\definecolor{wildstrawberry}{rgb}{0.5, 0.3, 1}
\numberwithin{equation}{section}
\newcommand{\T}{\mathbb{T}}
\newcommand{\R}{\mathbb{R}}
\newcommand{\Z}{\mathbb{Z}}
\newcommand{\Q}{\mathbb{Q}}
\newcommand{\N}{\mathbb{N}}
\newcommand{\TT}{\mathbb{T}^2}
\newcommand{\RR}{\mathbb{R}^2}
\newcommand{\ZZ}{\mathbb{Z}^2}
\newcommand{\cO}{\mathcal{O}}
\newcommand{\M}{\mathcal{M}}
\newcommand{\cR}{\mathcal{R}}
\newcommand{\hx}{\hat{x}}
\newcommand{\cC}{\mathcal{C}}
\newcommand{\cL}{\mathcal{L}}
\newcommand{\cA}{\mathcal{A}}
\newcommand{\cB}{\mathcal{B}}
\newcommand{\cK}{\mathcal{K}}
\newcommand{\cD}{\mathcal{D}}
\newcommand{\cF}{\mathcal{F}}
\newcommand{\cW}{\mathcal{W}}
\newcommand{\half}{\frac{1}{2}}
\newcommand{\ab}{\frac{a}{b}}
\newcommand{\B}{\operatorname{Box}}
\newcommand{\p}{\mathbf{p}}
\newcommand{\q}{\mathbf{q}}
\newcommand{\x}{\mathbf{x}}
\newcommand{\zero}{\mathbf{0}}
\newcommand{\la}{\boldsymbol{\lambda}}
\newcommand{\lala}{\la^2}
\newcommand{\deltap}{\delta_{\mathbf{p}}}
\newcommand{\deltaq}{\delta_{\mathbf{q}}}
\newcommand{\card}{\operatorname{card}}
\newtheorem{theo}{Theorem}
\newtheorem{theorem}{Theorem}[section]
\newtheorem{question}[theorem]{Question}
\newtheorem{proposition}[theorem]{Proposition}
\newtheorem{lemma}[theorem]{Lemma}
\newtheorem{corollary}[theorem]{Corollary}
\theoremstyle{definition}
\newtheorem{definition}[theorem]{Definition}
\newtheorem{remark}[theorem]{Remark}
\newtheorem*{hypotheses*}{Standing hypotheses (SH)}
\title[Irrational flows with multiple stopping points]{Historic behaviour vs. physical measures for irrational flows with multiple stopping points}
\subjclass[2010]{37C10, 37C40 (primary), and 11K50, 11J71 (secondary)}
\author{Martin Andersson, Pierre-Antoine Guih{\'e}neuf}
\address{Martin Andersson: Universidade Federal Fluminense, Departamento de Matem\`atica Aplicada, Rua Professor Marcos Waldemar de Freitas Reis, s/n, 24210-201, Niter\'oi, Brazil}
\email{nilsmartin@id.uff.br}
\address{Pierre-Antoine Guih{\'e}neuf: Sorbonne Universit{\'e} and Universit{\'e} de Paris, CNRS, IMJ-PRG, F-75006 Paris, France}
\email{pierre-antoine.guiheneuf@imj-prg.fr}
\thanks{We warmly thank Corinna Ulcigrai and Damien Thomine for their kind remarks about the first version of this paper.}
\date{\today}
\begin{document}

\begin{abstract}
We study Birkhoff averages along trajectories of smooth reparameterizations of irrational linear flows of the two torus with two stopping points, say $\p$ and $\q$, of quadratic order. The limiting behaviour of such averages is independent of the starting point in a set of full Haar-Lebesgue measure and depends in an intricate way on the Diophantine properties of both the slope $\alpha$ of the linear flow as well as the relative position of $\p$ and $\q$. In particular, if $\alpha$ is Diophantine, then Birkhoff limits diverge almost everywhere (historic behaviour) and if $\alpha$ is sufficiently Liouville, then there exists some $\p$ and $\q$ such that the Birkhoff averages converge almost everywhere (unique physical measure).
\end{abstract}

\maketitle


\setcounter{tocdepth}{1}
\tableofcontents

\section{Introduction}

One of Rufus Bowen's many contributions to smooth ergodic theory is a construction often referred to as Bowen's eye. It is an example of a flow $\phi^t$ on $\mathbb{R}^2$ with two hyperbolic fixed points $\p$ and $\q$ such that one branch of the stable manifold of $\p$ coincides with one branch of the unstable manifold of $\q$ and vice versa, forming an eye-shaped region between two separatrices. In the interior of this region, there is a repelling fixed point with  complex eigenvalues, inducing a spiralling behaviour towards the boundary of the eye. One can show that, if the eigenvalues of $D\phi^1(\p)$ and $D\phi^1(\q)$ are chosen appropriately, then the Birkhoff averages 
\[\frac{1}{t} \int_0^t f(\phi^s(\x)) \ ds \]
diverge as $t \to \infty$ for every $\x$ inside the eye except for the fixed point, whenever $f: \mathbb{R}^2 \to \mathbb{R}$ is a continuous function taking different values at $\p$ and $\q$ (see e.g. Takens \cite{MR1274765}).

Bowen's eye is the best known example of what is now called \emph{historic behaviour}: the existence of a positive Lebesgue measure set of initial points for which Birkhoff averages are divergent. Historic behaviour has been found in many different contexts. Herman gave a very simple example of a system exhibiting such a behaviour \cite{herman}.
Hofbauer and Keller \cite{hofbauer1990} proved that there are uncountably many quadratic maps with almost everywhere divergent Birkhoff averages.  
Kiriki and Soma \cite{KIRIKI2017524} have showed that for $r \geq 2$, there is a $C^r$ open set $\mathcal{N}$ of surface diffeomorphisms, and a dense subset $D \subset \mathcal{N}$, such that every $f \in D$ has a wandering domain, giving rise to historic behaviour\footnote{Very recently, Berger and Biebler have anounced in \cite{berger2020emergence} a similar result for the classes $C^\infty$ and $C^\omega$.}. The set $\mathcal{N}$ is obtained by Newhouse's construction of persistent tangencies and contains the H{\'e}non family. Their results have been generalized to higher dimension by Barrientos \cite{2103.11964} and were adapted to flows by Labouriau and Rodrigues \cite{Labouriau_2017}.  Saburov \cite{MR4185284} recently found that historic behaviour is abundant among predator-pray dynamics. A transitive partially hyperbolic diffeomorphism on $\mathbb{T}^3$ displaying historic behaviour is given in a recent work of Crovisier, Yang, and Zhang \cite{MR4082180}. 
In a recent work \cite{2003.02185}, Talebi studies historic behaviour in the setting of rational maps of the Riemann sphere. He has announced that the set of maps with historic behaviour contains a dense $G_\delta$ set in the closure of  strictly post-critically finite maps, i.e. maps for which all critical points lie in the pre-orbit of a periodic repeller. In the continuous setting, Abdenur and the first author \cite{Abdenur2013} showed that in the $C^0$ conjugacy class of expanding circle maps, there is a dense $G_\delta$ set of maps with historic behaviour.

Although historic behaviour is abundant in some special families of dynamical systems, it is generally believed that it cannot be persistent among smooth maps or flows without any extra structure. However, Ruelle has expressed some hope that such examples may exist \cite{MR1858471} and Takens  emphasized it as an important problem in \cite{MR2396607}, whence it has become known as \emph{Taken's last problem.}

The present work deals with Taken's last problem for the special family of repara\-metrized linear flows on the torus with two stopping points. In this setting, one must either have historic behaviour or a unique physical measure whose basin has full measure (a dichotomy which does not hold for other dynamical systems). We show that both possibilities occur, depending on the angle of the flow and the relative position of the stopping points, but historic behaviour is the more abundant phenomenon, both from a topological and a measure theoretic point of view. 

As far as we know, irrational flows with two stopping points have not been studied before. The case of flows with one stopping point, however, has been extensively studied, from the grounding work of Ko\v{c}ergin \cite{MR0516507} to finer results about the mixing rate (e.g. the recent result of  Fayad, Forni, and Kanigowski \cite{fayad2019lebesgue}). See the survey of Dolgopyat and Fayad \cite{MR3309100}; in particular the results of the present article  are based on studies of Birkhoff sums that give partial answers to Question 41 (this question was already tackled by Sina\u{\i} and Ulcigrai in \cite{MR2478478}).

For its part, the study of physical measures for flows on surfaces is a bit more developped, with among others Katok example (e.g. Kwapisz \cite{MR2351022}), the examples of Saghin, Sun and Vargas \cite{MR2670926} and the special attention paid to Cherry flows by Palmisano \cite{palmisano2014physical}, Saghin and Vargas \cite{Saghin_2012} and Yang \cite{YANG_2016}.

\begin{remark}
The definition of historic behaviour varies in the litterature. Sometimes it is often defined pointwise, so that a point is said to have historic behaviour whenever Birkhoff averages fail to converge for some continuous observable. There has been a recent surge in research about systems for which such historic behaviour occurs on a residual (dense $G_\delta$) set of points \cite{MR4212116, MR4055947, 2107.01200, MR3567830, 2107.12498}.
\end{remark}

\subsection*{Formulation of the problem and  summary of results}

Consider a constant vector field $X_0 = (1,\alpha)$ on $\TT= \RR / \ZZ$. We shall always assume that $\alpha$ is irrational (otherwise any reparametrization of the flow is periodic, thus has extremely simple  ergodic behaviour). Let $\varphi: \TT \to \R$ be a non-negative smooth function that vanishes at exactly two points, $\p, \q$ say. We assume that $\varphi$ is of quadratic order at these points -- by that we mean that its derivative $D\varphi$ vanishes and that the Hessian $D^2 \varphi$ is positive definite at both $\p$ and $\q$. This assumption is quite natural, being the case of lowest codimension\footnote{If the map $\varphi$ is not smooth at $(0,0)$, then the behaviour of the reparametrized flow can be quite different, see e.g. Kwapisz and Mathinson \cite{MR2947933}.}. Let $X = \varphi X_0$ and let $\phi^t$ be the corresponding flow on $\TT$, which will be referred to as the \emph{reparametrized linear flow}. Such flows are topologically mixing but have zero entropy (see the introduction of Kaginowski \cite{MR3819702} for more informations).

Since we assume $\alpha$ to be irrational, the stable sets of $\p$ and $\q$ are densely immersed semi-lines, consisting of those initial points $\x$ for which $\phi^t(\x)$ approaches $\p$ or $\q$ as $t$ tends to infinity. All points on none of these stable sets (in particular, points on a set of full Haar measure on $\TT$) have a dense future orbit under $\phi^t$. The question arises as to what can be said about the time averages of such points. As we shall see in Proposition~\ref{invariant probs better}, the flow $\phi^t$ has no invariant probabilities other than the point masses at $\p$ and $\q$ as well as their convex combinations. 

Let $\M$ be the set of Borel probabilities on $\TT$; endowed with the weak-* topology this set becomes compact. 
The flow $\phi^t$ induces at every point $\x \in \TT$ a family $\{\mu_\x^t \}_{t>0}$ of what we may call empirical measures,  given by 
\begin{equation}\label{EqDefPhi}
\int_{\T^2} f \ d\mu_\x^t = \frac{1}{t} \int_0^t  f(\phi^s(\x)) \  ds
\end{equation}
for every continuous $f: \T^2 \to \R$.	

Let $p\omega(\x)$ be the compact subset of $\M$ defined by
\[p\omega(\x)  = \bigcap_{T > 0} \overline{ \{\mu_\x^t: t \geq T \} }. \]

We denote by $\la$ the Haar measure (also referred to as Lebesgue measure) on $\T$ and by $\lala = \la \times \la$ the Haar measure (also referred to as Lebesgue measure) on $\TT$. 
Although $\lala$ is not invariant under the flow $\phi^t$, it is still ergodic in the sense that if $A$ is a Borel measurable set such that $\phi^t(A) = A$ for every $t \in \mathbb{R}$, then $\lala(A)$ is either $0$ or $1$ (because $\phi^t$ is almost everywhere orbit equivalent to the linear flow $\phi_0^t$ associated to the vector field $X_0$). As a consequence, $p\omega(\x)$ is $\lala$-almost everywhere constant (see Proposition~\ref{invariant probs better}). More precisely, let 
\begin{equation}\label{EqFormPhys}
\mu_\infty := \frac{\sqrt{d_\q}}{\sqrt{d_\p} + \sqrt{d_\p}} \delta_\p\, + \, \frac{\sqrt{d_\p}}{\sqrt{d_\p} + \sqrt{d_\q}} \delta_\q,
\end{equation}
where $d_\p$ and $d_\q$ are the determinants of the Hessian at $\p$ and $\q$ and $\delta_\p$, $\delta_\q$ the point mass measures at these points. We have the following dichotomy: Given a triple $\p, \q, \alpha$ and a reparametrerized linear flow $\phi^t$ with stopping points at $\p$ and $\q$, then (see Proposition~\ref{PropPossibOmega})
\begin{enumerate}[(i)]
\item either Birkhoff averages $\mu_\x^t$ are $\lala$-almost everywhere divergent (i.e. $\operatorname{card} p\omega(\x)\ge 2$ a.e.);
\item or $\mu_\x^t$ converges to $\mu_\infty$ for $\lala$-almost every $\x \in \TT$.
\end{enumerate}
In case (i) we say that $\phi^t$ has \emph{historic behaviour}, and in case (ii) we say that $\phi^t$ has a \emph{physical measure}. If, in case (i), it so happens that 
\[p\omega(\x) = \{ \alpha \delta_\p + (1-\alpha) \delta_\q: \ 0 \leq \alpha \leq 1 \} \] 
for $\lala$-almost every $\x \in \TT$, then we say that $\phi^t$ has a \emph{extreme historic behaviour}.

\begin{remark}
More generally,  a measure $\mu$ is called \emph{physical} if its basin $B(\mu) = \{\x: p\omega(\x) = \{\mu \} \}$ has positive $\lala$-measure.  Some dynamical systems have several physical measures. In our setting, whenever we have a physical measure, it is unique, and its basin has full $\lala$-measure.

A curious feature of irrational flows of category (ii) is that they give rise to physical measures that are not ergodic. This is a rare phenomenon for transitive systems which, to our knowledge, has only been found once before, in Saghin, Sun and Vargas \cite{MR2670926} (see Mu\~{n}oz, Navas, Pujals, and V\'{a}squez \cite{MR2373211} for a non transitive interesting example).
\end{remark}

The question arises as to what choices of $\alpha$ and stopping points $\p$, $\q$ give rise to historic behaviour, and what choices result in a physical measure.

\begin{hypotheses*} \label{SH}
Throughout this work, a reparameterized linear flow with angle $\alpha$ and  stopping points at $\p$ and $\q$ always refers to a flow $\phi^t$  generated by a vector field $X=\varphi X_0$ with the following properties:
\begin{enumerate}
\item $X_0 = (1,\alpha)$ and $\alpha$ is irrational;
\item $\varphi: \TT \to \R_+$ is of class  $C^3$;
\item $\varphi$ vanishes at two distinct points $\p$ and $\q$, and is  positive elsewhere (hence $D \varphi (\p) = D \varphi(\q) = 0$);
\item the Hessians $D^2 \varphi (\p)$ and $D^2 \varphi (\q)$ are positive definite matrices. 
\end{enumerate}
\end{hypotheses*}
\medskip

In this paper we get a quite complete set of criteria, expressed in terms of Diophantine approximation properties, under which a reparametrized flow satisfying (SH) has historic behaviour or a unique physical measure.

Our first result says that in most cases (from both a measure-theoretic and topological viewpoint), the reparametrized flow satisfying (SH) has historic behaviour. Still, there are nontrivial cases where there is a unique physical measure. The following statement is a combination of Theorems \ref{generic distinct orbits}, \ref{PropDivSum} and \ref{physmeas different orbits}.

\begin{theo}\label{TheoIntro1}
There are subsets  $\mathcal{F}, \cR, \cD \subset \R \times \TT \times \TT$ with the following characteristics: $\mathcal{F}$ is of full Lebesgue measure, $\cR$ is a dense $G_\delta$ set, and  $\cD$ is dense (but not $G_\delta$), such that for any $(\alpha, \p, \q) \in \T \times\T^2\times\T^2$ and any $\phi^t$ be a reparameterized linear flow satisfying (SH) with angle $\alpha$ and stopping points at $\p$ and $\q$. Then $\phi^t$ has
\begin{itemize}
\item a unique physical measure if $(\alpha, \p, \q) \in \cD$,
\item historic behaviour if $(\alpha, \p, \q) \in \mathcal{F}$, and
\item an extreme historic behaviour if $(\alpha, \p, \q) \in \cR$.
\end{itemize}
In the first case, the physical measure is given by (\ref{EqFormPhys}).
\end{theo}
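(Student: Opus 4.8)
\emph{Structure of the argument.} Theorem~\ref{TheoIntro1} is the conjunction of Theorems~\ref{generic distinct orbits}, \ref{PropDivSum} and \ref{physmeas different orbits}, which produce the sets $\cR$, $\mathcal{F}$ and $\cD$ respectively, so once those three are in hand nothing remains to be done: the dichotomy of Proposition~\ref{PropPossibOmega} guarantees that every flow satisfying (SH) falls into case (i) or (ii), and Proposition~\ref{invariant probs better} guarantees that $\po(\x)$ is $\lala$-almost everywhere a fixed compact subset of the segment $\{s\delta_\p+(1-s)\delta_\q : 0\le s\le 1\}$, so the three conclusions (unique physical measure, historic, extreme historic) are exhaustive and mutually exclusive in the obvious way. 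The plan is therefore to set up a common reduction and then treat each of the three regimes.

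\emph{Reduction to an arithmetic problem.} By flow-ergodicity of $\lala$ and Proposition~\ref{invariant probs better} it suffices to understand, for one (hence $\lala$-a.e.) base point $\x$, the set of limit points of $t\mapsto \mu_\x^t(V_\p)$, where $V_\p,V_\q$ are small disjoint neighbourhoods of $\p,\q$. Parametrising the linear flow $\phi_0^t$ by its own time $u$, so that $du=\varphi\,ds$ along orbits, the amount of time $\phi^s(\x)$ spends in $V_\p$ before time $t$ equals $\int du/\varphi(\phi_0^u\x)$ over the part of the linear orbit lying in $V_\p$; since $D\varphi(\p)=0$ and $D^2\varphi(\p)$ is positive definite, a single excursion of the linear orbit through $V_\p$ costs $(1+o(1))\,c\,(\sqrt{d_\p}\,h)^{-1}$ in time, where $h$ is the minimal distance to $\p$ along that excursion and $c$ a universal constant, and similarly for $V_\q$. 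Passing to a circle section transverse to $X_0$ turns the linear orbit into a rotation orbit $\{\theta_\x+k\alpha\}_{k\ge0}$, the excursions near $\p$ (resp.\ $\q$) becoming the close returns of this orbit to the section point $\theta_\p$ (resp.\ $\theta_\q$); by the three-distance theorem the number and depths of these returns, organised level by level, are controlled by the continued fraction convergents $q_n$ of $\alpha$ together with the Ostrowski expansions, relative to $\alpha$, of $\theta_\q-\theta_\p$ and of $\theta_\x$. The problem then reduces to the asymptotics, as $K\to\infty$, of the ratio $\Sigma^\p_K/\Sigma^\q_K$ of the weighted sums
\[
\Sigma^\p_K:=\frac{1}{\sqrt{d_\p}}\sum_{\substack{k\le K\\ \|\theta_\x+k\alpha-\theta_\p\|<\eta}}\frac{1}{\|\theta_\x+k\alpha-\theta_\p\|},\qquad
\Sigma^\q_K:=\frac{1}{\sqrt{d_\q}}\sum_{\substack{k\le K\\ \|\theta_\x+k\alpha-\theta_\q\|<\eta}}\frac{1}{\|\theta_\x+k\alpha-\theta_\q\|},
\]
in the sense that $\mu_\x^t\to\mu_\infty$ iff $\Sigma^\p_K/\Sigma^\q_K\to\sqrt{d_\q}/\sqrt{d_\p}$, that $\mu_\x^t$ diverges (historic behaviour) iff this ratio has $\liminf<\limsup$, and that one has extreme historic behaviour iff the ratio has $\liminf=0$ and $\limsup=+\infty$.

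\emph{The three regimes.} For $\mathcal{F}$ (Theorem~\ref{PropDivSum}) one takes $\alpha$ Diophantine, a full Lebesgue measure condition, and shows that then, whatever $\p$ and $\q$ are, the running ratio $\Sigma^\p_K/\Sigma^\q_K$ cannot converge: since a Diophantine $\alpha$ has no abnormally large partial quotient, no single close return dominates the sums, the block of returns near $\p$ added at continued fraction level $n$ has depths of order $\|q_n\alpha\|$ times the relevant Ostrowski tail of $\theta_\x-\theta_\p$ (and the $\q$-block involves the tail of $\theta_\x-\theta_\q$), and a counting argument (a Borel--Cantelli estimate on the continued fraction data, uniform in $\p,\q$) shows these two sequences of block contributions are never asymptotically proportional, yielding two sequences of times along which $\Sigma^\p_K/\Sigma^\q_K$ stays bounded away from each other. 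For $\cD$ (Theorem~\ref{physmeas different orbits}) one instead builds $(\alpha,\p,\q)$: making the partial quotients of $\alpha$ grow extremely fast (so $\alpha$ is Liouville) forces the single deepest return at each level to swamp all earlier history, and then choosing the Ostrowski digits of $\theta_\q-\theta_\p$ so that the deepest $\p$-return and the deepest $\q$-return occur at the same levels and in the precise depth-ratio dictated by $\sqrt{d_\p}$ and $\sqrt{d_\q}$ forces $\Sigma^\p_K/\Sigma^\q_K\to\sqrt{d_\q}/\sqrt{d_\p}$, hence $\mu_\x^t\to\mu_\infty$ given by (\ref{EqFormPhys}); the remaining continued fraction and Ostrowski digits are free, which makes the set of such triples dense. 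That $\cD$ is not $G_\delta$ is then automatic: $\cD$ is disjoint from $\cR$, which by Theorem~\ref{generic distinct orbits} is a dense $G_\delta$, so $\cD$ lies in a meager set and cannot be comeager. Finally, for $\cR$ (Theorem~\ref{generic distinct orbits}) one observes that ``$\po(\x)$ equals the whole segment'' is a $G_\delta$ condition on $(\alpha,\p,\q)$, because finite-time averages depend continuously on the parameters (structural stability of $\phi^t$ away from $\p$ and $\q$); its density follows from a perturbation argument producing, for a dense set of parameters and each prescribed $s\in[0,1]$, arbitrarily long time windows on which $\mu_\x^t(V_\p)$ is within $1/m$ of $s$, obtained by manufacturing alternating very deep approaches to $\p$ and to $\q$.

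\emph{Where the difficulty lies.} The technical heart is the arithmetic estimate underlying the reduction: controlling the sums $\sum 1/\|\theta_\x+k\alpha-\theta_\p\|$ over continued fraction blocks precisely enough to pin down the exact limiting constant $\sqrt{d_\q}/\sqrt{d_\p}$ (so that $\mu_\infty$ in (\ref{EqFormPhys}) is correct) and, simultaneously for $\p$ and for $\q$, to read off the oscillation of $\Sigma^\p_K/\Sigma^\q_K$ — in particular to prove that a Diophantine $\alpha$ forces divergence for every choice of $\p,\q$, which rules out synchronising the $\p$- and $\q$-contributions. Once this dictionary between the dynamics and the continued fraction / Ostrowski data is established, the density constructions behind $\cD$ and $\cR$ reduce to prescribing digits, which is delicate bookkeeping but not conceptually hard.
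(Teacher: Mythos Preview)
Your reduction to the arithmetic problem is right and matches the paper's Proposition~\ref{criterium1}: everything comes down to the asymptotics of $\Theta_n^\beta(x)=S_n(x)/S_n(x-\beta)$ with $S_n(x)=\sum_{k<n}\|x+k\alpha\|^{-1}$. The structural remarks (dichotomy, $G_\delta$ nature of $\cR$, $\cD$ not $G_\delta$ by Baire) are fine. But in two of the three regimes your proposed mechanism is not the one that actually works, and in the $\mathcal F$ case it is essentially backwards.

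\textbf{The $\mathcal F$ regime.} You write that for Diophantine $\alpha$ ``no single close return dominates the sums'' and that divergence comes from block contributions being ``never asymptotically proportional''. The paper's argument is the opposite. The relevant full-measure condition on $\alpha$ is $\sum_{a_n,a_{n+1}\ge 2}1/\log q_n=\infty$ (a.e.\ by Khinchin--L\'evy and mixing of the Gauss map), and under it one proves, via the Fuchs--Kim inhomogeneous approximation theorem, that for a.e.\ $x$ there are infinitely many $i$ with $\|x+i\alpha\|<\epsilon_n/(q_n\log(3q_n))$; Lemma~\ref{LemSizeDiverg} then gives $\psi(R_\alpha^i x)\ge (6\epsilon_n)^{-1}S_i(x)$, so a \emph{single term dominates the entire past sum}. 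Since that same iterate is far from $\beta$, one gets $\Theta_{i+1}^\beta(x)\ge (12\epsilon_n)^{-1}\Theta_i^\beta(x)$, contradicting convergence of $\Theta_n^\beta$. Your ``blocks not proportional'' picture gives no mechanism: if neither side has dominating spikes, why would the ratio fail to converge? You need to reverse your intuition here.

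\textbf{The $\cD$ regime.} Your phrase ``depth-ratio dictated by $\sqrt{d_\p}$ and $\sqrt{d_\q}$'' is a confusion: by Proposition~\ref{criterium1} the physical measure $\mu_\infty$ corresponds to the \emph{unweighted} ratio $S_n(x)/S_n(x-\beta)\to 1$, independently of $d_\p,d_\q$; so the choice of $\q$ cannot depend on the Hessians. The paper does not synchronise depths of closest returns. Instead it takes $\beta_0=\sum_{n\ge 0}\rho_n$, so that for every $n$ one has $\beta_0=\ell_{n-1}\alpha+\beta_n$ with $\ell_{n-1}=q_0+\cdots+q_{n-1}<2q_n$ and $|\beta_n|<\lambda^{(n)}$. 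Then $S_i(x+\beta_0)$ differs from $S_i(x)$ by (i) a shift by $\ell_{n-1}$ along the orbit, controlled by the ``bad set'' machinery of Section~\ref{SecPhysSame} (which shows $\psi(R_\alpha^j x)=o(S_j(x))$ off a summable sequence of sets), and (ii) a perturbation by $\beta_n$, controlled by a cancellation lemma (Lemma~\ref{LemFinalMartin}). The Liouville-type hypothesis on $\alpha$ enters only to make the bad-set measures summable. Your ``deepest return swamps history, match depths'' sketch is a different idea and, as stated, does not yield a proof.

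\textbf{The $\cR$ regime.} Your outline here is closer to the paper, but the density is obtained concretely rather than by abstract perturbation: for Liouville $\alpha$ and any $K$, one picks $n$ with $a_{n+1}>q_n^{K+1}$, nudges $\beta$ to lie in the middle of a gap of $\cO(q_n)$, and then Lemmas~\ref{ABC} and \ref{lower bound} give $\Theta_{\ell q_n}^\beta(x)>K$ on a set of measure $\ge 1/16$; Proposition~\ref{criterium2} finishes.
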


All the sets of angles (i.e. the projections of $\cF$, $\cR$ and $\cD$ to $\R$) in Theorem~\ref{TheoIntro1} are explicit in the sense that they arise from conditions involving quantities that are determined by the expression of the angle as a continued fraction.

Let us be a little more precise: The set $\mathcal{F}$ can be written as  $A\times \T^2\times \T^2$, with $A\subset\T$ of full measure. We also prove that that the first case (i.e. $(\alpha, \p, \q) \in \cD$) occurs in two ways, namely with $\p$ and $\q$ in the same orbit (of the non-reparameterized flow) as well as $\p$ and $\q$ in different orbits. Both situations occur densely in the phase space $\R \times \TT \times \TT$, and there exists a full Hausdorff dimensional set $B\subset \R$ such that for any $\alpha\in B$, there exists $\p,\q\in\T^2$ such that $(\alpha,\p,\q)\in\cD$.

We give special attention to extreme historic behaviour in the case where $\p=(0,0)$ and $\q=(0,\beta)$ for some rational $\beta$ (see Theorem~\ref{refined rational distinct orbits} for a refined version). 

\begin{theo}\label{rational distinct orbits}
There exists a dense $G_\delta$ set $A \subset \R$ such that if $\alpha \in A$, $\beta \in \Q \setminus \{0\}$, and if $\phi^t$ is a reparametrized linear flow satisfying (SH) with angle $\alpha$ with stopping points at $(0,0)$ and $(0, \beta)$, then $\phi^t$ has an extreme historic behaviour.
\end{theo}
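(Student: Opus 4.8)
The plan is to deduce the statement from the finer results on reparametrized flows with stopping points in distinct orbits established above, specialized to the configuration $\p=(0,0)$, $\q=(0,\beta)$ with $\beta=r/s\in\Q\setminus\{0\}$, together with a Baire--category construction of the set $A$. First recall from Proposition~\ref{invariant probs better} that the only flow-invariant probabilities are the convex combinations $a\delta_\p+(1-a)\delta_\q$, so for every $\x$ the set $p\omega(\x)$ is contained in the arc $S=\{a\delta_\p+(1-a)\delta_\q:0\le a\le1\}$; moreover $p\omega(\x)$ is the $\omega$-limit set of the continuous curve $t\mapsto\mu^t_\x$, hence connected, so it is a closed sub-interval of $S$, and by ergodicity of $\lala$ under $\phi^t$ it is $\lala$-a.e.\ equal to one and the same interval, say $\{a\delta_\p+(1-a)\delta_\q:a_-\le a\le a_+\}$. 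By definition, $\phi^t$ has extreme historic behaviour precisely when $a_-=0$ and $a_+=1$, i.e.\ when $\delta_\p$ and $\delta_\q$ both belong to $p\omega(\x)$ for $\lala$-a.e.\ $\x$.

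The proportion of time the orbit of a typical $\x$ has spent near $\p$ up to time $t$ is governed, through the quadratic vanishing of $\varphi$ (so that an orbit segment passing at distance $h$ from a stopping point is traversed in reparametrized time $\asymp 1/h$), by the inhomogeneous approximation quantities $\min_{k\le N}\|c+k\alpha\|$ and $\min_{k\le N}\|c+k\alpha-\beta\|$, where $c$ is determined by $\x$ and $N$ counts the crossings of the circle $\{x=0\}$. Theorems~\ref{generic distinct orbits} and~\ref{refined rational distinct orbits} turn the requirement ``$\delta_\p,\delta_\q\in p\omega(\x)$ for $\lala$-a.e.\ $\x$'' into an explicit Diophantine condition $(\star)$ on $\alpha$ (given $\beta=r/s$): there must be infinitely many scales at which the orbit of a typical $\x$ makes a deep excursion near $\p$ outweighing the entire history elapsed so far without a comparably deep excursion near $\q$ having taken place, and infinitely many scales at which the roles of $\p$ and $\q$ are reversed. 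The point of taking $\beta$ rational is that at such a scale the decisive quantity is the minimum of $\|m\alpha-\beta\|$ over the relevant range of $m$; since $\beta=r/s$, for the small values this equals $\frac1s\|sm\alpha\|$, and the integer nearest to $sm\alpha$ has residue modulo $s$ equal to $0$ when $m\alpha$ is close to $\p$ and equal to $r$ when $m\alpha$ is close to $\q$. Thus the close approaches of the orbit $\{m\alpha\}$ to the finite set $\{j/s:0\le j<s\}$ are governed by the continued fraction of $s\alpha$ and are sorted among the $s$ residue classes by an explicit combinatorial rule, which makes $(\star)$ a countable conjunction of conditions each of which constrains only finitely many partial quotients of $\alpha$ (equivalently of $s\alpha$) at a time.

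I would then build $A$ by Baire category. Each condition ``at scale comparable to $N_k$, $\alpha$ exhibits the prescribed pattern producing a $\p$-dominated deep excursion'' (for $k$ in one infinite family of indices) or ``$\dots$ a $\q$-dominated one'' (for $k$ in the complementary family) is realised on an \emph{open} set of $\alpha$, being an open condition on a finite initial string of partial quotients, and on a \emph{dense} set, since every finite string can be completed to one exhibiting the pattern: one inserts a single very large partial quotient to create the deep excursion while keeping a controlled number of neighbouring partial quotients of $\alpha$ and of $s\alpha$ bounded, so that the competing stopping point receives no excursion of comparable depth at that scale and the deep excursion still outweighs the whole earlier history. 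Intersecting these open dense sets over $k$, and intersecting with the irrationals, yields a dense $G_\delta$ set $A\subset\R$ on which $(\star)$ holds; by the previous paragraph, for every $\alpha\in A$, every $\beta\in\Q\setminus\{0\}$ and every reparametrized linear flow satisfying (SH) with angle $\alpha$ and stopping points $(0,0)$ and $(0,\beta)$, one has $a_-=0$ and $a_+=1$, i.e.\ $p\omega(\x)=S$ for $\lala$-a.e.\ $\x$, which is extreme historic behaviour.

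The main obstacle is the decoupling used in the second and third paragraphs: one must ensure that forcing a deep excursion near $\p$ at a target scale does not inadvertently create a competing excursion of comparable depth near $\q$ (and conversely), for otherwise the empirical measures would merely accumulate somewhere in the interior of $S$ rather than at both of its endpoints. This is exactly where the rationality of $\beta$ enters: it reduces the interaction between $\p$ and $\q$ to the residue-class bookkeeping for close approaches of $\{m\alpha\}$ to $\{j/s:0\le j<s\}$, which is transparent enough to be controlled digit by digit; for irrational $\beta$ this bookkeeping is much less tractable, which is why the theorem is stated for $\beta\in\Q$. Carrying out these estimates with explicit constants, and checking that the relevant statements hold for $\lala$-a.e.\ $\x$ rather than for a single orbit, is the content of Theorem~\ref{refined rational distinct orbits}, on which the argument ultimately rests.
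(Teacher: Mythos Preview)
Your overall strategy is right: deduce the theorem from Theorem~\ref{refined rational distinct orbits} together with a Baire-category construction of $A$. But there is a structural gap. You fix $\beta=r/s$ at the outset and then describe open dense conditions on $\alpha$ that depend on $s$ (you even speak of controlling the partial quotients of $s\alpha$). The dense $G_\delta$ set you obtain would therefore depend on the denominator $s$, whereas the theorem asks for a \emph{single} set $A$ that works for every $\beta\in\Q\setminus\{0\}$ simultaneously. Your final sentence asserts that $A$ works ``for every $\beta\in\Q\setminus\{0\}$'', but nothing in the preceding construction supports this; you would need to intersect over all denominators, and then argue that the intersection is still a dense $G_\delta$ (which requires each piece to be so, uniformly in $s$).

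The paper closes this gap explicitly. For each integer $k\ge 2$ it defines $\cW(\nu,k)$ as the set of $\alpha$ for which infinitely many $n$ satisfy both $a_{n+1}\ge q_n^\nu$ and $\gcd(q_n,k)=1$, proves directly (Proposition~\ref{W is generic}, using Lemma~\ref{pqk} to realise the coprimality condition) that each $\cW(\nu,k)$ is a dense $G_\delta$, and then takes $A=\cW(\nu)=\bigcap_{k\ge 2}\cW(\nu,k)$. For any $\alpha\in A$ and any $\beta=a/b$ in lowest terms one has $\alpha\in\cW(\nu,b)$, and Theorem~\ref{refined rational distinct orbits} applies. Your residue-class heuristic is morally the same mechanism as the coprimality condition $\gcd(q_n,b)=1$ (which, via Lemma~\ref{smallest distance}, is exactly what keeps the rational orbit $\{j/q_n\}$ uniformly away from $a/b$), but you never isolate that condition, and your appeal to Theorem~\ref{generic distinct orbits} is a red herring: that theorem concerns Liouville $\alpha$ and \emph{generic} $\beta$, not rational $\beta$.
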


Finally, we describe more specifically what happens in the case where the singularities $\p$ and $\q$ lie on the same orbit. The results are described by this combination of Theorems \ref{PropConv} and \ref{PropDivSum}.

\begin{theo}\label{TheoIntro2}
There exists a full measure set $\cA \subset \R$, and a full Hausdorff dimensional dense set $\cB \subset \T$ such that if $\alpha\in \R$ and $\q = \p + r(1, \alpha) \mod \ZZ$ for some $r>0$, then for any reparameterized linear flow $\phi^t$  satisfying (SH),
\begin{itemize}
\item $p\omega(\x) = [\mu_\infty,  \delta_\p]$ $\x$-a.e. if $\alpha\in \cA$ (historic behaviour);
\item $p\omega(\x) = \{\mu_\infty\}$ $\x$-a.e. if $\alpha\in \cB$, $\x$-a.e. (physical measure).
\end{itemize}
\end{theo}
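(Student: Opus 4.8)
The plan is to reduce the determination of $p\omega(\x)$ to a one–dimensional question about partial sums of the reciprocals of the successive distances at which the orbit of $\x$ grazes the common orbit of $\p$ and $\q$, and then to read off the two alternatives from the continued fraction expansion of $\alpha$. Throughout, $\x$ is taken off the (measure–zero) stable sets of $\p$ and $\q$.

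\textit{Geometric reduction.} Lifting to $\RR$, the orbit of $\x$ under the unreparameterized flow is a line $L'$ of slope $\alpha$, while $\p$, $\q$ and all their $\ZZ$-translates lie on a single line $L$ of slope $\alpha$, with $\q=\p+r(1,\alpha)$. Since translation by $r(1,\alpha)$ is parallel to $L'$, the distance from $L'$ to $\q+v$ equals that from $L'$ to $\p+v$ for every $v\in\ZZ$; hence the orbit of $\x$ approaches $\p$ and $\q$ along exactly the same sequence of distances $\delta_1,\delta_2,\dots$ — explicitly $\delta_n=\|\,n\alpha+c\,\|$ for a constant $c=c(\x,\p,\alpha)$ — and each close approach to $\p$ at distance $\delta_n$ is followed, a linear time $r$ later, by a close approach to $\q$ at the very same distance. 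By the standing hypotheses (quadratic order), a sojourn of the reparameterized flow inside a fixed small ball $B_{\varep_0}(\p)$ along a chord at distance $\delta$ lasts time $\asymp A_\p/\delta$, where $A_\p=\kappa/\sqrt{d_\p}$ with $\kappa$ an absolute constant coming from diagonalizing the Hessian (and likewise $A_\q=\kappa/\sqrt{d_\q}$); outside $B_{\varep_0}(\p)\cup B_{\varep_0}(\q)$ the speed $\varphi$ is bounded below, so only $O(1)$ reparameterized time elapses between consecutive sojourns. This yields a combinatorial model: the orbit performs successive macro-excursions, the $n$-th consisting of a sojourn of length $\approx A_\p/\delta_n$ near $\p$, then $O(1)$ time, then a sojourn of length $\approx A_\q/\delta_n$ near $\q$, then $O(1)$ time. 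The point requiring care here is that further tiny approaches may be nested inside the bounded gaps; this is dealt with by choosing $\varep_0$ and labelling the excursions via the three–distance theorem so that each macro-excursion collects a whole cluster of comparable approaches.

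\textit{Computing $\mu_\x^t$.} Since accumulation points of $\{\mu_\x^t\}$ are $\phi^t$-invariant, by Proposition~\ref{invariant probs better} $p\omega(\x)$ is a sub-segment of $[\delta_\p,\delta_\q]$, so it suffices to track the $\delta_\p$-weight of $\mu_\x^t$. Up to an $o(1)$ error (the time spent outside the $\varep_0$-balls after linear time $N$ is $O(N)$, while the total time is $\asymp S_N:=\sum_{n\le N}1/\delta_n=\omega(N)$), this weight equals
\[
\frac{A_\p\bigl(S_{n^\ast-1}+\theta_\p/\delta_{n^\ast}\bigr)}{A_\p\bigl(S_{n^\ast-1}+\theta_\p/\delta_{n^\ast}\bigr)+A_\q\bigl(S_{n^\ast-1}+\theta_\q/\delta_{n^\ast}\bigr)},
\]
where $n^\ast$ indexes the macro-excursion in progress and $\theta_\p,\theta_\q\in[0,1]$ record how far into its $\p$- and $\q$-sojourns one is (with $\theta_\q=0$ during the $\p$-sojourn and $\theta_\p=1$ during the $\q$-sojourn). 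At the end of each macro-excursion this weight is exactly $\tfrac{A_\p}{A_\p+A_\q}=\tfrac{\sqrt{d_\q}}{\sqrt{d_\p}+\sqrt{d_\q}}$, so $\mu_\x^t\to\mu_\infty$ along those times, and it never drops below that value, so $\delta_\q\notin p\omega(\x)$. Hence $p\omega(\x)$ is exactly the segment $[\mu_\infty,\delta_\p]$ when the weight returns close to $1$ infinitely often, which by the formula occurs precisely when
\[
\limsup_{n\to\infty}\ \frac{1/\delta_n}{\sum_{m<n}1/\delta_m}\ >\ 0,
\]
and otherwise (this $\limsup$ being $0$) one has $\mu_\x^t\to\mu_\infty$, a physical measure; in both cases it remains to quantify over $\x$, i.e.\ over $c$.

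\textit{Diophantine translation and the obstacle.} With $\delta_n=\|n\alpha+c\|$ the condition above is governed by the continued fraction of $\alpha$ together with metric inhomogeneous approximation in $c$. For the full-measure set $\cA$: an inhomogeneous Borel--Cantelli argument gives, for a.e.\ $\alpha$ and a.e.\ $c$, infinitely many $n$ with $\|n\alpha+c\|<1/\bigl(n(\log n)^{5}\bigr)$, while a Khintchine-type upper bound gives $\sum_{m\le n}1/\|m\alpha+c\|\ll n(\log n)^{3}$ a.e.; together these force the $\limsup$ to be $+\infty$, whence $p\omega(\x)=[\mu_\infty,\delta_\p]$ for $\lala$-a.e.\ $\x$. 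For $\cB$: one builds a Cantor set of Liouville numbers by prescribing, along a sparse sequence of indices $n_k$, partial quotients $a_{n_k+1}\to\infty$; near the best approximation of each block the orbit then makes $\asymp a_{n_k+1}$ further approaches at scales $\delta_{\min},2\delta_{\min},\dots$, so the block's contribution to $S$ acquires a factor $\asymp\log a_{n_k+1}$ while the largest single term is only a $1/\log a_{n_k+1}$ fraction of it; arranging $a_{n_k+1}\to\infty$ fast but keeping the free blocks long enough to preserve dimension (a standard Jarn\'\i k-type estimate) produces a dense set $\cB$ of full Hausdorff dimension on which the $\limsup$ is $0$, so $\mu_\x^t\to\mu_\infty$ for a.e.\ $\x$. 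The main obstacle is exactly this construction of $\cB$: one must make the partial quotients large enough that close approaches always come in fat clusters and no excursion dominates, yet not so large that the Hausdorff dimension drops below $1$; a secondary technical difficulty, in the geometric step, is absorbing the nested close approaches inside the bounded gaps into the macro-excursions without spoiling the estimates.
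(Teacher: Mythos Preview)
Your overall architecture matches the paper's: reduce via a Poincar\'e-type argument to the dichotomy
\[
\psi\big(R_\alpha^j(x)\big)=o\big(S_j(x)\big)\quad\text{vs.}\quad \limsup_j \frac{\psi(R_\alpha^j(x))}{S_j(x)}>0,
\]
use the same-orbit hypothesis to see that $\liminf_n \Theta_n^\beta(x)=1$ (so $p\omega(\x)\subset[\mu_\infty,\delta_\p]$ automatically), and then decide the dichotomy via the continued fraction of $\alpha$. The paper does exactly this (Lemma~\ref{CritPhys}, Proposition~\ref{criterium1} and the identity $\beta\equiv j\alpha$).

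However, your implementation of the $\cA$ part contains a concrete error. You claim that ``an inhomogeneous Borel--Cantelli argument gives, for a.e.\ $\alpha$ and a.e.\ $c$, infinitely many $n$ with $\|n\alpha+c\|<1/(n(\log n)^5)$''. This is false: $\sum_n 1/(n(\log n)^5)<\infty$, so by the convergence half of Borel--Cantelli (which needs no independence) one gets only \emph{finitely} many such $n$ for a.e.\ $c$, for \emph{every} $\alpha$. More importantly, no tweaking of the exponent saves the scheme: the typical size of $S_n(x)$ is $\asymp n\log n$, while the Khinchin threshold for infinitely many hits is also $\psi(n)\asymp 1/(n\log n)$; you are sitting exactly at the critical scale with no room between the two estimates. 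The paper avoids this trap in two ways. First, it replaces a threshold $\psi(n)$ by one depending on the continued-fraction \emph{level}, $\varphi(n(j))=\epsilon_{n}/(q_n\log(3q_n))$, and appeals to the Fuchs--Kim criterion (Theorem~\ref{CiteTh12}), whose divergence condition reduces to $\sum_{a_n,a_{n+1}\ge2}1/\log q_n=\infty$, true for a.e.\ $\alpha$ by Khinchin--L\'evy. Second, and this is the key coupling you are missing, the close approach $\|x+i\alpha\|<\epsilon_n/(q_n\log(3q_n))$ \emph{itself} forces the bound $S_i(x)\le 6q_n\log(3q_n)$, by comparing the length-$q_n$ orbit with the rational rotation by $p_n/q_n$ (Lemmas~\ref{near rational orbit}--\ref{LemSizeDiverg}). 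The upper bound on $S_i$ is therefore not an independent a.e.\ statement but a deterministic consequence of the hit.

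For $\cB$ your heuristic (large $a_{n+1}$ make close approaches come in blocks of $\asymp a_{n+1}$ terms at scales $\delta,2\delta,\dots$, so the largest single term is only an $O(1/\log a_{n+1})$ fraction of the block sum) is correct and is exactly the content of Lemma~\ref{lower bound}. But this only controls times $j$ inside a ``fat'' block $[q_n,q_{n+1})$ with $a_{n+1}$ large. You must also rule out domination at times between such blocks, where $a_{n+1}$ may be small; there the relevant lower bound on $S_j$ comes from the \emph{sector} contribution $\asymp k\log q_n/\lambda^{(n-1)}$ (Corollary~\ref{kq orbit}), not from the ground-floor log. The paper packages both regimes into a single ``bad set'' $D_n$ built from $c_{n,k}=\max(a_{n,k},b_{n,k})$ and shows $\sum\la(D_n)<\infty$ via a two-case estimate (Lemma~\ref{summability under rapid growth}); this is precisely where the growth condition $q_n\ge C\exp(n^{2+\gamma})$ is used, and the full Hausdorff dimension then comes from Jarn\'\i k--Besicovitch (Lemma~\ref{lemHaus}) rather than from an ad hoc Cantor construction.
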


(Here $[\mu_\infty, \delta_\p]$ denotes the set $\{\alpha \mu_\infty + (1-\alpha) \delta_\p : \ 0 \leq \alpha \leq 1 \}$ of convex combinations of $\mu_\infty$ and $\delta_\p$.) As before, the sets $\cA$ and $\cB$ are given by explicit Diophantine conditions. Some simulations relative to this theorem can be found in Figure~\ref{simul2}.

\begin{figure}\label{simul2}
\noindent
\includegraphics[width=.45\linewidth]{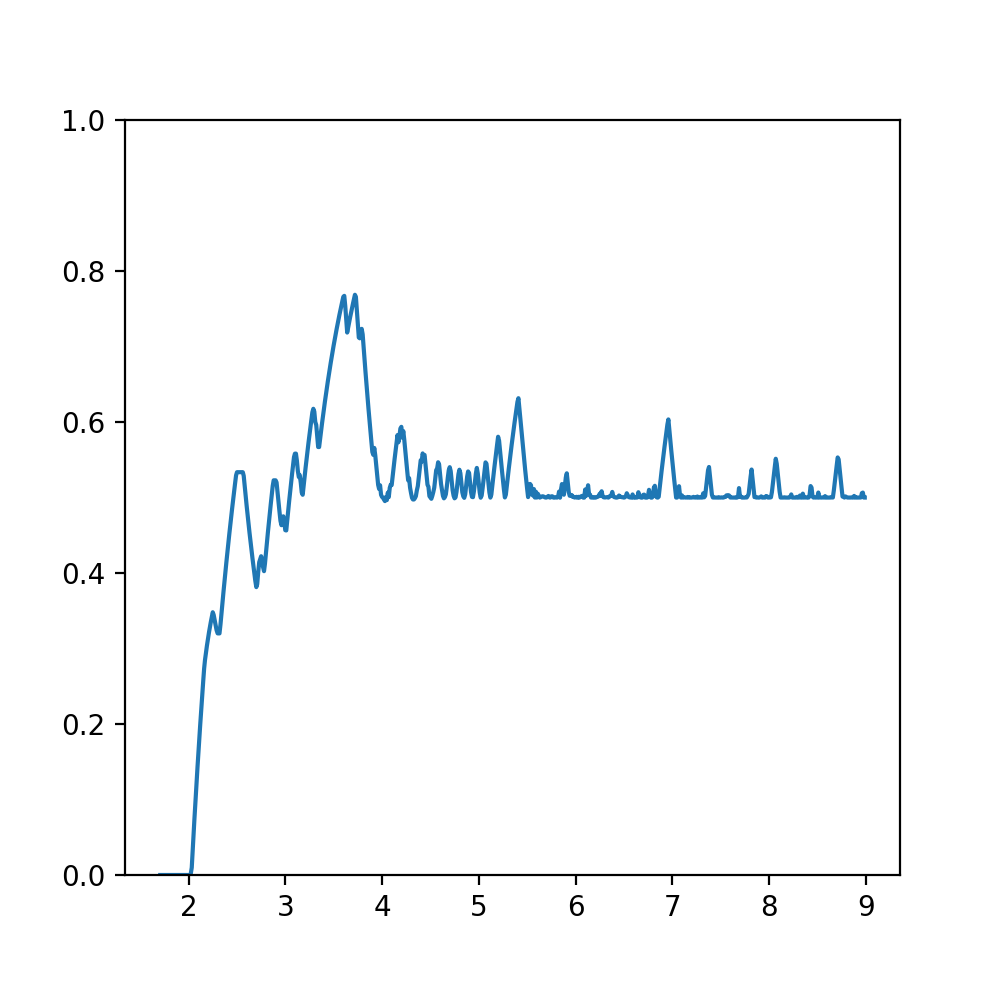}\hfill 
\includegraphics[width=.45\linewidth]{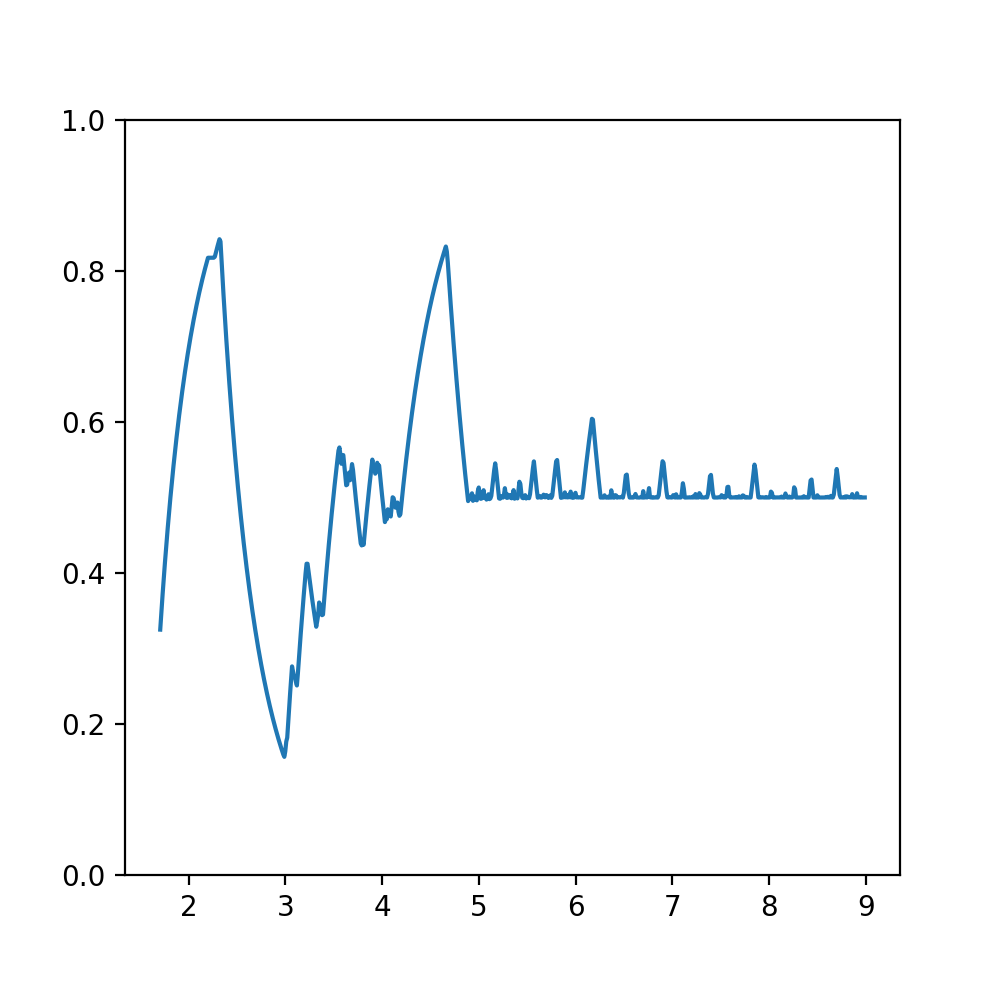}
\caption{Simulations of the proportion of time spent by the flow $\phi^t$ in some fixed small neighbourhood of $\p$ depending on $\log_{10}$ of the time (i.e. the right of the graphics with abscissa 9 corresponds to time $10^9$). More precisely, these are simulations of a single orbit starting at point $M=(0.6319874,\,0.3684641)$ of the map $M\mapsto M+\delta\varphi(M)(1,\alpha)$, with $\delta=0.1572348$ and $\alpha=4/13+2/135+1/26\,714+2/166\,267\,121$ (left) resp. $\alpha=\sqrt{2}-1$ (right), and $\varphi(M) = \min(\|M-\p\|_2, \|M-\q\|_2)$ for $\p=(0.25,0.75)$ and $\q=\phi^{8.357}(\p)$. It is not clear if our theorems' predictions can be observed here or not: as the $\alpha$ for the left graphic is ``Liouville-like'' (at least for the times considered in the simulations) and the right one is Diophantine (it is of bounded type), from Theorems \ref{PropConv} and \ref{PropDivSum}, the left graphic should eventually oscillate between two different values and the right one should converge to $1/2$.}
\end{figure}

Let us say a few words about the global strategy for proving these theorems. By considering a Poincar{\'e} section, we reduce the study to the one of Birkhoff sums $S_n(x)$ of points of $\T$ under the rotation $R_\alpha$ for the observable $\|x\|^{-1} = d(x,\Z)^{-1}$ (Proposition~\ref{criterium1}). More precisely, 
\begin{itemize}
\item If for almost any $x$, one has $\|R_\alpha^n(x)\|^{-1} = o(S_n(x))$, then the system has a unique physical measure;
\item If for almost any $x$, one has $\|R_\alpha^n(x)\|^{-1} \neq o( S_n(x))$, then the system has an historic behaviour.
\end{itemize}
Roughly speaking, one wants to decide whether the orbit of most of points $x$ eventually come very close to 0 or not (close enough to kill all the previous contributions made by $\|\cdot\|^{-1}$ to the Birkhoff sums).

\subsection*{Irrational flows with more than two stopping points}

Let us say a few words about the case of more than 2 stopping points by pointing out some direct consequences of our theorems in the case of three stopping points $\p,\q$ and $\mathbf{r}$. 

By a trivial generalization of Proposition~\ref{criterium1} to the case of more than two stopping points, if a flow with angle $\alpha$ with stopping points $\p$ and $\q$ has a physical measure, and a flow with angle $\alpha$ with stopping points $\q$ and $\mathbf{r}$ has a physical measure, then a flow with angle $\alpha$ with stopping points $\p,\q$ and $\mathbf{r}$ also has a physical measure. This gives generalizations of Theorems~\ref{simple physical measure}, \ref{PropConv} and \ref{physmeas different orbits}, in particular the set of parameters of flows with $N$ stopping points contains a dense subset made of those with a unique physical measure.

Similarly, if a flow with angle $\alpha$ with stopping points $\p$ and $\q$ has a historic behaviour, and a flow with angle $\alpha$ with stopping points $\q$ and $\mathbf{r}$ has a historic behaviour, then a flow with angle $\alpha$ with stopping points $\p,\q$ and $\mathbf{r}$ also has a historic behaviour. This allows to generalize Theorems~\ref{refined rational distinct orbits}, \ref{generic distinct orbits} and \ref{PropDivSum} to get flows with multiple stopping points and historic behaviour.

However, the generalization of the notion of extreme historic behaviour is unclear. In the case of three stopping points, the set of invariant measures is the simplex spanned by $\delta_\p$, $\delta_\q$ and $\delta_{\mathbf r}$. An in-depth look at the proofs of Theorems~\ref{refined rational distinct orbits} and \ref{generic distinct orbits} would probably lead to the fact that on a full measure set of initial conditions $\x$, the segments $[\delta_\p,\delta_\q]$ and $[\delta_\q,\delta_{\mathbf r}]$ and $[\delta_{\mathbf r},\delta_\p]$ are included in $p\omega(\x)$, but this is only the boundary of the simplex. this leads to the following question.

\begin{question}
Consider an irrational flow with three stopping points with parameters $\alpha,\p,\q,\mathbf r$.
Is $p\omega(\x)$ equal to the whole simplex spanned by $\delta_\p$, $\delta_\q$ and $\delta_{\mathbf r}$ for a.e. $\x$ and a full measure set of parameters $(\alpha,\p,\q,\mathbf r)$? For a generic set of parameters $(\alpha,\p,\q,\mathbf r)$? If not, what is the dimension of $p\omega(\x)$?
\end{question}

Such a result would need much deeper techniques as the ones developed in the present paper, as we would probably have to determine the whole set of accumulation points of $\Theta_k^\beta(x)$ (see \eqref{DefTheta}) instead of just proving that it is large enough for a big set of points $\x$, moreover taking into account the interplay between the points $\p,\q$ and $\mathbf r$.

\begin{remark}
By taking a product of the time $t$ map of the flow flow $\phi^t$ for a small but non-zero $t$ with the Arnold cat map
\[A: (x,y) \mapsto (2x+y, x+y) \mod \mathbb{Z}^2\quad \text{ for } (x,y) \in \TT,\]
we obtain a partially hyperbolic diffeomorphism on $f = \phi^t \times A:\mathbb{T}^4 \to \mathbb{T}^4$. It is straightforward to see that the time $t$ map of an irrational flow with stopping points is topologically mixing, and that the product of two topologically mixing maps is itself topologically mixing. Thus any $f$ obtained in this way is topologically mixing. It turns out that maps of this form have rather unusual ergodic properties that are worthwhile pointing out.
 
Suppose, first, that $\phi^t$ has a unique physical measure. Then $f$ has a unique, non-ergodic physical measure. Moreover, the center Lyapunov exponents are zero for \emph{every} $x \in \mathbb{T}^4$ and $f$ is mixing. We do not know of any such example in the literature.

Next suppose that $\phi^t$ has historic behaviour. Then so has $f = \phi^t \times A$. To our knowledge, it is the first topologically mixing example of this kind. A transitive example on $\mathbb{T}^3$ was given by Crovisier, Yang, and Zhang in  \cite{MR4082180}.
\end{remark}

\subsection*{Outline of the paper}

In Section \ref{special flows}, we relate the ergodic behaviour of most of orbits of the reparametrized linear flow satisfying (SH) with some related special flow obtained as a suspension flow over a rotation. This allows us to get asymptotics of return times to a Poincar{\'e} section, in terms of the Hessian determinants at the stopping points.

This interpretation in terms of suspension flow allows us, in Section~\ref{SecInv}, to get estimates on return times in terms of Birkhoff sums for the non-integrable observable\footnote{Similar Birkhoff sums are studied by Sina\u{\i} and Ulcigrai \cite{MR2478478}, but with $x^{-1}$ instead of $\|x\|^{-1}$, which allows the authors to use cancellations between the positive and negative parts of the observable.} $\|\cdot\|^{-1}$. Using these estimates, we get an exact formula relying the set $p\omega(\x)$ of limit measures of $\x\in\T^2$ with the asymptotic behaviour of some quantity defined from Birkhoff sums (Proposition~\ref{criterium1}). Using some symmetry properties of this quantity, we then get criteria for historic behaviour/physical measure (Subsection~\ref{SubsecSym}).

Section~\ref{PartRotations} is devoted to some reminders about properties of circle rotations, their renormalizations, linked with the continued fraction of the angle.

Section~\ref{SecTech} is quite technical: we get some crucial bounds (from above and below) for Birkhoff sums for the observable $\|\cdot\|^{-1}$, using in particular comparison of the orbits with a rational rotation.

This section is used in the four last sections of the paper, each one of which being aimed to prove a part of Theorems~\ref{TheoIntro1}, \ref{rational distinct orbits} and \ref{TheoIntro2}. Note that the last section uses the proof strategy of the previous one (Section~\ref{SecPhysSame}).




\section{Special flows} \label{special flows}

\subsection{Definition and notations}\label{SecDefFlow}

Fix $\varphi$ and $X$ as described in the introduction, i.e. $X = \varphi X_0$ where $X_0 = (1,\alpha)$ is a constant vector field on $\TT$ and $\varphi: \TT \to \R$ is a non-negative smooth function that vanishes at exactly two points $\p$ and $\q$. Let $\phi_0^t$ and $\phi^t$ be the flows of $X_0$ and $X$ respectively. Fix some $x_0 \in \T$ such that $\p, \q \notin \Sigma \stackrel{\text{def.}}{=} \{x_0\} \times \T$ (with $\T = \R/\Z$). Then $\Sigma$ is a transverse section of the flow $\phi^t$. Let $(x_0,p_0)$ be the unique point on $\Sigma$ such that $\phi_0^t(x_0, p_0) = \p$ for some $t \in (0, 1)$; define $q_0$ analogously. In other words, $(x_0,p_0)$ is the only point of $\Sigma$ satisfying $\phi^t(x_0,p_0) \notin \Sigma$ for all $t>0$ and $\lim_{t \to \infty} \phi^t(x_0,p_0) = \p$. Likewise for $(x_0,q_0)$.

We say that $\p$ and $\q$ \emph{lie on the same orbit} if they belong to the same orbit of the flow $\phi_0^t$. This is equivalent to say that there is some point $\x$ such that $\lim_{t \to - \infty} \phi^t(\x) = \p$ and $\lim_{t \to \infty} \phi^t(\x) = \q$ or vice versa. Note that $\p$ and $\q$ lie on the same orbit if and only if $p_0$ and $q_0$ lie on the same orbit under the rotation
\begin{align*}
R_\alpha: \T & \to \T \\
            y & \mapsto y +\alpha \mod 1.
\end{align*} 

Note that we can (and do) always choose $x_0$ so that $p_0 \neq q_0$. Indeed if $\p$ and $\q$ are not on the same orbit, this is always the case. If $\p$ and $\q$ are on the same orbit, it suffices to choose $x_0$ so that $\Sigma$ intersects the orbit that joins $\p$ and $\q$ (indeed, $p_0=q_0$ implies that $\Sigma$ does not intersect the orbit between $\p$ and $\q$).

Let $Y = \T \setminus \{p_0, q_0 \}$.
We define a return time map $T: Y \to \R$ (also called \emph{roof function} in the sequel) by 
\begin{equation}\label{EqDefTau}
T(y) = \min\big\{t>0: \phi^t(x_0,y) \in \Sigma \big\}.
\end{equation}
Let 
\[D_0 = \big\{(u,t) \in Y\times\R: 0 \leq t \leq T(u)\big\} \]
and
\[D = D_0/\sim,\qquad \text{where}\ (u,T(u))\sim(R_\alpha(u),0).\]
This allows to define a map $\Xi:D \to \TT$ by 
\[\Xi(u,t) = \phi^t(x_0, u).\]

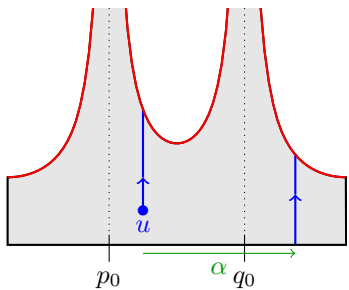
\begin{figure}
\begin{tikzpicture}[scale=4.5]
\clip(-.2,-.2) rectangle (1,.7);
\fill[color=black, opacity=.1] (0,0) -- plot[domain=0:1, scale=1, samples=80] (\x,{0.03*max(1/abs(\x-.3),1/abs(\x-1.3)) + 0.03*max(1/abs(\x-.7),1/abs(\x+.3))}) -- (1,0) -- cycle;
\draw[color=blue, thick, ->] (.4,.1) node{$\bullet$} node[below]{$u$} -- (.4,.2);
\draw[color=blue, thick] (.4,.2) -- (.4,.4);
\draw[color=blue, thick, ->] (.85,0) -- (.85,.15);
\draw[color=blue, thick] (.85,.15) -- (.85,.27);
\draw[thick] (0,0) -- plot[domain=0:1, scale=1, samples=80] (\x,{0.03*max(1/abs(\x-.3),1/abs(\x-1.3)) + 0.03*max(1/abs(\x-.7),1/abs(\x+.3))}) -- (1,0) -- cycle;
\draw[color=red, thick] plot[domain=0:1, scale=1, samples=80] (\x,{0.03*max(1/abs(\x-.3),1/abs(\x-1.3)) + 0.03*max(1/abs(\x-.7),1/abs(\x+.3))});
\draw[dotted] (.3,1) -- (.3,.03);
\draw (.3,.03) -- (.3,-.05) node[below]{$p_0$};
\draw[dotted] (.7,1) -- (.7,.03);
\draw (.7,.03) -- (.7,-.05) node[below]{$q_0$};

\draw[color=green!60!black,->] (.4,-.025) --node[midway, below]{$\alpha$} (.85,-.025);

\end{tikzpicture}
\caption{The flow $\Xi$: given a point $u\in D$, the flow $\Xi(u,t)$ (in blue) is the unitary vertical flow in $D$, given by the identifications $(v,T(v))\sim (R_\alpha(v),0)$.}
\end{figure}

Note that the image of $\Xi$ is $\TT$ minus two line segments going from the points $\p$ and $\q$ to $\Sigma$. The map $\Xi$ induces a family of measurable maps $\Psi^t = \Xi^{-1} \phi^t \Xi$ on $D$. The family $\Psi^t$ is called a \emph{special flow} on $D$ with \emph{base} $R_\alpha$ and \emph{roof function} $T$ (see \cite{Special}). Note that flow lines are vertical in restriction to the fundamental domain $D_0$.

We shall see in Section \ref{assymptotic} that, due to the quadratic order of the speed function $\varphi$, the roof function $T$ has two cusps of order $\| x-p_0\|^{-1}$ and $\| x-q_0 \|^{-1}$  in a neighbourhood of the two points $p_0$ and $q_0$ where it is undefined (Proposition~\ref{return times}). In particular, the roof function is not integrable.

\subsection{The asymptotic behaviour of return times} \label{assymptotic}

In this section we estimate the return times $T$ of points to the transverse section $\Sigma$ for the vector field $X$. Our goal is to prove that it only depends on the local behaviour of $\varphi$ around the singularities.

We start by estimating a quantity $\kappa$ similar to $T$, for a local quadratic model and a horizontal flow.

\begin{lemma}\label{bdd diff lemma}
Consider a horizontal vector field $X=(\varphi(x,y),0)$ in $\RR$, where $\varphi$ is a positive definite quadratic form $\varphi(x,y) = a x^2 + 2 b x y + c y^2$ with determinant $d= ac-b^2$. Denote by $\phi^t$ the flow associated to $X$. Fix some $\delta>0$ and, for $y \neq 0$, let $\kappa(y)$ be defined by 
\[\phi^{\kappa(y)}(-\delta, y) = (\delta, y).\]
Then 
\begin{equation}\label{bounded difference}
\kappa(y) = \frac{\pi}{\sqrt{d} |y|} + \gamma(y)
\end{equation}
for some bounded function $\gamma$.
\end{lemma}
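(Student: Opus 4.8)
The plan is to integrate the ODE $\dot x = \varphi(x,y)$ explicitly for fixed $y \neq 0$, extract the leading $\pi/(\sqrt d\,|y|)$ term coming from the ``bulk'' of the integral, and show the remainder is uniformly bounded. Writing $\kappa(y) = \int_{-\delta}^{\delta} \frac{dx}{\varphi(x,y)} = \int_{-\delta}^{\delta} \frac{dx}{a x^2 + 2 b x y + c y^2}$, I would complete the square: with the substitution $u = x + \frac{b}{a} y$ one gets $a x^2 + 2 b x y + c y^2 = a u^2 + \frac{d}{a} y^2$, where $d = ac - b^2 > 0$ by positive definiteness (and $a > 0$). Hence
\[
\kappa(y) = \int_{-\delta + \frac{b}{a}y}^{\delta + \frac{b}{a}y} \frac{du}{a u^2 + \frac{d}{a} y^2} = \frac{1}{\sqrt{d}\,|y|}\left[ \arctan\!\left(\frac{a u}{\sqrt d\,|y|}\right)\right]_{u = -\delta + \frac{b}{a}y}^{u = \delta + \frac{b}{a}y}.
\]

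Next I would analyze this closed form. The bracket equals $\arctan\!\big(\tfrac{a(\delta + \frac ba y)}{\sqrt d\,|y|}\big) - \arctan\!\big(\tfrac{a(-\delta + \frac ba y)}{\sqrt d\,|y|}\big)$. As $|y| \to 0$ the two arguments tend to $+\infty$ and $-\infty$ respectively (the sign being controlled by $\pm a\delta$ dominating the $\frac ba y$ term), so the bracket tends to $\frac{\pi}{2} - (-\frac{\pi}{2}) = \pi$; thus $\kappa(y) \sim \frac{\pi}{\sqrt d\,|y|}$. To get the precise statement I would set $\gamma(y) := \kappa(y) - \frac{\pi}{\sqrt d\,|y|} = \frac{1}{\sqrt d\,|y|}\big(B(y) - \pi\big)$ where $B(y)$ is the bracket, and show $\frac{B(y) - \pi}{|y|}$ stays bounded as $y \to 0$ (it is clearly bounded away from $y = 0$, and for $|y| \geq$ some constant $\kappa$ is anyway bounded). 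Using $\arctan(t) = \frac\pi2 - \frac1t + O(t^{-3})$ for $t \to +\infty$ and the odd analogue at $-\infty$, the two $\frac\pi2$'s produce exactly $\pi$ and the correction terms are $O(|y|)$: indeed $B(y) - \pi = -\frac{\sqrt d\,|y|}{a(\delta + \frac ba y)} - \frac{\sqrt d\,|y|}{a(\delta - \frac ba y)} + O(|y|^3) = O(|y|)$, so $\gamma(y) = O(1)$, uniformly. One should also remark that $\kappa$ is continuous in $y$ on each of $\{y>0\}$, $\{y<0\}$, so boundedness on a neighbourhood of $0$ plus the elementary bound away from $0$ gives global boundedness of $\gamma$.

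The only mild subtlety — and the place I would be most careful — is controlling the signs of the arctangent arguments near $y = 0$: one must check that for $|y|$ small the term $a\delta$ genuinely dominates $|b y / 1|$ so that the upper argument is large positive and the lower one is large negative (otherwise the limit of $B(y)$ need not be $\pi$). This is immediate since $\delta$ is fixed and we only need $|y| < a\delta / |b|$ (if $b \neq 0$; if $b = 0$ there is nothing to check). I do not expect any real obstacle: the computation is exact and elementary, and the lemma is essentially a statement about the integral of a positive definite quadratic form. I would present the substitution, the closed form, the two asymptotic expansions of $\arctan$ at $\pm\infty$, and conclude; the writing should take only a few lines.
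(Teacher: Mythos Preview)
Your proposal is correct and follows essentially the same approach as the paper: both write $\kappa(y)=\int_{-\delta}^{\delta}\frac{dx}{\varphi(x,y)}$, complete the square to evaluate via $\arctan$, and extract the leading $\pi/(\sqrt{d}\,|y|)$ term. The only cosmetic difference is that the paper first shifts the limits of integration to make them symmetric and then uses the exact identity $\arctan(t)+\arctan(1/t)=\pi/2$ to isolate a manifestly bounded remainder $-\frac{2}{y\sqrt d}\arctan\!\big(\frac{y\sqrt d}{\delta}\big)$, whereas you keep the asymmetric endpoints and use the asymptotic expansion of $\arctan$ at $\pm\infty$; both routes yield the same conclusion.
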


\begin{proof}
The non-boundedness of $\kappa$ only occurs in the neighbourhood of $0$, so that one can reduce its study to that on a bounded set of $\R$.

Using the method of separation of variables, we see that 
\[\kappa(y) = \int_{-\delta}^{\delta} \frac{dx}{\varphi(x,y)}.\]
Note that $\kappa(y)$ differs form
\[\kappa_0(y) = \int_{(-\delta-by)/a}^{(\delta-by)/a} \frac{dx}{\varphi(x,y)} \]
by a bounded function, so it suffices to prove (\ref{bounded difference}) with $\kappa_0$ in place of $\kappa$. Moreover, by symmetry, it suffices to consider the case where $y>0$. A direct calculation gives
\begin{align*}
\kappa_0(y) & = \int_{(-\delta-by)/a}^{(\delta-by)/a}  \frac{dx}{a x^2 + 2bxy+cy^2} = \left[ \frac{1}{y \sqrt{d}} \arctan \left(\frac{ax+by}{y\sqrt{d}} \right) \right]_{(-\delta-by)/a}^{(\delta-by)/a} \\
& = \frac{2}{y \sqrt{d}} \arctan\left(\frac{\delta}{y \sqrt{d}}\right). 
\end{align*}
Recall that $\arctan(x) + \arctan(\frac{1}{x}) = \frac{\pi}{2}$ for every $x>0$. Therefore
\[\kappa_0(y) = \frac{\pi}{y \sqrt{d}} - \frac{2}{y \sqrt{d}} \arctan \left( \frac{y \sqrt{d}}{\delta} \right) ,\]
and the proof follows readily since the last term is bounded in $y$.
\end{proof}

\begin{lemma}\label{intble diff lemma}
Consider a horizontal vector field $X=(\varphi(x,y),0)$ in $Q=(-1,1)^2$, where $\varphi:Q \to \R$ is a non-negative $C^3$ function vanishing at $(0,0)$ and strictly positive elsewhere. Suppose that the Hessian $D^2 \varphi(0,0)$ is positive definite and write $d = \det(D^2 \varphi(0,0))$. Denote by $\phi^t$ the flow associated to $X$. Fix some $0< \delta< 1$ and, for $y \neq 0$, let $\kappa(y)$ be defined by 
\[\phi^{\kappa(y)}(-\delta, y) = (\delta, y).\]
Then 
\begin{equation} \label{intble difference}
\kappa(y) = \frac{\pi}{|y| \sqrt{d}} + \sigma(y)
\end{equation}
for some integrable function $\sigma$.
\end{lemma}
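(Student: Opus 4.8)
The plan is to reduce the $C^3$ case to the quadratic model of Lemma~\ref{bdd diff lemma} by Taylor expansion, and to show that the error one makes is not merely bounded but \emph{integrable} near $y=0$. Since $\varphi$ is $C^3$, vanishes together with its gradient at the origin, and has positive definite Hessian there, we may write $\varphi(x,y) = \psi(x,y) + \rho(x,y)$, where $\psi(x,y) = \frac12\big(D^2\varphi(0,0)\big)\big((x,y),(x,y)\big) = ax^2 + 2bxy + cy^2$ is the quadratic part (with $ac - b^2 = d/? $; more precisely $\det(D^2\varphi(0,0)) = (2a)(2c) - (2b)^2 = 4d$ if we set $\psi = ax^2+2bxy+cy^2$, so one keeps careful track of this constant, or one simply absorbs it, as in the statement, into the final $\sqrt d$), and the remainder satisfies $|\rho(x,y)| = O\big((x^2+y^2)^{3/2}\big)$ uniformly on $Q$ by Taylor's theorem with the $C^3$ hypothesis. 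As in the proof of Lemma~\ref{bdd diff lemma}, separation of variables gives
\[
\kappa(y) = \int_{-\delta}^{\delta} \frac{dx}{\varphi(x,y)},
\]
and the non-integrability/blow-up of $\kappa$ only comes from a neighbourhood of $y=0$, so it suffices to control $\kappa(y) - \kappa_\psi(y)$ for $|y|$ small, where $\kappa_\psi(y) = \int_{-\delta}^\delta dx/\psi(x,y)$ is the quantity already analyzed (up to a bounded term) in the previous lemma, and equals $\pi/(|y|\sqrt d) + (\text{bounded})$.

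The main step is the estimate of the difference
\[
\kappa(y) - \kappa_\psi(y) = \int_{-\delta}^{\delta} \left( \frac{1}{\psi(x,y) + \rho(x,y)} - \frac{1}{\psi(x,y)} \right) dx = - \int_{-\delta}^{\delta} \frac{\rho(x,y)}{\psi(x,y)\,\big(\psi(x,y)+\rho(x,y)\big)}\, dx.
\]
Here I would use two ingredients. First, positive definiteness of $\psi$ gives a two-sided bound $c_1(x^2+y^2) \le \psi(x,y) \le c_2(x^2+y^2)$ on $Q$; and since $\varphi > 0$ away from the origin and $\varphi$ is $C^3$ with positive definite Hessian, a standard compactness/Morse-type argument gives $\varphi(x,y) = \psi(x,y) + \rho(x,y) \ge c_3(x^2+y^2)$ on a neighbourhood of the origin as well (shrinking $\delta$ if necessary; away from the origin $\varphi$ is bounded below by a positive constant and contributes only a bounded term to $\kappa$). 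Second, $|\rho(x,y)| \le C(x^2+y^2)^{3/2}$. Combining, the integrand is bounded in absolute value by
\[
\frac{C (x^2+y^2)^{3/2}}{c_1 c_3 (x^2+y^2)^2} = \frac{C'}{(x^2+y^2)^{1/2}} \le \frac{C'}{|x|},
\]
which is \emph{not} integrable in $x$ near $x = 0$ — so a cruder bound is not enough, and this is precisely where the argument needs care. The fix is to integrate in $x$ keeping $y$ fixed: $\int_{-\delta}^{\delta} (x^2+y^2)^{-1/2}\,dx = 2\,\mathrm{arcsinh}(\delta/|y|) = O\big(\log(1/|y|)\big)$ as $y\to 0$. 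Hence $|\kappa(y) - \kappa_\psi(y)| = O\big(\log(1/|y|)\big)$, and $y \mapsto \log(1/|y|)$ is integrable on $(-\delta,\delta)$. Therefore $\sigma(y) := \kappa(y) - \pi/(|y|\sqrt d)$ differs from the integrable function $\kappa(y) - \kappa_\psi(y)$ by the bounded (hence integrable) error coming from Lemma~\ref{bdd diff lemma}, plus the bounded contribution of the region away from the origin, so $\sigma$ is integrable, as claimed.

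The only real obstacle is the one just described: the naive pointwise bound on the integrand is genuinely non-integrable in $x$, so one must exploit the gain obtained by first integrating in $x$ at fixed $y$ (turning a $|x|^{-1}$ singularity into a logarithm in $y$) rather than bounding $\sigma(y)$ pointwise. A secondary, purely bookkeeping point is to handle the lower bound $\varphi \gtrsim x^2 + y^2$ near the origin correctly — this is where the positive definiteness of the Hessian (not just nonnegativity of $\varphi$) is used, via a Morse-lemma-type or second-order Taylor argument — and to keep the normalizing constant relating $d = \det D^2\varphi(0,0)$ to the coefficients of $\psi$ consistent with the statement.
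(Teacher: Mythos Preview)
Your argument is essentially identical to the paper's: both Taylor-expand $\varphi$, compare $\kappa$ with the crossing time $\kappa_\psi$ for the pure quadratic part (handled by Lemma~\ref{bdd diff lemma}), bound the difference integrand by a constant times $(x^2+y^2)^{-1/2}$ using positive definiteness of the Hessian and the cubic remainder estimate, and then integrate in $x$ at fixed $y$ to obtain an $O(\log(1/|y|))$ bound on $\sigma(y)$, which is integrable. Your tangent about the ``cruder bound'' $C'/|x|$ is unnecessary (the bound $C'/(x^2+y^2)^{1/2}$ is already fine for $y\neq 0$), and the determinant normalization is indeed just bookkeeping; otherwise there is nothing to add.
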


\begin{proof}
Let $A = D^2 \varphi (0,0)$ and let $0< \lambda_1 \leq \lambda_2$ be its eigenvalues. To simplify notation we write $(x,y)$ as $\x$. Recall that 
\[ \lambda_1 \| \x \|^2 \leq \x^T A \x \leq \lambda_2 \| \x \|^2 \]
for every $\x \in \RR$.

Since $\varphi$ is $C^3$ we can write $\varphi(\x) = \x^T A \x + R(\x)$, where 
\begin{equation}\label{taylor with rest}
|R(\x)| \leq K \| \x \|^3
\end{equation} for some $K>0$ in a neighbourhood of $\zero$.
Just like in the proof of Lemma~\ref{bdd diff lemma}, we have
\[ T(y) = \int_{-\delta}^{\delta} \frac{dx}{\varphi(\x)} = \int_{\delta}^{\delta} \frac{dx}{\x^T A \x + R(\x)}.\]
We know from Lemma~\ref{bdd diff lemma} that 
\[\int_{-\delta}^{\delta} \frac{dx}{\x^T A \x} \]
differs from $\pi/(|y| \sqrt{d})$ by a bounded function. Therefore, in order to prove Lemma~\ref{intble diff lemma}, it suffices to show that 
\[\sigma(y) = \int_{-\delta}^{\delta} \frac{1}{\x^T A \x}-\frac{1}{\x^T A \x + R(\x)} dx = \int_{-\delta}^\delta \frac{R(\x) dx}{\x^T A \x (\x^T A \x + R(\x))}\]
 is integrable. Note that $\sigma$ is continuous away from $y=0$, so it suffices to show that $\int_{-\delta}^{\delta} |\sigma(y)| dy < \infty$. Note also that changing the value of $\delta$ produces a change in $\sigma$ by a bounded amount. Thus, upon possibly reducing $\delta$, we can (and do) suppose that $K \| \x \|^3 \leq \frac{\lambda_1}{2} \| \x \|^2$  in $(-\delta, \delta)^2$. Consequently, for every $y \in (-\delta, \delta) \setminus \{0\}$, we have
\begin{align*}
|\sigma(y)| & \leq \int_{-\delta}^{\delta} \frac{|R(\x)| dx}{\x^T A \x (\x^T A \x - |R(\x)|)} \\
& \leq  \int_{-\delta}^{\delta} \frac{K \| \x \|^3 dx}{\lambda_1 \| \x \|^2 (\lambda_1 \|\x \|^2 - K \| \x \|^3)}  \\ 
& \leq \frac{2K}{\lambda_1^2}  \int_{\delta}^{\delta} \frac{dx}{\| \x \|}\\
& = \frac{2K}{\lambda_1^2} \int_{-\delta}^{\delta} \frac{dx}{\sqrt{x^2+y^2}} \\
& = \frac{4K}{\lambda_1^2} \left( \log \big( \delta+ \sqrt{\delta^2+y^2}\big)-\log |y| \right). 
\end{align*}
In particular $\int_{-\delta}^\delta |\sigma(y)| \ dy < \infty$. 
\end{proof}

Lemma \ref{intble diff lemma} tells us roughly how much a horizontal flow is slowed down near a stopping point at the origin. In order to apply it to the stopping points $\p, \q$ of an irrational flow of the torus, we need  to perform a change of coordinates. As we shall see, this change of coordinates is equivalent to changing the Hessian of $\varphi$ in a way that does not affect its determinant. Let's go through the details.

Suppose that we have fixed the angle $\alpha$. For small $\delta$, let $Q_\delta = (-\delta, \delta)^2 \subset \RR$ and consider the affine charts $\xi_{\p}, \xi_{\q} : Q_\delta \to \TT$ given by $\xi_{\p}(\x) = I(P\x) + \p$ and $\xi_{\q}= I(P \x) + \q$ where 
\[ P= \left( \begin{matrix}
1 & 0 \\
\alpha & 1
\end{matrix} \right),\] 
and $I: \RR \to \TT$ is the canonical projection. Let 
\[\B_\delta(\p) = \xi_\p (Q_\delta)\qquad \text{and}\qquad \B_\delta(\q) = \xi_\q (Q_\delta).\]
We will refer to $\B_\delta(\p)$ and $\B_\delta(\q)$ as \emph{flow boxes} around $\p$ and $\q$. 
 
For any $x\in\T = \R/\Z$, we set $\tilde x$ a lift of $x$ to $\R$ and define
\[ \|x\| = \min_{n\in\Z} |\tilde x-n|.\]

Let also
\[S_\p : (-\delta, \delta) \to (0,\infty]\]
be the time it takes for the flow to cross $\B_\delta(\p)$, defined by
\[S_\p(y) = \min \{ t>0: \phi^t(\p - \delta (1, \alpha)+(0,y)) \notin \B_\delta(\p)\}.\]

\begin{lemma} \label{time in a box}
Let $X = \varphi X_0$ be a reparameterized linear flow satisfying (SH) with a stopping point at $\p$ and $0 < \delta < 1$ small enough so that $X$ has no other stopping point in $\B_\delta(\p)$. 
Then $y\mapsto S_\p(y) - \frac{\pi}{\sqrt{d_\q} | y |}$ is integrable.
\end{lemma}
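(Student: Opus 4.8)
The statement concerns the time $S_\p(y)$ the flow spends crossing the flow box $\B_\delta(\p)$, and it asserts this equals $\frac{\pi}{\sqrt{d_\q}|y|}$ up to an integrable error — but wait: the determinant appearing is $d_\q$, not $d_\p$, which must be a typo for $d_\p$ (the Hessian determinant at the stopping point $\p$ that we are crossing). I will prove the statement with $d_\p$ in place of $d_\q$. The plan is to reduce $S_\p$ to the quantity $\kappa$ of Lemma~\ref{intble diff lemma} via the affine chart $\xi_\p$, and then control how the chart distorts things.

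First I would pull the flow back through the chart $\xi_\p(\x) = I(P\x)+\p$. In these coordinates the vector field $X = \varphi X_0$ becomes $\tilde X = \tilde\varphi \cdot (P^{-1}X_0)$, where $\tilde\varphi = \varphi\circ\xi_\p$. The point of the matrix $P = \begin{pmatrix}1&0\\\alpha&1\end{pmatrix}$ is precisely that $P^{-1}(1,\alpha)^T = (1,0)^T$, so in the chart the flow direction is exactly horizontal: $\tilde X(\x) = (\tilde\varphi(\x),0)$. The crossing time $S_\p(y)$ is then the time for this horizontal flow to go from $\x = (-\delta,y)$ to $\x=(\delta,y)$, i.e. $S_\p(y) = \int_{-\delta}^{\delta}\frac{dx}{\tilde\varphi(x,y)}$, exactly the quantity $\kappa(y)$ of Lemma~\ref{intble diff lemma} applied to $\tilde\varphi$ on $Q=(-1,1)^2$ (after trivially rescaling so that $\delta<1$; the box is $Q_\delta$ but only a neighbourhood of the origin matters, since $\tilde\varphi$ is bounded below away from $0$ and contributes a bounded, hence integrable, amount there).

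Next I would identify the Hessian of $\tilde\varphi$ at $0$. Since $D\varphi(\p)=0$, the chain rule gives $D^2\tilde\varphi(0) = P^T\, D^2\varphi(\p)\, P$ (the affine part of the chart, the translation by $\p$, is irrelevant and $I$ is a local diffeomorphism near $0$). Because $\det P = 1$, we get $\det D^2\tilde\varphi(0) = (\det P)^2 \det D^2\varphi(\p) = d_\p$. Also $D^2\tilde\varphi(0)$ is positive definite, being congruent to a positive definite matrix. Moreover $\tilde\varphi$ is $C^3$ and vanishes only at $0$ inside $Q_\delta$ by the choice of $\delta$. Hence Lemma~\ref{intble diff lemma} applies verbatim to $\tilde\varphi$ and yields $\kappa(y) = \frac{\pi}{|y|\sqrt{\det D^2\tilde\varphi(0)}} + \sigma(y) = \frac{\pi}{|y|\sqrt{d_\p}} + \sigma(y)$ with $\sigma$ integrable, which is the claim.

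The only genuine point requiring care — the ``main obstacle,'' though it is minor — is the bookkeeping at the edges of the box: one must check that $S_\p(y)$, defined as the first exit time from $\B_\delta(\p)$, really does coincide with the time to travel horizontally from abscissa $-\delta$ to $+\delta$ in the chart, i.e. that the orbit exits through the right face $\{x=\delta\}$ and not through the top or bottom $\{|y|=\delta\}$. But in the chart the flow is purely horizontal, so the second coordinate $y$ is constant along the orbit; hence for $|y|<\delta$ the orbit exits exactly through $\{x=\delta\}$, and for $|y|\ge\delta$ there is nothing to prove (such $y$ form a null set, or $S_\p$ is simply the relevant integral over the shorter horizontal segment and is bounded there). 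One should also note the harmless discrepancy between the box being $Q_\delta = (-\delta,\delta)^2$ rather than $(-1,1)^2$: as observed in the proof of Lemma~\ref{intble diff lemma}, changing $\delta$ alters the integral only by a bounded — hence integrable — amount, so this does not affect integrability of the error term.
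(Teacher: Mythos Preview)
Your proposal is correct and follows essentially the same approach as the paper: pull back through the affine chart $\xi_\p$, observe that $P^{-1}X_0 = e_1$ so the flow becomes horizontal, compute $D^2\tilde\varphi(0) = P^T D^2\varphi(\p) P$ with $\det P = 1$, and then invoke Lemma~\ref{intble diff lemma}. You also correctly spotted the typo ($d_\q$ should be $d_\p$), and your extra bookkeeping about the exit face and the domain $Q_\delta$ versus $(-1,1)^2$ is sound and slightly more careful than the paper itself.
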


\begin{proof}
Let $Q= (-\delta, \delta)^2$ and $\xi_\p : Q \to \B_\delta(\p)$ be the affine chart described in the definition of $\B_\delta(\p)$. 
 Let $\tilde{X} = \xi_{\p}^* X$ be the pull-back of $X$ through $\xi_\p$ and denote by $\tilde{\phi}^t$ its associated flow on $Q$. Note that $\tilde{X}$ is a horizontal vector field and that the time it takes for  a trajectory of $\tilde{\phi}^t$ to cross $Q$ is the same as the time it takes for a trajectory of $\phi^t$ to cross $\B_\delta(\p)$. We claim that $\tilde{X}$ is of the form $\tilde{\varphi} e_1$, where $e_1$ is the unit vector $(1,0)$ and $\tilde{\varphi}$ satisfies
 \begin{equation}\label{equal Hessians}
 \det D^2 \tilde{\varphi} ( \zero) = \det D^2 \varphi(\p) = d_\p.
 \end{equation}
 Once this is shown, the proof follows from Lemma \ref{intble diff lemma}.

 To see why (\ref{equal Hessians}) holds, note that
\[\tilde{X} = \xi_{\p}^* X (\x) = D\xi_\p^{-1}(\xi_{\p}(\x)) X(\xi_\p(\x)) = P^{-1} X (\xi_\p(\x)) = \tilde{\varphi}_{\p} (\x) e_1,\]
where $\tilde{\varphi}_{\p} = \varphi \circ \xi_{\p}$, and therefore
\[D^2 \tilde{\varphi}_{\p}(\zero) = D\xi_{\p}(\zero) ^T D^2 \varphi (\p) D \xi_{\p}(\zero) = P^T D^2 \varphi(\p) P. \]
Since $\det P = 1$ we have $\det D^2 \tilde{\varphi}_{\p}(\zero) = \det D^2 \varphi ( \p) =d_\p$. 
\end{proof}

We are now able to provide a rather nice description of the return time of the flow $X$ to a transverse cross section.

\begin{proposition}\label{return times}
Let $X = \varphi X_0$ be a reparameterized linear flow satisfying (SH) with stopping points at $\p$ and $\q$, and $\Sigma = \{x_0\} \times \T$ a cross section not containing $\p$ nor $\q$. Let 
\begin{align*}
T: \T & \longrightarrow (0,\infty] \\
    y & \longmapsto \min\{ t>0: \phi^t(x_0,y) \in \Sigma \}
\end{align*}
be the return time of the flow to $\Sigma$. Denote by $d_\p$ and $d_\q$ the determinants of the Hessian of $\varphi$ at $\p$ and $\q$ respectively. Then
\[T(y) =  \frac{\pi }{\sqrt{d_\p} \|y-p_0\|} + \frac{\pi }{\sqrt{d_\q} \| y-q_0\|}  + \sigma(y) \]
for some integrable function $\sigma: \T \to \R$.
\end{proposition}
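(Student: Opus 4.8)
The plan is to decompose the return time $T(y)$ into the contributions coming from passages through the two flow boxes $\B_\delta(\p)$ and $\B_\delta(\q)$, plus the time spent in the complement of these boxes, and then show that only the first two contributions are unbounded while the remainder is integrable. More precisely, fix $\delta>0$ small enough that $\B_\delta(\p)$ and $\B_\delta(\q)$ are disjoint and contain no stopping point other than $\p$, respectively $\q$. Away from these two boxes, the vector field $X=\varphi X_0$ is bounded below by a positive constant, so the flow moves at uniformly bounded speed there; hence the total time any trajectory spends outside $\B_\delta(\p)\cup\B_\delta(\q)$ before returning to $\Sigma$ is uniformly bounded (there is a uniform upper bound on the length of such a trajectory segment in $\T^2$, and a uniform lower bound on the speed). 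This already takes care of everything except the time spent inside the two boxes.

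Next I would handle the passages through the boxes. For a typical $y\in\T$, the forward orbit of $(x_0,y)$ under $\phi^t$, followed until its first return to $\Sigma$, is a sub-arc of an orbit of the linear flow $\phi_0^t$; since that orbit winds around the torus with irrational slope, it crosses each flow box at most a bounded number of times (in fact, for $\delta$ small, at most once, because the linear orbit segment between two consecutive hits of $\Sigma$ has bounded length and $\B_\delta(\p)$ has diameter $O(\delta)$; more carefully one gets a uniform bound independent of $y$). Each time the orbit enters $\B_\delta(\p)$ it does so along a horizontal line $\{(\text{something})\}\times\{y'\}$ in the chart $\xi_\p$, at height $y'$ equal, up to a bounded error and up to the affine change of coordinates $P$, to $\|y-p_0\|$ — indeed the point $(x_0,y)$ flows under $\phi_0^t$ to the vertical line through $p_0$ at a point whose distance to $\p$ along that line is exactly $\|y-p_0\|$, and entering the box only shifts this by $O(\delta)$. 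By Lemma~\ref{time in a box}, the time to cross $\B_\delta(\p)$ starting at height $y'$ is $\frac{\pi}{\sqrt{d_\p}\,|y'|}+\sigma_\p(y')$ with $\sigma_\p$ integrable; summing over the boundedly many crossings, and using that $|y'|$ differs from $\|y-p_0\|$ by a bounded factor and a bounded additive constant, gives a contribution $\frac{\pi}{\sqrt{d_\p}\,\|y-p_0\|}+(\text{integrable in }y)$, and likewise for $\q$.

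The main obstacle I expect is the bookkeeping in the second step: one must verify that the number of passages through each box before the first return to $\Sigma$ is uniformly bounded, that the height at which the orbit enters the box is $\|y-p_0\|$ up to controlled error, and — crucially — that composing the change of variables $y\mapsto(\text{entering height})$ with the integrable error function $\sigma_\p$ still yields something integrable in $y$. This last point requires that the map $y\mapsto y'$ be, near $p_0$, a bi-Lipschitz change of variable (which it is: it is essentially affine, coming from $\phi_0^t$ and the chart $\xi_\p$, with nonzero derivative), so that $\int|\sigma_\p(y'(y))|\,dy\asymp\int|\sigma_\p(y')|\,dy'<\infty$. Once these geometric facts are pinned down the statement follows by adding the three pieces: $\frac{\pi}{\sqrt{d_\p}\|y-p_0\|}+\frac{\pi}{\sqrt{d_\q}\|y-q_0\|}+\sigma(y)$ with $\sigma$ integrable on $\T$, completing the proof.
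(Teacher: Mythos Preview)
Your proposal is correct and follows essentially the same approach as the paper: decompose $T(y)$ into time inside $\B_\delta(\p)$, time inside $\B_\delta(\q)$, and time outside (which is bounded), then invoke Lemma~\ref{time in a box} for the first two. Your extra hedging is unnecessary: between two consecutive hits of $\Sigma$ the linear orbit advances its first coordinate by exactly $1$, so for $\delta$ small it crosses each box \emph{exactly once}, and a direct computation in the chart $\xi_\p$ shows the entering height is \emph{exactly} $y-p_0$ (not merely bi-Lipschitz equivalent to it), so no change-of-variable argument for $\sigma_\p$ is needed.
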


Recall that $p_0$ is the unique point in $\T$ such that $\p = (x_0, y) + r (1, \alpha)$ for some $0< r< 1$. Similarly for $q_0$. Note that Proposition \ref{return times} does not require $p_0$, $q_0$ do be distinct.

\begin{proof}
Fix $\delta>0$ so that the sets $\Sigma$, $\B_\delta(\p)$ and $\B_\delta(\q)$ are pairwise disjoint. Apply Lemma \ref{time in a box} and observe that the time spent by the orbit of $(x_0,y)$ outside $\B_\delta(\p) \cup \B_\delta(\q)$ before it  hits $\Sigma$ is a bounded function.
\end{proof}

Proposition \ref{return times} shows that the behaviour of time averages for the flow $\phi^t$ can be thought of as a problem of infinite ergodic theory. As we shall see in the next section, the behaviour of the 
time averages of the flow is determined by the behaviour of the quotient
\[ \sum_{k=0}^{n-1} \frac{1}{\|x+k\alpha - p_0 \|} \Big/ \sum_{k=0}^{n-1} \frac{1}{\|x+k\alpha - q_0 \|}, \]
for typical $x$, as $n \to \infty$.

\section{Invariant measures}\label{SecInv}

\subsection{A $\sigma$-finite invariant measure}

\begin{figure}\label{simul}
\noindent
\includegraphics[width=.33\linewidth]{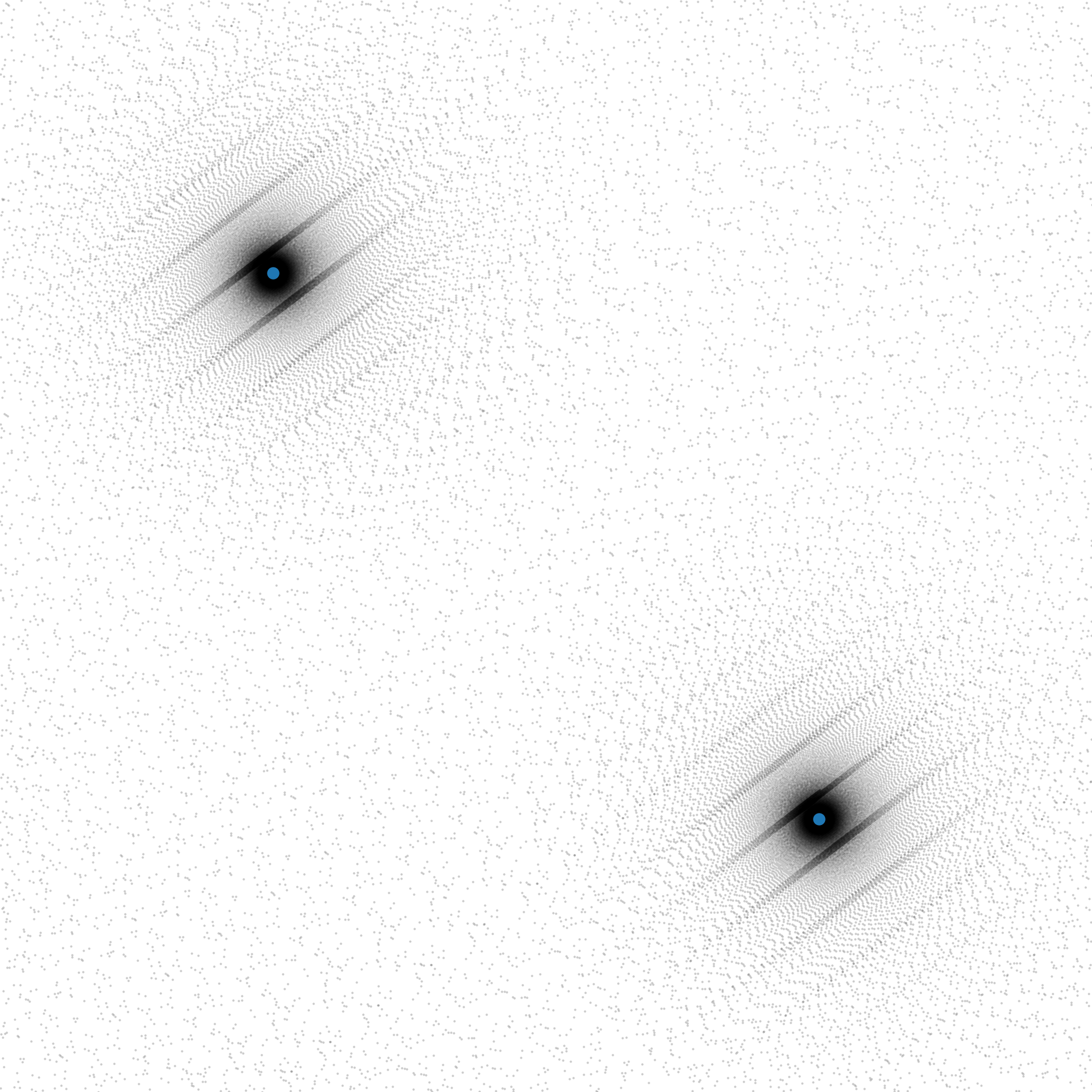}\hfill 
\includegraphics[width=.33\linewidth]{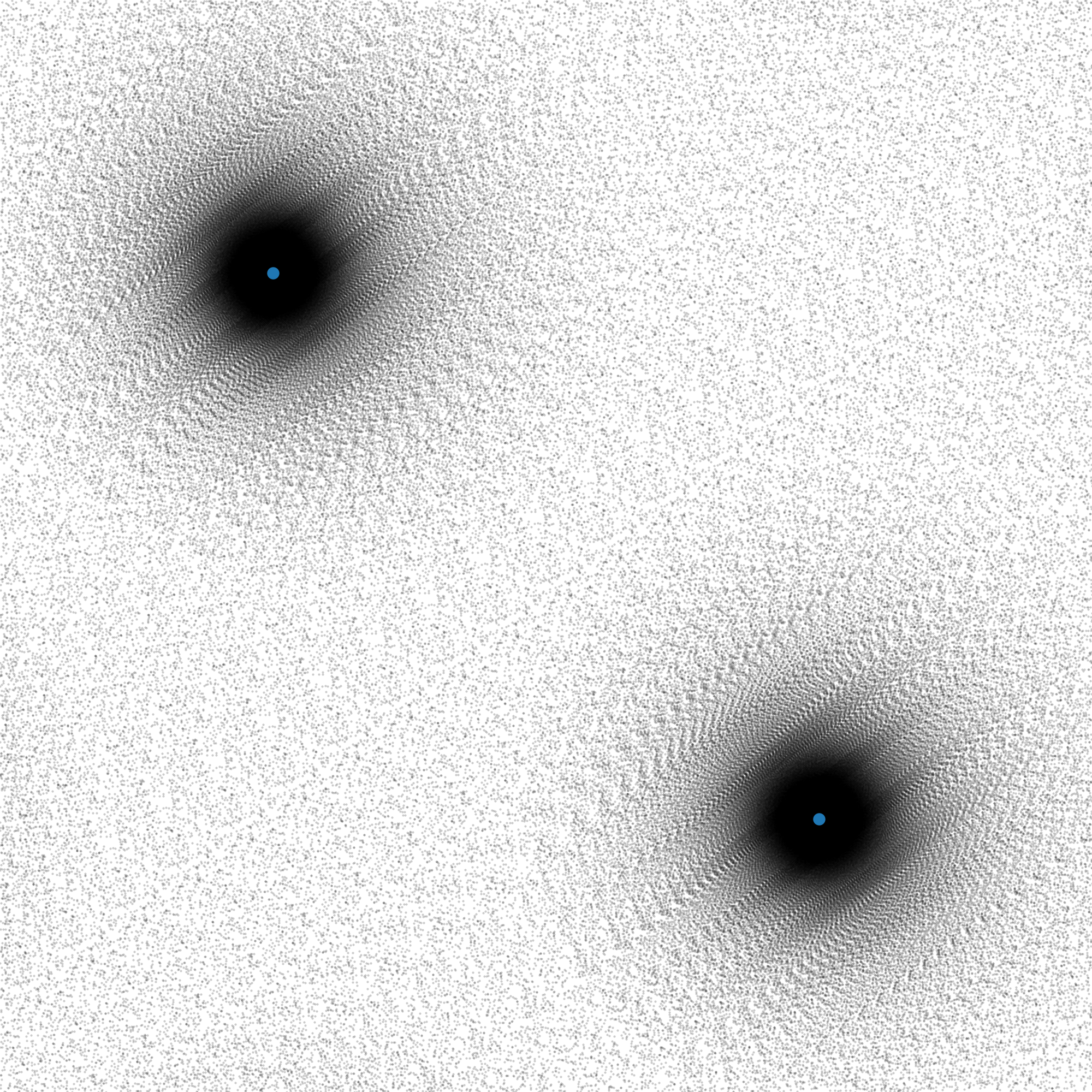}\hfill
\includegraphics[width=.33\linewidth]{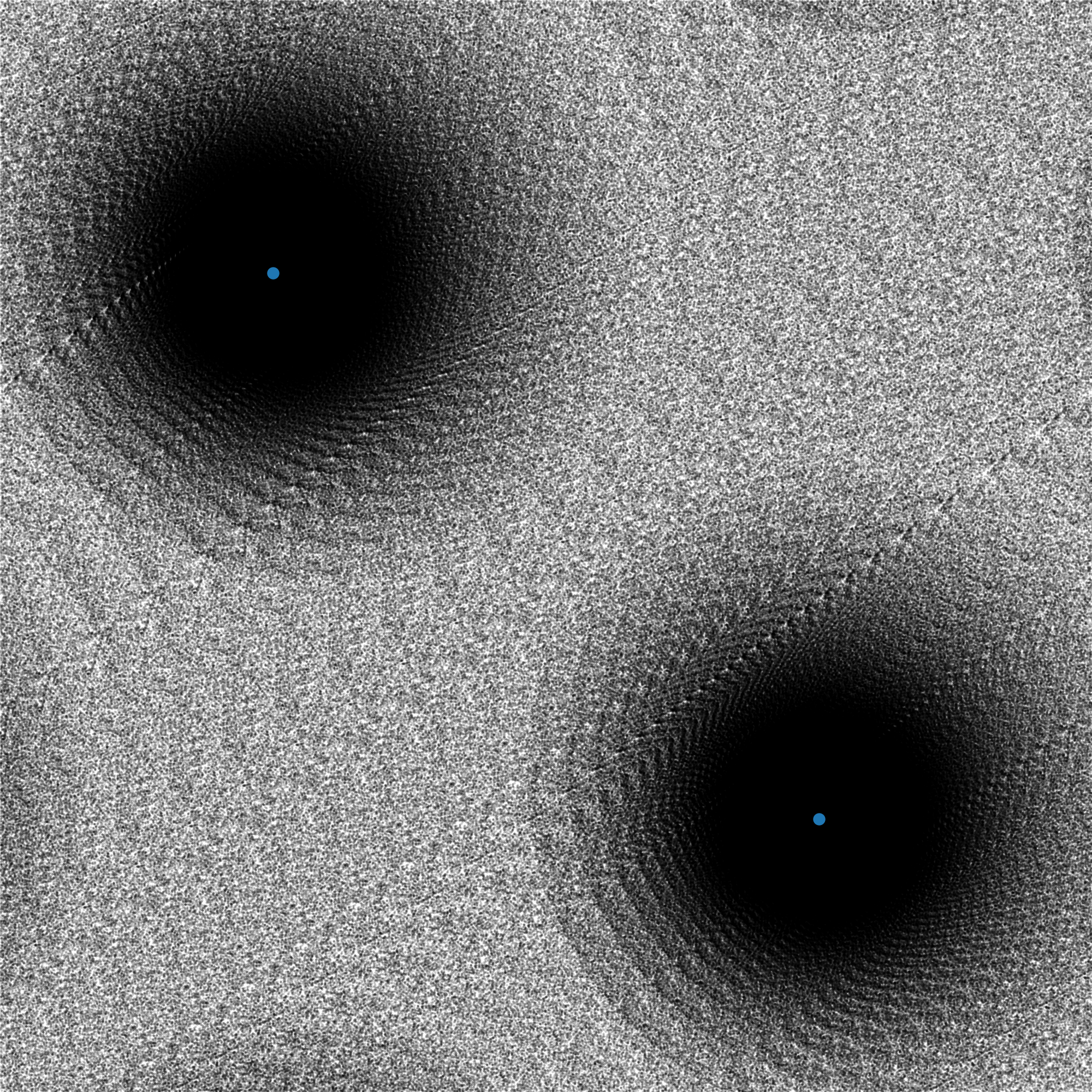} 
\caption{Simulations of the time-1 of the flow $\phi^t$ for simulation times $T=10^5$ (left), $10^6$ (middle) and $10^7$ (right). More precisely, these are simulations of a single orbit starting at point $M=(0.1,\,0.3)$ of the map $M\mapsto M+\delta\varphi(M)(1,\alpha)$, with $\delta=0.1972348$ and $\alpha=0.764831$, and $\varphi(M) = \min(\|M-\p\|_2, \|M-\q\|_2)$ for $\p=(0.25,0.75)$ and $\q=(0.75,0.25)$ (blue dots). Note that some strips can be observed on these simulations (they are more visible for $T=10^5$), which correspond to close returns to the initial conditions of the rotation of angle $\alpha$.}
\end{figure}

Let $\phi^t$ be a reparameterized linear flow satisfying (SH).
Consider the special flow $\Psi^t$ on the domain $D$ as described in Section \ref{special flows}. Let $m$ denote the restriction of the Lebesgue measure on $\RR$ to $D$. 
It is straightforward to check that $m$ is $\Psi^t$-invariant for every $t$. It follows that $\mu = \Xi_* m$ is invariant under $\phi^t$ and absolutely continuous with respect to the Haar measure $\lala$ on $\TT$. However --- and here's the catch --- due to the non-integrability of the roof function (Proposition~\ref{return times}), the measure $\mu$ is not a finite measure (although it is clearly $\sigma$-finite).

\begin{remark}
Instead of looking at $\TT$ we could consider reparameterization of a minimal linear flow on $\T^n$ for $n \geq 3$ with two stopping points. In this case, we would obtain a special flow over $\T^{n-1}$ whose roof function has, again, two asymptotics of order $\|x\|^{-1}$. However, for $n-1 \geq 2$, such a function is integrable. It therefore follows that there is an invariant probability $\mu$ absolutely continuous with respect to the Haar measure on $\T^n$. In particular, there cannot be an extreme historic behaviour in this setting. Unless, of course, the order of the zeros of the speed function at the stopping points is higher than quadratic.
\end{remark}

\subsection{Limit measures}

Let $\M_X$ denote the set of invariant probability measures for $\phi^t$. The following proposition is a special case of a more general result by Saghin-Sun-Vargas \cite[Proposition 1]{MR2670926}.

\begin{proposition}\label{invariant probs}
$\M_X = \{ \alpha \deltap + (1-\alpha) \deltaq: 0 \leq \alpha \leq 1 \}$.
\end{proposition}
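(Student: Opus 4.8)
Rather than quoting Saghin--Sun--Vargas, I would argue directly. The inclusion $\{a\delta_\p+(1-a)\delta_\q:0\le a\le1\}\subseteq\M_X$ is immediate: $\p$ and $\q$ are fixed points of $\phi^t$ (since $X(\p)=\varphi(\p)X_0(\p)=0$, and likewise at $\q$), so $\delta_\p$, $\delta_\q$ and all their convex combinations are invariant. For the reverse inclusion, note that $\{\p\}$ and $\{\q\}$ are invariant sets, hence the restriction of any invariant probability $\mu$ to $\TT\setminus\{\p,\q\}$ is again $\phi^t$-invariant; it therefore suffices to prove that a finite $\phi^t$-invariant measure $\nu$ with $\nu(\{\p,\q\})=0$ must vanish. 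Assume such a $\nu$ exists with $\nu(\TT)>0$, and normalise it to a probability.

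The first step is to locate the support of $\nu$. By Poincar\'e recurrence applied to $\nu$ and to both $\phi^t$ and $\phi^{-t}$, $\nu$-almost every point is forward- and backward-recurrent. A point of $W^s_\p\setminus\{\p\}$ (resp.\ $W^u_\p\setminus\{\p\}$) has forward (resp.\ backward) orbit tending to $\p\ne$ itself, hence is not recurrent in that time direction, and similarly for $\q$. Consequently $\nu$ is carried by $V:=\TT\setminus(W^s_\p\cup W^u_\p\cup W^s_\q\cup W^u_\q)$, which is $\TT$ minus the two $\phi_0^t$-orbits through $\p$ and $\q$: a set of full $\lala$-measure, invariant under both $\phi^t$ and $\phi_0^t$, on which $\phi^t$ is an honest complete time reparametrisation of the linear flow $\phi_0^t$ with speed $\varphi>0$ --- the $\phi^t$- and $\phi_0^t$-orbits of a point of $V$ are the same curve, since a $\phi_0^t$-orbit meeting one of the two removed orbits would have to be one of them.

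The second step transfers $\nu$ to $\phi_0^t$. Put $\tilde\nu:=\varphi\,\nu$, a finite measure on $\TT$ with $\tilde\nu(\TT\setminus V)=0$. For $f\in C^1(\TT)$ and $s\in\R$, set $g(s)=\int_\TT(f\circ\phi_0^s)\,\varphi\,d\nu$. Differentiating under the integral sign (legitimate: $\varphi$ bounded, $\nu$ finite) gives $g'(s)=\int_\TT\varphi\cdot X_0(f\circ\phi_0^s)\,d\nu=\int_\TT X(f\circ\phi_0^s)\,d\nu$, and this vanishes because $\int_\TT Xh\,d\nu=\frac{d}{dt}\big|_{t=0}\int_\TT h\circ\phi^t\,d\nu=0$ for every $h\in C^1(\TT)$ by invariance of $\nu$. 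Hence $g$ is constant, i.e.\ $(\phi_0^s)_*\tilde\nu=\tilde\nu$ for all $s$; since $C^1(\TT)$ is dense in $C(\TT)$, $\tilde\nu$ is $\phi_0^t$-invariant. Because $\alpha$ is irrational, $\phi_0^t$ is uniquely ergodic with invariant probability $\lala$, so $\tilde\nu=c\,\lala$ for some $c\ge0$; equivalently $\nu$ has density $c/\varphi$ with respect to $\lala$ on $\TT\setminus\{\p,\q\}$. But $\varphi$ is $C^3$ with $\varphi(\p)=0=D\varphi(\p)$, so $\varphi(\x)\le M\operatorname{dist}(\x,\p)^2$ near $\p$ and $\int\varphi^{-1}\,d\lala=+\infty$; finiteness of $\nu$ then forces $c=0$, whence $\nu(\TT\setminus\{\p,\q\})=0$ and $\nu=0$ --- a contradiction. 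Therefore every invariant probability $\mu$ is supported on $\{\p,\q\}$, i.e.\ $\mu=\mu(\{\p\})\delta_\p+\mu(\{\q\})\delta_\q$, proving the proposition.

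The main obstacle is the bookkeeping of the first step: one must ensure that $\nu$ charges only the set $V$ where the two flows share their orbits as curves and where $\varphi$ stays bounded below along each individual orbit, so that the clean identity ``$\int Xh\,d\nu=0$ for all $h$'' really yields $\phi_0^t$-invariance of $\varphi\nu$ --- in particular, that the time-change cocycle $t\mapsto\int_0^t\varphi(\phi^s(\x))\,ds$ is a bijection of $\R$ on such orbits. Everything else is standard: Poincar\'e recurrence, unique ergodicity of the irrational linear flow, and the non-integrability of $\varphi^{-1}$, the last being essentially the non-integrability of the roof function in Proposition~\ref{return times}. (Alternatively, one may run the argument through the special flow $\Psi^t$ of Section~\ref{special flows}: its invariant measures correspond to $R_\alpha$-invariant measures on the base with $T$-integrable density, and as $R_\alpha$ is uniquely ergodic with $T\notin L^1(\la)$, $\Psi^t$ has no invariant probability, whence neither does $\phi^t$ away from the two excised orbits.)
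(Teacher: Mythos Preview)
Your argument is correct. The paper does not actually prove this proposition: it simply records it as a special case of Proposition~1 in Saghin--Sun--Vargas \cite{MR2670926}. Your direct proof via the density transfer $\tilde\nu=\varphi\,\nu$ is the standard route and is self-contained; the key identities $\int Xh\,d\nu=0$ and $g'(s)=\int X(f\circ\phi_0^s)\,d\nu$ are justified exactly as you say, and the non-integrability of $\varphi^{-1}$ with respect to $\lala$ near a quadratic zero in dimension~$2$ forces the constant $c$ to vanish.

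One remark: your Step~1 (localising $\nu$ to $V$ via Poincar\'e recurrence) is good intuition but is not actually needed for Step~2 as you wrote it. The computation $g'(s)=0$ uses only that $\nu$ is $\phi^t$-invariant and that $X_0$ is constant (so that $X_0(f\circ\phi_0^s)=(X_0f)\circ\phi_0^s$); it holds for \emph{any} $\phi^t$-invariant probability $\nu$, with no hypothesis on its support. Unique ergodicity then gives $\varphi\,\nu=c\,\lala$ directly, and $c=0$ follows as before; what remains of $\nu$ is then supported on $\{\varphi=0\}=\{\p,\q\}$. So the whole recurrence discussion, and the worry about the time-change cocycle being a bijection, can be dropped. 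Your alternative sketch through the special flow (unique ergodicity of $R_\alpha$ together with $T\notin L^1(\la)$ from Proposition~\ref{return times}) is also a clean way to finish.
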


We remark that $\phi^t$ is an example of a flow for which any point is non-wandering (and even, is in the closure of the set of recurrent points) but the union of the supports of the invariant measures is finite.
\medskip

Given measures $\mu, \nu \in \M_X$, we use the notation $[\mu, \nu]$ to denote the set $\{ \alpha \mu + (1-\alpha) \nu: \ 0 \leq \alpha \leq 1 \}$. Thus Proposition~\ref{invariant probs} can be written as $\M_X = [\delta_\p, \delta_\q]$.

The following proposition, for its part, says that the limit measures are almost everywhere constant.

\begin{proposition}\label{invariant probs better}
For any reparametrized linear flow (with zero Lebesgue measure set of singularities), the set $p\omega(\x)$ is almost everywhere constant.
\end{proposition}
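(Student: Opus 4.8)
The plan is to exploit the fact that $\phi^t$ is almost everywhere orbit-equivalent to the linear flow $\phi_0^t$, which is uniquely ergodic (indeed minimal) on $\TT$, and to transfer the ergodicity of $\lala$ under $\phi_0^t$ to a statement about the $p\omega$-limit sets. First I would fix a countable dense set $\{f_j\}_{j\ge 1}$ in $C(\TT)$ (with the sup norm), so that the weak-$*$ topology on $\M$ restricted to the compact metrizable space $\M$ is generated by the functionals $\mu\mapsto\int f_j\,d\mu$. It then suffices to show that for each $j$, the two functions
\[ \underline L_j(\x) = \liminf_{t\to\infty}\frac1t\int_0^t f_j(\phi^s(\x))\,ds,\qquad \overline L_j(\x)=\limsup_{t\to\infty}\frac1t\int_0^t f_j(\phi^s(\x))\,ds \]
are each $\lala$-almost everywhere constant; once all $\underline L_j,\overline L_j$ are a.e.\ constant, the set of accumulation points $p\omega(\x)$ of $\{\mu_\x^t\}$ is determined $\lala$-a.e.\ by those countably many constants, hence is itself a.e.\ constant.

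The key step is the almost-everywhere constancy of $\underline L_j$ and $\overline L_j$. For this I would invoke the ergodicity statement quoted in the excerpt: if $A$ is Borel with $\phi^t(A)=A$ for all $t$, then $\lala(A)\in\{0,1\}$ (this holds because outside the measure-zero union of the two stable half-lines, $\phi^t$ is a time-change of $\phi_0^t$ along the same oriented orbits, so the invariant sets of the two flows coincide up to a null set, and $\lala$ is ergodic — in fact uniquely ergodic — for $\phi_0^t$). Now observe that $\underline L_j$ and $\overline L_j$ are Borel measurable and, crucially, \emph{invariant under the flow}: replacing $\x$ by $\phi^\tau(\x)$ changes $\frac1t\int_0^t f_j(\phi^s(\x))\,ds$ by $\frac1t\big(\int_0^\tau - \int_t^{t+\tau}\big)f_j\,ds$, a quantity bounded by $2\tau\|f_j\|_\infty/t\to 0$, so $\underline L_j(\phi^\tau\x)=\underline L_j(\x)$ and likewise for $\overline L_j$, for every $\tau$. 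Hence for each $c\in\R$ the sublevel set $\{\x:\underline L_j(\x)\le c\}$ is flow-invariant and Borel, so has $\lala$-measure $0$ or $1$; letting $c$ run over the rationals shows $\underline L_j$ is $\lala$-a.e.\ equal to a constant, and the same for $\overline L_j$.

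Finally I would assemble these pieces: let $N_j$ be the $\lala$-null set off which $\underline L_j$ and $\overline L_j$ equal their constant values $\underline\ell_j\le\overline\ell_j$, and let $N=\bigcup_j N_j$, still null. For $\x,\x'\notin N$, the families $\{\mu_\x^t\}_{t>0}$ and $\{\mu_{\x'}^t\}_{t>0}$ have the same set of limit values of each functional $\mu\mapsto\int f_j\,d\mu$, namely the interval $[\underline\ell_j,\overline\ell_j]$ (or more precisely the closure of the set of accumulation points, which since $t\mapsto\mu_\x^t(f_j)$ is continuous is exactly $[\underline\ell_j,\overline\ell_j]$); since a probability measure in $p\omega(\x)$ is pinned down by its values on all $f_j$, and these ranges agree for $\x$ and $\x'$, one concludes $p\omega(\x)=p\omega(\x')$. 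I expect the only subtle point to be the clean justification that the invariant sets of $\phi^t$ and $\phi_0^t$ agree modulo $\lala$-null sets — i.e.\ that the parenthetical ergodicity claim in the excerpt is legitimate — but that is exactly the statement we are permitted to assume, so the argument above is essentially a packaging of it together with the flow-invariance of $\liminf$/$\limsup$ of Birkhoff averages.
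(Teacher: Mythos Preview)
Your argument is sound up through showing that each $\underline L_j$ and $\overline L_j$ is $\lala$-a.e.\ constant. The gap is in the final assembly step. Knowing, for every $j$, the range $\{\int f_j\,d\mu:\mu\in p\omega(\x)\}=[\underline\ell_j,\overline\ell_j]$ does \emph{not} determine the compact set $p\omega(\x)$. A point of $\M$ is indeed pinned down by its values on all $f_j$, but a \emph{set} is not pinned down by its coordinate-wise projections: two different compact connected subsets of $\M$ can have identical projections onto every coordinate $\mu\mapsto\mu(f_j)$ (think of a diagonal versus an anti-diagonal in a square). Even allowing all continuous $f$ rather than just the $f_j$, the collection of one-dimensional projections only recovers the closed convex hull of $p\omega(\x)$, and there is no reason $p\omega(\x)$ should be convex in the generality of the proposition (several stopping points, say). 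So the sentence ``one concludes $p\omega(\x)=p\omega(\x')$'' is where the argument breaks.

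The paper's proof sidesteps this entirely by working directly in the hyperspace: it views $\x\mapsto p\omega(\x)$ as a flow-invariant map into the compact metric space $\cK(\M_X)$ of compact subsets with the Hausdorff distance, covers $\cK(\M_X)$ by finitely many $1/n$-balls, and uses ergodicity on the (invariant, measurable) preimages of these balls to conclude that for each $n$ the value of $p\omega(\x)$ lies in a single $1/n$-ball for a.e.\ $\x$. You could repair your proof along the same lines, or alternatively argue that $\x\mapsto p\omega(\x)$ is Borel measurable into $\cK(\M)$ (this follows, for instance, from your computation that each $\underline L_j,\overline L_j$ is Borel together with a standard description of the Borel structure on the hyperspace) and then invoke ergodicity directly. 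Note that in the special case of exactly two stopping points your argument \emph{does} go through, since then $\M_X=[\delta_\p,\delta_\q]$ is one-dimensional and a connected compact subset is an interval, hence determined by a single projection; but the proposition is stated more generally.
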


\begin{proof}
Denote by $\cK(\M_X)$ the set of compact subsets of $\M_X$, endowed with a distance generating the Hausdorff topology. For every $n>0$, the set $\cK(\M_X)$ is covered by a finite number of balls $B(K_i^n,1/n)$. For any $n,i$, the set
\[\big\{\x\in \T^2 : p\omega(\x)\in B(K_i^n,1/n)\big\}\]
is $\phi^t$-invariant (and the union of these sets over $i$ is of full measure). Hence, by ergodicity, at least one of these sets is of measure 1: for any $n$, there exists $i$ such that $p\omega(\x)\in B(K_i^n,1/n)$ for a.e. $\x\in \T^2$. This implies that $p\omega(\x)$ is a.e. constant.
\end{proof}

\subsection{Computing limit measures in Diophantine terms}\label{SecLimitDioph}

In this section we link the ergodic behaviour of the flow $\phi^t$ with some limit behaviour of Birkhoff sums over the rotation $R_\alpha$. More precisely, we show how the presence of an extreme historic behaviour can be reduced to a Diophantine problem of comparing sums of reciprocals. First we develop a general criterion for the existence of an extreme historic behaviour (Proposition~\ref{criterium1}). 

For $x\in\T$, let
\begin{equation}\label{DefSn}
S_k(x) = \sum_{i=0}^{k-1} \frac{1}{\|x +  i \alpha \|}
\end{equation}
and
\begin{equation}\label{DefTheta}
\Theta_k^\beta(x) = \frac{S_k(x)}{S_k(x-\beta)}.
\end{equation}

Note that for any $\x\in\T^2$, by continuity of $t\mapsto \mu_\x^t$, the limit set $p\omega(\x)$ is connected. By combining Propositions \ref{invariant probs} and \ref{invariant probs better}, there exist $0\le \tau_0 \le \tau_1 \le 1$ such that
\begin{equation}\label{crit1}
p\omega(\x) = \Big[\tau_0\delta_\p + (1-\tau_0)\delta_\q,\ \tau_1\delta_\p + (1-\tau_1)\delta_\q\Big] \quad \la-a.e.
\end{equation}

\begin{proposition} \label{criterium1}
Let $\phi^t$ be as in (SH) and $\Sigma$ chosen so that $p_0 \neq q_0$. Let $\beta = q_0-p_0$ and $0\le \tau_0 \le \tau_1 \le 1$ such that \eqref{crit1} holds. Suppose that the positive orbit of $\x$ does not meet neither $\p$ nor $\q$.
Then
\begin{align*}
\limsup_{n \to \infty} \Theta_n^\beta(x) & = \sqrt{\frac{d_\p}{d_\q}}\left(\frac{\tau_1}{1-\tau_1} \right) \quad \la-a.e.,\text{ and}\\
\liminf_{n \to \infty} \Theta_n^\beta(x) & = \sqrt{\frac{d_\p}{d_\q}}\left(\frac{\tau_0}{1-\tau_0} \right) \quad \la-a.e.
\end{align*}
\end{proposition}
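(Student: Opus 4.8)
\textbf{Proof plan for Proposition~\ref{criterium1}.}

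The plan is to translate the asymptotics of the empirical measures $\mu_\x^t$ into asymptotics of Birkhoff sums along the base rotation, using the special flow description from Section~\ref{special flows}. First I would fix a point $\x$ whose positive orbit avoids $\p$ and $\q$; then $\x = \Xi(u,s)$ for some $(u,s) \in D$ with $u$ not in the orbit of $p_0$ or $q_0$ under $R_\alpha$. Since adding a bounded time shift and reparametrizing by $s$ does not change $p\omega(\x)$, it suffices to analyze the empirical measures of the special flow $\Psi^t$ started at $(u,0)$. The key point is that the empirical measures of $\Psi^t$, evaluated on a continuous observable $f$, are governed — up to the (bounded, hence negligible in the Cesàro sense) contribution of the integrable part $\sigma$ of the roof function $T$ from Proposition~\ref{return times} — by how the orbit segment $u, R_\alpha u, \dots, R_\alpha^{n-1}u$ distributes weight between the two cusps. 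Precisely, the total time elapsed after $n$ returns is
\[
T_n(u) := \sum_{k=0}^{n-1} T(R_\alpha^k u) = \frac{\pi}{\sqrt{d_\p}} S_n(u-p_0) + \frac{\pi}{\sqrt{d_\q}} S_n(u-q_0) + O(n),
\]
and during the $k$-th return the orbit spends almost all of its time, namely $\pi/(\sqrt{d_\p}\|R_\alpha^k u - p_0\|) + O(1)$, inside the flow box $\B_\delta(\p)$ (hence near $\p$ for small $\delta$), and similarly $\pi/(\sqrt{d_\q}\|R_\alpha^k u - q_0\|) + O(1)$ near $\q$, the remaining time being $O(1)$.

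Next I would make this quantitative. Writing $\beta = q_0 - p_0$ and $x = u - p_0$, so that $u - q_0 = x - \beta$, the fraction of the first $T_n(u)$ units of time that $\Psi^t$ (equivalently $\phi^t$) spends $\delta$-close to $\p$ is
\[
\frac{\frac{\pi}{\sqrt{d_\p}} S_n(x) + O(n)}{\frac{\pi}{\sqrt{d_\p}} S_n(x) + \frac{\pi}{\sqrt{d_\q}} S_n(x-\beta) + O(n)} \;=\; \frac{\sqrt{d_\q}\,\Theta_n^\beta(x)}{\sqrt{d_\q}\,\Theta_n^\beta(x) + \sqrt{d_\p}} + o(1),
\]
where I have divided through by $\frac{\pi}{\sqrt{d_\p}}S_n(x-\beta)$ and used that $S_n(x-\beta) \to \infty$ (so the $O(n)$ terms are $o(S_n)$; this follows because $\|R_\alpha^k(x-\beta)\|^{-1} \geq 1$ for all $k$, giving $S_n \geq n$, but one needs in fact $S_n/n \to \infty$, which holds by the non-integrability of $\|\cdot\|^{-1}$ and Birkhoff's ergodic theorem for the infinite measure, or more elementarily because infinitely many terms are large). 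Taking $\delta \to 0$ and using that $\mu_\infty$, $\delta_\p$, $\delta_\q$ are the only relevant measures (Propositions~\ref{invariant probs}, \ref{invariant probs better}), one obtains that along a subsequence $n_j$ with $\Theta_{n_j}^\beta(x) \to \ell$, the empirical measures converge to $\tau \delta_\p + (1-\tau)\delta_\q$ with
\[
\tau = \frac{\sqrt{d_\q}\,\ell}{\sqrt{d_\q}\,\ell + \sqrt{d_\p}}, \qquad \text{equivalently} \qquad \ell = \sqrt{\frac{d_\p}{d_\q}}\cdot\frac{\tau}{1-\tau},
\]
and conversely every limit measure arises this way (times between consecutive returns being $o(T_n)$, so no mass escapes to measures coming from "partial" returns). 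Combining this order-preserving bijection $\ell \mapsto \tau$ between accumulation points of $\Theta_n^\beta(x)$ and the endpoints of the interval $p\omega(\x)$ in \eqref{crit1}, and invoking Proposition~\ref{invariant probs better} to know $\tau_0,\tau_1$ are a.e.\ constant, yields exactly the claimed formulas for $\liminf$ and $\limsup$ of $\Theta_n^\beta(x)$, valid $\la$-a.e.\ (the a.e.\ statement coming from the a.e.\ constancy of $p\omega$ together with the fact that the exceptional set of $x$ — those in the $R_\alpha$-orbits of $p_0,q_0$, or for which the orbit hits a singularity — is countable).

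The main obstacle I expect is controlling the "in-transit" time rigorously: one must show that the $O(1)$ error per return and the time spent strictly between the flow boxes genuinely sum to $o(T_n(u))$ and, more delicately, that within a single return the empirical measure does not develop spurious mass — i.e.\ that the orbit is $\delta$-close to $\p$ for all but an $O(1)$ portion of its time in $\B_\delta(\p)$, uniformly. This is where Lemma~\ref{time in a box} and the integrability of $\sigma$ in Proposition~\ref{return times} do the work: the contribution of the bounded function $\gamma$ (from Lemma~\ref{bdd diff lemma}) is uniformly $O(1)$, and since $S_n(x), S_n(x-\beta) \to \infty$ faster than linearly, dividing by $T_n$ kills all these terms in the limit. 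A secondary technical point is passing from the special flow back to $\phi^t$ and checking that reparametrization and the initial bounded time segment do not affect $p\omega$; this is routine since Cesàro limits are insensitive to bounded initial segments and to time-changes that are $o(t)$.
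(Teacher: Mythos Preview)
Your overall strategy matches the paper's: pass to the special flow, decompose the roof function via Proposition~\ref{return times} into the two non-integrable cusps plus an integrable remainder, and use Birkhoff's theorem (integrable numerator over non-integrable denominator) to show that the fraction of time spent in $\B_r(\p)$ is, along the return times $T_n$, asymptotic to $\sqrt{d_\q}\,\Theta_n^\beta(x)/\big(\sqrt{d_\q}\,\Theta_n^\beta(x)+\sqrt{d_\p}\big)$. The order-preserving correspondence $\ell\leftrightarrow\tau$ you write down is exactly what the paper obtains.

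There is one genuine gap. To get the converse direction --- that every element of $p\omega(\x)$ arises as a limit along the subsequence $(T_n)$ --- you assert that ``times between consecutive returns [are] $o(T_n)$''. This is false in general: when $\|R_\alpha^n(u)-p_0\|^{-1}$ is comparable to $S_n(u-p_0)$ (which happens along subsequences for a.e.\ $u$ in the Diophantine regime, and is precisely the mechanism behind historic behaviour in Theorem~\ref{PropDivSum}), the single return time $T(R_\alpha^n u)$ is comparable to $T_n(u)$, and the empirical measure can swing by a fixed amount during one return. The paper's fix uses the hypothesis $p_0\neq q_0$: for $r$ small enough the flow-box projections onto $\Sigma$ are disjoint intervals around $p_0$ and $q_0$, so during any single return the orbit meets at most one of $\B_r(\p)$, $\B_r(\q)$. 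It follows that $t\mapsto\mu_\x^t(\B_r(\q))$, restricted to $[T_n,T_{n+1}]$, is monotone up to an additive change bounded by $O(1)/T_n$ at the two ends; hence its extrema are realized (up to $o(1)$) at $T_n$ or $T_{n+1}$, and the limsup/liminf over all $t$ agree with those along $(T_n)$. This is the content of the paper's remark preceding \eqref{limsup2} and of Lemma~\ref{equivalent accumulation}; once you insert this argument, the rest of your plan goes through.
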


We begin by establishing an auxiliary property regarding accumulation points of $\mu_\x^t$. This will shorten the (a little lengthy but straightforward) proof of Proposition~\ref{criterium1}.

\begin{lemma}\label{equivalent accumulation}
Let $\phi^t$ be a reparameterized linear flow satisfying (SH) and $\mu_\x^t$ its associated family of empirical measures. Let $r>0$ be small enough so that $\B_r (\p)$ and $\B_r(\q)$ are disjoint. Then, given any $\x \in \TT$, the following are equivalent:
\begin{enumerate}
\item 
\[\tau \delta_\p + (1-\tau)\delta_\q \in p\omega(\x); \]
\item  
\[ \liminf_{t \to \infty} \mu_\x^t(\B_r(\p)) \leq \tau \leq \limsup_{t \to \infty} \mu_\x^t(\B_r(\p)); \]
\item 
\[ \liminf_{t \to \infty} \mu_\x^t(\B_r(\q)) \leq 1-\tau \leq \limsup_{t \to \infty} \mu_\x^t(\B_r(\q)). \]
\end{enumerate}
\end{lemma}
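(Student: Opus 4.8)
The plan is to prove the chain of equivalences by relating the empirical measure $\mu_\x^t$ of the flow to the ``amount of time spent'' in each flow box, using the fact (Proposition~\ref{invariant probs}) that the only invariant probabilities are convex combinations of $\delta_\p$ and $\delta_\q$. First I would observe that since the flow $\phi^t$ is $\la^2$-a.e. orbit-equivalent to the linear flow and the stable sets of $\p$, $\q$ are of zero measure, for $\la^2$-a.e. $\x$ the forward orbit is dense and equidistributed outside the two flow boxes with respect to the ($\sigma$-finite) invariant measure $\mu$; but more robustly, I would argue directly at the level of weak-$*$ accumulation. Any weak-$*$ limit $\nu$ of a sequence $\mu_\x^{t_n}$ with $t_n\to\infty$ is $\phi^t$-invariant (standard, since $\mu_\x^t$ differs from $\mu_{\phi^s(\x)}^t$ by $O(s/t)$), hence by Proposition~\ref{invariant probs} has the form $\nu = \tau\delta_\p + (1-\tau)\delta_\q$ for some $\tau \in [0,1]$.

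The key quantitative point is that the mass $\nu$ places on $\p$ can be \emph{read off} from $\mu_\x^{t_n}(\B_r(\p))$. On the one hand, $\B_r(\p)$ is open and contains $\p$, so by the portmanteau theorem $\nu(\B_r(\p)) \le \liminf_n \mu_\x^{t_n}(\B_r(\p))$; on the other hand, its closure $\overline{\B_r(\p)}$ contains only the singularity $\p$ among the support points of any invariant measure (for $r$ small enough that the two closed boxes are disjoint and $\q\notin\overline{\B_r(\p)}$), so $\nu(\overline{\B_r(\p)}) = \tau$ and $\limsup_n\mu_\x^{t_n}(\overline{\B_r(\p)}) \le \nu(\overline{\B_r(\p)}) = \tau$. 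Combining, $\nu(\B_r(\p)) = \tau$, and since $\nu(\B_r(\p))\le\tau\le\nu(\overline{\B_r(\p)})$ these inequalities pin down $\tau$ exactly as the limit of $\mu_\x^{t_n}(\B_r(\p))$ along that subsequence (modulo the negligible boundary, which carries no invariant mass — here one should note $\partial \B_r(\p)$ need not be $\nu$-null in general, but it is because $\nu$ is supported on $\{\p,\q\}\subset \B_r(\p)\cup\B_r(\q)$, away from the boundaries, for $r$ small). This gives (1)$\Rightarrow$(2): if $\tau\delta_\p+(1-\tau)\delta_\q\in p\omega(\x)$, take $t_n$ realizing it, and the above shows $\mu_\x^{t_n}(\B_r(\p))\to\tau$, so $\tau$ lies between the $\liminf$ and $\limsup$ of $\mu_\x^t(\B_r(\p))$.

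For the converse (2)$\Rightarrow$(1), I would use continuity of $t\mapsto \mu_\x^t(\B_r(\p))$ together with the intermediate value theorem: if $\tau$ lies between $\liminf_t \mu_\x^t(\B_r(\p))$ and $\limsup_t\mu_\x^t(\B_r(\p))$, then there is a sequence $t_n\to\infty$ with $\mu_\x^{t_n}(\B_r(\p))\to\tau$; passing to a further subsequence so that $\mu_\x^{t_n}$ converges weak-$*$ to some invariant $\nu = \tau'\delta_\p + (1-\tau')\delta_\q$, the argument of the previous paragraph forces $\tau' = \lim_n\mu_\x^{t_n}(\B_r(\p)) = \tau$, so $\tau\delta_\p+(1-\tau)\delta_\q = \nu \in p\omega(\x)$. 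The equivalence (2)$\Leftrightarrow$(3) is then immediate once one knows that the total mass outside $\B_r(\p)\cup\B_r(\q)$ escapes: more precisely, for $\la^2$-a.e. $\x$ (those whose orbit is not on a stable manifold and hence equidistributes according to the $\sigma$-finite invariant measure $\mu$, whose restriction to the complement of the boxes is finite) one has $\mu_\x^t\big(\TT\setminus(\B_r(\p)\cup\B_r(\q))\big)\to 0$ as $t\to\infty$, so $\mu_\x^t(\B_r(\p)) + \mu_\x^t(\B_r(\q)) \to 1$; this is exactly where Proposition~\ref{return times} (non-integrability of the roof, so the return-time mass concentrates in the cusps at $p_0,q_0$, i.e.\ in the flow boxes) enters.

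The main obstacle I anticipate is the bookkeeping at the boundaries of the flow boxes and the passage ``mass outside the boxes $\to 0$'': one must be careful that this holds for the \emph{given} $\x$ in the statement (the lemma is stated for arbitrary $\x\in\TT$, not a.e.\ $\x$), which suggests the intended argument is purely topological — using only that $p\omega(\x)\subseteq\M_X = [\delta_\p,\delta_\q]$ (true for every $\x$ by compactness of $\M$ and invariance of limits) plus portmanteau — and that the ``escape of mass'' claim should be replaced by the cleaner observation that any accumulation point $\nu$ of $\mu_\x^t$ is an element of $[\delta_\p,\delta_\q]$, whence $\nu(\B_r(\p)) = \nu(\overline{\B_r(\p)}) = \tau$ automatically and the total over the two boxes is automatically $1$. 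So the real content reduces to: (a) $p\omega(\x)\subseteq[\delta_\p,\delta_\q]$ for every $\x$; (b) the portmanteau squeeze identifying $\tau$; (c) continuity of $t\mapsto\mu_\x^t(\B_r(\p))$ plus IVT. I would make sure (a) is justified (limits of $\mu_\x^{t_n}$ are invariant, even for a single bad $\x$), and then the rest is routine.
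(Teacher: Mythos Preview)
Your proposal is correct and, after your own self-correction in the final paragraph, matches the paper's proof essentially line for line: the paper uses exactly the three ingredients you isolate, namely (a) $p\omega(\x)\subseteq[\delta_\p,\delta_\q]$ for every $\x$ (so boundaries of the flow boxes carry no limit mass), (b) the portmanteau/continuity-set argument to read off $\tau$ from $\mu_\x^{t_n}(\B_r(\p))$, and (c) continuity of $t\mapsto\mu_\x^t(\B_r(\p))$ plus the intermediate value theorem for the converse, handling (3) by symmetry with (2) rather than via any mass-escape argument. Your initial detour through ``mass outside the boxes $\to 0$'' is indeed unnecessary (and would only work a.e., as you noted), so it is good that you discarded it.
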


\begin{proof}
We start by recalling a useful characterization of weak* convergence: a sequence of measures $\mu_n$ converges weakly* to $\mu$ if and only if $\mu_n(U) \to \mu(U)$ for every open set $U$ satisfying $\mu(\partial U) = 0$. 

Suppose that (1) holds. Then we may choose a sequence $t_n \to \infty$ such that 
\[\mu_\x^{t_n} \to \tau \delta_\p + (1-\tau) \delta_\q.\]
Since $r$ is small, the boundary of both flow boxes $\B_r(\p)$ and $\B_r(\q)$ have zero $\tau \delta_\p + (1-\tau) \delta_\q$ measure. Hence
\[\lim_{n \to \infty}\mu_\x^{t_n}(\B_r(\p)) = \tau
\qquad \text{and} \qquad
\lim_{n \to \infty} \mu_\x^{t_n}(\B_r(\q)) = 1-\tau.\]
It follows that 
\begin{equation*}\label{liminfp}
\liminf_{t \to \infty} \mu_\x^t(\B_r(\p)) \leq \tau
\qquad \text{and} \qquad
\liminf_{t \to \infty} \mu_\x^t (\B_r(\q)) \leq 1-\tau.
\end{equation*}
and similarly
\[\limsup_{t \to \infty} \mu_\x^t (\B_r(\p)) \geq \tau
\qquad \text{and} \qquad
\limsup_{t \to \infty} \mu_\x^t (\B_r(\q)) \geq 1-\tau.\]
We have shown that (1) implies both (2) and (3). We shall now show that (2) implies (1). The proof that (3) implies (1) is analogous.

Suppose that (2) holds. By continuity of the map 
\[t \mapsto \mu_\x^t (\B_r(\p))\]
it is possible to find a sequence $t_n \to \infty$ such that 
\[\lim_{n \to \infty} \mu_\x^{t_n}(\B_r(\p)) = \tau.\]
The boundary of $\B_r(\p)$ has zero $\mu$-measure for every $\mu \in p\omega(\x)$. Hence any accumulation point $\mu$ of $\mu_\x^{t_n}$ must satisfy
\begin{equation} \label{accpoint}
\mu(\B_r(\p)) = \tau.
\end{equation}
By Proposition \ref{invariant probs}, only one measure in $\M_X$ satisfies (\ref{accpoint}), namely $\mu = \tau \delta_\p+(1-\tau) \delta_\q$. Therefore $\mu_x^{t_n} \to \mu$, so $\mu \in p\omega(\x)$.
\end{proof}

\begin{proof}[Proof of Proposition~\ref{criterium1}]
Let $r>0$ be small enough so that $\B_r(\p) \cap \B_r(\q) = \emptyset$. Upon possibly reducing $r$, we may suppose that the images of $\B_r(\p)$ and $\B_r(\q)$ by the return map on $\Sigma$ are disjoint (because $p_0\neq q_0$). Using Lemma~\ref{equivalent accumulation}, we know that there is a full $\la$-measure set $Z \subset \T$ such that, given any $y \in Z$, we have 
\begin{equation}\label{limsup}
\limsup_{t \to \infty} \mu_{(x_0,y)}^t(\B_r(\q)) = 1-\tau_0 \quad \text{and} \quad 
\liminf_{t \to \infty} \mu_{(x_0,y)}^t(\B_r(\q)) = 1-\tau_1.
\end{equation}
Note that this property is invariant under the flow, and therefore must hold on a set of full $\lala$-measure in $\TT$.

Consider the following functions from $\T$ to $\R_+ \cup \{\infty \}$ (see \eqref{EqDefTau}).
\begin{align*}
T(y) & = \min\{t>0: \phi^t((x_0,y)) \in \Sigma \} \\
S_\p (y) & = \la (\{ t \in [0, T(y)): \phi^t((x_0,y)) \in \B_r(\p) \})\\
S_\q (y) & = \la (\{ t \in [0, T(y)): \phi^t((x_0,y)) \in \B_r(\q) \}) \\
O(y) & = \la (\{t \in [0,T(y)): \phi^t((x_0,y)) \notin \B_r(\p) \cup \B_r(\q) \}) \\
A(y) & = \frac{\pi}{\sqrt{d_\p} \|y -p_0\| } \\
B(y) & = \frac{\pi}{\sqrt{d_\q} \|y -q_0 \| }. 
\end{align*} 
(We set the value of these functions to $\infty$ whenever their defining expressions are not well defined.) Note that, since $\B_r(\p)$ and $\B_r(\q)$ are disjoint, we have 
\[S_\p + S_\q + O = T.\]
\medskip

We know from Lemma~\ref{time in a box} that there are functions $\sigma_\p, \sigma_\q \in L^1(\T)$ such that 
\[S_\p = A+ \sigma_\p \qquad \text{and} \qquad S_\q = B + \sigma_\q.\]
Writing
\[C = O + \sigma_\p + \sigma_\q \]
and using the notation
\begin{align}
A_n  = \sum_{k=0}^{n-1} A\circ R_\alpha^k, \quad
B_n  = \sum_{k=0}^{n-1} B\circ R_\alpha^k, \quad 
C_n  = \sum_{k=0}^{n-1} C\circ R_\alpha^k, \quad
T_n  = \sum_{k=0}^{n-1} T\circ R_\alpha^k,
\end{align}
we get
\[ A_n + B_n + C_n = T_n. \]

We remark that by the fact that the images of $\B_r(\p)$ and $\B_r(\q)$ by the return map on $\Sigma$ are disjoint, the property \eqref{limsup} can be replaced by
\begin{equation}\label{limsup2}
\limsup_{n \to \infty} \mu_{(x_0,y)}^{T_n(y)}(\B_r(\q)) = 1-\tau_0 \quad \text{and} \quad 
\liminf_{n \to \infty} \mu_{(x_0,y)}^{T_n(y)}(\B_r(\q)) = 1-\tau_1.
\end{equation}
for $\la$ almost every $y\in Y$.
\medskip

Note that 
\begin{align*}
\mu_{(x_0,y)}^{T_n(y)}\big(\B_r(\q)\big) 
& = \frac{\sum_{k=0}^{n-1}S_\q(R_\alpha^k(y))}{T_n(y)}\\
& = \frac{\sum_{k=0}^{n-1}\sigma_\q(R_\alpha^k(y))}{T_n(y)} +  \frac{B_n(y)}{T_n(y)}\\
& = \frac{\sum_{k=0}^{n-1}\sigma_\q(R_\alpha^k(y))}{T_n(y)} + \frac{1}{1+\frac{A_n(y)}{B_n(y)}+\frac{C_n(y)}{B_n(y)}}.
\end{align*}
Recall that $\sigma_\q$ and $C$ are integrable functions whereas $B$ and $T$ are not. Hence 
\[\frac{\sum_{k=0}^{n-1}\sigma_\q(R_\alpha^k(z))}{T_n(z)} \to 0,\qquad \frac{C_n(z)}{B_n(z)} \to 0 \qquad \la-a.e.\]
(because by ergodicity, $C_n(z)/n \to \int C$ almost everywhere, while $B_n(z)/n$ tends to $+\infty$ almost everywhere).

Consequently, as by \eqref{limsup2}
\[\limsup_{n \to \infty} \mu_{(x_0,y)}^{T_n(y)} \big(\B_r(\q)\big) = 1-\tau_0 \quad \la-a.e.,\]
one has
\[\liminf_{n\to \infty} \Theta_n^\beta (y-p_0) = \liminf_{n \to \infty} \sqrt{\frac{d_\p}{d_\q}}\frac{A_n(y)}{B_n(y)} = \sqrt{\frac{d_\p}{d_\q}}\left(\frac{\tau_0}{1-\tau_0} \right) \quad \la-a.e.\] 

A similar argument holds for  $\liminf_{n \to \infty} \Theta_n^\beta$.
\end{proof}

\subsection{Consequences of a symmetry property of $\Theta_n^\beta$}\label{SubsecSym}

We shall see that the functions $\Theta_n^\beta$ have a nice symmetry property. It will imply that there is only one possibility for physical measures (Proposition~\ref{PropPossibOmega}), and give more easily checkable criteria for the existence of an extreme historic behaviour (Propositions \ref{criterium2} and \ref{criterium3}) than Proposition~\ref{criterium1}.

Denote by $I : \T \to \T$ the involution map $x \mapsto -x$ and let
\[J_n^\beta = R_{\beta - (n-1) \alpha} \circ I.\]
Note that $J_n^\beta$ can also be written as $I \circ R_{(n-1)\alpha-\beta}$. 

\begin{lemma}\label{symmetry}
We have
\[\Theta_n^\beta \circ J_n^\beta  = \frac{1}{\Theta_n^\beta}.\]
\end{lemma}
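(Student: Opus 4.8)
The plan is to unwind the definitions and exploit the fact that $\|\cdot\|$ is invariant under the involution $I$ together with the reversibility of the orbit segment $\{x, x+\alpha, \dots, x+(n-1)\alpha\}$ under $J_n^\beta$. Concretely, recall that
\[
S_k(x) = \sum_{i=0}^{k-1} \frac{1}{\|x+i\alpha\|}, \qquad \Theta_n^\beta(x) = \frac{S_n(x)}{S_n(x-\beta)}.
\]
First I would compute $S_n(J_n^\beta(x))$. Writing $J_n^\beta(x) = \beta - (n-1)\alpha - x$, we have
\[
S_n\big(J_n^\beta(x)\big) = \sum_{i=0}^{n-1} \frac{1}{\|\beta - (n-1)\alpha - x + i\alpha\|} = \sum_{i=0}^{n-1} \frac{1}{\|\beta - x - (n-1-i)\alpha\|},
\]
and the substitution $j = n-1-i$ (which is a bijection of $\{0,\dots,n-1\}$ onto itself) turns this into $\sum_{j=0}^{n-1} \|\beta - x - j\alpha\|^{-1}$. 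Since $\|y\| = \|-y\|$ for all $y \in \T$, each term equals $\|x - \beta + j\alpha\|^{-1} = \|(x-\beta) + j\alpha\|^{-1}$, so $S_n(J_n^\beta(x)) = S_n(x-\beta)$.

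Next I would run the same computation on the denominator, i.e. evaluate $S_n\big(J_n^\beta(x) - \beta\big)$. Here $J_n^\beta(x) - \beta = -(n-1)\alpha - x$, so
\[
S_n\big(J_n^\beta(x) - \beta\big) = \sum_{i=0}^{n-1} \frac{1}{\|-(n-1)\alpha - x + i\alpha\|} = \sum_{i=0}^{n-1} \frac{1}{\|x + (n-1-i)\alpha\|},
\]
using $\|\cdot\| = \|-\cdot\|$ again; the same reindexing $j = n-1-i$ gives $S_n(J_n^\beta(x) - \beta) = S_n(x)$. Combining the two identities,
\[
\Theta_n^\beta\big(J_n^\beta(x)\big) = \frac{S_n(J_n^\beta(x))}{S_n(J_n^\beta(x) - \beta)} = \frac{S_n(x-\beta)}{S_n(x)} = \frac{1}{\Theta_n^\beta(x)},
\]
which is exactly the claimed identity $\Theta_n^\beta \circ J_n^\beta = 1/\Theta_n^\beta$. (One should note in passing that $J_n^\beta$ is an involution-like map but need not be an involution; what matters is only that it is a measurable bijection of $\T$, which it is, being a composition of a rotation and $I$, so the identity makes sense pointwise wherever the relevant quantities are finite and nonzero.)

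There is essentially no hard step here — the proof is a bookkeeping exercise in reindexing a finite sum and using the evenness of the distance-to-$\Z$ function. The only point requiring a little care is to carry out the index reversal correctly on both numerator and denominator (it is easy to misplace the $\beta$ shift or the $-(n-1)\alpha$ term), and to check that $J_n^\beta$ sends the orbit $\{x+i\alpha\}_{i=0}^{n-1}$ of $x$ onto the reversed orbit in the way claimed. I would present the two displayed computations as the body of the proof and conclude with the one-line division.
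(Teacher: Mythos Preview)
Your proof is correct and is precisely the ``direct calculation'' the paper invokes: reindex $j=n-1-i$ and use $\|y\|=\|-y\|$ to get $S_n(J_n^\beta(x))=S_n(x-\beta)$ and $S_n(J_n^\beta(x)-\beta)=S_n(x)$, then divide. There is nothing to add.
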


\begin{proof} 
Direct calculation.
\end{proof}

An important consequence of Lemma~\ref{symmetry} is that that it gives us only one possible candidate for physical measure.

\begin{proposition}\label{PropPossibOmega}
If $\phi^t$ has a physical measure, then it is equal to $\mu_\infty$ (defined in \eqref{EqFormPhys}).
\end{proposition}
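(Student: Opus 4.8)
The plan is to combine the criterion of Proposition~\ref{criterium1} with the symmetry of Lemma~\ref{symmetry}, passing to convergence in measure because the maps $J_n^\beta$ depend on $n$ and cannot be applied directly to an almost-everywhere limit.

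First I fix a cross section $\Sigma$ with $p_0\neq q_0$ (always possible, as noted after the definition of $p_0,q_0$) and set $\beta=q_0-p_0$. If $\phi^t$ has a physical measure $\mu$, then by Proposition~\ref{invariant probs} we may write $\mu=\tau\deltap+(1-\tau)\deltaq$ for some $\tau\in[0,1]$, and the basin having full measure means exactly that in \eqref{crit1} one has $\tau_0=\tau_1=\tau$. Proposition~\ref{criterium1} then gives
\[
\lim_{n\to\infty}\Theta_n^\beta(x)=L:=\sqrt{\frac{d_\p}{d_\q}}\,\frac{\tau}{1-\tau}\qquad \la\text{-a.e.}
\]
(with the convention $L=+\infty$ when $\tau=1$). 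Since $\la$ is a finite measure, this almost-everywhere convergence implies that $\Theta_n^\beta\to L$ in measure.

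Next I exploit Lemma~\ref{symmetry}: $1/\Theta_n^\beta=\Theta_n^\beta\circ J_n^\beta$, and $J_n^\beta=R_{\beta-(n-1)\alpha}\circ I$ is an isometry (indeed an involution) of $\T$, hence preserves $\la$. Therefore, for every $\varepsilon>0$ and every $n$, the change of variable $x'=J_n^\beta(x)$ yields
\[
\la\big(\{x:|1/\Theta_n^\beta(x)-L|>\varepsilon\}\big)=\la\big(\{x':|\Theta_n^\beta(x')-L|>\varepsilon\}\big)\xrightarrow[n\to\infty]{}0,
\]
so that $1/\Theta_n^\beta\to L$ in measure as well. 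Now $\Theta_n^\beta\to L$ and $1/\Theta_n^\beta\to L$ in measure simultaneously. If $L\in(0,\infty)$, continuity of $t\mapsto 1/t$ near $L$ forces $1/\Theta_n^\beta\to 1/L$ in measure, and uniqueness of limits in measure gives $L=1/L$, i.e. $L=1$. The extreme cases are excluded directly: if, say, $L=0$, then choosing $\varepsilon=1$ we would have $\la(\Theta_n^\beta\ge 1)<1/3$ and $\la(1/\Theta_n^\beta\ge 1)=\la(\Theta_n^\beta\le 1)<1/3$ for large $n$, contradicting $\la(\T)=1$; the case $L=+\infty$ is symmetric. Hence $L=1$ in all cases (in particular $\tau\notin\{0,1\}$), which rearranges to $\tfrac{\tau}{1-\tau}=\sqrt{d_\q/d_\p}$, i.e. $\tau=\sqrt{d_\q}/(\sqrt{d_\p}+\sqrt{d_\q})$, so $\mu=\mu_\infty$ as defined in \eqref{EqFormPhys}.

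The only delicate point is the one already flagged: one may not substitute $J_n^\beta$ into the pointwise limit, since the full-measure set on which $\Theta_n^\beta\to L$ is not $J_n^\beta$-invariant and the maps vary with $n$; working with convergence in measure (where the measure-preserving substitution is harmless at each fixed $n$) is what makes the symmetry usable. Everything else is bookkeeping, including the harmless fact that $\Theta_n^\beta$ is well defined off a countable set.
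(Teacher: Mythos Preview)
Your proof is correct and follows essentially the same route as the paper: Proposition~\ref{criterium1} gives $\Theta_n^\beta\to L$ $\la$-a.e., and Lemma~\ref{symmetry} together with the $\la$-preservation of $J_n^\beta$ forces $L=1$. The only difference is cosmetic---the paper derives the contradiction via the threshold argument $\la(\Theta_n^\beta>1)>\tfrac12$ versus $\la(\Theta_n^\beta<1)>\tfrac12$ for large $n$, while you package the same idea as convergence in measure and uniqueness of limits (which, as you note, cleanly sidesteps the $n$-dependence of $J_n^\beta$).
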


Hence, either $\phi^t$ has an historic behaviour, or it admits this measure as a physical measure with full basin.

\begin{proof}
We already know that if $\phi^t$ has a physical measure, then this one is unique (it is a consequence of ergodicity, see Proposition~\ref{invariant probs better}). Suppose then that 
\[\mu_\infty = \tau \delta_\p + (1-\tau) \delta_\q\]
is a physical measure for $\phi^t$. Then, according to Proposition~\ref{criterium1} we must have 
\begin{equation}\label{physmeas}
\lim_{n \to \infty} \Theta_n^\beta(x) = \sqrt{\frac{d_\p}{d_\q}}\left( \frac{\tau}{1-\tau} \right)
\end{equation}
for $\la$-a.e. $x\in\T$.
Let $T$ be the right hand side of (\ref{physmeas}). We claim that $T = 1$. Indeed, suppose that $T>1$. Then there exists some $N\in\N$ such that
\begin{equation}\label{EqPossib}
\la  \big\{x\in\T : \Theta_n^\beta(x)>1\big\} > \half
\end{equation}
for every $n \geq N$. Hence, according to Lemma~\ref{symmetry}, we have
 
\[\la  \big\{x\in\T : \Theta_n^\beta \circ J_n^\beta <1 \big\} > \half\]
for every $n \geq N$. But this is not possible since $J_n^\beta$ preserves $\la$. Hence $T \leq 1$. A similar argument shows that $T \geq 1$. 

Solving for $\tau$ in the equation
\[\sqrt{\frac{d_\p}{d_\q}}\left( \frac{\tau}{1-\tau} \right) = 1\]
gives 
\[\tau = \frac{\sqrt{d_\q}}{\sqrt{d_\p}+\sqrt{d_\q}}.\]
\end{proof}

One could expect Lemma~\ref{symmetry} to imply the $\la$-almost everywhere symmetry of the set $p\omega(\x)$. This is not true in its full generality (see Theorem~\ref{PropDivSum}). The reason why it is not stems from an insufficiency of information about the behaviour of $\mu_\x^t$ by considering only the sequence of return times to $\Sigma$.

Lemma~\ref{symmetry} also provides us with two simple criteria for the presence of extreme historic behaviour, under some uniformity hypotheses.

\begin{proposition}\label{criterium2}
Let $\phi^t$ be as in (SH). Suppose that there exists $C>0$ such that, given any $K>1$, one can find $n\in\N$ such that
\begin{equation*}\label{crit2}
\la \big\{x \in \T: \Theta_n^\beta(x)>K \big\} \geq C.
\end{equation*}
Then $\phi^t$ has an extreme historic behaviour.
\end{proposition}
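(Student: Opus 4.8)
The plan is to use Proposition~\ref{criterium1} together with the symmetry of Lemma~\ref{symmetry} to show that under the stated hypothesis one has $\tau_1 = 1$ and $\tau_0 = 0$ in the almost-everywhere description \eqref{crit1} of $p\omega(\x)$, which is exactly what extreme historic behaviour means. By Proposition~\ref{criterium1}, it suffices to prove that $\limsup_{n\to\infty}\Theta_n^\beta(x) = +\infty$ and $\liminf_{n\to\infty}\Theta_n^\beta(x) = 0$ for $\la$-a.e.\ $x\in\T$ (since the limsup equals $\sqrt{d_\p/d_\q}\,\tau_1/(1-\tau_1)$, which is infinite precisely when $\tau_1 = 1$, and symmetrically for the liminf). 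Because $\limsup_n\Theta_n^\beta$ is an $R_\alpha$-invariant function (up to the usual shift, $\Theta_n^\beta\circ R_\alpha$ and $\Theta_{n}^\beta$ have the same asymptotics, or more directly because $p\omega$ is a.e.\ constant by Proposition~\ref{invariant probs better} translated through Proposition~\ref{criterium1}), it is a.e.\ constant by ergodicity of $R_\alpha$; call this constant $L_1\in[0,\infty]$, and similarly $L_0$ for the liminf. So the whole task reduces to showing $L_1 = \infty$.

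First I would fix $K>1$ and, using the hypothesis, pick $n=n(K)$ with $\la\{x:\Theta_n^\beta(x)>K\}\geq C$. Then I would argue that this forces $L_1\geq K$. Suppose for contradiction that $L_1 < K$. Then $\Theta_m^\beta(x) \to$ something $\le L_1 < K$ on a full-measure set along the full sequence $m\to\infty$; but this is about the $\limsup$, so for $\la$-a.e.\ $x$ there is an $M(x)$ with $\Theta_m^\beta(x) < K$ for all $m\geq M(x)$. Hence for $m$ large, $\la\{x:\Theta_m^\beta(x)>K\}$ is as small as we like — in particular eventually $<C$. This does not immediately contradict the hypothesis, since the hypothesis only gives \emph{some} $n$, not all large $n$, with the measure bound. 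So the real point is that the hypothesis gives such an $n$ for \emph{every} $K$, and one must leverage the symmetry to upgrade a single good index into a statement about the limsup constant. Here is the mechanism: if $L_1$ were finite, then by Lemma~\ref{symmetry}, $\Theta_n^\beta\circ J_n^\beta = 1/\Theta_n^\beta$, and since $J_n^\beta$ preserves $\la$, we get $\la\{x:\Theta_n^\beta(x)<1/K\}=\la\{x:\Theta_n^\beta(x)>K\}\geq C$ for that same index $n$; so in fact for every $K>1$ there is an index $n$ with a definite mass $\ge C$ on which $\Theta_n^\beta$ is both ``$>K$ somewhere'' and ``$<1/K$ somewhere''. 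Combining with the a.e.\ constancy of $L_1$ and $L_0$: if $L_1<\infty$, then for $\la$-a.e.\ $x$, $\Theta_m^\beta(x)$ is bounded above by $L_1 + 1$ for all $m$ beyond some $M(x)$; by Egorov's theorem there is a set $E$ with $\la(E)>1-C/2$ and a uniform $M$ such that $\Theta_m^\beta(x)\le L_1+1$ for all $x\in E$, $m\geq M$. Now choose $K > L_1+1$; the hypothesis yields $n$ with $\la\{x:\Theta_n^\beta(x)>K\}\geq C$, and this set must meet $E$ (since $\la(E^c) < C/2 < C$ and also we may need $n\ge M$ — but we can always pass to larger $n$: if the hypothesis holds for some $n_0$, one checks it holds for infinitely many $n$, or else replace the argument by working directly with the sequence). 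The point meeting $E$ has $\Theta_n^\beta > K > L_1+1$, contradicting $x\in E$. Therefore $L_1 = \infty$, and by symmetry (or by the same argument applied to $1/\Theta_n^\beta$) $L_0 = 0$.

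The main obstacle I anticipate is the quantifier mismatch: the hypothesis supplies a single index $n$ per $K$, whereas the limsup along the full sequence is what governs $p\omega$. The fix is precisely the combination of (a) a.e.\ constancy of $\limsup_n\Theta_n^\beta$ via ergodicity, which turns a pointwise statement into a single number $L_1$, and (b) Egorov's theorem, which converts ``$\limsup \le L_1$ a.e.''\ into a uniform-in-$x$ bound on a set of measure $>1-C/2$, so that the fixed positive mass $C$ produced by the hypothesis is forced to violate that uniform bound. One should also verify the minor point that the hypothesis, which a priori gives one $n$ for each $K$, actually yields arbitrarily large such $n$: this is automatic because for each fixed $K'>K$ the hypothesis gives an $n' = n(K')$ with $\la\{\Theta_{n'}^\beta>K'\}\ge C$, hence a fortiori $\la\{\Theta_{n'}^\beta>K\}\ge C$, and $n(K')$ cannot stay bounded as $K'\to\infty$ since $\Theta_m^\beta$ for any fixed $m$ is a fixed finite-valued function (finite $\la$-a.e.). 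Once these two wrinkles are ironed out, invoking Proposition~\ref{criterium1} to conclude $\tau_0=0$, $\tau_1=1$, i.e.\ $p\omega(\x) = [\delta_\q,\delta_\p]$ a.e., is immediate, and that is the definition of extreme historic behaviour.
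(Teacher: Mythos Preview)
Your proof is correct, but it takes a more circuitous route than the paper's. The paper argues directly: setting $A_{K,n}=\{x:\Theta_n^\beta(x)>K\}$, $A_K=\bigcup_n A_{K,n}$ and $A=\bigcap_{K>1}A_K$, the hypothesis gives $\la(A_K)\ge C$ for every $K$, and since the family $A_K$ is decreasing in $K$ one gets $\la(A)\ge C$; the same works for $B=\{x:\liminf\Theta_n^\beta(x)=0\}$ via Lemma~\ref{symmetry}. As $A\cap B$ is $R_\alpha$-invariant and has positive measure, ergodicity finishes. No Egorov, no contradiction, no need to manufacture arbitrarily large indices $n$.

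Your approach instead fixes the a.e.\ constant $L_1=\limsup\Theta_n^\beta$, assumes $L_1<\infty$, and uses Egorov to uniformize the bound $\Theta_m^\beta\le L_1+1$ on a set of measure $>1-C/2$; then the hypothesis (after the auxiliary observation that the good indices $n(K)$ are unbounded) produces a violation. This is sound, and your handling of the two wrinkles (the quantifier mismatch and unboundedness of $n(K)$) is fine. But note that the detour through Lemma~\ref{symmetry} in the middle of your $L_1$ argument is not actually used there --- symmetry is only needed for $L_0$, exactly as in the paper. What your argument buys is perhaps a more ``quantitative'' feel, but the paper's direct set-theoretic computation is shorter and avoids the measure-theoretic machinery of Egorov altogether.
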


\begin{proposition}\label{criterium3}
Let $\phi^t$ be as in (SH). Suppose that, given any $K>1$, there exists $C>0$ such that
\begin{equation*}\label{crit3}
\la\big\{x \in \T: \Theta_n^\beta(x)>K \big\} \geq C
\end{equation*}
for infinitely many $n$. Then $\phi^t$ has an extreme historic behaviour.
\end{proposition}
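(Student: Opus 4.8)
The plan is to show that the hypothesis of Proposition~\ref{criterium3} already forces the strong oscillation needed for extreme historic behaviour, by combining it with the symmetry of Lemma~\ref{symmetry} and the characterization of $p\omega(\x)$ via $\limsup$ and $\liminf$ of $\Theta_n^\beta$ given in Proposition~\ref{criterium1}. Recall from \eqref{crit1} that there are constants $0\le\tau_0\le\tau_1\le1$ with $p\omega(\x) = [\tau_0\delta_\p + (1-\tau_0)\delta_\q,\ \tau_1\delta_\p+(1-\tau_1)\delta_\q]$ for $\lala$-a.e.\ $\x$, and that by Proposition~\ref{criterium1} this is equivalent to
\[
\limsup_{n\to\infty}\Theta_n^\beta(x) = \sqrt{\tfrac{d_\p}{d_\q}}\,\tfrac{\tau_1}{1-\tau_1},
\qquad
\liminf_{n\to\infty}\Theta_n^\beta(x) = \sqrt{\tfrac{d_\p}{d_\q}}\,\tfrac{\tau_0}{1-\tau_0}
\qquad \la\text{-a.e.}
\]
Extreme historic behaviour is exactly the statement $\tau_1 = 1$ and $\tau_0 = 0$, i.e.\ $\limsup_n\Theta_n^\beta = +\infty$ and $\liminf_n\Theta_n^\beta = 0$ for $\la$-a.e.\ $x$. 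So it suffices to prove these two equalities.

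First I would prove $\limsup_n\Theta_n^\beta(x) = +\infty$ a.e. Fix $K>1$; by hypothesis there is $C>0$ and an infinite set of indices $n$ with $\la\{x:\Theta_n^\beta(x)>K\}\ge C$. Since the a.e.\ value $L_K := \sqrt{d_\p/d_\q}\cdot\tau_1/(1-\tau_1)$ of $\limsup_n\Theta_n^\beta$ is a constant, if we had $L_K < K$ then $\{x : \Theta_n^\beta(x) > K\}$ would, for $n$ large, be contained in the null set where $\limsup$ exceeds $L_K$ — more carefully, for each such large $n$ the set $E_n = \{\Theta_n^\beta > K\}$ has measure $\ge C$, and $\limsup E_n$ (in the set-theoretic sense, along the infinite index set) has measure $\ge C$ by Fatou for sets; but every point of $\limsup E_n$ satisfies $\limsup_m \Theta_m^\beta \ge K > L_K$, contradicting that this holds on a null set. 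Hence $L_K \ge K$. As $K$ was arbitrary, $L_K = +\infty$, giving $\tau_1 = 1$.

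For $\liminf_n\Theta_n^\beta(x) = 0$ a.e.\ I would feed the same information through Lemma~\ref{symmetry}. Since $J_n^\beta$ preserves $\la$ and $\Theta_n^\beta\circ J_n^\beta = 1/\Theta_n^\beta$, the hypothesis $\la\{\Theta_n^\beta>K\}\ge C$ translates into $\la\{x : \Theta_n^\beta(x) < 1/K\} = \la\{x : 1/\Theta_n^\beta(x) > K\} \ge C$ for the same infinite set of $n$. Running the argument of the previous paragraph with $1/\Theta_n^\beta$ in place of $\Theta_n^\beta$ (whose a.e.\ $\limsup$ is $\sqrt{d_\q/d_\p}\cdot(1-\tau_0)/\tau_0$, i.e.\ the reciprocal of the a.e.\ $\liminf$ of $\Theta_n^\beta$) forces that reciprocal to be $+\infty$, hence $\liminf_n\Theta_n^\beta = 0$ a.e.\ and $\tau_0 = 0$. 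Combining, $p\omega(\x) = [\delta_\q,\delta_\p]$ a.e., which is precisely extreme historic behaviour.

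The main obstacle I anticipate is the measure-theoretic bookkeeping in the contradiction step: one must be careful that the a.e.\ values $\tau_0,\tau_1$ of $\liminf$ and $\limsup$ really are \emph{constants} (this is Proposition~\ref{invariant probs better} together with the connectedness of $p\omega(\x)$ recorded before Proposition~\ref{criterium1}), and that passing from "$\la(E_n)\ge C$ for infinitely many $n$" to "$\la(\limsup_n E_n)\ge C$" via reverse Fatou is legitimate on the probability space $(\T,\la)$ — it is, since $\limsup_n E_n = \bigcap_N \bigcup_{n\ge N} E_n$ and $\la(\bigcup_{n\ge N}E_n)\ge C$ for all $N$ along the relevant subsequence. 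Everything else is a direct translation through the dictionary already set up by Proposition~\ref{criterium1} and Lemma~\ref{symmetry}.
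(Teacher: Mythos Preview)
Your proof is correct and follows essentially the same route as the paper: reduce via Proposition~\ref{criterium1} to showing $\limsup_n\Theta_n^\beta=\infty$ and $\liminf_n\Theta_n^\beta=0$ a.e., use the measure-theoretic $\limsup$ of the sets $\{\Theta_n^\beta>K\}$ together with monotone convergence, and handle the $\liminf$ side by the symmetry Lemma~\ref{symmetry}. The one (minor) difference is in how ergodicity enters: the paper shows directly that the sets $A_K=\{\limsup_n\Theta_n^\beta>K\}$ are $R_\alpha$-invariant (via $\Theta_n^\beta(x)\sim_n\Theta_n^\beta(x+\alpha)$) and then applies ergodicity of $R_\alpha$ to upgrade positive measure to full measure, whereas you shortcut this step by invoking that Proposition~\ref{criterium1} already gives $\limsup_n\Theta_n^\beta$ and $\liminf_n\Theta_n^\beta$ as a.e.\ \emph{constants}, so a positive-measure set where $\limsup\ge K$ forces that constant to be $\ge K$. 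Your packaging is slightly cleaner, but the underlying mechanism (ergodicity, ultimately through Proposition~\ref{invariant probs better}) is the same.
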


\begin{proof}[Proof of Proposition~\ref{criterium2}]
According to Proposition~\ref{criterium1} it suffices to show that 
\begin{equation} \label{limsupinfinfty}
\limsup_{n \to \infty} \Theta_n^\beta(x) = \infty 
\qquad \text{and}\qquad 
\liminf_{n \to \infty} \Theta_n^\beta(x) = 0
\end{equation}
for $\la$-almost every $x$. 

It is straightforward to check that the set on which \eqref{limsupinfinfty} holds is $R_\alpha$-invariant. Thus to show that $\phi^t$ has an extreme historic behaviour, it suffices to show that this set has positive $\la$-measure. 
  
Let 
\[A_{K,n} = \big\{ x \in \T : \Theta_n^\beta(x) >K\big\},\quad A_K = \bigcup_{n \in \N} A_{K,n}, \quad A = \bigcap_{K>1} A_K\]
and
\[B_{K,n} = \big\{x \in \T: \Theta_n^\beta(x) < 1/K \big\},\quad B_K = \bigcup_{n \in \N} B_{K,n}, \quad B = \bigcap_{K>1} B_K.\]
Note that $\limsup_{n\to \infty} \Theta_n^\beta(x) = \infty$ if and only if $x \in A$, and that $\liminf_{n \to \infty} \Theta_n^\beta(x) =0$ if and only if $x \in B$.

The hypothesis in Proposition~\ref{criterium2} implies that $\la(A_K) \geq C$ for every $K>1$. Also, it follows from Lemma~\ref{symmetry} that 
\begin{align*}
\la(B_{K,n}) & = \la \big\{x \in \T: \Theta_n^\beta(x) < 1/K \big\} \\ 
& = \la \big\{ x \in \T : \Theta_n^\beta \circ J_n^\beta (x) > K \big\} \\
& = \la(A_{K,n}) \geq C.
\end{align*}
It therefore follows from our hypothesis that $\la(B_{K}) \geq C$. Note that $A_K$ is a decreasing family in the sense that $A_{K'} \subset A_K$ whenever $K' \geq K$. It follows that $\la(A) \geq C$. The proof that $\la(B)\geq C$ is analogous. 
\end{proof}

\begin{proof}[Proof of Proposition~\ref{criterium3}]
In view of Proposition~\ref{criterium1} it suffices to show that 
\begin{equation}\label{limsupinfty}
\limsup_{n \to \infty} \Theta_n^\beta(x) = \infty 
\end{equation}
and
\begin{equation}\label{liminfzero}
\liminf_{n \to \infty} \Theta_n^\beta(x) = 0
\end{equation}
hold for $\la$-almost every $x$. 

The hypothesis implies that, given any $K>1$, there exists $C>0$ such that 
\[ \la \left( \bigcup_{k \geq n} \big\{x : \Theta_k^\beta(x) > K \big\} \right) \geq C\]
for every $n \geq 1$. 
Note that the sequence $\bigcup_{k \geq n} \{x : \Theta_k^\beta(x) > K \}$ is decreasing in $n$ so that 
\[\la \left( \bigcap_{n \geq 1} \bigcup_{k\geq n}
\big\{x: \Theta_k^\beta(x)>K\big\}\right) \geq C. \]
 In particular, the set
\[A_K=\bigcup_{\epsilon>0} \bigcap_{n \geq 1} \bigcup_{k\geq n}
\big\{x: \Theta_k^\beta (x) > K + \epsilon \big\} \]
has positive $\la$-measure for every $K>1$ (the term $\epsilon$ is added to ensure the invariance of the set $A_K$). Analogously, using Lemma~\ref{symmetry}, one sees that
\[ B_K = \bigcup_{\epsilon>0} \bigcap_{n \geq 1} \bigcup_{k\geq n}
\left\{x: \Theta_k^\beta (x) < \frac{1}{K+\epsilon} \right\} \]
has positive $\la$-measure for every $K>1$.

The sets $A_K$ and $B_K$ are $R_\alpha$-invariant. Indeed, $A_K$ is the set of points on which $\limsup_n \Theta_n^\beta $ is larger than $K$ and $B_K$ is the set on which $\liminf_n \Theta_n^\beta $ is smaller than $1/K$, and it is straightforward to check that $\Theta_n^\beta(x) \sim_{n} \Theta_n^\beta(x+\alpha)$ for every $x$.

By ergodicity of $R_\alpha$ we conclude that $A_K$ and $B_K$ have full $\la$-measure. Note that $A_K$ and $B_K$ are decreasing families. Thus
\[A=\bigcap_{K>1} A_K\qquad \text{and}\qquad B=\bigcap_{K>1} B_K\]
are also of full $\la$-measure. But $A$ and $B$ are the sets on which (\ref{limsupinfty}) and (\ref{liminfzero}) hold, respectively. The proof is therefore complete.
\end{proof}

%
%

%
%
%
%

\section{Circle rotations} \label{PartRotations}

\subsection{Diophantine approximation theory}

Recall that for any $x\in\T = \R/\Z$ we define its norm as 
\[ \|x\| = \min_{m\in\Z} |\tilde x-m|,\]
where $\tilde x$ is a lift of $x$ to $\R$.

Let $\alpha>0$ be an irrational number, and let $R_\alpha: \T \to \T$ be its associated circle rotation. We write $\alpha = [a_0;a_1,a_2\cdots]$ for its expansion as a continued fraction. We denote by $p_n/q_n = [a_0;a_1,a_2,\cdots,a_n]$ and $\alpha_n$ such that
\[\alpha = [a_0;a_1,\cdots, a_{n-1},\alpha_n].\]

The sequence $q_n$ is characterized by the properties
\begin{itemize}
\item $q_0 = 1$, and
\item $q_n = \min \{ k>q_{n-1}: \| k \alpha \| < \|q_{n-1} \alpha \| \} $ for every $n \geq 1$.
\end{itemize}
We also set $\rho_n = q_n \alpha - p_n$ and $\lambda^{(n)} = |\rho_n|$. For $k \in \N$, let $\cO(k)$ denote the orbit $\{R_\alpha^i(0): 0 \leq i \leq k-1 \}$, and
\begin{align*}
m(\cO(k)) & = \min_{x \in \cO(k)} \min_{y \in \cO(k) \setminus{x}} \|x-y\|, \text{ and}\\
M(\cO(k)) & = \max_{x \in \cO(k)} \min_{y \in \cO(k) \setminus{x}} \| x-y \|
\end{align*}
be the smallest resp. largest distance between two consecutive points of the orbit $\cO(k)$ on $\T$ (``gaps''). The following lemma recalls classical facts of Diophantine approximation theory, that will be used in the sequel (some of them can be deduced from renormalization properties, see Figure \ref{FigRenor0}).

\begin{lemma}\label{properties}
\begin{equation}\label{EqContFrac0}
\lambda^{(n)} = (-1)^n \rho_n = \min_{0\le j< q_{n+1}}\|j\alpha\| ;
\end{equation}
\begin{equation}\label{EqContFrac1}
\frac{\lambda^{(n-2)}}{\lambda^{(n-1)}} = \alpha_n \quad \text{and} \quad a_n = \lfloor \alpha_n \rfloor,
\end{equation}
\begin{equation}\label{EqTotTime}
q_{n+1} = q_{n}a_{n+1} + q_{n-1};
\end{equation}
\begin{equation}\label{Eqaeta}
\lambda^{(n-1)}= a_{n+1} \lambda^{(n)} + \lambda^{(n+1)};
\end{equation}
\begin{equation}\label{EqLambdaQ}
q_{n+1} \ge q_n \quad \text{and} \quad\frac{1}{2q_{n+1}} \le \frac{1}{q_{n+1}+q_n} < \lambda^{(n)} < \frac{1}{q_{n+1}}.
\end{equation}
\begin{align*}
 m(\cO(q_n)) & = \lambda^{(n-1)} > \frac{1}{q_{n-1}+q_n};\\
 M(\cO(q_n)) & = \lambda^{(n)} + \lambda^{(n-1)} < \frac{1}{q_n} + \frac{1}{q_{n+1}};
 \end{align*}
given any integer $m$ such that $0<m<q_{n+1}$, we write the Euclidean division $m=\ell q_n+r$, and then
\begin{equation}\label{EqOrbit}
\cO(m) = \left( \bigcup_{i=0}^{\ell-1} R_{\rho_n}^i \cO(q_n)\right) \cup R_{\rho_n}^\ell \cO(r).
\end{equation}
\end{lemma}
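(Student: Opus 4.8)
The plan is to derive every item from the two standard recurrences for the convergents together with the sign alternation of $\rho_n$, reserving the last two lines for the three–distance theorem.

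First I would record $p_n=a_np_{n-1}+p_{n-2}$ and $q_n=a_nq_{n-1}+q_{n-2}$ (with $q_{-1}=0$, $q_0=1$, $p_{-1}=1$, $p_0=a_0$), which is exactly \eqref{EqTotTime}, along with $p_{n-1}q_n-p_nq_{n-1}=(-1)^n$. Combining these with $\rho_n=q_n\alpha-p_n$ yields $\rho_{n+1}=a_{n+1}\rho_n+\rho_{n-1}$, the ``Wronskian'' relation $q_n\rho_{n-1}-q_{n-1}\rho_n=\pm1$, and, from $\alpha=(\alpha_nq_{n-1}+q_{n-2})^{-1}(\alpha_np_{n-1}+p_{n-2})$, the relation $\alpha_n\rho_{n-1}=-\rho_{n-2}$. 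Next I would establish the sign alternation $\rho_n=(-1)^n\lambda^{(n)}$: the convergents $p_n/q_n$ approach $\alpha$ alternately from below and above (a standard consequence of the recurrences together with $q_n\to\infty$), so $\rho_n$ has sign $(-1)^n$; this is \eqref{EqContFrac0} apart from its best–approximation clause. Feeding the signs back into $\rho_{n+1}=a_{n+1}\rho_n+\rho_{n-1}$ gives \eqref{Eqaeta}, and in particular that $\lambda^{(n)}$ is strictly decreasing; dividing $\alpha_n\rho_{n-1}=-\rho_{n-2}$ by $\rho_{n-1}$ and using the signs gives $\alpha_n=\lambda^{(n-2)}/\lambda^{(n-1)}$, while $a_n=\lfloor\alpha_n\rfloor$ is just the definition of the partial quotients, so \eqref{EqContFrac1} follows.

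For the quantitative bounds \eqref{EqLambdaQ} I would take absolute values in the Wronskian relation, which (since $\rho_{n-1}$ and $\rho_n$ have opposite signs) becomes $q_n\lambda^{(n-1)}+q_{n-1}\lambda^{(n)}=1$, equivalently $q_{n+1}\lambda^{(n)}+q_n\lambda^{(n+1)}=1$. Because $0<q_n\lambda^{(n+1)}<q_n\lambda^{(n)}<q_{n+1}\lambda^{(n)}$, this sandwiches $\lambda^{(n)}$ strictly between $1/(q_{n+1}+q_n)$ and $1/q_{n+1}$, and $q_{n+1}\ge q_n$ (as $a_{n+1}\ge1$) gives $1/(2q_{n+1})\le 1/(q_{n+1}+q_n)$. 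For the best–approximation identity in \eqref{EqContFrac0} (read with $j\neq0$) I would invoke the characterization of $(q_n)$ as the sequence of record times of $j\mapsto\|j\alpha\|$ stated just above the Lemma, together with the monotonicity of $\lambda^{(n)}$: on each block $q_m\le j<q_{m+1}$ that characterization forbids a value below $\|q_m\alpha\|=\lambda^{(m)}$, so $\|j\alpha\|\ge\lambda^{(m)}\ge\lambda^{(n)}$ for all $m\le n$, with equality attained at $j=q_n$.

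Finally, the orbit description \eqref{EqOrbit} is essentially a reindexing: since $\rho_n\equiv q_n\alpha\pmod1$ one has $R_{\rho_n}^i\cO(q_n)=\{k\alpha:iq_n\le k<(i+1)q_n\}$ in $\T$, so splitting $\{0,\dots,m-1\}$ into the $\ell$ length‑$q_n$ blocks and the length‑$r$ tail produces the stated union. The formulas for $m(\cO(q_n))$ and $M(\cO(q_n))$ are the $N=q_n$ case of the three–distance theorem: $\cO(q_n)$ partitions $\T$ into $q_n$ arcs taking exactly the two lengths $\lambda^{(n-1)}$ and $\lambda^{(n-1)}+\lambda^{(n)}$, with multiplicities $q_n-q_{n-1}$ and $q_{n-1}$ (note $(q_n-q_{n-1})\lambda^{(n-1)}+q_{n-1}(\lambda^{(n-1)}+\lambda^{(n)})=q_n\lambda^{(n-1)}+q_{n-1}\lambda^{(n)}=1$ by the Wronskian identity). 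I would prove this by induction on $n$, using the reindexing in \eqref{EqOrbit} to see that passing from $\cO(q_{n-1})$ to $\cO(q_n)$ inserts rotated copies of $\cO(q_{n-1})$, each new point cutting a longest arc into a shortest one and a longest one; the displayed inequalities then follow from \eqref{EqLambdaQ}. This last step — the three–distance/two–gap computation — is the only genuinely non‑formal ingredient; everything else is routine manipulation of the recurrences, so in the write‑up one could equally just cite a standard reference for the three–gap theorem.
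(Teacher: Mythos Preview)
The paper does not actually prove this lemma: it is introduced as ``classical facts of Diophantine approximation theory'' with only a pointer to the renormalization picture (Figure~\ref{FigRenor0}) for intuition, and no argument is given. Your sketch is correct and is precisely the standard derivation one would supply if asked to fill this in --- the recurrences, the sign alternation of $\rho_n$, the identity $q_{n+1}\lambda^{(n)}+q_n\lambda^{(n+1)}=1$ for the sandwich bounds, and the two--gap case of the three--distance theorem for $m(\cO(q_n))$ and $M(\cO(q_n))$. Your parenthetical ``read with $j\neq 0$'' for \eqref{EqContFrac0} is also apt, since as written the minimum over $0\le j<q_{n+1}$ includes $j=0$; the intended statement is of course over $1\le j<q_{n+1}$.
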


\subsection{Renormalization of rotations}\label{subsecrenor}

We now recall some facts about renormalization intervals for circle rotations and their link with continued fractions. This renormalization is at the basis of the ideas of some in the proofs we will present in the sequel and will give nice geometric interpretations of our arguments. We will reuse the notations  of Sina\u{\i} and Ulcigrai. \cite[\S 1.1]{MR2478478} (see also Sina\u{\i} \cite[Lecture 9]{MR1258087} and the nice visualizations of Hariss and Arnoux \cite{VideoArnoux}).


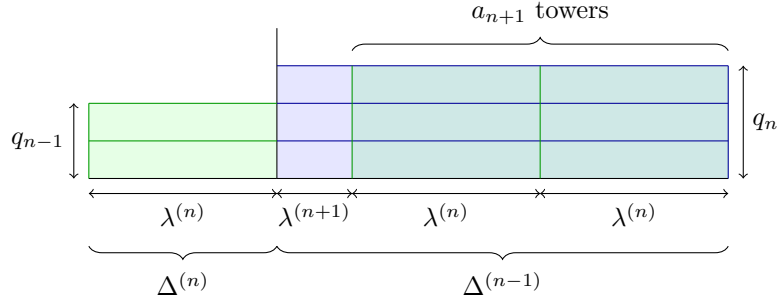
\begin{figure}
\begin{tikzpicture}[scale=1]
\fill[fill=green, opacity=.1] (0,0) rectangle (-2.5,1);
\fill[fill=blue, opacity=.1] (0,0) rectangle (6,1.5);
\fill[fill=green, opacity=.1] (1,0) rectangle (6,1.5);
\draw (0,0) -- (0,2);
\draw (0,0) -- (6,0);
\draw[color=blue!60!black] (0,.5) -- (6,.5);
\draw[color=blue!60!black] (0,1) -- (6,1);
\draw[color=blue!60!black] (0,1.5) -- (6,1.5);
\draw[color=blue!60!black] (6,0) -- (6,1.5);

\draw (-2.5,0) -- (0,0);
\draw[color=green!60!black] (-2.5,.5) -- (0,.5);
\draw[color=green!60!black] (-2.5,1) -- (0,1);
\draw[color=green!60!black] (-2.5,0) -- (-2.5,1);
\draw[color=green!60!black] (1,0) -- (1,1.5);
\draw[color=green!60!black] (3.5,0) -- (3.5,1.5);

\draw [decorate,decoration={brace,amplitude=5pt},xshift=0,yshift=-.9cm]
(6,0) -- (0,0) node [black,midway,yshift=-0.5cm] {$\Delta^{(n-1)}$};
\draw [decorate,decoration={brace,amplitude=5pt},xshift=0,yshift=-.9cm]
(0,0) -- (-2.5,0) node [black,midway,yshift=-0.5cm] {$\Delta^{(n)}$};

\draw [decorate,decoration={brace,amplitude=5pt},xshift=0,yshift=.2cm]
(1,1.5) -- (6,1.5) node [black,midway,yshift=0.5cm] {$a_{n+1}$ towers};

\draw[<->] (6.2,0) --node[midway, right]{$q_n$} (6.2,1.5);
\draw[<->] (-2.7,0) --node[midway, left]{$q_{n-1}$} (-2.7,1);
\draw[<->] (-2.5,-.2) --node[midway, below]{$\lambda^{(n)}$} (0,-.2);
\draw[<->] (1,-.2) --node[midway, below]{$\lambda^{(n+1)}$} (0,-.2);
\draw[<->] (1,-.2) --node[midway, below]{$\lambda^{(n)}$} (3.5,-.2);
\draw[<->] (3.5,-.2) --node[midway, below]{$\lambda^{(n)}$} (6,-.2);
\end{tikzpicture}
\caption{The renormalization interval $\Delta(n-1) = \Delta^{(n)} \cup \Delta^{(n-1)}$ and the associated quantities.}\label{FigRenor0}
\end{figure}

Let
\[\Delta^{(n)} = \left\{\begin{array}{ll}
[0,\{q_n\alpha\})\quad & \text{if $n$ is even}\\
{[}\{q_n\alpha\},1)\quad & \text{if $n$ is odd.}
\end{array}\right.\]
We also denote $\Delta_j^{(n)} = R_\alpha^j (\Delta^{(n)})$. Remark that the length of $\Delta_j^{(n)}$ satisfies $\la(\Delta^{(n)}_j) = \lambda^{(n)}$, and that $\lambda^{(n-1)} = \lambda^{(n+1)} + a_{n+1}\lambda^{(n)}$ (it is Equation \eqref{Eqaeta}, which can be observed on  Figure \ref{FigRenor0}).

For any $n$, the collection made of the intervals $(\Delta_j^{(n)})_{0\le j < q_{n+1}}$ and $(\Delta_j^{(n+1)})_{0\le j < q_n}$ form a partition $\xi^{(n)}$ of $[0,1)$. It is decomposed into two towers
\begin{equation}\label{EqTower}
Z^{(n)}_l = \bigcup_{j=0}^{q_{n+1}-1} \Delta^{(n)}_j \quad \text{and} \quad Z^{(n)}_s = \bigcup_{j=0}^{q_{n}-1} \Delta^{(n+1)}_j,
\end{equation}
called respectively the \emph{large} and \emph{small} towers.

The interval $\Delta(n) = \Delta^{(n)} \cup \Delta^{(n+1)}$ is called the \emph{$n^\text{th}$ renormalization interval} of the rotation of angle $\alpha$ on $\T$. It can be seen as a subset of $\T$, so that one can define the \emph{induced map} $T^{(n)}$ as the first return map of $R_\alpha$ on $\Delta(n)$. This induced map $T^{(n)}$ is a rotation of angle $\pm\lambda^{(n)}$ (the sign depending of the parity of $n$). Moreover, the return time is constant  equal to $q_{n+1}$ on $\Delta^{(n)}$ and constant equal to $q_n$ on $\Delta^{(n+1)}$ (see Figure \ref{FigRenor1}).

For $x\in \T$, we denote $x^{(n)}$ the projection of $x$ on $\Delta(n)$. More precisely
\begin{itemize}
\item if $x\in \Delta^{(n)}_j$ for some $0\le j <q_{n+1}$, then $x^{(n)} = R_\alpha^{-j}(x)$;
\item if $x\in \Delta^{(n+1)}_j$ for some $0\le j <q_{n}$, then $x^{(n)} = R_\alpha^{-j}(x)$.
\end{itemize}
\medskip

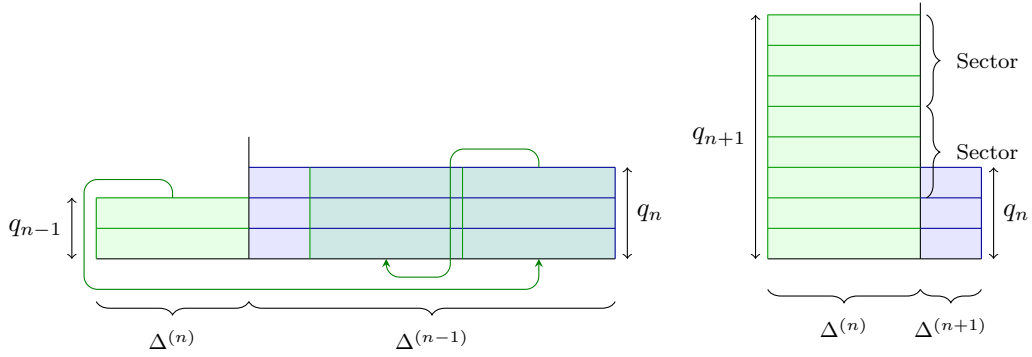
\begin{figure}
\resizebox{\textwidth}{!}{
\begin{tikzpicture}[scale=.8]
\fill[fill=green, opacity=.1] (0,0) rectangle (-2.5,1);
\fill[fill=blue, opacity=.1] (0,0) rectangle (6,1.5);
\fill[fill=green, opacity=.1] (1,0) rectangle (6,1.5);
\draw (0,0) -- (0,2);
\draw (0,0) -- (6,0);
\draw[color=blue!60!black] (0,.5) -- (6,.5);
\draw[color=blue!60!black] (0,1) -- (6,1);
\draw[color=blue!60!black] (0,1.5) -- (6,1.5);
\draw[color=blue!60!black] (6,0) -- (6,1.5);

\draw (-2.5,0) -- (0,0);
\draw[color=green!60!black] (-2.5,.5) -- (0,.5);
\draw[color=green!60!black] (-2.5,1) -- (0,1);
\draw[color=green!60!black] (-2.5,0) -- (-2.5,1);
\draw[color=green!60!black] (1,0) -- (1,1.5);
\draw[color=green!60!black] (3.5,0) -- (3.5,1.5);

\draw [decorate,decoration={brace,amplitude=5pt},xshift=0,yshift=-.7cm]
(6,0) -- (0,0) node [black,midway,yshift=-0.5cm] {\footnotesize $\Delta^{(n-1)}$};
\draw [decorate,decoration={brace,amplitude=5pt},xshift=0,yshift=-.7cm]
(0,0) -- (-2.5,0) node [black,midway,yshift=-0.5cm] {\footnotesize $\Delta^{(n)}$};

\draw[->,>=stealth,rounded corners=5pt,color=green!50!black] (-1.25,1) -- (-1.25,1.3) -- (-2.7,1.3) -- (-2.7,-.5) -- (4.75,-.5) -- (4.75,0);

\draw[->,>=stealth,rounded corners=5pt,color=green!50!black] (4.75,1.5) -- (4.75,1.8) -- (3.3,1.8) -- (3.3,-.3) -- (2.25,-.3) -- (2.25,0);

\draw[<->] (6.2,0) --node[midway, right]{$q_n$} (6.2,1.5);
\draw[<->] (-2.9,0) --node[midway, left]{$q_{n-1}$} (-2.9,1);

\fill[fill=green, opacity=.1] (11,0) rectangle (8.5,4);
\fill[fill=blue, opacity=.1] (11,0) rectangle (12,1.5);
\draw (11,0) -- (11,4.2);
\draw (11,0) -- (12,0);
\draw[color=blue!60!black] (11,.5) -- (12,.5);
\draw[color=blue!60!black] (11,1) -- (12,1);
\draw[color=blue!60!black] (11,1.5) -- (12,1.5);
\draw[color=blue!60!black] (12,0) -- (12,1.5);

\draw (8.5,0) -- (11,0);
\draw[color=green!60!black] (8.5,.5) -- (11,.5);
\draw[color=green!60!black] (8.5,1) -- (11,1);
\draw[color=green!60!black] (8.5,1.5) -- (11,1.5);
\draw[color=green!60!black] (8.5,2) -- (11,2);
\draw[color=green!60!black] (8.5,2.5) -- (11,2.5);
\draw[color=green!60!black] (8.5,3) -- (11,3);
\draw[color=green!60!black] (8.5,3.5) -- (11,3.5);
\draw[color=green!60!black] (8.5,4) -- (11,4);
\draw[color=green!60!black] (8.5,0) -- (8.5,4);

\draw [decorate,decoration={brace,amplitude=5pt},xshift=0,yshift=-.5cm]
(12,0) -- (11,0) node [black,midway,yshift=-0.5cm] {\footnotesize $\Delta^{(n+1)}$};
\draw [decorate,decoration={brace,amplitude=5pt},xshift=0,yshift=-.5cm]
(11,0) -- (8.5,0) node [black,midway,yshift=-0.5cm] {\footnotesize $\Delta^{(n)}$};

\draw[<->] (12.2,0) --node[midway, right]{$q_n$} (12.2,1.5);
\draw[<->] (8.3,0) --node[midway, left]{$q_{n+1}$} (8.3,4);

\draw [decorate,decoration={brace,amplitude=5pt},xshift=.1cm,yshift=0cm]
(11,2.5) -- (11,1) node [black,midway,xshift=0.8cm] {\footnotesize Sector};
\draw [decorate,decoration={brace,amplitude=5pt},xshift=.1cm,yshift=0cm]
(11,4) -- (11,2.5) node [black,midway,xshift=0.8cm] {\footnotesize Sector};
\end{tikzpicture}
}

\caption{Renormalization intervals $\Delta(n-1)$ (left) and $\Delta(n)$ (right) for odd $n$. The green arrows denote the dynamics of the intervals, i.e. the way to build the dynamics of the partition $\xi^{(n)}$ from that of $\xi^{(n-1)}$.}\label{FigRenor1}
\end{figure}

\begin{figure}
\resizebox{\textwidth}{!}{
\begin{tikzpicture}
\fill[fill=green, opacity=.1] (0,0) rectangle (-2.5,1);
\fill[fill=blue, opacity=.1] (0,0) rectangle (6,1.5);
\fill[fill=green, opacity=.1] (1,0) rectangle (6,1.5);
\draw (0,0) -- (0,2);
\draw (0,0) -- (6,0);
\draw[color=blue!60!black] (0,.5) -- (6,.5);
\draw[color=blue!60!black] (0,1) -- (6,1);
\draw[color=blue!60!black] (0,1.5) -- (6,1.5);
\draw[color=blue!60!black] (6,0) -- (6,1.5);

\draw (-2.5,0) -- (0,0);
\draw[color=green!60!black] (-2.5,.5) -- (0,.5);
\draw[color=green!60!black] (-2.5,1) -- (0,1);
\draw[color=green!60!black] (-2.5,0) -- (-2.5,1);
\draw[color=green!60!black] (1,0) -- (1,1.5);
\draw[color=green!60!black] (3.5,0) -- (3.5,1.5);

\draw[color=red!70!black, thick] (0,.2) -- (0,-.2);
\draw[color=red!70!black, thick] (2.5,.2) -- (2.5,-.2);
\draw[color=red!70!black, dashed, thick] (2.5,0) -- (2.5,1.5);
\draw[color=red!70!black, thick] (5,.2) -- (5,-.2);
\draw[color=red!70!black, dashed, thick] (5,0) -- (5,1.5);
\draw[color=red!70!black, thick] (-1,.2) -- (-1,-.2);
\draw[color=red!70!black, dashed, thick] (-1,0) -- (-1,1);

\draw[->,shorten >=2pt,shorten <=2pt,color=red!70!black] (0,-.2) -- (2.5,-.2);
\draw[->,shorten >=2pt,shorten <=2pt,color=red!70!black] (2.5,-.2) -- (5,-.2);
\draw[shorten <=2pt,color=red!70!black] (5,-.2) -- (5.8,-.2);
\draw[densely dotted,color=red!70!black] (5.8,-.2) -- (6,-.2);
\draw[->,shorten >=2pt,color=red!70!black] (-2.3,-.2) -- (-1,-.2);
\draw[densely dotted,color=red!70!black] (-2.5,-.2) -- (-2.3,-.2);

\draw[->,>=stealth,rounded corners=5pt,color=red!50!black, dotted, thick] (0,-.2) -- (0,-.5) -- (.8,-.5) -- (.8,2) -- (2.5,2) -- (2.5,1.6);
\draw[->,>=stealth,rounded corners=5pt,color=red!50!black, dotted, thick] (2.5,-.2) -- (2.5,-.5) -- (3.2,-.5) -- (3.2,2) -- (5,2) -- (5,1.6);
\draw[rounded corners=5pt,color=red!50!black, dotted, thick] (5,-.2) -- (5,-.5) -- (5.7,-.5) -- (5.7,2) -- (6,2) ;
\draw[->,>=stealth,rounded corners=5pt,color=red!50!black, dotted, thick] (-2.5,2) -- (-1,2) -- (-1,1.1);

\draw[color=red!70!black]  (1.5,-.5) node{$\lambda^{(n)}$};


\fill[fill=green, opacity=.1] (11,0) rectangle (8.5,4);
\fill[fill=blue, opacity=.1] (11,0) rectangle (12,1.5);
\draw (11,0) -- (11,4.5);
\draw (11,0) -- (12,0);
\draw[color=blue!60!black] (11,.5) -- (12,.5);
\draw[color=blue!60!black] (11,1) -- (12,1);
\draw[color=blue!60!black] (11,1.5) -- (12,1.5);
\draw[color=blue!60!black] (12,0) -- (12,1.5);

\draw (8.5,0) -- (11,0);
\draw[color=green!60!black] (8.5,.5) -- (11,.5);
\draw[color=green!60!black] (8.5,1) -- (11,1);
\draw[color=green!60!black] (8.5,1.5) -- (11,1.5);
\draw[color=green!60!black] (8.5,2) -- (11,2);
\draw[color=green!60!black] (8.5,2.5) -- (11,2.5);
\draw[color=green!60!black] (8.5,3) -- (11,3);
\draw[color=green!60!black] (8.5,3.5) -- (11,3.5);
\draw[color=green!60!black] (8.5,4) -- (11,4);
\draw[color=green!60!black] (8.5,0) -- (8.5,4);

\draw[color=red!70!black, thick] (10,.2) -- (10,-.2);
\draw[color=red!70!black, thick] (11,.2) -- (11,-.2);
\draw[color=red!70!black, dashed, thick] (10,0) -- (10,4);
\draw[color=orange, thick] (9,.2) -- (9,-.2);
\draw[color=orange, densely dotted, thick] (9,0) -- (9,4);
\draw[color=orange, thick] (11.5,.2) -- (11.5,-.2);
\draw[color=orange, densely dotted, thick] (11.5,0) -- (11.5,1.5);

\draw[->,shorten >=2pt,shorten <=2pt,color=orange] (11,-.2) -- (10,-.2);
\draw[->,shorten >=2pt,shorten <=2pt,color=orange] (10,-.2) -- (9,-.2);
\draw[shorten <=2pt,color=orange] (9,-.2) -- (8.7,-.2);
\draw[densely dotted,color=orange] (8.7,-.2) -- (8.5,-.2);
\draw[densely dotted,color=orange] (12,-.2) -- (11.8,-.2);
\draw[->,shorten >=2pt,color=orange] (11.8,-.2) -- (11.5,-.2);

\draw[color=orange!80!black]  (10.5,-.6) node{$-\lambda^{(n+1)}$};

\end{tikzpicture}}

\caption{Set of preimages of 0 by the rotation in time $q_{n+1}$ (red, dashed) and $q_{n+2}$ (orange, dotted).}\label{FigRenor2}
\end{figure}
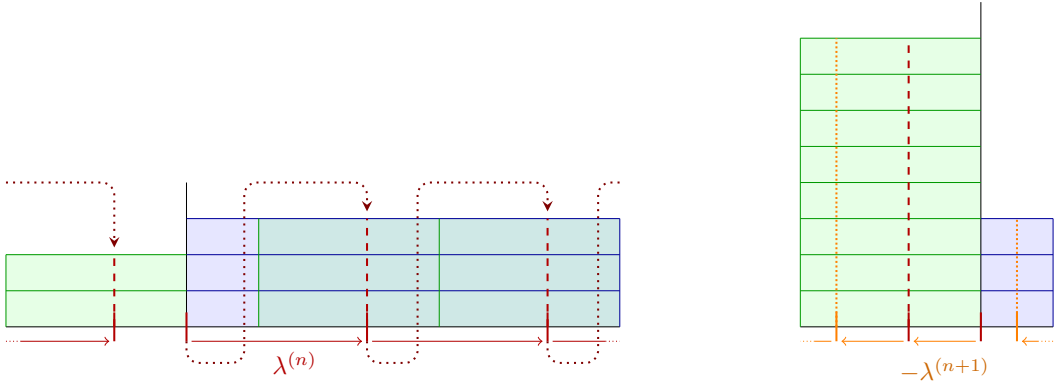

The time it takes for the pre-orbit $(R^{-j}_\alpha(0))_{j>0}$ of $0$ to visit every element of the partition $\xi^{(n)}$ is equal to $q_{n+2}$ (see Figure \ref{FigRenor2}). Indeed, it meets first any element $\Delta^{(n)}_j$ of the large tower $Z_l^{(n)}$ (defined by \eqref{EqTower}) exactly $\lfloor\frac{\lambda^{(n)}}{\lambda^{(n+1)}}\rfloor = a_{n+2}$ times, and then any element $\Delta^{(n+1)}_j$ of the small tower $Z_s^{(n)}$ once. The total time is thus equal to (see \eqref{EqTotTime})
\[
q_{n+1}\left\lfloor\frac{\lambda^{(n)}}{\lambda^{(n+1)}}\right\rfloor + q_n = q_{n+1}a_{n+2} + q_n = q_{n+2}.
\]
This finite pre-orbit is equal to the set of points in the tower $Z_l^{(n+1)}$ above the point of $\Delta^{(n+1)}$ within a distance $\lambda^{(n+2)}$ to 0.
\medskip

\label{Sectors}
To study Birkhoff sums, we will cut them into sums over ``sectors''. The tower $Z^{(n)}_l$ (of height $q_{n+1}$) can be decomposed into a ``basis'' of height $q_{n-1}$, which corresponds to $Z^{(n-1)}_s$, and $a_{n+1}$ groups of floors -- which we will call \emph{sectors} -- of heights $q_n$, made of the floors that project on the same interval of $\Delta^{(n-1)}$ (see Figure \ref{FigRenor1}).

\section{Some estimates}\label{SecTech}

In this whole section we fix $\alpha\notin \Q$ and use notations of the previous section about continued fractions.

For $y\in (0,1)$, we denote
\begin{equation}\label{EqDefPsi}
\psi_1(y) = \frac{1}{y},\quad \psi_2(y) = \frac{1}{1-y} \quad\text{and}\quad \psi(y) = \max\big(\psi_1(y), \psi_2(y)\big) .
\end{equation}
Remark that this implies that if $y$ is seen as an element of $\T$, then $\psi(y) = \|y\|^{-1}$, and moreover
\[\frac{\psi_1(y)+ \psi_2(y)}{2} \le \psi(y) \le \psi_1(y) + \psi_2(y).\]
In the sequel, we will use the notation $\psi(y)$ for $y\in\T$, by identifying the circle $\T$ with $[0,1)$.

For $y\in\T$, set (recall that $\psi$ is defined in \eqref{EqDefPsi})
\begin{equation}\label{EqDefS}
S(y) = \sum_{i=0}^{q_n-1} \psi\big(R_\alpha^{i}(y)\big)
\end{equation}
the Birkhoff sum over a sector.

\begin{lemma}\label{EqSellFinal}
Let $y\in \T$. We denote $y_0$ the point of the orbit $y,R_\alpha(y), \dots, R_\alpha^{q_n-1}(y)$ which is the closest to 0. Then,
\[S(y) \ge \psi(y_0) + \frac{\log q_n}{2 \lambda^{(n-1)}},\]
and
\[S(y) \le \psi(y_0) + \frac{4\log q_n}{\lambda^{(n-1)}}.\]
\end{lemma}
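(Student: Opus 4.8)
The plan is to analyze the Birkhoff sum over a sector by exploiting the combinatorial structure of the orbit $y, R_\alpha(y), \dots, R_\alpha^{q_n-1}(y)$. The first observation is that, since consecutive iterates of the rotation differ by $\alpha$, the points of this orbit are $q_n$ distinct points of $\T$, and by the three-distance theorem (or the renormalization description in Lemma~\ref{properties}), any interval of $\T$ of length $\lambda^{(n-1)}$ contains at most a bounded number (in fact at most two or three) of these points. More precisely, the set $\{R_\alpha^i(y) : 0 \le i < q_n\}$ has all its gaps bounded below by $m(\cO(q_n)) = \lambda^{(n-1)}$ once we include $y$ itself as a reference, so the points are $\lambda^{(n-1)}$-separated. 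Hence, ordering the orbit points by their distance to $0$, the $j$-th closest point $y_j$ satisfies $\|y_j\| \gtrsim j\,\lambda^{(n-1)}$ and $\|y_j\| \lesssim j\,\lambda^{(n-1)}$ up to universal constants, because the orbit fills up $\T$ with roughly evenly spaced points at scale $\lambda^{(n-1)}$ (there are $q_n$ points and $q_n \lambda^{(n-1)} \asymp 1$ by \eqref{EqLambdaQ}).

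From this spacing, the sum $S(y) = \sum_{i} \psi(R_\alpha^i(y)) = \sum_j \|y_j\|^{-1}$ splits as $\psi(y_0) = \|y_0\|^{-1}$ (the dominant term coming from the closest point) plus $\sum_{j \ge 1} \|y_j\|^{-1}$. For the upper bound, I would use $\|y_j\| \ge c\, j\, \lambda^{(n-1)}$ for some universal $c$ (coming from the separation property: the $j$-th closest point to $0$ is at distance at least roughly $j$ times the minimal gap), so that $\sum_{j=1}^{\lfloor q_n/2\rfloor} \|y_j\|^{-1} \le \frac{1}{c\lambda^{(n-1)}} \sum_{j=1}^{q_n} \frac 1j \le \frac{C \log q_n}{\lambda^{(n-1)}}$; tracking the constants should give the factor $4$ (or one absorbs the harmless rounding into the stated constant). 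For the lower bound, one uses that among the $q_n$ points there are at least $\lfloor q_n/2 \rfloor$ of them at distance at most $1/2$ with $\|y_j\| \le C' j \lambda^{(n-1)}$, hence $\sum_{j \ge 1}\|y_j\|^{-1} \ge \frac{1}{C'\lambda^{(n-1)}} \sum_{j=1}^{q_n/2} \frac 1j \ge \frac{\log q_n}{2\lambda^{(n-1)}}$ for the right normalization of constants.

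The key input making the spacing precise is the renormalization picture: the orbit of length $q_n$ of an arbitrary point $y$ looks, up to translation, like the orbit $\cO(q_n)$ of $0$, whose gap structure is exactly described by Lemma~\ref{properties} (all gaps are either $\lambda^{(n-1)}$ or $\lambda^{(n-1)}+\lambda^{(n)} < \lambda^{(n-1)}+ \lambda^{(n-1)}$, so gaps are between $\lambda^{(n-1)}$ and $2\lambda^{(n-1)}$). So the $j$-th nearest point to any fixed target is at distance between $\lfloor j/2\rfloor \lambda^{(n-1)}$ and $2j\,\lambda^{(n-1)}$, roughly. I would phrase this as a short sublemma: if a finite set $F \subset \T$ has all consecutive gaps in $[\ell, 2\ell]$ and $\card F = N$, then its $j$-th closest point to any $z \in \T$ lies at distance in $[\frac{j-1}{2}\ell, 2j\ell]$ for $1 \le j \le N/2$.

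The main obstacle, I expect, will be bookkeeping the constants so that the bound genuinely reads $\psi(y_0) + \frac{\log q_n}{2\lambda^{(n-1)}}$ from below and $\psi(y_0) + \frac{4\log q_n}{\lambda^{(n-1)}}$ from above, rather than just $\psi(y_0) + \Theta(\log q_n / \lambda^{(n-1)})$. This requires being slightly careful about: (a) whether $y_0$ is counted among the $y_j$ or separated out; (b) the precise comparison between the harmonic sum $\sum 1/j$ and $\log q_n$ (one has $\log q_n \le \sum_{j=1}^{q_n} 1/j \le 1 + \log q_n$), and the fact that summing only over $j \le q_n/2$ still gives at least $\log(q_n/2) = \log q_n - \log 2$; (c) handling the points on "the other side" (near $1 \equiv 0$) — but by the definition of $\psi$ via $\max(\psi_1,\psi_2)$ and the equivalence $\psi(y) \asymp \psi_1(y)+\psi_2(y)$, this only costs a factor $2$ which is comfortably inside the stated constants. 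None of these steps is deep; the content is entirely the three-distance/renormalization spacing estimate, and the rest is careful arithmetic with logarithms and the bounds $\frac{1}{2q_{n}} \le \lambda^{(n-1)} \le \frac 1{q_{n}}$ from \eqref{EqLambdaQ} used to translate $q_n \lambda^{(n-1)} \asymp 1$.
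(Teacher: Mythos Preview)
Your proposal is correct and follows essentially the same approach as the paper: both exploit the three-distance gap structure (all gaps in $[\lambda^{(n-1)}, \lambda^{(n-1)}+\lambda^{(n)}] \subset [\lambda^{(n-1)}, 2\lambda^{(n-1)}]$) to show the $j$-th orbit point lies at distance $\asymp j\lambda^{(n-1)}$ from $0$, and then compare the remaining sum with a harmonic series. The only cosmetic difference is that the paper first reflects the orbit so that $y_0 \in (1/2,1]$ and orders the remaining points by their position in $(0,1)$ (bounding $\psi_1$ and $\psi_2$ separately for the upper bound, and using $\psi \ge \psi_1$ for the lower bound), whereas you order directly by $\|\cdot\|$; the underlying estimates and constants are the same.
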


In the sequel, we will use repeatedly the following trivial fact, obtained by comparison with integral (for the second part, the comparison is done with the logarithmic integral function).

\begin{lemma}\label{LemSerHarmo}
For any $k_0 \ge 2$,
\[\sum_{k=k_0}^N \frac{1}{k} \ge \log\left(\frac{N+1}{k_0}\right) \qquad \text{and} \qquad \sum_{k=1}^N \frac{1}{k} \le \log(3N).\]

Moreover, there exists $C> 0 $ such that 
\[\sum_{k=2}^a \frac{1}{\log k} \leq \frac{C a}{\log a}\]
for every integer $a \geq 2$.
\end{lemma}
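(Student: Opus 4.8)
The statement collects three elementary estimates; I would prove each by the standard integral comparison technique. For the first inequality, I would compare the sum $\sum_{k=k_0}^N \frac1k$ with $\int_{k_0}^{N+1} \frac{dx}{x}$: since $1/x$ is decreasing, on each interval $[k, k+1]$ one has $\frac1k \ge \int_k^{k+1}\frac{dx}{x}$, and summing over $k$ from $k_0$ to $N$ yields $\sum_{k=k_0}^N \frac1k \ge \int_{k_0}^{N+1}\frac{dx}{x} = \log\!\big(\frac{N+1}{k_0}\big)$. For the second inequality, I would isolate the $k=1$ term and compare the rest with $\int_1^N \frac{dx}{x}$: using $\frac1k \le \int_{k-1}^{k}\frac{dx}{x}$ for $k\ge 2$ gives $\sum_{k=1}^N \frac1k \le 1 + \log N$, and then $1 + \log N \le \log 3 + \log N = \log(3N)$ since $\log 3 > 1$. (One should note $k_0 \ge 2$ is needed only to make $\log(N+1)/k_0$ the natural bound; the hypothesis causes no trouble.)

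For the third estimate, about $\sum_{k=2}^a \frac{1}{\log k}$, I would again use that $1/\log x$ is decreasing on $[2,\infty)$, so $\sum_{k=2}^a \frac1{\log k} \le \frac1{\log 2} + \int_2^a \frac{dx}{\log x} = \frac{1}{\log 2} + \operatorname{li}(a) - \operatorname{li}(2)$, the logarithmic integral. The point is then the classical asymptotic $\operatorname{li}(a) \sim a/\log a$; more precisely, splitting the integral at $\sqrt a$ gives $\int_2^{\sqrt a}\frac{dx}{\log x} \le \frac{\sqrt a}{\log 2}$ and $\int_{\sqrt a}^a \frac{dx}{\log x} \le \frac{a}{\log\sqrt a} = \frac{2a}{\log a}$, so the whole sum is $O(a/\log a)$, and absorbing the lower-order terms $\frac{1}{\log 2}$ and $\frac{\sqrt a}{\log 2}$ (both $o(a/\log a)$, hence bounded by a constant multiple of it for $a \ge 2$) yields a uniform constant $C$.

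There is no real obstacle here — the only mild subtlety is the third part, where one must be a little careful that the constant $C$ can be chosen uniformly for all integers $a \ge 2$ and not just asymptotically; this is handled by the split-the-integral argument above together with the observation that for $a$ in any bounded range the inequality holds trivially by enlarging $C$. I would present the argument in three short displayed computations, one per claimed inequality, and remark that these are the only forms in which the estimates are used in the sequel.
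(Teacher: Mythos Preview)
Your proposal is correct and follows exactly the approach indicated in the paper, which merely states that the inequalities are ``obtained by comparison with integral (for the second part, the comparison is done with the logarithmic integral function).'' You have simply supplied the details of this integral comparison, including the standard split at $\sqrt{a}$ for the logarithmic integral bound.
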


\begin{proof}[Proof of Lemma~\ref{EqSellFinal}]
Fix any point $y\in\T$, and consider its orbit $\mathcal O = \{y,R_\alpha(y),\dots,$ $ R_\alpha^{q_n-1}(y)\}$ of length $q_n$. We can denote $y_0$ the point of smallest norm of the orbit of $y$ of length $q_n$ and write $\cO^*=\cO \setminus \{y_0\}$. Note that $\| y \| =\| -y \|$ for every $y \in \T$ so, by swapping from $\cO$ to $-\cO = \{-y, -R_\alpha(y), \ldots, -R_\alpha^{q_n-1}(y) \}$ if necessary, we suppose that $y_0\in (1/2,1]$. 

Recall that Lemma~\ref{properties} says that the largest gaps in $\mathcal{O}$ are of size $\lambda^{(n)}+\lambda^{(n-1)}$. 
So if we write $\cO^*=\{y_1, \ldots, y_{q_n-1}\}$ with $0 < y_1 < y_2 < \ldots < y_{q_n-1} < 1$, we have $y_i< i(\lambda^{(n-1)}+\lambda^{(n)}) < 2i\lambda^{(n-1)} $. Hence
\begin{align*}
\sum_{i=0}^{q_n-1} \psi \big(R_\alpha^i(y)\big) & \geq \psi(y_0) + \sum_{i=1}^{q_n-1} \psi_1(y_i) \\
& \ge \psi(y_0) + \sum_{i=1}^{q_n-1} \psi_1\big(2i(\lambda^{(n-1)})\big).
\end{align*}

Using Lemma~\ref{LemSerHarmo}, one deduces that
\[\sum_{i=0}^{q_n-1} \psi \big(R_\alpha^i(y)\big) \ge  \psi(y_0) + \frac{\log(q_n)}{2\lambda^{(n-1)}}.\]
\medskip

We now turn to the second inequality. Following the same ideas, one gets that
\[y_i \ge (i-1)\lambda^{(n-1)} + \frac{\lambda^{(n-1)}}{2} \qquad \text{and} \qquad y_{q_n-j}\le 1-j\lambda^{(n-1)}. \]
and hence
\[S(y) \le \psi(y_0) + 2\sum_{i=1}^{q_n-1} \frac{2}{i\lambda^{(n-1)}},\]
and so, by Lemma~\ref{LemSerHarmo},
\[S(y) \le \psi(y_0) + \frac{4\log q_n}{\lambda^{(n-1)}}.\]
\end{proof}

As a direct consequence of Lemma~\ref{EqSellFinal}, we have the following result.

\begin{corollary} \label{kq orbit}
For every $n \geq 1$, every $k \geq 1$ and every $x \in \T$ we have
\[\sum_{i=0}^{k q_n-1} \psi\big(R_\alpha^i(x)\big) > \frac{k \log q_n}{2\lambda^{(n-1)}}.\]
\end{corollary}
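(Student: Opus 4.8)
\begin{proof}[Proof of Corollary~\ref{kq orbit}]
The plan is simply to split the orbit segment of length $kq_n$ into $k$ consecutive sectors of length $q_n$ and apply the lower bound of Lemma~\ref{EqSellFinal} to each.

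Fix $n\ge 1$, $k\ge 1$ and $x\in\T$. For $0\le j \le k-1$ set $y^{(j)} = R_\alpha^{jq_n}(x)$, so that
\[
\sum_{i=0}^{kq_n-1} \psi\big(R_\alpha^i(x)\big)
= \sum_{j=0}^{k-1} \sum_{i=0}^{q_n-1}\psi\big(R_\alpha^i(y^{(j)})\big)
= \sum_{j=0}^{k-1} S(y^{(j)}),
\]
where $S$ is the sector sum defined in \eqref{EqDefS}. By the first inequality of Lemma~\ref{EqSellFinal}, for each $j$ we have
\[
S(y^{(j)}) \ge \psi(y_0^{(j)}) + \frac{\log q_n}{2\lambda^{(n-1)}} > \frac{\log q_n}{2\lambda^{(n-1)}},
\]
the strict inequality coming from the fact that $\psi$ is everywhere positive (indeed $\psi(y) = \|y\|^{-1}\ge 2$ for every $y\in\T$). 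Summing these $k\ge 1$ strict inequalities over $j=0,\dots,k-1$ yields
\[
\sum_{i=0}^{kq_n-1} \psi\big(R_\alpha^i(x)\big) > \frac{k\log q_n}{2\lambda^{(n-1)}},
\]
as claimed.
\end{proof}

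The only point requiring any care is the bookkeeping of indices when partitioning $\{0,1,\dots,kq_n-1\}$ into the $k$ sectors starting at $R_\alpha^{jq_n}(x)$; everything else is an immediate consequence of Lemma~\ref{EqSellFinal}, so there is no real obstacle here.
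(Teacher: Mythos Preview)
Your proof is correct and is exactly the argument the paper has in mind: split the length-$kq_n$ orbit into $k$ consecutive sectors of length $q_n$, apply the lower bound of Lemma~\ref{EqSellFinal} to each, and sum. The paper simply states that the corollary is a direct consequence of Lemma~\ref{EqSellFinal} without writing out these details.
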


\begin{lemma}\label{LemFinalMartin}
Let $\alpha = [a_0; a_1, a_2, \ldots]$ be an irrational number. Suppose that $x, y \in \T$ satisfy $\|x-y\| \leq   \lambda^{(n)}$ and let $j_0,j_1$ be such that 
\[\psi_1\big(R_\alpha^{j_0}(x)\big) = \max_{0 \leq j < q_n} \psi_1\big(R_\alpha^j(x)\big)
\qquad \text{and} \qquad
\psi_1\big(R_\alpha^{j_1}(y)\big) = \max_{0 \leq j < q_n} \psi_1\big(R_\alpha^j(y)\big).\]
Then
\begin{align*}
 \bigg| \sum_{j=0}^{q_n-1}\psi_1\big(R_\alpha^j(x)\big) & - \sum_{j=0}^{q_n-1}\psi_1\big(R_\alpha^j(y)\big) \bigg| \\
 &  \leq \big| \psi_1\big(R_\alpha^{j_0}(x)\big) - \psi_1\big(R_\alpha^{j_0}(y)\big) \big| +  \big| \psi_1\big(R_\alpha^{j_1}(x)\big) - \psi_1\big(R_\alpha^{j_1}(y)\big) \big| + \frac{\lambda^{(n)} q_n }{\lambda^{(n-1)}}.
 \end{align*}
\end{lemma}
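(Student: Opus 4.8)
The plan is to exploit the combinatorial structure of the orbit $\{R_\alpha^j(x): 0 \le j < q_n\}$, which by Lemma~\ref{properties} has all gaps of size either $\lambda^{(n-1)}$ or $\lambda^{(n)} + \lambda^{(n-1)}$, and in particular the points are $\lambda^{(n-1)}$-separated. Since $\|x - y\| \le \lambda^{(n)}$ and $\lambda^{(n)} < \lambda^{(n-1)}$, the perturbation $y$ is much smaller than the minimal gap, so $R_\alpha^j(y)$ stays in the ``same slot'' as $R_\alpha^j(x)$ for each $j$. First I would set $u_j = R_\alpha^j(x)$ and $v_j = R_\alpha^j(y)$, so that $v_j = u_j + (y-x)$ with $\|y-x\| \le \lambda^{(n)}$ fixed, and estimate $|\psi_1(u_j) - \psi_1(v_j)|$ for each $j$ separately, then sum.

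The key step is the pointwise bound. For a single index $j$, writing $\|x-y\| = \eta \le \lambda^{(n)}$, one has $|\psi_1(u_j) - \psi_1(v_j)| = \bigl|\tfrac{1}{u_j} - \tfrac{1}{v_j}\bigr| = \tfrac{\eta}{u_j v_j}$ (interpreting $u_j, v_j \in (0,1)$ suitably), and both $u_j$ and $v_j$ are bounded below by the distance from $u_j$ to $0$ minus $\eta$. For the indices $j$ where $u_j$ is not the closest point of the orbit to $0$, one uses the separation: the second-closest point of the $q_n$-orbit to $0$ is at distance at least $\lambda^{(n-1)}$ (more precisely, ordering the orbit points away from $0$, the $i$-th one is at distance $\ge i\lambda^{(n-1)} - \text{const}$, as already used in the proof of Lemma~\ref{EqSellFinal}), so $u_j v_j \gtrsim (i\lambda^{(n-1)})^2$ and $|\psi_1(u_j) - \psi_1(v_j)| \lesssim \eta/(i\lambda^{(n-1)})^2$ roughly, but a cleaner route is $|\psi_1(u_j) - \psi_1(v_j)| \le \eta \cdot \psi_1(u_j)\psi_1(v_j) \le \tfrac{\eta}{\lambda^{(n-1)}}\psi_1(u_j) \cdot \tfrac{1}{\lambda^{(n-1)}} \cdot \lambda^{(n-1)}$... — the honest bound I would carry is: away from the single closest point, $\psi_1(u_j) \le 1/(\text{dist to }0) $ and summing $\psi_1$ over the non-extremal indices is $\le \tfrac{C\log q_n}{\lambda^{(n-1)}}$ by Lemma~\ref{EqSellFinal}, while each factor loses only a bounded multiplicative amount — so I would instead directly bound $\sum_{j \ne j_0, j_1}|\psi_1(u_j) - \psi_1(v_j)| \le \sum_{j \ne j_0,j_1} \tfrac{\eta}{u_j v_j} \le \tfrac{2\eta}{\lambda^{(n-1)}} \sum_{i \ge 1} \tfrac{1}{i \lambda^{(n-1)}} \cdot$, which is the wrong power; the right computation is $\sum_i \tfrac{\eta}{(i\lambda^{(n-1)})^2} = \tfrac{\eta}{(\lambda^{(n-1)})^2}\sum_i i^{-2} = O\bigl(\tfrac{\lambda^{(n)}}{(\lambda^{(n-1)})^2}\bigr)$, and since $q_n \le 1/\lambda^{(n-1)}$ (by \eqref{EqLambdaQ}) this is $O\bigl(\tfrac{\lambda^{(n)} q_n}{\lambda^{(n-1)}}\bigr)$, exactly the claimed error term.

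So the structure is: (1) separate out the two special indices $j_0$ and $j_1$ (the argmaxes of $\psi_1$ along the two orbits), since near $0$ the function $\psi_1$ is not Lipschitz and these terms must be left as they are — they contribute the first two terms on the right-hand side; (2) for all other $j$, use that $R_\alpha^j(x)$ is at distance $\ge \lambda^{(n-1)}$ (in fact $\gtrsim i\lambda^{(n-1)}$ for the $i$-th nearest) from $0$, so that $\psi_1$ is Lipschitz there with the appropriate constant, giving $|\psi_1(u_j)-\psi_1(v_j)| \le \|x-y\|/(u_jv_j)$; (3) sum the geometric-type series $\sum_i \lambda^{(n)}/(i\lambda^{(n-1)})^2$ and use $q_n \lambda^{(n-1)} \le 1$ to recognize $\lambda^{(n)} q_n/\lambda^{(n-1)}$; (4) combine via the triangle inequality $|\sum \psi_1(u_j) - \sum\psi_1(v_j)| \le \sum|\psi_1(u_j)-\psi_1(v_j)|$. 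The main obstacle is purely bookkeeping: making sure that when $j = j_0$ we correctly bound $|\psi_1(u_{j_0}) - \psi_1(v_{j_0})|$ by the stated first term (trivial — it IS that term) and when $j = j_1$ similarly, while confirming that dropping these two indices from the ``good'' sum does not cost anything, and that the second-nearest-point-to-$0$ among the other indices is genuinely $\ge \lambda^{(n-1)}$ away (and more generally the $i$-th nearest is $\gtrsim i\lambda^{(n-1)}$), which is exactly the gap structure recalled from Lemma~\ref{properties} and already deployed in the proof of Lemma~\ref{EqSellFinal}. I expect no genuine difficulty beyond keeping the constants consistent with the clean bound $\lambda^{(n)} q_n / \lambda^{(n-1)}$ that the statement asks for.
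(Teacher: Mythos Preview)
Your outline shares the paper's skeleton: first show that for every $j\ne j_0$ the point $R_\alpha^j(x)$ lies in $X=[\lambda^{(n-1)},1)$ (and likewise $R_\alpha^j(y)\in X$ for $j\ne j_1$), split off the two exceptional indices, and control the remaining $q_n-2$ terms via $|\psi_1(u)-\psi_1(v)|=\|x-y\|/(uv)$. The paper then invokes a single Lipschitz constant for $\psi_1$ on $X$ and sums $q_n-2$ identical terms; you instead use that the $i$-th nearest orbit point to $0$ sits at distance $\ge i\lambda^{(n-1)}$ (as in the proof of Lemma~\ref{EqSellFinal}), which replaces the flat sum by the convergent $\sum_i i^{-2}$ and lands on $O\bigl(\lambda^{(n)}/(\lambda^{(n-1)})^2\bigr)$. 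This refinement is a real gain: the Lipschitz constant of $\psi_1$ on $X$ is $(\lambda^{(n-1)})^{-2}$ (the paper writes $(\lambda^{(n-1)})^{-1}$, apparently a slip), so the uniform-Lipschitz route taken literally only yields $(q_n-2)\lambda^{(n)}/(\lambda^{(n-1)})^2$, i.e.\ an extra factor of order $q_n$ compared with your bound and with the lemma as stated.

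One slip in your plan: to pass from $\lambda^{(n)}/(\lambda^{(n-1)})^2$ to $\lambda^{(n)}q_n/\lambda^{(n-1)}$ you need $1/\lambda^{(n-1)}<q_n+q_{n-1}\le 2q_n$, which is the \emph{other} half of \eqref{EqLambdaQ}; the inequality $q_n\le 1/\lambda^{(n-1)}$ you wrote points the wrong way. With that correction your argument yields $C\,\lambda^{(n)}q_n/\lambda^{(n-1)}$ for an absolute constant $C$, which is what every application of the lemma in the paper actually requires.
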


A similar statement holds for $\psi_2$ instead of $\psi_1$. Note that the philosophy of the lemma recalls the cancellations of Sina\u{\i} and Ulcigrai in \cite{MR2478478}.

\begin{proof}
Let us first show that 
\begin{equation}\label{half lambda}
\psi_1\big(R_\alpha^{j}(x)\big) \leq \frac{1}{\lambda^{(n-1)}}
\end{equation}
for every $0 \leq j < q_n$ with $j\neq j_0$.

Suppose, for the sake of arriving at a contradiction, that we can find some $0 \leq j< q_n$ with $j \neq j_0$ such that $\psi_1(R_\alpha^{j}(x)) > 1/\lambda^{(n-1)}$, and write $x_0 = R_\alpha^{j_0}(x)$ and $x'=R_\alpha^j(x)$. Then, since $\psi_1(x_0) \geq \psi_1(x')$ we must also have $\psi_1(x_0) > 1/\lambda^{(n-1)}$. Hence, $x_0,x'\in(0,\lambda^{(n-1)})$ and so
\[\|x_0-x'\| < \lambda^{(n-1)}.\]
But this is absurd since $\lambda^{(n-1)}$ is the smallest distance between distinct points in any orbit of length $q_n$ (see \eqref{EqContFrac0}). This proves (\ref{half lambda}). Similarly, 
\[\psi_1\big(R_\alpha^{j}(y)\big) \leq \frac{1}{\lambda^{(n-1)}}\]
for every $0 \leq j < q_n$ with $j\neq j_1$.
\medskip

Hence, the set
\[\big\{R_\alpha^{j}(x): 0 \leq j< q_n, \ j\neq j_0\big\} \cup \big\{R_\alpha^{j}(y): 0 \leq j< q_n, \ j\neq j_1\big\} \]
is contained in the set 
\[X = \T \setminus \left(0,\, \lambda^{(n-1)}\right).\]
The function $\psi_1$ has Lipschitz constant $1/\lambda^{(n-1)}$ on $X$, so
\begin{align*}
\Big|S_{q_n}(x)-S_{q_n}(y) - \left( \psi_1(R_\alpha^{j_0}(x)) - \psi_1(R_\alpha^{j_0}(y)) \right) & - \left( \psi_1(R_\alpha^{j_1}(x)) - \psi_1(R_\alpha^{j_1}(y)) \right) \Big|\\
& = \Big| \sum_{\substack{0\leq j < q_n \\ j \neq j_0,j_1}} \psi_1\big(R_\alpha^j(x)\big)- \psi_1\big(R_\alpha^j(y)\big)\Big| \\
& \leq \sum_{\substack{0\leq j < q_n \\ j \neq j_0,j_1}}
\frac{1}{\lambda^{(n-1)}} \big\|R_\alpha^j(x)-R_\alpha^j(y)\big\| \\
& \leq \frac{\lambda^{(n)}(q_n-2)}{\lambda^{(n-1)}}.
\end{align*}
\end{proof}

\subsection*{Using comparison with a rational rotation}

\begin{lemma}\label{near rational orbit}
Let $\alpha = [a_0; a_1, a_2, \ldots]$ be an irrational number with convergents $p_n/q_n$. Suppose that for some given $n$, $a_{n+1} \geq 2$. Then there exists a bijection 
\[\sigma: \{0, \ldots, q_n-1\} \to \{0, \ldots, q_n-1 \} \]   
such that 
\[\left\| k \alpha - \frac{\sigma(k)}{q_n} \right\| < \lambda^{(n)} < \frac{1}{q_{n+1}}\] 
for every $ 0 \leq k < q_n$.
\end{lemma}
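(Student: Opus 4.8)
The plan is to exploit the structure of the orbit $\cO(q_n)=\{R_\alpha^k(0):0\le k<q_n\}$ given by Lemma~\ref{properties}, and to compare it point-by-point with the orbit of the rational rotation by $1/q_n$, i.e.\ the set $\{j/q_n:0\le j<q_n\}$, which is exactly the set of $q_n$-th roots of unity's arguments, equally spaced with gap $1/q_n$. The key geometric input is that when $a_{n+1}\ge 2$, the $q_n$ points $R_\alpha^k(0)$, $0\le k<q_n$, partition $\T$ into arcs of only two lengths, $\lambda^{(n-1)}$ and $\lambda^{(n)}+\lambda^{(n-1)}$ (this is the ``three-gap''/renormalization picture, cf.\ Figure~\ref{FigRenor0} and the gap formulas $m(\cO(q_n))=\lambda^{(n-1)}$, $M(\cO(q_n))=\lambda^{(n)}+\lambda^{(n-1)}$ in Lemma~\ref{properties}), and that by \eqref{EqLambdaQ} together with $q_{n+1}=q_na_{n+1}+q_{n-1}\ge 2q_n+q_{n-1}$ we get $\lambda^{(n)}<1/q_{n+1}<1/(2q_n)$, so each gap is close to $1/q_n$: indeed $\lambda^{(n-1)}=q_n^{-1}+O(\lambda^{(n)})$ roughly, and the ``long'' gaps exceed the short ones by exactly $\lambda^{(n)}<1/(2q_n)$.

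First I would set up the bijection combinatorially. Order the $q_n$ points of $\cO(q_n)$ cyclically on $\T$ as $z_0=0<z_1<\dots<z_{q_n-1}<1$ (so each $z_i=R_\alpha^{k_i}(0)$ for a unique permutation $i\mapsto k_i$), and define $\sigma$ by sending the point $R_\alpha^{k_i}(0)$ to $i/q_n$ — equivalently $\sigma(k_i)=i$; this is manifestly a bijection of $\{0,\dots,q_n-1\}$. It remains to check $\|k_i\alpha - i/q_n\|<\lambda^{(n)}$ for all $i$, i.e.\ $\|z_i - i/q_n\|<\lambda^{(n)}$. For this I would argue inductively along the cyclic order: $z_0-0/q_n=0$, and passing from $z_i$ to $z_{i+1}$ increases the left coordinate by a gap length, which is either $\lambda^{(n-1)}$ or $\lambda^{(n-1)}+\lambda^{(n)}$, while the right coordinate increases by exactly $1/q_n$. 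So the error $e_i:=z_i-i/q_n$ changes by $\lambda^{(n-1)}-1/q_n$ (at a short gap) or $\lambda^{(n-1)}+\lambda^{(n)}-1/q_n$ (at a long gap). The crucial identity is that there are exactly $q_{n-1}$ long gaps and $q_n-q_{n-1}$ short gaps among the $q_n$ total (because the renormalization interval $\Delta^{(n-1)}$, of length $\lambda^{(n-1)}+\lambda^{(n)}\cdot(\text{its own subdivision})$... more precisely, the return time of $R_\alpha$ to $\Delta(n-1)$ picture, or directly: summing all gaps gives $q_{n-1}(\lambda^{(n-1)}+\lambda^{(n)})+(q_n-q_{n-1})\lambda^{(n-1)}=q_n\lambda^{(n-1)}+q_{n-1}\lambda^{(n)}=\lambda^{(n-1)}q_n+q_{n-1}\lambda^{(n)}$; using $q_n\lambda^{(n-1)}+q_{n-1}\lambda^{(n)}$... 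I should instead invoke $q_n\lambda^{(n-1)} - q_{n-1}\lambda^{(n)}=?$ hmm). Rather than wrestle with exact counts I would take the cleaner route below.

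Cleaner route via renormalization, which I would actually write up: by \eqref{EqOrbit}, the orbit $\cO(q_n)$ is built from the renormalization interval $\Delta(n-1)=\Delta^{(n-1)}\cup\Delta^{(n)}$ whose induced map is a rotation by $\pm\lambda^{(n-1)}$, with return times $q_n$ on $\Delta^{(n-1)}$ and $q_{n-1}$ on $\Delta^{(n)}$ — no wait, the relevant statement is that the tower $Z^{(n-1)}_l$ of height $q_n$ over $\Delta^{(n-1)}$ together with $Z^{(n-1)}_s$ of height $q_{n-1}$ over $\Delta^{(n)}$ tile $\T$. Each of the $q_n$ floors $\Delta^{(n-1)}_j$ ($0\le j<q_n$) of the large tower has length $\lambda^{(n-1)}$, and the $q_{n-1}$ floors of the small tower have length $\lambda^{(n)}$, and these $q_n$ long plus $q_{n-1}$ short intervals are precisely the gaps of $\cO(q_{n}+q_{n-1})$, not $\cO(q_n)$. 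So the honest statement is: the $q_n$ points of $\cO(q_n)$ have $q_{n-1}$ gaps of length $\lambda^{(n-1)}+\lambda^{(n)}$ and $q_n-q_{n-1}$ gaps of length $\lambda^{(n-1)}$ (the long gaps being those that will later be subdivided by points of $\cO(q_{n+1})$). Now define $e_i=z_i-i/q_n$ as above; then $e_0=0$ and, since every partial run of $t$ consecutive gaps contains at most... here is where I use $a_{n+1}\ge 2$: the long and short gaps are interleaved as evenly as possible (the combinatorial pattern of the three-gap theorem is a Sturmian/balanced word), so among any $i$ consecutive gaps the number of long ones is within $1$ of $i\cdot q_{n-1}/q_n$. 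Hence
\[
e_i = \sum_{\text{first }i\text{ gaps}}(\text{gap}) - \frac{i}{q_n}
= \Big(i\lambda^{(n-1)} + (\#\text{long among first }i)\,\lambda^{(n)}\Big) - \frac{i}{q_n},
\]
and since $i\lambda^{(n-1)}+i\frac{q_{n-1}}{q_n}\lambda^{(n)} = \frac{i}{q_n}(q_n\lambda^{(n-1)}+q_{n-1}\lambda^{(n)}) = \frac{i}{q_n}\cdot 1 = \frac{i}{q_n}$ (using $q_n\lambda^{(n-1)}+q_{n-1}\lambda^{(n)}=1$, which is $|q_n\rho_{n-1}+q_{n-1}\rho_n|$ up to sign — equivalently $q_np_{n-1}-q_{n-1}p_n=\pm1$ rearranged), we get
\[
e_i = \Big((\#\text{long among first }i) - i\tfrac{q_{n-1}}{q_n}\Big)\lambda^{(n)},
\]
whose absolute value is at most $\lambda^{(n)}$ by the balancedness estimate. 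This gives $\|z_i-i/q_n\|\le |e_i|\le \lambda^{(n)}$, and the strict inequality $<\lambda^{(n)}$ follows because the balancedness bound is strict except at trivial endpoints which one handles separately (or one notes $i\,q_{n-1}/q_n\notin\Z$ for $0<i<q_n$ since $\gcd(q_{n-1},q_n)=1$, so the deviation is never an integer and hence has absolute value $<1$, giving $|e_i|<\lambda^{(n)}$). Finally $\lambda^{(n)}<1/q_{n+1}$ is \eqref{EqLambdaQ}. Translating from $\cO(q_n)$ back to indices: $\sigma(k_i)=i$ and $z_i=R_\alpha^{k_i}(0)=\{k_i\alpha\}$, so $\|k_i\alpha-\sigma(k_i)/q_n\|<\lambda^{(n)}$, as required.

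**Main obstacle.** The one genuinely nontrivial point is the balancedness estimate: that the cyclic word recording long/short gaps of $\cO(q_n)$ is balanced, so that the count of long gaps among the first $i$ differs from $i\,q_{n-1}/q_n$ by strictly less than $1$ (giving $|e_i|<\lambda^{(n)}$), and in particular why $a_{n+1}\ge 2$ is needed — it is needed so that there are genuinely two gap lengths present (if $a_{n+1}=1$ the combinatorics degenerates / the interleaving to the next level differs). I expect to spend most of the write-up either citing the three-distance theorem in the sharp form that gives this balanced structure, or deriving it directly from the renormalization tower \eqref{EqOrbit}: the large tower $Z^{(n-1)}_l$ has its $q_n$ floors naturally grouped into a base of $q_{n-1}$ floors coming from $\Delta^{(n)}$'s return and $a_{n+1}$... — actually I think the slickest rigorous argument avoids balancedness entirely: iterate $T^{(n-1)}$ (rotation by $\lambda^{(n-1)}$ on $\Delta(n-1)$) and track that the coding of $0,R_\alpha(0),\dots,R_\alpha^{q_n-1}(0)$ by which level of $\xi^{(n-1)}$ they lie in is itself a rotation orbit, so the "first $i$ gaps" count is a Birkhoff sum of an indicator over a rotation, controlled within $\lambda^{(n)}/\lambda^{(n-1)}\cdot(\text{something})<1$ by the standard fact that such Birkhoff sums deviate from the mean by at most the length of the interval's renormalization. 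I would present whichever of these is shortest; the rest of the proof is the bookkeeping above and is routine.
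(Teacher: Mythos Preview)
Your approach is genuinely different from the paper's, and considerably more elaborate. The paper does not order the orbit or analyse the gap word at all. It argues in two short steps: (i) for each $0\le k<q_n$ there exists $\ell$ with $\|k\alpha-\ell/q_n\|<\lambda^{(n)}$, proved by contradiction (if not, then $\|kq_n\alpha\|\ge q_n\lambda^{(n)}$, whereas $\|kq_n\alpha\|\le k\lambda^{(n)}\le(q_n-1)\lambda^{(n)}$); (ii) the resulting map $k\mapsto\ell$ is injective, since otherwise $k_1\ne k_2$ would give $\|(k_1-k_2)\alpha\|<2\lambda^{(n)}$, contradicting $\|(k_1-k_2)\alpha\|\ge\lambda^{(n-1)}=a_{n+1}\lambda^{(n)}+\lambda^{(n+1)}\ge 2\lambda^{(n)}$. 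That inequality is the sole place $a_{n+1}\ge 2$ is used.

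Your route through the error formula $e_i=\big((\#\text{long among first }i)-i\,q_{n-1}/q_n\big)\lambda^{(n)}$ (using $q_n\lambda^{(n-1)}+q_{n-1}\lambda^{(n)}=1$) is correct and attractive, but the step you yourself flag as the main obstacle --- that the long/short gap word of $\cO(q_n)$ is balanced, so that the count of long gaps in any prefix deviates from its mean by strictly less than $1$ --- is left unproved. None of the three fixes you sketch (sharp three-gap theorem, renormalization towers, Birkhoff-sum control) is carried out, and any of them would take more work than the paper's entire argument. Your explanation of why $a_{n+1}\ge2$ is needed is also off: $\cO(q_n)$ has exactly two gap lengths for every $n\ge1$ regardless of $a_{n+1}$, and in fact your argument, once the balancedness is supplied, never invokes $a_{n+1}\ge2$.

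As a side remark that bypasses both arguments: setting $\sigma(k)\equiv kp_n\pmod{q_n}$ is automatically a bijection since $\gcd(p_n,q_n)=1$, and $\|k\alpha-\sigma(k)/q_n\|=\|k(\alpha-p_n/q_n)\|=k\lambda^{(n)}/q_n<\lambda^{(n)}$ for $0\le k<q_n$, with no hypothesis on $a_{n+1}$ required.
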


\begin{proof}
First we shall prove that, given any integer $0 \leq k < q_n$, there exists an integer $0 \leq \ell < q_n$ such that
\[\left\| k \alpha - \frac{\ell}{q_n} \right\| < \lambda^{(n)}.\]
Indeed, suppose there is some $k$ for which no such integer $\ell$ can be found. Then
\[\left|k\alpha - \frac{\ell}{q_n} + m \right| \geq \lambda^{(n)} \]
for every $0 \leq \ell < q_n$ and every $m \in \Z$. But then
\[ \left|k q_n \alpha - \ell + q_n m \right| \geq q_n \lambda^{(n)}\]
for every $0 \leq \ell < q_n$ and every $m \in \Z$. Hence
\[\left\|k q_n \alpha \right\| \geq q_n \lambda^{(n)}.\]
On the other hand, we have that
\[\left\|k q_n \alpha \right\| \leq k \left\|q_n \alpha \right\| = k \lambda^{(n)} \leq (q_n-1) \lambda^{(n)},\]
a contradiction.

Let $\sigma: \{0, \ldots, q_n-1\} \to \{ 0, \ldots, q_n-1\}$ be such that
\[\left\| k\alpha - \frac{\sigma(k)}{q_n} \right\| < \lambda^{(n)} \]
for every $0 \leq k < q_n$. We claim that $\sigma$ is injective; hence a bijection. Suppose it is not. Then there exist $0 \leq k_1 < k_2 < q_n$ such that $\sigma(k_1) =  \sigma (k_2)$. But then
\[\big \|(k_2 - k_1) \alpha \big\| \leq \left\|k_2(\alpha) - \frac{\sigma(k_2)}{q_n} \right\| + \left\|\frac{\sigma(k_1)}{q_n}-k_1 \alpha \right\| < 2 \lambda^{(n)}.\]
On the other hand, since $k_2-k_1 < q_n$, we know that $\|(k_2-k_1) \alpha \|$ must be at least $\lambda^{(n-1)}$. But, by \eqref{Eqaeta},
\[\lambda^{(n-1)} = a_{n+1} \lambda^{(n)} + \lambda^{(n+1)} \geq 2 \lambda^{(n)},\]
a contradiction. 
\end{proof}

\begin{lemma}\label{sumovernonzerorationals}

Let $q$ be a positive integer and let $0<\delta< \frac{1}{q}$. Let $x_1, \ldots, x_{q-1} \in \T$ be such that $\|x_k - \frac{k}{q}\| < \delta$ for every $1 \leq k \leq q-1$. Then
\[
\sum_{k=1}^{q-1} \frac{1}{\| x_k\|} \leq \frac{2q}{1-\delta q}\log(3q).\]
\end{lemma}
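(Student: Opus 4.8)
The plan is to estimate the sum $\sum_{k=1}^{q-1} 1/\|x_k\|$ by comparing each $x_k$ with the rational point $k/q$, for which the distance to $0$ is exactly $\min(k,q-k)/q$. Since $\|x_k - k/q\| < \delta$, the triangle inequality gives $\|x_k\| \geq \|k/q\| - \delta = \min(k,q-k)/q - \delta$, and because $\delta < 1/q$ we have $\min(k,q-k)/q - \delta \geq \min(k,q-k)/q - 1/q \cdot \delta q \cdot \tfrac{1}{\min(k,q-k)} \cdot \min(k,q-k)$... more simply, I would write $\|x_k\| \geq \frac{\min(k,q-k)}{q}\bigl(1 - \frac{\delta q}{\min(k,q-k)}\bigr) \geq \frac{\min(k,q-k)}{q}(1-\delta q)$, using $\min(k,q-k)\geq 1$. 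Hence $\frac{1}{\|x_k\|} \leq \frac{q}{(1-\delta q)\min(k,q-k)}$.

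Next I would sum this bound. Splitting the index set $\{1,\dots,q-1\}$ according to whether $k \leq q/2$ or $k > q/2$, the quantity $\min(k,q-k)$ ranges over $\{1,2,\dots,\lfloor q/2\rfloor\}$, each value attained at most twice. Therefore
\[
\sum_{k=1}^{q-1}\frac{1}{\min(k,q-k)} \leq 2\sum_{j=1}^{\lfloor q/2\rfloor}\frac{1}{j} \leq 2\log(3\lfloor q/2\rfloor) \leq 2\log(3q),
\]
where the middle inequality is the harmonic-sum bound from Lemma~\ref{LemSerHarmo}. Combining with the pointwise estimate yields
\[
\sum_{k=1}^{q-1}\frac{1}{\|x_k\|} \leq \frac{q}{1-\delta q}\sum_{k=1}^{q-1}\frac{1}{\min(k,q-k)} \leq \frac{2q}{1-\delta q}\log(3q),
\]
which is exactly the claimed bound.

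There is no serious obstacle here; this is a routine computation. The only points requiring a little care are: first, that $\|k/q\| = \min(k,q-k)/q$ for $1 \leq k \leq q-1$ (so the rational point $k/q$ is never $0$ in $\T$, which is why the hypothesis excludes $k=0$ and $k=q$); second, that the factor $1-\delta q$ is positive by the assumption $\delta < 1/q$, so dividing by it is legitimate; and third, handling the doubling in $\min(k,q-k)$ cleanly — when $q$ is even the value $q/2$ is attained only once, which only improves the bound, so the inequality $\sum 1/\min(k,q-k) \leq 2\sum_{j=1}^{\lfloor q/2\rfloor} 1/j$ is safe in all cases. I would present it in roughly four or five lines.
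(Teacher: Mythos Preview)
Your proof is correct and follows essentially the same approach as the paper's: both lower-bound $\|x_k\|$ by $\frac{\min(k,q-k)}{q}(1-\delta q)$ (the paper does this via the decomposition $\psi \leq \psi_1 + \psi_2$ on representatives in $[0,1)$, bounding each half by $\frac{q}{1-\delta q}\log(3q)$, while you work directly with $\min(k,q-k)$ and pick up the factor $2$ from the symmetry of that expression) and then invoke Lemma~\ref{LemSerHarmo} for the harmonic sum. The two routes are equivalent bookkeeping for the same estimate.
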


\begin{proof}
Let $0<\hx_1< \ldots < \hx_{q-1} < 1$ be representatives of $x_1, \ldots, x_{q-1}$ in the fundamental domain $[0,1)$ of $\T$. In order to prove the lemma, it suffices to prove that
\[\sum_{k=1}^{q-1} \psi_i(\hx_k) \leq \frac{q}{1-\delta q} \log(3 q) \]
for $i=1,2$. (Recall the definition of $\psi_1, \psi_2$ in (\ref{EqDefPsi})). 

By hypothesis, 
\[\hx_k > \frac{k}{q}-\delta = \frac{k}{q}\left( 1-\frac{ \delta q}{k} \right) \geq \frac{k}{q} \left( 1-\delta q \right) \]
for every $1 \leq k \leq q-1$. Hence, according to Lemma~\ref{LemSerHarmo}, we have
\[\sum_{k=1}^{q-1} \psi_1(\hx_k) < \sum_{k=1}^{q-1} \frac{1}{\frac{k}{q}(1-q \delta)} = \left( \frac{q}{1-\delta q}\right) \sum_{k=1}^{q-1} \frac{1}{k}<\frac{q}{1-\delta q} \log(3q).\]
 The estimate for $\psi_2$ is analogous.
\end{proof}

\begin{lemma} \label{LemSizeDiverg}
Let $n \geq 11$ and $0<\epsilon<1$. Suppose that  $a_{n+1} \geq 2$ and consider $x \in \T$ such that for some $0<i<q_n$ we have
\[\|x+i \alpha \| < \frac{ \epsilon }{q_n \log(3 q_n) }.\]
Then
\[\frac{6 \epsilon}{\|x+i \alpha \|} >  \sum_{k=0}^{i-1} \frac{1}{\|x+k\alpha\|}.\]
\end{lemma}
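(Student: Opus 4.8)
The plan is to control the partial Birkhoff sum $\sum_{k=0}^{i-1}\|x+k\alpha\|^{-1}$ from above by a multiple of the single largest term $\|x+i\alpha\|^{-1}$, using the hypothesis that $\|x+i\alpha\|$ is much smaller than $1/(q_n\log(3q_n))$ together with the comparison-with-a-rational-rotation machinery of Lemma~\ref{near rational orbit} and the harmonic-type estimate of Lemma~\ref{sumovernonzerorationals}. Since $a_{n+1}\ge 2$, Lemma~\ref{near rational orbit} furnishes a bijection $\sigma:\{0,\dots,q_n-1\}\to\{0,\dots,q_n-1\}$ with $\|k\alpha-\sigma(k)/q_n\|<\lambda^{(n)}<1/q_{n+1}$ for all $0\le k<q_n$. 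Applying this to the points $x_k:=x+k\alpha$ (for $k$ ranging over $0,\dots,i-1$, which is a subset of $0,\dots,q_n-1$), I would first note that the index $i$ itself gets sent by the translation structure to some residue; more precisely the points $x+k\alpha$ for $0\le k<i$ are, up to a global rotation by the real number $x+i\alpha$ (which has tiny norm), close to the rationals $(\sigma(k)-\sigma(i))/q_n \bmod 1$. Thus I expect the set $\{x+k\alpha:0\le k<i\}\setminus\{x+i\alpha\}$... wait, $x+i\alpha$ is not among the $k<i$; rather the relevant comparison is that each $x+k\alpha$ with $0\le k<i$ lies within $2\lambda^{(n)}$ of a point of the form $(x+i\alpha)+r_k/q_n$ where the $r_k$ are distinct nonzero residues mod $q_n$ (distinctness from $0$ because $\sigma$ is injective and $k\neq i$, distinctness from each other likewise).

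So the concrete steps: (1) Invoke Lemma~\ref{near rational orbit} to get $\sigma$; set $r_k\equiv\sigma(k)-\sigma(i)\pmod{q_n}$ for $0\le k<i$, which are $i$ distinct elements of $\{1,\dots,q_n-1\}$ (nonzero since $k\neq i$ and $\sigma$ injective). (2) Write $\|x+k\alpha\| = \|(x+i\alpha) + (k-i)\alpha\|$ and estimate $\|(k-i)\alpha - r_k/q_n\| < 2\lambda^{(n)}$, hence $\|x+k\alpha - ((x+i\alpha)+r_k/q_n)\|<2\lambda^{(n)}$. Letting $\eta:=\|x+i\alpha\|<\epsilon/(q_n\log 3q_n)$ and using $\|r_k/q_n\|\ge 1/q_n$, triangle inequality gives $\|x+k\alpha\| \ge \|r_k/q_n\| - \eta - 2\lambda^{(n)} \ge 1/q_n - \eta - 2/q_{n+1}$, and one massages this (using $\lambda^{(n)}<1/q_{n+1}\le 1/q_n$ and $n\ge 11$ so $q_n$ is large, $q_{n+1}\ge 2q_n$) into a bound of the form $\|x+k\alpha\|\ge c\|r_k/q_n+\text{(small)}\|$ for a definite constant $c$. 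The cleanest route is to directly apply Lemma~\ref{sumovernonzerorationals}: the points $y_k := (x+k\alpha)-(x+i\alpha)$ for $0\le k<i$ satisfy $\|y_k - r_k/q_n\| < 2\lambda^{(n)} < 2/q_{n+1} \le 1/q_n$ provided $q_{n+1}\ge 2q_n$ (true when $a_{n+1}\ge 2$ by \eqref{EqTotTime}), so with $\delta = 2\lambda^{(n)} < 1/q_n$ Lemma~\ref{sumovernonzerorationals} gives $\sum_{0\le k<i}\|y_k\|^{-1} \le \frac{2q_n}{1-2\lambda^{(n)}q_n}\log(3q_n)$.

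The final step is the triangle-inequality comparison between $\|x+k\alpha\|$ and $\|y_k\|$: since $x+k\alpha = y_k + (x+i\alpha)$ and $\|x+i\alpha\|=\eta$, we have $\|x+k\alpha\|\ge \|y_k\| - \eta$. I want to deduce $\|x+k\alpha\|^{-1}\le 2\|y_k\|^{-1}$, which holds as soon as $\|y_k\|\ge 2\eta$; and indeed $\|y_k\|\ge 1/q_n - 2\lambda^{(n)} \ge 1/q_n - 2/q_{n+1}\ge 1/q_n-1/q_n\cdot(2/a_{n+1})\ge \ldots$ — more carefully, $\|y_k\|\ge \|r_k/q_n\|-2\lambda^{(n)}\ge 1/q_n - 2/q_{n+1}$, and since $q_{n+1}=a_{n+1}q_n+q_{n-1}\ge 2q_n$ this is $\ge 1/q_n-1/q_n=0$, too weak; so instead I use $\lambda^{(n)}<1/q_{n+1}$ and $1/q_{n+1}\le 1/(2q_n)$ only when $a_{n+1}\ge 2$ actually gives $q_{n+1}\ge 2q_n+1$... hmm this still gives $\|y_k\|\ge 1/q_n - 1/q_n$. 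The genuinely correct bound comes from Lemma~\ref{properties}: the minimal gap in any orbit of length $q_n$ is $\lambda^{(n-1)}\ge 1/(q_{n-1}+q_n)$, but more to the point the points $y_k$ together with $0$ form (a subset of) an orbit structure, so actually $\|y_k\|\ge \lambda^{(n-1)}$ directly, and $\lambda^{(n-1)} = a_{n+1}\lambda^{(n)}+\lambda^{(n+1)}\ge 2\lambda^{(n)} > 2\eta$ once $\eta<\lambda^{(n)}$, which holds since $\eta<\epsilon/(q_n\log 3q_n)<1/q_{n+1}\cdot(\text{something})$... I need $\eta<\lambda^{(n)}$, i.e. roughly $\epsilon/(q_n\log 3q_n)<1/q_{n+1}$; this needs $q_{n+1}\log(3q_n)<q_n/\epsilon$, which is NOT automatic. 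The resolution — and I believe this is the main obstacle — is that one does not need $\|y_k\|\ge 2\eta$ for all $k$; one splits the sum into those $k$ with $\|x+k\alpha\|\ge \eta$ (handled by $\|x+k\alpha\|\ge\|y_k\|/2$ after checking $\|y_k\|\ge 2\eta$ fails only for $\|y_k\|$ comparable to $\eta$, i.e. $\|x+k\alpha\|$ could still be bounded below by, say, $\eta$ trivially) versus small ones. Actually the hypothesis $i$ is the index of the \emph{smallest}-norm point is not assumed here; rather $x+i\alpha$ is just some very-close return. So I expect the intended argument is: for $k\neq i$ with $0\le k<i$, $\|x+k\alpha\| = \|y_k + (x+i\alpha)\| \ge \|y_k\| - \eta$, and $\|y_k\|\ge \lambda^{(n-1)}-2\lambda^{(n)}\ge\lambda^{(n+1)}>0$ hmm. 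Given the stated conclusion has constant $6$, I would prove $\sum_{k=0}^{i-1}\|x+k\alpha\|^{-1}\le \frac{6\epsilon}{\|x+i\alpha\|}$ by chaining: $\sum \le 2\sum\|y_k\|^{-1}\le 2\cdot\frac{2q_n}{1-2\lambda^{(n)}q_n}\log(3q_n)\le \frac{6q_n\log(3q_n)}{1}$ using $n\ge 11$ to make $2\lambda^{(n)}q_n<2/a_{n+1}\cdot q_n/q_{n+1}\le$ small, then $6q_n\log(3q_n)\le 6\epsilon\cdot\frac{q_n\log(3q_n)}{\epsilon}\le 6\epsilon/\|x+i\alpha\|$ by the hypothesis $\|x+i\alpha\|<\epsilon/(q_n\log 3q_n)$. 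The main obstacle is therefore verifying the step $\|x+k\alpha\|^{-1}\le 2\|y_k\|^{-1}$ rigorously, which I expect to handle by noting $\|y_k\|\ge\lambda^{(n-1)}$ (minimal gap, via \eqref{EqContFrac0}) while $\eta = \|x+i\alpha\|$ — if $\eta\ge \lambda^{(n-1)}/2$ then $\|x+i\alpha\|^{-1}\le 2/\lambda^{(n-1)}$ is itself small and a cruder bound on the whole sum suffices, whereas if $\eta<\lambda^{(n-1)}/2$ the factor-$2$ comparison is immediate; I will carry out this case split and collect constants so the final bound is $6$.
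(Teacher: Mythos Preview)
Your overall strategy coincides with the paper's: compare the orbit points $x+k\alpha$ (pivoted around the close return $x+i\alpha$) to rationals $j/q_n$ via Lemma~\ref{near rational orbit}, then invoke Lemma~\ref{sumovernonzerorationals} to bound the sum by a multiple of $q_n\log(3q_n)$ and finish using $6\epsilon/\|x+i\alpha\| > 6q_n\log(3q_n)$. However, your execution has a genuine numerical gap that prevents the constant $6$ from being reached.

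The failing step is your chain
\[
2\cdot\frac{2q_n}{1-2\lambda^{(n)}q_n}\log(3q_n)\ \le\ 6q_n\log(3q_n).
\]
You obtained $\delta=2\lambda^{(n)}$ by applying Lemma~\ref{near rational orbit} separately at $k$ and at $i$ and adding the errors. But with this $\delta$ the factor $1/(1-\delta q_n)$ is \emph{not} uniformly bounded: one only has $2\lambda^{(n)}q_n < 2q_n/q_{n+1} = 2/(a_{n+1}+q_{n-1}/q_n)$, which for $a_{n+1}=2$ and large $a_n$ (so $q_{n-1}/q_n$ tiny) is arbitrarily close to $1$. The hypothesis $n\ge 11$ does nothing for this; your claimed inequality is simply false in general. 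Even if you tighten the rational approximation to $\delta=\lambda^{(n)}$ (by applying $\sigma$ once to $i-k$ rather than twice), your crude factor-$2$ transfer $\|x+k\alpha\|^{-1}\le 2\|y_k\|^{-1}$ then yields $8$, not $6$.

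The paper sidesteps both losses by folding the small shift $\gamma:=\|x+i\alpha\|$ directly into the parameter $\delta$ of Lemma~\ref{sumovernonzerorationals}, rather than handling it by a separate multiplicative comparison. With $\tilde\sigma=\iota\circ\sigma$ (where $\iota(k)=[-k]_{q_n}$) one has $\|(-k)\alpha-\tilde\sigma(k)/q_n\|<\lambda^{(n)}$, so the reindexed points $x_j:=x+(i-\tilde\sigma^{-1}(j))\alpha$ satisfy $\|x_j-j/q_n\|<\gamma+\lambda^{(n)}=:\delta$ by a single triangle inequality. Now $q_n\lambda^{(n)}<1/2$ and, for $n\ge 11$, $q_n\gamma<\epsilon/\log(3q_n)<1/6$, so $q_n\delta<2/3$, giving $\tfrac{1}{1-\delta q_n}<3$ and hence the bound $6q_n\log(3q_n)$ directly. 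Incidentally, your case split on $\eta$ versus $\lambda^{(n-1)}/2$ is unnecessary: the second case is vacuous, since $\eta<1/(q_n\log(3q_n))<1/(6q_n)$ while $\lambda^{(n-1)}>1/(2q_n)$.
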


\begin{proof}
Let $\gamma = \epsilon/(q_n \log(3 q_n))$ and take $\delta = \lambda^{(n)} + \gamma$. 
From Lemma~\ref{properties} we have 
\[q_n \lambda^{(n)} < \frac{q_n}{q_{n+1}}< \frac{q_n}{q_n a_{n+1}} \leq \frac{1}{2}.\]

If $\alpha$ is the golden mean, then $q_{11} = 144$  (the $11^{th}$ element of the Fibonacci sequence) and $\log(3 q_{11}) > 6$. For every other value of $\alpha$ we have $q_{11} \geq 144$, whence it follows that $\log(3 q_n) >6$ for every $n \geq 11$. Hence $q_n \gamma < \frac{1}{6}$, so we have $q_n \delta < \frac{2}{3}$ and so
\begin{equation}\label{Eqdeltaqn}
\frac{1}{1-\delta q_n} < 3.
\end{equation}

Let $\sigma$ be as in Lemma~\ref{near rational orbit}. Let $\iota:\{0, \ldots q_n-1\} \to \{0, \ldots q_n -1\}$ be the natural involution given by $\iota(k) = [-k]_{q_n}$, where $[j]_{q_n}$ is the unique element in $\{ 0,\ldots, q_n-1 \}$ such that $[j]_{q_n} \equiv j \mod q_n$,
and denote by $\tilde{\sigma}$ the composition $\iota \circ \sigma$. Then, according to Lemma~\ref{near rational orbit} we have 
\[\left\| -k \alpha - \frac{\tilde{\sigma}(k)}{q_n} \right\| = \left\| \frac{\sigma(k)}{q_n} - k \alpha \right\| < \lambda^{(n)}\]
for every $0 \leq k < q_i$. 
By hypothesis, $0<i< q_n$ is such that $\|x+i\alpha \| < \gamma$. Thus
\[
\left\|x+(i-k)\alpha - \frac{\tilde{\sigma}(k)}{q_n} \right\|\leq \|x+i \alpha \|+ \left\|-k \alpha - \frac{\tilde{\sigma}(k)}{q_n} \right\| < \gamma + \lambda^{(n)} = \delta.
\]
Hence, denoting  $x_k= x+(i- \tilde{\sigma}^{-1}(k))\alpha$, we have 
\begin{equation}\label{EqInfDelta}
\left\|x_k-\frac{k}{q_n}\right\| \le \delta.
\end{equation}

But
\[\sum_{k=0}^{i-1} \frac{1}{\|x+k\alpha\| } = \sum_{k=1}^{i} \frac{1}{\| x + (i-k)\alpha \|} = \sum_{k=1}^i \frac{1}{\|x_{\tilde{\sigma}^{-1}(k)}\|} \leq \sum_{k=1}^{q_n-1} \frac{1}{\|x_k\|},\]
(the fact that $x_0$ does not appear in the last sum comes from the fact that $\tilde \sigma(0) = 0$),
and \eqref{EqInfDelta} allows us to apply Lemma~\ref{sumovernonzerorationals}: 
\[\sum_{k=0}^{i-1} \frac{1}{\|x+k\alpha\| }\leq \frac{2q_n}{1-\delta q_n}\log(3q_n).\]
Combining it with \eqref{Eqdeltaqn} we get
\[\sum_{k=0}^{i-1} \frac{1}{\|x+k\alpha\| }\leq 6 q_n \log(3 q_n),\]
hence
\[ \frac{6 \epsilon}{\|x+i\alpha\|} > 6 q_n \log(3 q_n)  \geq  \sum_{k=0}^{i-1} \frac{1}{\| x+k \alpha \|} \]
as required.
\end{proof}

\begin{lemma}\label{sumoverrationals}
Let $q$ be a positive integer, $A>0$, and $\beta \in \T$ such that 
\[\left\| \frac{n}{q} - \beta \right\| >\frac{A}{q} \quad \forall \ 0 \leq n < q.\]  Then
\begin{equation*}
 \sum_{n=0}^{q-1} \frac{1}{\| \frac{n}{q} - \beta \| } < 2 q \big(A^{-1}+\log( A^{-1} q)\big).
\end{equation*}
\end{lemma}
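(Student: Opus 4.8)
The plan is to exploit that the points $n/q \bmod 1$, $0\le n<q$, are equally spaced on $\T$ with gap $1/q$; after translating by $\beta$ the whole sum depends on a single parameter, and the hypothesis prevents that parameter from landing too close to a lattice point, so the sum is controlled by a harmonic-type series.

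First I would reduce to a one-parameter family. Write $\beta = m/q + \theta$ with $m\in\{0,\dots,q-1\}$ and $0\le\theta<1/q$. Since $k\mapsto k-m$ is a bijection of $\Z/q\Z$,
\[\sum_{n=0}^{q-1}\frac{1}{\|n/q-\beta\|} = \sum_{k=0}^{q-1}\frac{1}{\|k/q-\theta\|}.\]
Applying the hypothesis to $n=m$ and $n=m+1$ gives $\theta=\|{-\theta}\|>A/q$ and $1/q-\theta=\|1/q-\theta\|>A/q$, hence $A/q<\theta<(1-A)/q$ (in particular $A<1/2$, else there is no admissible $\beta$ and there is nothing to prove). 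I would then isolate the two potentially small terms $k=0$ and $k=1$, which contribute $\frac1\theta+\frac1{1/q-\theta}<\frac{q}{A}+\frac{q}{A}=\frac{2q}{A}$, the $2qA^{-1}$ part of the target bound.

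For the remaining terms I would argue by pairing. For $2\le k\le q-1$ set $j=\min(k,q-k)\in\{2,\dots,\lfloor q/2\rfloor\}$. If $k=j$ then $0<j/q-\theta\le\tfrac12$, so $\|k/q-\theta\|=j/q-\theta>(j-1+A)/q$ using $\theta<(1-A)/q$. If $k=q-j$ then $\|k/q-\theta\|=\min\!\big(j/q+\theta,\,(q-j)/q-\theta\big)$, and a two-line case split (according to whether $j/q+\theta$ exceeds $\tfrac12$), together with $j\le q/2$ and $\theta<(1-A)/q$, again yields $\|k/q-\theta\|>(j-1+A)/q$. Since each value of $j$ is attained by at most two values of $k$,
\[\sum_{k=2}^{q-1}\frac{1}{\|k/q-\theta\|} < 2q\sum_{j=2}^{\lfloor q/2\rfloor}\frac{1}{j-1+A} \le 2q\int_1^{q/2}\frac{dt}{t-1+A} = 2q\log\!\frac{q/2-1+A}{A} < 2q\log(A^{-1}q).\]
Adding the contribution of $k=0,1$ gives $\sum_{n=0}^{q-1}\|n/q-\beta\|^{-1} < 2q\big(A^{-1}+\log(A^{-1}q)\big)$, with the handful of very small $q$ (where $\lfloor q/2\rfloor<2$) disposed of by direct inspection.

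I do not expect a genuine obstacle here: the argument is elementary. The only point requiring care is matching the precise constant, i.e.\ getting $\log(A^{-1}q)$ rather than a cruder $\log(3q)$ in the tail. This forces one to use the two-sided estimate $A/q<\theta<(1-A)/q$ (not just $\theta>A/q$) to produce the lower bound $(j-1+A)/q$, and to compare the resulting sum with an integral instead of invoking Lemma~\ref{LemSerHarmo}; the bookkeeping of the parity/boundary cases ($q$ even or odd, $k=q/2$, the point $(q-j)/q-\theta$ crossing $\tfrac12$) is the most tedious part.
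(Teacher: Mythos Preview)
Your approach is correct in spirit and genuinely different from the paper's, though there is one small bookkeeping slip: for $k=q-1$ one has $j=\min(k,q-k)=1$, not $j\ge 2$, so this term is not covered by your sum $\sum_{j=2}^{\lfloor q/2\rfloor}$. Fortunately $\|(q-1)/q-\theta\|=1/q+\theta>1/q$, so the missing term contributes at most $q$, and since your integral estimate actually yields $2q\log\frac{q/2-1+A}{A}<2q\log(A^{-1}q)-2q\log 2$, there is slack of more than $q$ to absorb it. So the argument is repairable with no change in the final constant.

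The paper's proof avoids this case analysis altogether by a cleaner device. Rather than pairing $k$ with $q-k$, it uses $\|y\|^{-1}\le\psi_1(y)+\psi_2(y)$ with $\psi_1(y)=1/y$, $\psi_2(y)=1/(1-y)$, and treats the two monotone pieces separately. Ordering the representatives $x_1<\cdots<x_q$ of $\{n/q-\beta\bmod 1\}$ in $(0,1)$, one has $x_1\ge A/q$, $x_q\le 1-A/q$, and consecutive gaps exactly $1/q$; then $\frac{1}{q}\sum_{k\ge2}\psi_1(x_k)$ is a lower Riemann sum for $\int_{x_1}^{x_q}dt/t<\log(A^{-1}q)$, and the single extreme term $\psi_1(x_1)\le q/A$ provides the $A^{-1}$ part. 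The estimate for $\psi_2$ is symmetric, producing the overall factor $2$ with no pairing, no parity or boundary cases, and no special treatment of the point nearest $1$. Your route recovers the same bound but is strictly more work; the $\psi_1+\psi_2$ decomposition is the clean way to do it.
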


\begin{proof}
Let $X = \{ \frac{n}{q}- \beta \mod 1: 0 \leq n < q \} \subset (0,1)$ and write $X = \{ x_1, \ldots, x_q \}$ in such a way that 
\[ x_1 < x_2 < \ldots < x_q.\]

By hypothesis we have that $x_1\geq \frac{A}{q}$ and $x_q \leq 1-\frac{A}{q}$. Note also that 
\[x_{k+1}-x_{k} = \frac{1}{q} \] 
for every $1 \leq k < q$. Then, by \eqref{EqDefPsi},
\begin{equation}\label{fromtwosides}
\sum_{n=0}^{q-1} \frac{1}{\|\frac{n}{q}- \beta\|} \leq \sum_{k=1}^q \psi_1(x_k) + \psi_2 (x_k).
\end{equation}

Note that $\frac{1}{q} \sum_{k=2}^q \psi_1(x_k)$ is a lower Riemann sum for $\int_{x_1}^{x_q} \psi_1(t) \ dt$. Hence (as $x_2\ge 1/q$)
\[\sum_{k=1}^q \psi_1(x_k) < \psi_1(x_1) + q \int_{x_1}^{x_q} \frac{dt}{t} < A^{-1} q+\log\left(\frac{x_q}{x_1}\right) < q(A^{-1} + \log( A^{-1} q )).\]

Similarily, we have
\[\sum_{k=1}^q \psi_2(x_k) < q(A^{-1} + \log(A^{-1} q)),\]
and the proof follows from (\ref{fromtwosides}).
\end{proof}

Recall that our goal is to get bounds over the quantity 
$\Theta_n^\beta(x) = S_n(x)/S_n(x-\beta)$ (see \eqref{DefTheta}). It amounts to bound from above/below both $S_n(x)$ and $S_n(x-\beta)$, which will be done in Lemmas~\ref{ABC} and \ref{lower bound}. 

For each non-negative $n$ and positive $\ell < q_{n+1}/q_n$, denote by $E_{n,\ell}$ the $\frac{\lambda^{(n)}}{2}$-neighbourhood of the orbit $\cO(\ell q_n)$, that is,
\begin{equation}\label{DefE}
E_{n, \ell} = \bigcup_{k=0}^{\ell q_n - 1} R_\alpha^{-k}(I_n).
\end{equation}
where $I_n = \big(-\frac{\lambda^{(n)}}{2},\frac{\lambda^{(n)}}{2} \big)$.

\begin{lemma}  \label{size}
One has $\la(E_{n, \ell})=\ell q_n \lambda^{(n)} $.
\end{lemma}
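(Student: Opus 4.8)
The plan is simply to check that the arcs making up $E_{n,\ell}$ are pairwise disjoint, and then add up their lengths. By \eqref{DefE}, $E_{n,\ell}$ is the union of the $\ell q_n$ sets $R_\alpha^{-k}(I_n)$, $0\le k<\ell q_n$, each of which is a translate on $\T$ of the open interval $I_n=\big(-\tfrac{\lambda^{(n)}}{2},\tfrac{\lambda^{(n)}}{2}\big)$ and hence has $\la$-measure $\lambda^{(n)}$. Since $\ell q_n<q_{n+1}$ (this is exactly the hypothesis $\ell<q_{n+1}/q_n$) and $q_{n+1}\lambda^{(n)}<1$ by \eqref{EqLambdaQ}, the total length $\ell q_n\lambda^{(n)}$ is smaller than $1$, so it will suffice to establish disjointness in order to conclude $\la(E_{n,\ell})=\ell q_n\lambda^{(n)}$ by additivity of $\la$ on the circle.

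To prove disjointness, suppose $R_\alpha^{-k}(I_n)\cap R_\alpha^{-k'}(I_n)\neq\emptyset$ for some $0\le k'<k<\ell q_n$. Applying the bijection $R_\alpha^{k'}$ gives $I_n\cap R_\alpha^{-(k-k')}(I_n)\neq\emptyset$; picking a point in this intersection and writing it in the two ways shows that $\|(k-k')\alpha\|$ is the distance on $\T$ between two points of $I_n$, hence strictly less than $\lambda^{(n)}$. But $0<k-k'<\ell q_n<q_{n+1}$, and \eqref{EqContFrac0} of Lemma~\ref{properties} gives $\|j\alpha\|\ge\lambda^{(n)}$ for every integer $j$ with $0<j<q_{n+1}$; this is a contradiction. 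Therefore the arcs $R_\alpha^{-k}(I_n)$, $0\le k<\ell q_n$, are pairwise disjoint, and the formula follows.

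The argument is elementary and I do not foresee any real obstacle. The only point requiring a little care is to invoke the sharp lower bound \eqref{EqContFrac0} on $\|j\alpha\|$ over the full range $0<j<q_{n+1}$ (rather than the cruder bound valid only for $j<q_n$), which is precisely what the constraint $\ell<q_{n+1}/q_n$ is there to accommodate.
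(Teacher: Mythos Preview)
Your proof is correct and follows essentially the same approach as the paper: show the $\ell q_n$ translates of $I_n$ are pairwise disjoint by invoking \eqref{EqContFrac0}, then sum their lengths. The paper's argument is identical in substance.
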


\begin{proof}
Note that $\la(R_\alpha^{-k}(I_n)) = \la(I_n) = \lambda^{(n)}$ and that $\sum_{k=0}^{\ell q_n-1} \la(R_\alpha^{-k}(I_n)) = \ell q_n \lambda^{(n)}$, so the proof follows if we can show that the sets $R_\alpha^{-k}(I_n)$, $k  = 0 , \ldots , \ell q_n-1$ are pairwise disjoint. Suppose they are not. Then there exist $0\leq k < \ell < q_{n+1}$ such that $\| k\alpha - \ell \alpha \|< \lambda^{(n)}$. Writing $m = |k-\ell|$ that gives us $\|m \alpha \| < \lambda^{(n)}$ for some $m< q_{n+1}$. But that is absurd, since $q_{n+1}$ is the smallest number with this property.
\end{proof}

\begin{lemma} \label{ABC}
Fix some $n>0$ and suppose that there are numbers $0<B < A < \half$, a positive integer $\ell$ and some $\beta \in \T$ such that 
\[\left\| \frac{i}{q_n}-\beta \right\|  \geq \frac{A}{q_n}\]
for every $0 \leq i < q_n$, and that
\[\ell+1  \leq B a_{n+1}.\]

Then, given any $x \in E_{n, \ell}$, we have
\[\sum_{k=0}^{\ell q_n - 1} \frac{1}{\|x+k\alpha - \beta \|} < \frac{2 \ell q_n ( A^{-1} + \log( A^{-1} q_n))}{1-B/A} .\] 
\end{lemma}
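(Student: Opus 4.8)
The strategy is to decompose the sum over the orbit segment of length $\ell q_n$ into $\ell$ blocks of length $q_n$, and within each block exploit that $x+kq_n\alpha - \beta$ stays close to a point of the rational orbit $\{i/q_n - \beta : 0 \le i < q_n\}$, so that Lemma~\ref{sumoverrationals} applies. The key quantitative input is that, since $x \in E_{n,\ell}$, the orbit point $x$ lies within $\lambda^{(n)}/2$ of $\cO(\ell q_n)$, hence the partial orbit $\{x+k\alpha : 0\le k<\ell q_n\}$ shadows the rational orbit $\{j/q_n : 0 \le j < q_n\}$ (each point hit $\ell$ times) up to an error controlled by $\ell q_n \lambda^{(n)} < \ell/q_{n+1}$. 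The hypothesis $\ell+1 \le Ba_{n+1}$ is precisely what makes this shadowing error small compared to the $A/q_n$ spacing to $\beta$, namely $\ell q_n \lambda^{(n)} < \ell/a_{n+1} \cdot (1/q_n) \le B/q_n$, so that every point $x+k\alpha$ satisfies $\|x+k\alpha - \beta\| > (A-B)/q_n$.

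First I would set up the shadowing: using Lemma~\ref{near rational orbit} (valid since $a_{n+1}\ge 2$, which follows from $\ell+1 \le Ba_{n+1} < a_{n+1}/2$, hence $a_{n+1}>2$), fix the bijection $\sigma$ with $\|k\alpha - \sigma(k)/q_n\| < \lambda^{(n)}$ for $0\le k<q_n$. Writing $k = mq_n + r$ with $0 \le r < q_n$ and $0 \le m < \ell$, we get $\|x+k\alpha - \beta - (\sigma(r)/q_n - \beta + x - (\text{base point}))\|$ small; more carefully, since $x$ is within $\lambda^{(n)}/2$ of some $R_\alpha^{-k_0}(0)$, the point $x + k\alpha$ is within $\lambda^{(n)}/2 + \lambda^{(n)}$ of an appropriate element $j(k)/q_n$ of the rational orbit, and as $k$ ranges over a block of length $q_n$ the index $j(k)$ runs bijectively over $\{0,\dots,q_n-1\}$. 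Then I would bound $1/\|x+k\alpha-\beta\| \le 1/(\|j(k)/q_n - \beta\| - \tfrac32\lambda^{(n)})$ and, using $\|j/q_n - \beta\| \ge A/q_n$ together with $\tfrac32\lambda^{(n)} q_n \le \tfrac32 \ell q_n \lambda^{(n)}/\ell < \tfrac{3B}{2\ell q_n}\cdot q_n$... — rather, cleanly: $\tfrac32\lambda^{(n)} \le \tfrac{3}{2q_{n+1}} \le \tfrac{3}{2 a_{n+1} q_n}$, which is $\le \tfrac{B}{q_n}$ up to the harmless constant, so $\|j(k)/q_n-\beta\| - \tfrac32\lambda^{(n)} \ge (1 - B/A)\|j(k)/q_n - \beta\|$ (after absorbing constants into the already-generous statement, or by using the slightly more precise $\lambda^{(n)}<1/q_{n+1}$ and $\ell+1\le Ba_{n+1}$). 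Summing over the $q_n$ points in one block gives, by Lemma~\ref{sumoverrationals} applied with this $q=q_n$, $A$, and $\beta$:
\[
\sum_{k \text{ in one block}} \frac{1}{\|x+k\alpha-\beta\|} \le \frac{1}{1-B/A}\sum_{j=0}^{q_n-1}\frac{1}{\|j/q_n - \beta\|} < \frac{2q_n(A^{-1}+\log(A^{-1}q_n))}{1-B/A}.
\]
Summing over the $\ell$ blocks yields the claimed bound with the extra factor $\ell$.

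The main obstacle I anticipate is bookkeeping the shadowing error uniformly: one must check that as $k$ runs over a block of $q_n$ consecutive integers, the rational approximants $j(k)/q_n$ are genuinely a permutation of $\{0,\dots,q_n-1\}$ (so that Lemma~\ref{sumoverrationals} can be invoked with the full rational orbit), and that the triangle-inequality loss $\tfrac32\lambda^{(n)}$ is comfortably dominated by the gap $(A-B)/q_n$ — this is exactly where the hypothesis $\ell+1 \le Ba_{n+1}$ is consumed, via $\lambda^{(n)} < 1/q_{n+1} \le 1/(a_{n+1}q_n) < 1/((\ell+1)q_n) \cdot B^{-1}\cdot B = \cdots$, i.e. $\ell q_n\lambda^{(n)} < B/q_n$. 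Once these two points are pinned down, the rest is a direct application of Lemmas~\ref{near rational orbit} and~\ref{sumoverrationals} and the factor-$\ell$ summation, with all constants already loose enough in the statement to absorb the $\tfrac32$ versus $1$ discrepancy in the denominator.
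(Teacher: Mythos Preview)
Your approach is essentially the paper's: split into $\ell$ blocks of length $q_n$, shadow each block by the rational orbit $\{j/q_n\}$ via Lemma~\ref{near rational orbit}, pull out the factor $(1-B/A)^{-1}$ from a triangle inequality, and apply Lemma~\ref{sumoverrationals} block by block.

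There is, however, a genuine slip in your error bookkeeping. You first claim the shadowing error is $\tfrac{3}{2}\lambda^{(n)}$; this is only valid inside the block that contains the index $k_0$ with $\|x+k_0\alpha\|<\lambda^{(n)}/2$. Writing $k_0=cq_n+d$ and $k=rq_n+s$, there is an additional drift $\|(r-c)q_n\alpha\|\le(\ell-1)\lambda^{(n)}$ across blocks, so the per-point error is $(\ell+\tfrac12)\lambda^{(n)}$, not $\tfrac{3}{2}\lambda^{(n)}$. You then switch to writing the error as $\ell q_n\lambda^{(n)}$, but that quantity is $\approx \ell/a_{n+1}<B$, \emph{not} $B/q_n$ as you assert, and would be far too large to compare with the gap $A/q_n$. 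The correct chain (exactly as in the paper) is
\[
(\ell+\tfrac12)\lambda^{(n)} \;<\; B\,a_{n+1}\lambda^{(n)} \;<\; B\,\lambda^{(n-1)} \;<\; \frac{B}{q_n} \;\le\; \frac{B}{A}\left\|\frac{j}{q_n}-\beta\right\|,
\]
using successively $\ell+1\le Ba_{n+1}$, then $a_{n+1}\lambda^{(n)}<\lambda^{(n-1)}$ from \eqref{Eqaeta}, then $\lambda^{(n-1)}<1/q_n$ from \eqref{EqLambdaQ}. With the error expressed this way, the permutation claim is also immediate: for fixed $r$, as $s$ runs over $\{0,\dots,q_n-1\}$ the indices $\sigma([s-d]_{q_n})$ run bijectively over $\{0,\dots,q_n-1\}$, so Lemma~\ref{sumoverrationals} applies verbatim to each block. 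Once you replace your $\tfrac32\lambda^{(n)}$ (and the spurious $\ell q_n\lambda^{(n)}$) by $(\ell+\tfrac12)\lambda^{(n)}$, your outline becomes the paper's proof.
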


\begin{proof}
Note that
\[\sum_{k=0}^{\ell q_n -1} \frac{1}{\|x + k \alpha - \beta \|} 
= \sum_{r=0}^{\ell-1} \sum_{s=0}^{q_n-1} \frac{1}{\|x + (r q_n+s) \alpha - \beta \| }.\]
Thus it suffices to show that 
\[ \sum_{s=0}^{q_n-1} \frac{1}{\|x + (r q_n+s) \alpha - \beta \| } < \frac{2 q_n ( A^{-1} + \log( A^{-1} q_n))}{1-B/A}\]
for every $0 \leq r < \ell$.

Fix some $x \in E_{n, \ell}$. Then, by the definition of $E_{n, \ell}$ there exists $0 \leq k < \ell q_n$ such that $\|x+k \alpha \| < \lambda^{(n)} /2$. Let $0 \leq c < \ell$ and $0 \leq d < q_n$ be such that $k = c q_n + d$. Let 
\[\sigma: \{0, \ldots, q_n-1 \} \to \{0, \ldots, q_n-1\}\]
be as in Lemma~\ref{near rational orbit}. (The inequality $\ell+1 < B a_{n+1}$ implies that $a_{n+1} >4$ so that Lemma~\ref{near rational orbit} applies.) Given an integer $i$, let $[i]_{q_n}$ denote the unique integer $0 \leq m < q_n$ such that $i \equiv m \mod q_n$. Then, for every $0 \leq r < \ell$ and $0 \leq s < q_n$ we have 
\begin{multline*}
\big\| x + (r q_n + s) \alpha - \beta \big\|  
 = \big\|  x + (s-d) \alpha +(r-c) q_n \alpha + k \alpha - \beta \big\| \\
 \geq \left\|  \frac{\sigma([s-d]_{q_n})}{q_n} - \beta \right\| 
 - \left\|  (s-d) \alpha- \frac{\sigma([s-d]_{q_n})}{q_n} 
+(r-c) q_n \alpha + x + k \alpha \right\|.
\end{multline*}

By hypothesis,
\[ \left\|  \frac{\sigma([s-d]_{q_n})}{q_n} - \beta \right\|  
> \frac{A}{q_n} > A \lambda^{(n-1)}\]
for every $0 \leq s < q_n$.

Moreover, using the inequality $|r-c|\leq \ell-1$ and Lemma~\ref{near rational orbit},
\begin{align*}
 \left\|  (s-d) \alpha- \frac{\sigma([s-d]_{q_n})}{q_n} 
+(r-c) q_n \alpha + x + k \alpha\right\| \leq & \left\|  (s-d) \alpha- \frac{\sigma([s-d]_{q_n})}{q_n} \right\| \\
& + |r-c| \|q_n \alpha \| + \| x+k \alpha \| \\
\leq &\ \lambda^{(n)} + |r-c| \lambda^{(n)} + \frac{\lambda^{(n)}}{2} \\
\leq &\ (\ell+\half) \lambda^{(n)} < B a_{n+1} \lambda^{(n)}\\
< &\ B \lambda^{(n-1)}.
\end{align*}

Consequently,
\[ \big\| x + (r q_n + s) \alpha - \beta \big\| 
> (1-B/A) \left\|  \frac{\sigma([s-d]_{q_n})}{q_n} - \beta \right\|.\]
Taking reciprocals while summing over $s$ and applying Lemma~\ref{sumoverrationals} gives
\begin{align*}
\sum_{s=0}^{q_n-1} \frac{1}{\| x + (r q_n + s) \alpha - \beta \|} & < \frac{1}{1-B/A}  \sum_{s=0}^{q_n-1} \frac{1}{\| \frac{\sigma([s-d]_{q_n})}{q_n} - \beta \| }  \\
 & = \frac{1}{1-B/A} \sum_{j=0}^{q_n-1} \frac{1}{\| \frac{j}{q_n} - \beta \|} \\
&  < \frac{2 q_n( A^{-1} + \log(A^{-1}q_n))}{1-B/A},
\end{align*}
as required.
\end{proof}

\begin{lemma} \label{lower bound}
Let $\alpha\notin \Q$. Fix some $n>0$ and let $\ell \geq 1$ be such that $\ell q_n < q_{n+1}$. Let $E_{n, \ell}$ be as in (\ref{DefE}).
Then, for any $x\in E_{n, \ell}$,
\[\sum_{k=0}^{\ell q_n-1} \frac{1}{\|x + k \alpha \|} \geq \frac{ \log \ell}{\lambda^{(n)}}.\]
\end{lemma}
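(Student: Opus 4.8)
The plan is to exploit the defining property of $E_{n,\ell}$ together with the observation that a single very close return of an orbit to $0$ automatically produces a whole cascade of near returns. Concretely, if $x\in E_{n,\ell}$ then, by definition of $E_{n,\ell}$, there is an index $k_0$ with $0\le k_0<\ell q_n$ and $\|x+k_0\alpha\|<\tfrac12\lambda^{(n)}$. Writing the Euclidean division $k_0=cq_n+d$ with $0\le c<\ell$ and $0\le d<q_n$, I would then look at the $\ell$ indices $k_j:=k_0+jq_n=(c+j)q_n+d$ for $j=-c,-c+1,\dots,\ell-1-c$. Since $c+j$ runs through $\{0,1,\dots,\ell-1\}$, each $k_j$ lies in $\{0,1,\dots,\ell q_n-1\}$ and the $k_j$ are pairwise distinct, so all $\ell$ terms $1/\|x+k_j\alpha\|$ genuinely occur in the sum to be bounded from below.

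Next I would estimate these $\ell$ terms. By Lemma~\ref{properties} (see \eqref{EqContFrac0}) one has $\|q_n\alpha\|=\lambda^{(n)}$; let $\rho$ denote the lift of $q_n\alpha$ to $(-\tfrac12,\tfrac12)$, so $|\rho|=\lambda^{(n)}$, and let $z_0$ be the lift of $x+k_0\alpha$ with $|z_0|<\tfrac12\lambda^{(n)}$. Since $x+k_j\alpha=(x+k_0\alpha)+jq_n\alpha$ and $\|u\|\le|\tilde u|$ for any lift $\tilde u$ of $u$, the triangle inequality gives $\|x+k_j\alpha\|\le|z_0+j\rho|\le|z_0|+|j|\,|\rho|<(|j|+\tfrac12)\lambda^{(n)}$. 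Summing the reciprocals over $j$ then reduces the whole problem to the elementary inequality $\sum_{j=-c}^{\ell-1-c}(|j|+\tfrac12)^{-1}\ge\log\ell$.

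For that last step I would argue as follows. The window $\{-c,\dots,\ell-1-c\}$ always contains $0$ (as $0\le c\le\ell-1$). Writing $g(c)=\sum_{j=-c}^{\ell-1-c}(|j|+\tfrac12)^{-1}$, a short computation of $g(c+1)-g(c)$ shows that $g$ first increases then decreases as $c$ runs from $0$ to $\ell-1$, so it attains its minimum at the endpoints $c=0$ and $c=\ell-1$, where, by symmetry, it takes the same value $\sum_{i=0}^{\ell-1}(i+\tfrac12)^{-1}$. Comparing with the integral of the decreasing function $t\mapsto(t+\tfrac12)^{-1}$ gives $\sum_{i=0}^{\ell-1}(i+\tfrac12)^{-1}\ge\int_0^\ell(t+\tfrac12)^{-1}\,dt=\log(2\ell+1)\ge\log\ell$, and combining this with the estimate of the previous paragraph yields the claim.

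I do not anticipate a serious obstacle: the argument is essentially self-contained once the cascade is identified. The two places that require a little care are the index bookkeeping in the first paragraph — one must check that the $\ell$ shifted indices $k_0+jq_n$ stay inside $\{0,\dots,\ell q_n-1\}$ — and the elementary minimization over $c$ in the third. Conceptually, this is where the denominator becomes $\lambda^{(n)}$ rather than $\lambda^{(n-1)}$ (contrast Corollary~\ref{kq orbit}): a $\tfrac12\lambda^{(n)}$-close approach to $0$ propagates under the $q_n$-shift, which displaces points by exactly $\lambda^{(n)}$, into a near-arithmetic progression of returns to $0$ at scales $\tfrac12\lambda^{(n)},\tfrac32\lambda^{(n)},\tfrac52\lambda^{(n)},\dots$, whose reciprocals sum to order $\log\ell/\lambda^{(n)}$.
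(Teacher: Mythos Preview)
Your proof is correct and follows essentially the same route as the paper: pick the close return $k_0$, shift it by multiples of $q_n$ to get $\ell$ terms at distances $(|j|+\tfrac12)\lambda^{(n)}$, then bound the resulting harmonic-type sum by a Riemann-sum/integral comparison. Your treatment of the minimization over $c$ (unimodality of $g$) is in fact more explicit than the paper's, which simply asserts $\sum_{r=0}^{\ell-1}(|r-c|+\tfrac12)^{-1}\ge\sum_{r=0}^{\ell-1}(r+\tfrac12)^{-1}$ without justification.
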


\begin{remark}\label{lower bound2}
Replacing the interval $I_n$ by $\tilde I_n = (-2\lambda^{(n)},2\lambda^{(n)})$, and the set $E_{\ell,n}$ by $\tilde E_{\ell,n}$ accordingly, one gets a similar result:
\[\sum_{k=0}^{\ell q_n-1} \frac{1}{\|x + k \alpha \|} \geq \frac{ \log \ell}{4\lambda^{(n)}}.\]
\end{remark}

The proof is based on taking into account only the contribution of points of the ``ground floor'' of the renormalization interval.

\begin{proof}
Fix some  $x \in E_{n, \ell}$. Then, by definition of $E_{n, \ell}$, there exists some $0\leq m < \ell q_n$ such that $\|x+m \alpha \|< \frac{\lambda^{(n)}}{2}$. Let $0\leq c<\ell$ and $0 \leq d < q_n$ be integers such that $m=c q_n + d$. Then
\begin{align*}
 \sum_{k=0}^{\ell q_n - 1} \frac{1}{\|x + k \alpha \|} & = \sum_{r=0}^{\ell-1} \sum_{s=0}^{q_n-1}  \frac{1}{\| x + (r q_n+s) \alpha  \|} \\
 & > \sum_{r=0}^{\ell-1} \frac{1}{\|(x+ (r q_n+d) \alpha\|} \\
& = \sum_{r=0}^{\ell-1} \frac{1}{\|x' + (r-c) q_n \alpha \|},
\end{align*}
where $x' = x+m \alpha$. Note that 
\[\| x' + (r-c) q_n \alpha  \| \leq \|(r-c)q_n \alpha \| + \|x'\| < |r-c| \lambda^{(n)} + \frac{\lambda^{(n)}}{2}.\]
Hence 
\begin{equation}\label{riemannsum}
\sum_{r=0}^{\ell-1} \frac{1}{\| x' + (r-c) q_n \alpha  \|}  \geq \sum_{r=0}^{\ell-1} \frac{1}{\lambda^{(n)} ( |r-c|+ \half)} 
 \geq \sum_{r=0}^{\ell-1} \frac{1}{\lambda^{(n)}(r+\frac{1}{2})}.
\end{equation}

From that the lemma follows easily, since (\ref{riemannsum}) is an upper Riemann sum of the integral
\[\int_{\frac{1}{2}}^{\ell+\frac{1}{2}} \frac{dx}{\lambda^{(n)} x}, \]
whose value is greater than $ \frac{\log \ell}{\lambda^{(n)}}$.
\end{proof}

We end this section by a lemma that will be used in the next one.

\begin{lemma}\label{smallest distance}
Let $a, b, q$ be positive integers, with $b \geq 2$. Suppose that $a$ and $b$ are coprime and also that $b$ and $q$ are coprime. Then
\begin{equation*}
\left\| \frac{n}{q}- \ab \right\| \geq \frac{1}{bq}
\end{equation*}  
 for every $n \in \Z$.
\end{lemma}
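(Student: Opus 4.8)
The plan is to clear denominators and reduce the inequality to the statement that a certain integer cannot vanish. First I would observe that for \emph{any} integers $n$ and $m$,
\[
\left| \frac{n}{q} - \ab - m \right| = \frac{\left| (n - mq)\,b - aq \right|}{bq},
\]
so that $\left\| \frac{n}{q} - \ab \right\|$ is exactly $\frac{1}{bq}$ times the minimum of $|kb - aq|$ over $k \in \Z$. Hence the lemma is equivalent to the claim that $kb - aq \neq 0$ for every $k \in \Z$, since a nonzero integer has absolute value at least $1$.

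The second step is to verify this nonvanishing using the coprimality hypotheses. Suppose $kb = aq$ for some integer $k$. Then $b \mid aq$; since $\gcd(a,b) = 1$ this forces $b \mid q$, and since $\gcd(b,q) = 1$ we get $b = 1$, contradicting $b \geq 2$. Therefore $kb - aq$ is a nonzero integer, so $|kb - aq| \geq 1$, and the desired bound $\left\| \frac{n}{q} - \ab \right\| \geq \frac{1}{bq}$ follows immediately from the displayed identity.

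I do not expect any genuine obstacle here: the argument is elementary number theory. The only point requiring a little care is that the norm $\|\cdot\|$ already incorporates a minimum over integer translates, which is precisely why I phrase the computation in terms of an arbitrary pair $(n,m)$ from the start rather than fixing a representative; the two coprimality assumptions are then each used exactly once, in the single divisibility deduction above.
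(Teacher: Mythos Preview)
Your proof is correct and essentially identical to the paper's: both clear denominators to reduce the inequality to the nonvanishing of $kb - aq$ (equivalently $aq \not\equiv 0 \bmod b$), and both deduce this from the two coprimality hypotheses in the same way.
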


\begin{proof}
It follows from $\gcd(a,b) = \gcd(b,q) = 1$ that $a q \not\equiv 0 \mod b$. Thus 
\[b(n+mq)-aq \neq 0\] 
for every $n, m \in \Z$, and hence
\[ \left| \frac{n}{q} - \frac{a}{b} +m \right|  = \left| \frac{b(n+mq)-aq}{bq} \right| \geq \frac{1}{bq}, \]
proving the lemma. 
\end{proof}

\section{Extreme historic behaviour}
\label{SecLiou}

This section is devoted to two theorems on the existence of reparameterized linear flows with extreme historic behaviour. Theorem~\ref{refined rational distinct orbits} deals with stopping points on rationally separated orbits whereas Theorem~\ref{generic distinct orbits} deals with the generic case.

\subsection{Precise statements and sketch of proofs}

We start by identifying the set of angles for which we are going to prove that the conclusion of Theorem~\ref{rational distinct orbits} holds before stating a more precise version of it.

\begin{definition}
Let $\nu$ be a positive number, $k \geq 2$ be an integer, and $\alpha = [a_0; a_1, \ldots]$ an irrational number with convergents $p_n/q_n$. We say that $\alpha$ is \emph{$(\nu, k)$-approximable} if there are infinitely many $n \in \N$ such that 
\[ 
\begin{cases}
a_{n+1} \geq q_n^\nu, \text{ and} \\
\gcd(q_n, k) = 1.
\end{cases}
\]
Let $\cW(\nu,k)$ denote the set of numbers that are $(k, \nu)$-approximable. Let  
\[\cW(\nu) = \bigcap_{k\geq 2} \cW(\nu,k) \]
and 
\[\cW = \bigcup_{\nu>0} \cW(\nu).\]
\end{definition}

\begin{proposition}\label{W is generic}
For every $\nu>0$ and every integer $k \geq 2$, the set $\cW(\nu,k)$ is a dense $G_\delta$ subset of $\R$.
\end{proposition}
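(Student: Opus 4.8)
The plan is to realize $\cW(\nu,k)$ as a $G_\delta$ set cut out by continued-fraction cylinder conditions, and then establish density by an explicit cylinder-extension argument. Work in the Polish (hence Baire) space $X=\R\setminus\Q$, on which the partial quotients $a_i$ and the denominators $q_n$ are everywhere defined. For a finite string $\mathbf c=(c_1,\dots,c_n)$ of positive integers let $\Delta_{\mathbf c}\subset X$ be the cylinder $\{\alpha:a_i(\alpha)=c_i,\ i=1,\dots,n\}$; it is relatively open in $X$, its closure is an interval (of length $O(q_n^{-2})$), and $q_n$ depends only on $\mathbf c$. Inside $\Delta_{\mathbf c}$ the number $\alpha$ is a continuous monotone (Möbius) function of its next complete quotient $\alpha'_{n+1}\in(1,\infty)$, and $a_{n+1}(\alpha)\ge m$ iff $\alpha'_{n+1}>m$; hence $\{\alpha\in\Delta_{\mathbf c}:a_{n+1}(\alpha)\ge m\}$ is relatively open in $X$ and nonempty for every $m$.

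For each $N\ge 1$ set
\[
W_N=\bigcup_{n\ge N}\ \bigcup_{\substack{\mathbf c=(c_1,\dots,c_n)\\ \gcd(q_n(\mathbf c),k)=1}}\bigl\{\alpha\in\Delta_{\mathbf c}:\ a_{n+1}(\alpha)\ge \lceil q_n(\mathbf c)^\nu\rceil\bigr\}.
\]
Since $q_n(\alpha)=q_n(\mathbf c)$ on $\Delta_{\mathbf c}$ and $a_{n+1}$ is integer-valued, one checks directly from the definition that $\cW(\nu,k)=\bigcap_{N\ge1}W_N$, and each $W_N$ is relatively open in $X$ by the previous paragraph. As $X$ is itself $G_\delta$ in $\R$, this already gives that $\cW(\nu,k)$ is $G_\delta$ in $\R$; and by the Baire category theorem on $X$ it suffices, for density of $\cW(\nu,k)$, to show that every $W_N$ is dense in $X$ (equivalently in $\R$).

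For density of $W_N$, fix a nonempty open interval of $\R$; shrinking it we may assume it is contained in some $(j,j+1)$, and pick an irrational $\alpha_0$ in it with partial quotients $(a_1,a_2,\dots)$. For $m$ large the cylinder $\Delta_{(a_1,\dots,a_m)}$ is contained in our interval. It remains to prolong this finite string — keeping it inside the interval — to one of length $n\ge N$ whose denominator is coprime to $k$: first pad with digits $1$ up to length $m'=\max(m,N)$, then append one further digit $c_{m'+1}$ with $\gcd(q_{m'+1},k)=1$. Such a choice exists because $q_{m'+1}=c_{m'+1}q_{m'}+q_{m'-1}$ with $\gcd(q_{m'-1},q_{m'})=1$: for primes $p\mid k$ dividing $q_{m'}$ the quantity $c_{m'+1}q_{m'}+q_{m'-1}$ is automatically prime to $p$ (as $p\nmid q_{m'-1}$), while for the finitely many primes $p\mid k$ not dividing $q_{m'}$ one only needs $c_{m'+1}$ to avoid one residue class mod $p$, so by the Chinese remainder theorem infinitely many positive $c_{m'+1}$ work. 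Writing $\mathbf c$ for the resulting string of length $n=m'+1\ge N$, the set $\{\alpha\in\Delta_{\mathbf c}:a_{n+1}(\alpha)\ge\lceil q_n^\nu\rceil\}$ is a nonempty relatively open subset of $X$ contained in our interval and contained in $W_N$. Hence $W_N$ is dense, and the proof concludes by Baire.

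The cylinder bookkeeping and the identity $\cW(\nu,k)=\bigcap_N W_N$ are routine; the only point that genuinely requires an argument is the arithmetic extension step, i.e.\ prolonging a cylinder of prescribed large length so that $\gcd(q_n,k)=1$, which is where the coprimality of consecutive continued-fraction denominators combined with the Chinese remainder theorem is essential.
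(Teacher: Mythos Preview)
Your proof is correct and follows essentially the same approach as the paper: you write $\cW(\nu,k)$ as a countable intersection of open (in $\R\setminus\Q$) sets $W_N$ defined by continued-fraction cylinder conditions, and prove density of each $W_N$ by the same arithmetic extension step, namely choosing a further partial quotient $c$ so that $q_{m'+1}=c\,q_{m'}+q_{m'-1}$ is coprime to $k$ using $\gcd(q_{m'-1},q_{m'})=1$ (the paper isolates this as Lemma~\ref{pqk}, while you argue it directly via the Chinese remainder theorem). Your treatment is in fact slightly more careful than the paper's in handling the passage between $G_\delta$ in $\R\setminus\Q$ and $G_\delta$ in $\R$.
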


Since $\cW(\nu,k)$ is a dense $G_\delta$ set, so is $\cW(\nu)$ for every $\nu$. 

The proof of Proposition~\ref{W is generic} is a straightforward $G_\delta$ argument, but it relies on the fact that we may make small alterations to $\alpha$ to obtain $\gcd(q_n,k) =1$ for large $n$. The following lemma ensures that this is possible. 

\begin{lemma} \label{pqk}
Let $a,b,c$ be positive integers such that $a$ and $b$ are coprime. Then there exists a positive integer $i$ such that $a+ib$ and $c$ are coprime.
\end{lemma}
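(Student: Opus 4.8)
The plan is to reduce the statement to a single congruence condition on $i$, after separating the prime factors of $c$ according to whether or not they divide $b$.

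First I would write $c = c_1 c_2$, where $c_1$ is the largest divisor of $c$ coprime to $b$ and $c_2 = c/c_1$. Then $c_1$ and $c_2$ are coprime, $\gcd(b,c_1)=1$, and every prime dividing $c_2$ also divides $b$. The reason for this splitting is that $c_2$ is harmless: if a prime $p$ divides $c_2$ then $p \mid b$, hence $p \nmid a$ (because $\gcd(a,b)=1$), and therefore $p \nmid a+ib$ for \emph{every} $i$, since $p \mid ib$. Thus $\gcd(a+ib,c_2)=1$ no matter how $i$ is chosen.

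It then remains to pick $i$ so that $\gcd(a+ib,c_1)=1$. Since $\gcd(b,c_1)=1$, the class of $b$ is invertible modulo $c_1$, so I would take any positive integer $i$ with $i \equiv (1-a)\,b^{-1} \pmod{c_1}$; this forces $a+ib \equiv 1 \pmod{c_1}$ and hence $\gcd(a+ib,c_1)=1$. Combining the two parts yields $\gcd(a+ib,c)=\gcd(a+ib,c_1c_2)=1$, as desired. The degenerate cases ($c=1$, or $c_1=1$, or $c_2=1$) are subsumed by the same argument, the corresponding condition simply becoming vacuous.

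There is no real obstacle here: the only idea is the factorization $c=c_1c_2$ isolating the primes that divide $b$, after which both the construction of $i$ and the verification on $c_2$ are immediate consequences of $\gcd(a,b)=1$. (Alternatively one could argue prime by prime: for each prime $p\mid c$ with $p\nmid b$ exactly one residue of $i$ modulo $p$ is forbidden, and the Chinese Remainder Theorem produces an admissible — indeed positive — value of $i$; but the two-factor formulation is shorter.)
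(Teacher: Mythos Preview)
Your proof is correct. It differs from the paper's argument in the way $i$ is produced: the paper sets $i$ explicitly as the product of the prime factors of $c$ that do not divide $ab$ (or $i=1$ if there are none), and then checks for each prime $p \mid c$ that $p \nmid a+ib$ by splitting into the cases $p \mid a$ and $p \nmid a$. You instead factor $c=c_1c_2$ according to whether the primes divide $b$, observe that the $c_2$ part imposes no constraint on $i$, and use the invertibility of $b$ modulo $c_1$ to force $a+ib\equiv 1\pmod{c_1}$. Both arguments are equally short and elementary; the paper's choice of $i$ is explicit and square-free, while yours yields a whole arithmetic progression of admissible $i$ and makes the Chinese Remainder Theorem alternative (which you also mention) transparent.
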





\begin{proof}
Let $i$ be the product of all prime factors of $c$ that do not divide $ab$, if such factors exist. Otherwise let $i=1$. Note that $a$, $b$, and $i$ have no common factors, and that every prime factor of $c$ divides $abi$. We claim that $c$ and $a+ib$ are coprime. Indeed, suppose that $p$ is a prime factor of $c$. Then $p \mid abi$. If $p$ is a factor of $a$ then $p$ is not a factor of $bi$. In particular $p \nmid a+ib$. If $p$ is not a factor of $a$, then $p$ is a factor of $bi$. Here again $p \nmid a+ib$. We have shown that there is no prime number that divides both $c$ and $a+ib$.
\end{proof}

\begin{proof}[Proof of Proposition~\ref{W is generic}]

Fix $\nu>0$ and an integer $k \geq 2$.
Let us denote by $C(a_0; a_1, \ldots, a_n)$ the open cylinder set
\[
\left\{ 
  a_0 + 
    \frac{1}{
      a_1+ \frac{1}{
         \ddots \  +\frac{1}{a_n+y}
            }}: 0<y<1 \right\} \]
     
Let $\cC_n$ be the collection of all cylinders of the form $C(a_0;a_1, \ldots, a_n)$. 
If $\alpha$ and $\alpha'$ belong to the same cylinder $C(a_0;a_1, \ldots, a_n)$, then 
\[ \|\alpha-\alpha'\| \leq \|\alpha-\frac{p_n}{q_n}\| + \| \alpha'-\frac{p_n}{q_n}\| < \frac{2}{q_{n+1}} \leq 2 \cdot 2^{-\lfloor n/2 \rfloor}\]
so that the diameter of cylinders in $\cC_n$ converge to zero uniformly as $ n \to \infty$. 

Let $\cA_n$ be the collection of cylinders on which $\gcd(q_n,k)=1$ is satisfied. (Note that $q_n=q_n(\alpha)$ is constant on cylinders in $\cC_n$.) We claim that any open set in $\R$ contains an element of $\cA_n$. Indeed, sine the diameter of cylinders in $\cC_n$ tend uniformly to zero, any open set contains a cylinder in $\cC_{n-1}$, $C(a_0; a_1, \ldots, a_{n-1})$ say, for $n$ sufficiently large. Now, by Lemma \ref{pqk}, we may choose a number $a_n \geq 1$ such that $q_n = q_{n-2}+ a_n q_{n-1}$ and $k$ are coprime. Hence $C(a_0; a_1, \ldots, a_n) \in \cA_n$. 
We have proved that 
\[\bigcup_{n\geq m} \bigcup_{C \in \cA_n}C\]
is dense in $\T$ for every $m$.

Let $\cB_{n+1}$ be the collection of cylinders of the form $C(a_0; a_1, \ldots, a_{n+1})$ such that
\begin{enumerate}
\item $C(a_0;a_1, \ldots, a_n) \in \cA_n$, and
\item points in $C(a_0, a_1, \ldots, a_{n+1})$ satisfy $a_{n+1} \geq q_n^\nu$. 
\end{enumerate}
It is clear that if $C(a_0;a_1, \ldots,a_n)$ belongs to $\cA_n$ then $C(a_0; a_1, \ldots, a_n, \ell)$ belongs to $\cB_{n+1}$ for $\ell$ sufficiently large. In other words, each $C \in \cA_n$ contains a subcylinder  $C' \in \cB_{n+1}$. Consequently
\[O_m = \bigcup_{n\geq m} \bigcup_{C \in \cB_n} C\]
is dense in $\T$. The proof follows by observing that
\[\cW(\nu,k) = \bigcap_{m} O_m.\]
\end{proof}

\begin{theorem}[refined Theorem~\ref{rational distinct orbits}]\label{refined rational distinct orbits}
Let $\p=(0,0)$ and $\q = (0, \frac{a}{b})$, where $\gcd(a,b)=1$, and suppose that $\alpha \in \cW(\nu,b)$ for some $\nu>0$. If $\phi^t$ is a reparametrized linear flow satisfying (SH), then it has an extreme historic behaviour.
\end{theorem}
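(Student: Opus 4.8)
The plan is to deduce the statement from Proposition~\ref{criterium3}, applied to the Birkhoff‑ratio quantities $\Theta_n^\beta$ of the flow $\phi^t$. The first step is the identification of $\beta$: with $\p=(0,0)$ and $\q=(0,\tfrac ab)$, any admissible cross‑section $\Sigma=\{x_0\}\times\T$ has $q_0-p_0\equiv\tfrac ab\pmod 1$; since $\gcd(a,b)=1$ and $b\ge 2$ (otherwise $\q=\p$), this rational is in lowest terms, $p_0\neq q_0$, and $\p,\q$ lie on different orbits (else $\alpha$ would be rational). So $\beta=\tfrac ab$, and it suffices to check the hypothesis of Proposition~\ref{criterium3}: for every $K>1$ there is $C>0$ with $\la\{x\in\T:\Theta_n^\beta(x)>K\}\ge C$ for infinitely many $n$. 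Replacing $K$ by a larger value only shrinks these sets, so I may assume $K$ is as large as needed.

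Fix such a large $K$, set $A=\min(\tfrac1b,\tfrac25)$ and $B=\tfrac A2$, so $0<B<A<\tfrac12$. By $\alpha\in\cW(\nu,b)$ there are infinitely many $N$ with $a_{N+1}\ge q_N^{\nu}$ and $\gcd(q_N,b)=1$; for each such $N$ with $q_N$ large (a bound depending only on $\nu,b,K$) I would work at the scale $\ell=\ell_N:=\lfloor\tfrac{\nu}{16K}\,a_{N+1}\rfloor$ and with the set $E=E_{N,\ell}$ of \eqref{DefE}, the $\tfrac{\lambda^{(N)}}{2}$-neighbourhood of $\cO(\ell q_N)$; note that $2\le\ell$, that $\ell+1\le Ba_{N+1}$, and that $\ell q_N<q_{N+1}$ once $N$ is large. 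For every $x\in E$, Lemma~\ref{lower bound} bounds $S_{\ell q_N}(x)$ from below by $\tfrac{\log\ell}{\lambda^{(N)}}>q_{N+1}\log\ell$ (using $\lambda^{(N)}<q_{N+1}^{-1}$ from Lemma~\ref{properties}). In the other direction, $\gcd(q_N,b)=1$ lets Lemma~\ref{smallest distance} give $\|\tfrac i{q_N}-\beta\|\ge\tfrac1{bq_N}\ge\tfrac A{q_N}$ for all $i$, so Lemma~\ref{ABC} applies and yields, for every $x\in E$,
\[ S_{\ell q_N}(x-\beta)\;<\;\frac{2\ell q_N\bigl(A^{-1}+\log(A^{-1}q_N)\bigr)}{1-B/A}\;\le\;8\,\ell q_N\log q_N, \]
the last step holding for $q_N$ larger than a constant depending on $b$.

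Combining the two estimates and using $q_{N+1}>a_{N+1}q_N$, $a_{N+1}/\ell\ge 16K/\nu$, and $\log\ell\ge\nu\log q_N-\log(32K/\nu)$, one gets $\Theta_{\ell q_N}^\beta(x)>\tfrac{q_{N+1}\log\ell}{8\ell q_N\log q_N}>K$ for every $x\in E$ (the factors $\nu$ cancel, which is the point), provided $q_N$ is large enough that $\log(32K/\nu)\le\tfrac{\nu}{4}\log q_N$. Finally, Lemma~\ref{size} together with $\lambda^{(N)}>\tfrac1{q_{N+1}+q_N}>\tfrac1{4a_{N+1}q_N}$ gives $\la(E)=\ell q_N\lambda^{(N)}>\tfrac{\ell}{4a_{N+1}}\ge\tfrac{\nu}{128K}$ for $N$ large. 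Thus $\la\{\Theta_n^\beta>K\}\ge\tfrac{\nu}{128K}=:C$ for $n=\ell_N q_N$, and there are infinitely many such $n$; Proposition~\ref{criterium3} then gives the extreme historic behaviour.

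The delicate point is the choice of the scale $\ell_N$. It must be large — comparable to $a_{N+1}$ — so that the neighbourhood $E_{N,\ell_N}$ has measure bounded below uniformly in $N$; but then the orbit of $x$ only comes within $\asymp\lambda^{(N)}\asymp q_{N+1}^{-1}$ of $0$, so a single term cannot dominate $S_{\ell_N q_N}(x-\beta)\asymp q_{N+1}\log q_N$, and one is forced to use the \emph{logarithmic} lower bound of Lemma~\ref{lower bound}: the gain $\log\ell_N\asymp\nu\log q_N$ precisely offsets the $\log q_N$ loss coming from Lemma~\ref{ABC}, and the residual factor $\tfrac{q_{N+1}}{\ell_N q_N}\asymp\tfrac{K}{\nu}$ then supplies the $K$. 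This same trade‑off explains why only the weaker Proposition~\ref{criterium3} (with $C=C(K)\asymp\nu/K$) is available here, rather than Proposition~\ref{criterium2}.
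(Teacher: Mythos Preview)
Your proof is correct and follows essentially the same route as the paper: apply Proposition~\ref{criterium3} by choosing, for infinitely many $N$ with $a_{N+1}\ge q_N^\nu$ and $\gcd(q_N,b)=1$, a scale $\ell\asymp (\nu/K)\,a_{N+1}$, then bound $S_{\ell q_N}(x)$ from below via Lemma~\ref{lower bound} and $S_{\ell q_N}(x-\beta)$ from above via Lemmas~\ref{smallest distance} and~\ref{ABC}, and finally estimate $\la(E_{N,\ell})$ with Lemma~\ref{size}. The constants are packaged slightly differently---the paper takes $A=1/b$ and $B=\nu/(8bK)$, whereas you fix $A=\min(1/b,2/5)$ and $B=A/2$ independently of $K$---but the mechanism is identical; your choice even handles the edge case $b=2$ (where the paper's $A=1/2$ does not strictly satisfy the hypothesis $A<\tfrac12$ of Lemma~\ref{ABC}) more cleanly.
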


We now turn to the generic case.

\begin{definition}
We say that $\alpha = [a_0; a_1, a_2, \ldots ]$ is a Liouville number if, given any $k>0$  there are infinitely many values of $n$ for which
\[a_{n+1} > q_n^k\]
holds. 
\end{definition}

\begin{remark}
The above definition Liouville number is stated in a form suited for the needs in this this paper. Is distinct from, but equivalent to, the standard definition that $\alpha$ is Liouville if, given any positive integer $k$, there exist $p,q \in \Z$ such that
\[\left| \alpha - \frac{p}{q} \right| < \frac{1}{q^k}.
\]
\end{remark}

\begin{theorem} \label{generic distinct orbits}
Let $\alpha$ be a Liouville number. Then there exists a dense $G_\delta$ set $B \subset  \T$ such that if $\beta \in B$, then any reparameterized linear flow satisfying (SH) with angle $\alpha$ and stopping points at $(0,0)$ and $(0,\beta)$ has an extreme historic behaviour.
\end{theorem}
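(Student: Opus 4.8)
The plan is to deduce this from Theorem~\ref{refined rational distinct orbits} together with a perturbation argument on $\beta$, using the criterion for extreme historic behaviour already available. Recall that if $\beta = a/b$ with $\gcd(a,b)=1$, then Theorem~\ref{refined rational distinct orbits} gives extreme historic behaviour as soon as $\alpha \in \cW(\nu,b)$ for some $\nu>0$; and the fact that $\alpha$ is Liouville means precisely that $a_{n+1} > q_n^k$ for infinitely many $n$, for every $k$. So for a \emph{fixed} $b$, the Liouville hypothesis supplies infinitely many indices $n$ with $a_{n+1} \geq q_n^\nu$ for any prescribed $\nu$; the only extra thing needed to land in $\cW(\nu,b)$ is that infinitely many of those $n$ also satisfy $\gcd(q_n,b)=1$. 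This last condition is not automatic, but — and this is the point — it does not depend on $\beta$ at all, only on $\alpha$ and $b$. However $\alpha$ is fixed here and we are not allowed to perturb it, so instead I will perturb $\beta$: I want to exhibit, for each $m$, a dense open set $B_m \subset \T$ of values of $\beta$ near which a suitable rational $a/b$ with $\gcd(q_n,b)=1$ for infinitely many Liouville indices $n$ can be found, and then conclude $\beta$ itself is handled because extreme historic behaviour only depends on $\beta$ through $\Theta_n^\beta$ and we will arrange $\beta \in \Q$.

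Let me be more precise. First I would verify, exactly as in the proof of Proposition~\ref{W is generic} but with the roles reversed, that for a fixed Liouville $\alpha$ the set
\[
D_\alpha = \Big\{ \tfrac{a}{b} : \gcd(a,b)=1,\ \alpha \in \bigcup_{\nu>0}\cW(\nu,b) \Big\}
\]
is dense in $\T$. For this, fix any open interval $U \subset \T$ and any target denominator scale. Since $\alpha$ is Liouville, pick a large index $n$ with $a_{n+1} > q_n^{n}$ (say); the value $q_n$ is then fixed. Now I need a rational $a/b \in U$ with $\gcd(q_n,b)=1$ and, simultaneously, I want this to persist for infinitely many Liouville indices — but here I only need one $b$ that works for infinitely many such $n$. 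The cleanest route: take $b$ to be a prime exceeding every prime factor of infinitely many of the $q_n$'s — more carefully, among the Liouville indices $n_1 < n_2 < \cdots$ with $a_{n_j+1} > q_{n_j}^{j}$, choose a prime $b$ not dividing $q_{n_j}$ for all large $j$ (possible since there are infinitely many primes and for each prime $p$ the set of $j$ with $p \mid q_{n_j}$ either is all large $j$ or misses infinitely many; a diagonal choice produces a $b$ with $\gcd(q_{n_j},b)=1$ for infinitely many $j$). Then $\alpha \in \cW(j, b) $ for that $\nu = j$ reading along the subsequence, hence $\alpha \in \cW(\nu,b)$ for, say, $\nu = 1$, so $\alpha \in \cW(b) \subset \cW$. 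Finally adjust the numerator: since $b$ is fixed and we may add multiples of $1/b$ and also replace $b$ by $b$ (it is already prime, so every reduced fraction $a/b$ with $b \nmid a$ works), the set $\{a/b : 1 \le a < b\}$ has mesh $1/b$; taking $b$ large makes one such point land in $U$, with $\gcd(a,b)=1$ automatic. This proves $D_\alpha$ is dense.

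Next, set $B = D_\alpha$ and observe it is a countable dense set, hence in particular we may take any dense $G_\delta$ set contained in its closure — but the theorem as stated asks for a dense $G_\delta$, so I would instead phrase $B$ as $\bigcap_m O_m$ where $O_m$ is the union over all large denominators $b$ (with $\gcd(q_n,b)=1$ for infinitely many Liouville indices $n$) and all reduced numerators of the $1/b^2$-neighbourhoods of $a/b$; density of each $O_m$ is exactly the computation above, and each $O_m$ is open, so $B$ is a dense $G_\delta$. For $\beta \in B$ we are \emph{not} guaranteed $\beta$ rational, so the final step is to check that extreme historic behaviour at $\beta$ follows from its validity at all nearby admissible rationals: this is where I would invoke the quantitative criterion — Proposition~\ref{criterium2} or \ref{criterium3} — noting that the lower bounds on $\la\{x : \Theta_n^\beta(x) > K\}$ produced in the proof of Theorem~\ref{refined rational distinct orbits} are obtained from close returns of the $R_\alpha$-orbit near $0$ and near $\beta$ at the time scale $q_n$ with $\lambda^{(n)}$ tiny, and these estimates are stable under moving $\beta$ by $o(\lambda^{(n)})$; since along our chosen indices $\lambda^{(n)} < 1/q_{n+1} < q_n^{-\nu-1}$ decays faster than any power of $1/q_n$, a $\beta$ within $1/b^2$ of infinitely many such $a/b$ (with $b$ growing) inherits the same lower bounds and hence the same extreme historic behaviour via Proposition~\ref{criterium3}.

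\textbf{Main obstacle.} The delicate point is the last paragraph: transferring the estimates from rational $\beta = a/b$ to a nearby irrational (or different rational) $\beta$. One must make sure the window within which $\beta$ may vary — dictated by the error terms in Lemmas~\ref{ABC} and \ref{LemSizeDiverg}, essentially of size a power of $\lambda^{(n)}$ — is large enough to contain a full $G_\delta$'s worth of $\beta$, i.e. that the neighbourhoods $1/b^2$ can be taken inside that window for infinitely many of the relevant indices $n$ simultaneously. This forces a careful bookkeeping coordinating the choice of the prime $b$, the Liouville index $n$ (hence $q_n$ and $\lambda^{(n)}$), and the coprimality condition $\gcd(q_n,b)=1$ needed to invoke Lemma~\ref{smallest distance}. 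I expect that, with $\lambda^{(n)}$ super-polynomially small in $q_n$ along Liouville indices, there is plenty of room, so the argument goes through; but writing the nested quantifiers cleanly is the real work.
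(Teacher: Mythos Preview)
Your approach has a genuine gap at the step where you claim to produce a prime $b$ with $\gcd(q_{n_j},b)=1$ for infinitely many Liouville indices $n_j$. The dichotomy you invoke (``for each prime $p$, the set of $j$ with $p\mid q_{n_j}$ is either cofinite or misses infinitely many $j$'') is correct, but it does \emph{not} follow that some prime falls into the second alternative. Since $\alpha$ is fixed and you may not perturb it, nothing prevents the scenario in which \emph{every} prime $p$ divides $q_{n_j}$ for all large $j$: one can construct Liouville $\alpha$ whose Liouville indices occur only at a sparse set (say $n_j=3j$), with the intermediate $a_n$ chosen so that each $q_{3j}$ is divisible by the $j$-th primorial. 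For such $\alpha$ one has $\alpha\notin\bigcup_{\nu>0}\cW(\nu,b)$ for \emph{every} $b\geq 2$, so no rational $a/b$ is admissible and your dense set $D_\alpha$ is empty. The same obstruction kills the $G_\delta$ construction, since the sets $O_m$ need such admissible rationals to be nonempty. Your transfer step (moving $\beta$ off the rational by $1/b^2$) is also problematic for a separate reason: the window in which the estimates of Lemma~\ref{ABC} are stable is of size at most $\lambda^{(n)}$, which for fixed $b$ is vastly smaller than $1/b^2$, so the scales do not match.

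The paper's proof avoids all of this with a single clean idea: it defines the $G_\delta$ directly as $B=\bigcap_{K>1}\bigcup_{n\geq 1}D(n,K)$ where $D(n,K)=\{\beta:\la\{x:\Theta_n^\beta(x)>K\}>1/16\}$, and proves density of $\bigcup_n D(n,K)$ by picking, near any target $\beta_0$, the point $\beta=\frac{b}{q_n}+\frac{1}{2q_n}$ for a suitable integer $b$ and a Liouville index $n$ with $a_{n+1}>q_n^{K+1}$. The half-integer shift guarantees $\|\tfrac{i}{q_n}-\beta\|\geq\tfrac{1}{2q_n}$ for \emph{every} integer $i$, so Lemma~\ref{ABC} applies with $A=\tfrac12$ and no coprimality condition whatsoever --- Lemma~\ref{smallest distance} is simply not needed. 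This $\beta$ lies in $D(\ell q_n,K)$ outright, so no perturbation or transfer argument is required either. The moral is that routing through Theorem~\ref{refined rational distinct orbits} imports an arithmetic constraint ($\gcd(q_n,b)=1$) that is an artefact of the rational case and is both unnecessary and, for some Liouville $\alpha$, unsatisfiable.
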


The proofs of Theorems~\ref{refined rational distinct orbits} and \ref{generic distinct orbits} are based on some rather simple ideas, as we now shall explain. 
 
Propositions \ref{criterium2} and \ref{criterium3} tell us that in order to detect an extreme historic behaviour for a reparameterized flow with stopping points at $\p=(0,0)$ and $\q=(0, \beta)$, we must show that the ratio between $S_m(x)$ and $S_m(x-\beta)$ can be made larger (or smaller) than an arbitrary constant on a set of substantial measure. We now explain how this is done.

Consider a situation in which $\alpha$ has two successive convergents $p_n/q_n$ and $p_{n+1}/q_{n+1}$ such that $q_{n+1}$ is very large compared to $q_n$. Then the orbit $\cO(q_n)$ is very close to the set $\{k/q_n: 0 \leq k < q_n \}$ (Lemma~\ref{near rational orbit}). Suppose that $\beta$ happens to lie at a safe distance ($> A \lambda^{(n-1)} \approx A/ q_n$) from this orbit. Then $R_\beta(\cO(q_n))$ --- the orbit of length $q_n$ starting from $\beta$ --- is intertwined with that of $\cO(q_n)$ so that each gap of one orbit contains exactly on point of the other orbit and vice versa. Moreover, points of the two orbits are separated from one another by a distance of order $A \lambda^{(n-1)}$.

One way to guarantee that $\beta$ is on a safe distance from the orbit $\cO(q_n)$ is to take $\beta$ to be rational of the form $\ab$, and ask for $q_n$ and $b$ to be coprime (Lemma~\ref{smallest distance}). This is why we take $\alpha \in \cL_b$ in Theorem~\ref{refined rational distinct orbits}. Another way is to simply move $\beta$ so that it lies more or less in the middle of one of the gaps defined by the orbit $\cO(q_n)$. This is the idea exploited in the proof of Theorem~\ref{generic distinct orbits}.

Now, since $q_{n+1}$ is much larger than $q_n$ we have that (see Lemma~\ref{properties}) $q_{n+1} \approx a_{n+1} q_n$ and also that $\lambda^{(n+1)}  \approx a_{n+1} \lambda^{(n)}$ (see \eqref{Eqaeta}). Let $\ell$ be some integer approximately equal to $B a_{n+1}$ for some fixed $0<B<A$. Then the orbit $\cO(\ell q_n)$ is the union of $q_n$ small ``blocks'', each of length $\ell$ (Equation\eqref{EqOrbit}). If $B$ is not too big, the orbits $\cO(\ell q_n)$ and $R_\beta(\cO(\ell q_n))$ are still on a safe distance from one another. Figure~\ref{orbits} illustrates this in a situation where $q_n=7$ and $\ell = 4$.

\begin{figure}\label{orbits}
\includegraphics[trim=90pt 90pt 70pt 70pt, clip, scale=0.5]{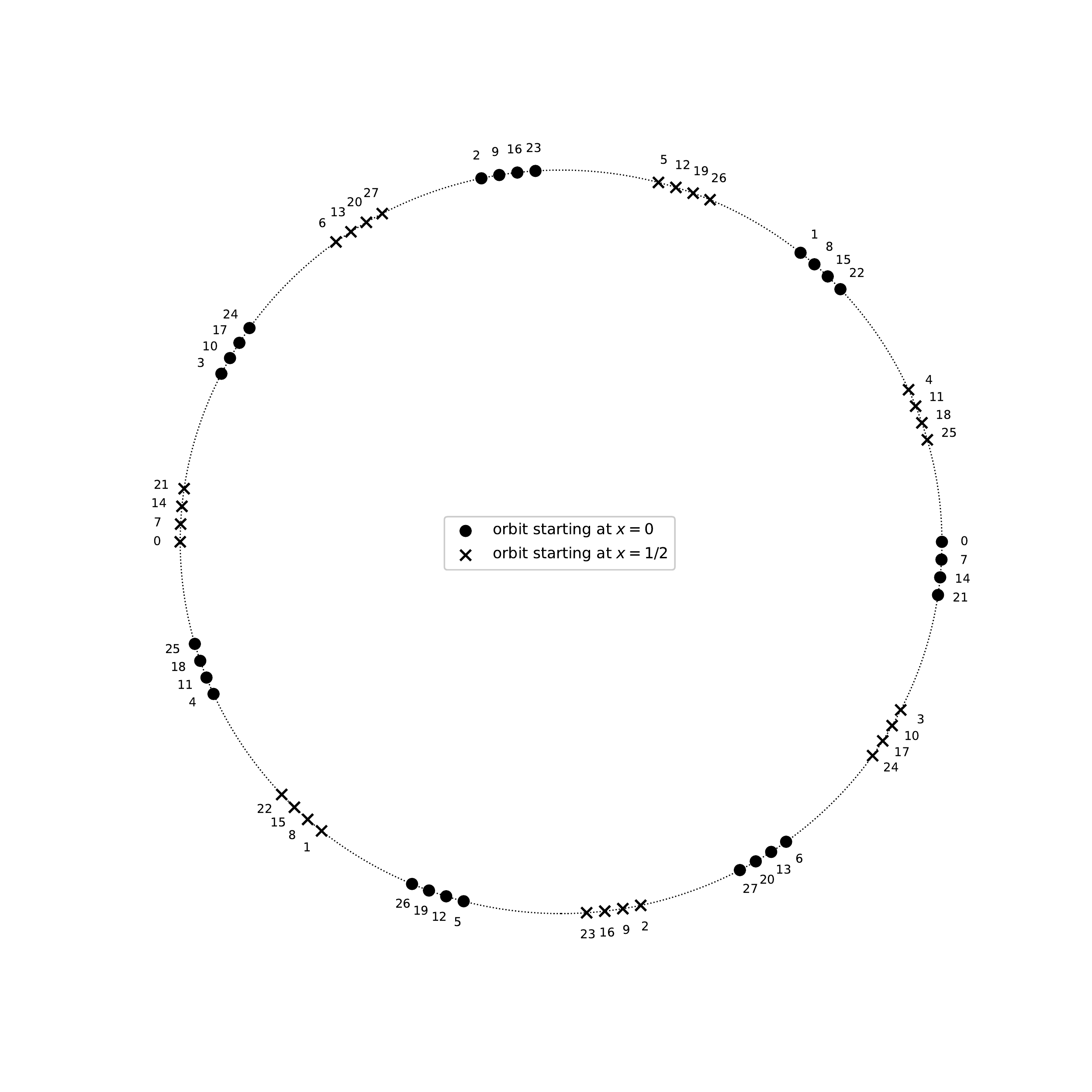} 
\caption{Illustration of an orbit (dots) of length $\ell q_n$ where $q_n= 7$ and $\ell = 4$, together with its rotation (crosses). Each orbit has seven blocks of four points each. The distance between corresponding points in adjacent blocks is approximately $1/q_n$, whereas the distance between points within the same block is approximately $1/q_{n+1}$. In this figure, $q_{n+1}$ is about ten times larger than $q_n$ so that each block of lenght four 'fills up' nearly half the gap between points of the orbit of length $q_n$. The proof of theorem \ref{refined rational distinct orbits} requires the ration between $q_{n+1}$ and $q_n$ to be larger than some fixed positive power of $q_n$ for infinitely many $n$, while the proof of theorem \ref{generic distinct orbits} requires the same ratio to be larger than any power of $q_n$.}
\end{figure}

Let $E_{n, \ell}$ be an $\lambda^{(n)}/2$-neighbourhood of $\cO(\ell q_n)$. Then, modulo a finite number of points
, $E_{n, \ell}$ is a disjoint union of $q_n$ intervals of size $\approx B \lambda^{(n-1)} \approx B /q_n$, each interval corresponding to a 'block'. Now let us rotate this set $E_{n, \ell}$ by the angle $-\ell q_n \alpha$. The resulting set, $E_{n, \ell}'$ is then a $\lambda^{(n)}/2$-neighborhood of the pre-orbit of length $\ell q_n$ of the point $0$. Thus a point $x$ belongs to $E_{n, \ell}'$ if and only if one of its first $\ell q_n$ iterates lies within an $\lambda^{(n)}/2$-distance from $0$. In this case Lemma~\ref{lower bound} tells us that  $S_m ( x) $ is at least of order $(\log \ell) / \lambda^{(n)} \approx  a_{n+1} q_n \log \ell $. 
On the other hand, $S_m(x-\beta)$ is at most of order $B a_{n+1} q_n \log(q_n)$ (Lemma~\ref{ABC}). Thus in order to have $S_m(x)$ larger than, say, $K S_m(x-\beta)$, we impose the condition that $a_{n+1}$ (and hence $\ell$) be of order $q_n^{BK}$. 

Since $E_{n, \ell} '$ consists of $\ell q_n $ disjoint intervals of length $\lambda^{(n)}$, its $\la$-measure is equal to $\ell q_n \lambda^{(n)} \approx B a_{n+1} q_n \lambda^{(n)} \approx B$. In the proof of Theorem~\ref{generic distinct orbits}, since $\alpha$ is Liouville, $a_{n+1}$ will be larger than $q_n^{BK}$ infinitely often, whatever the value of $BK$. Hence the value of $B$ is uniformly bounded away from zero (i.e. does not depend on $K$). On the other hand, in Theorem~\ref{refined rational distinct orbits}, $BK$ has to be of order $\nu$, so $B$ needs to be taken smaller as we increase $K$.

\subsection{Proof of Theorem~\ref{refined rational distinct orbits}}

We fix some $\beta \in \Q \setminus \Z$ and write $\beta = \ab$, with $\gcd(a,b)=1$. Fix also some $\nu>0$ and $\alpha \in \cW(\nu,b)$.

We shall prove that, given any $K>1$ and  $n \in \N$, there exists an integer $m \geq n$ and a set $E_{n, \ell} \subset \T$ with $\la(E_{n, \ell}) \geq \nu/(64bK)$ such that 

\begin{equation}\label{dominance1}
\sum_{k=0}^{m-1}\frac{1}{\|k \alpha + x \|} > K \sum_{k=0}^{m-1} \frac{1}{\|k \alpha+x-\ab \|}
\end{equation} 
holds for every $x \in E_{n, \ell}$. Then $\phi^t$ has an extreme historic behaviour according to Proposition~\ref{criterium3}.

Fix $K>1$ and $N \in \N$. Upon possibly increasing $K$ we can (and do) suppose that $K> \nu/8$. Since $\alpha \in \cW(\nu,b)$ there exists $n\geq N$ for which 
\[a_{n+1}>q_n^\nu\]
and $\gcd(q_n,b)=1$. Pick such $n$, with the additional property that 
\begin{equation}\label{large q_i}
q_n^{\nu/2} > \frac{16bK e^{\nu/2}}{\nu} \qquad \text{and}\qquad q_n^\nu > \frac{32 bK}{\nu}.
\end{equation} 
Therefore we can choose an integer $\ell \geq 1$  such that 
\[2<\frac{\nu a_{n+1}}{16bK} < \ell < \ell+1 \leq \frac{\nu a_{n+1}}{8bK}.\]

Let $E_{n, \ell}$ be as in (\ref{DefE}). We claim that (\ref{dominance1}) holds for $m=\ell q_n$ and any $x \in E_{n, \ell}$. 

Lemma~\ref{smallest distance} tells us that 
\[\left\|\frac{i}{q_n}- \ab \right\| \geq \frac{1}{b q_n}\]
for every integer $i$.

We can therefore apply  Lemma~\ref{ABC} with $A=1/b$, $B=\nu/(8bK)$. Doing so gives (Recall that we are assuming that $K>\nu/4$.)
\begin{align*}
\sum_{k=0}^{\ell q_n-1} \frac{1}{\| x+k\alpha - \ab \|} 
& < \frac{2\ell q_n}{1-\frac{\nu}{8K}}(b + \log(bq_n))\\
& < 4 \ell q_n (b+\log(bq_n))
\end{align*}
for every $x \in E_{n, \ell}$. 

Moreover, as $K>\nu/8>\nu/(8b)$), we have
\[\ell q_n < \frac{\nu a_{n+1}q_n}{8bK} \le \frac{\nu q_{n+1}}{8bK} < q_{n+1}.\]
We may therefore  apply Lemma~\ref{lower bound}, obtaining the estimate
\[\sum_{k=0}^{\ell q_n-1} \frac{1}{\| x+k\alpha\|} \geq \frac{\log \ell}{\lambda^{(n)}}.\]
Thus in order to show (\ref{dominance1}), it suffices to show that 
\[\log \ell >   4 K \lambda^{(n)} \ell q_n (b+\log(bq_n)).\]
But by Lemma~\ref{properties},
\[ \ell \lambda^{(n)} q_n < \frac{\nu a_{n+1} \lambda^{(n)} q_n}{8bK} < \frac{\nu \lambda^{(n-1)} q_n}{8bK} < \frac{\nu}{8bK}, \]
so for $q_n$ large enough
\[4K\lambda^{(n)} \ell q_n\big(b+\log(b q_n)\big) < \frac{\nu}{2b}\big(b+\log(bq_n)\big) < \frac{\nu}{2}\big(1+\log(q_n)\big).\]
Hence it suffices to show that 
\[ \frac{\nu}{2} \big(1+\log(q_n)\big) <  \log \ell. \]
But $\ell$ was chosen so that 
\[\ell >   \frac{\nu a_{n+1}}{16bK}> \frac{\nu}{16bK} q_n^\nu
> e^{\nu/2} q_n^{\nu/2}\]
in view of (\ref{large q_i}). We have therefore shown that (\ref{dominance1}) holds for $m=\ell q_n$ whenever $x \in E_{n, \ell}$.

It remains to show that $\lambda(E_{n, \ell}) \geq \nu/(64 b K)$. Applying Lemma~\ref{size} to the set $E_{n, \ell}$ we see that 
\[\la(E_{n, \ell}) = \ell q_n \lambda^{(n)} \geq \frac{\nu a_{n+1}}{16bK} \lambda^{(n)} q_n  > \frac{\nu \lambda^{(n-1)}}{32 bK} q_n > \frac{\nu}{64bK}.\]
 This completes the proof.

\subsection{Proof of Theorem~\ref{generic distinct orbits}}

Fix some Liouville number $\alpha$. 
Let 
\[ C(n,K,\beta)= \big\{ x \in \T: \Theta_n^\beta (x)> K \big\},\]
and
\[D(n,K) = \big\{ \beta \in  \T: \la(C(n,K, \beta))>1/16 \big\}. \]

Clearly, the sets $D(n,K)$ are open. Let 
\[\cR = \bigcap_{K > 1} \bigcup_{n \geq 1} D(n,K).\]
Then according to Proposition~\ref{criterium2}, any reparameterized flow satisfying (SH) with $ \beta  \in  \cR $ has an extreme historic behaviour. Thus in order to prove Theorem~\ref{generic distinct orbits} it suffices to prove that $\bigcup_{n \geq 1} D(n,K)$ is dense in $\T$ for every $K>1$.

To this end, we fix $K>1$, $\beta_0 \in \T$ and $\epsilon>0$ arbitrarily. We shall prove that there is some $m \geq 1$ and $ \beta \in \T$ with  $|\beta-\beta_0| < \epsilon$ such that $\beta \in D(m, K)$. 

Write $\alpha$ as $[a_0; a_1, a_2, \ldots]$ and let $p_n/q_n$ be its convergents. Then fix some $n \in \N$ such that $1/q_n < \epsilon$ and $a_{n+1} > q_n^{K+1}$. Upon possibly increasing $n$ we can (and do) assume that $q_n > n$ and also that  $q_n >  8 e^{2K} 2^K$ (which in particular is larger than $16$). 

Chose an integer $0 \leq b < q_n$ such that 
\[\left\|\beta_0 - \left(\frac{b}{q_n} + \frac{1}{2 q_n} \right) \right\| \le \epsilon \]
and let 
\[ \beta = \frac{b}{q_n} + \frac{1}{2 q_n} .\]

Since $a_{n+1} > 16$ we can (and do) fix $ \ell \in \N$ such that
\[\frac{a_{n+1}}{8} < \ell< \ell+1 < \frac{a_{n+1}}{4}.\]
Let $m = \ell q_n$. We claim that  $ \beta \in D(m,K)$. More specifically, let $E_{n,\ell}$ be as in (\ref{DefE}). We shall prove that  $E_{n, \ell} \subset C(m, K, \beta)$ and that $\la(E_{n, \ell}) \geq 1/16$. Indeed, the latter follows from Lemma~\ref{size}, our choice of $\ell > a_{n+1}/8$ and the inequality $a_{n+1} \lambda^{(n)} \geq \lambda^{(n-1)}/2$. 

Now fix $x \in E_{n, \ell}$. We shall prove that 
\begin{equation}\label{dominance4}
\sum_{k=0}^{m-1}\frac{1}{\|k \alpha + x \|} > K \sum_{k=0}^{m-1} \frac{1}{\|k \alpha+x-\beta \|}.
\end{equation}

Our choice of $\beta$ is such that 
\[ \left\| \frac{i}{q_n} - \beta \right\| \geq \frac{1}{2 q_n}  \]
for every $0 \leq i < q_n$. 

We can therefore apply Lemma~\ref{ABC} with $A = 1/2$ and $B=1/4$. Doing so gives us the estimate
\[K \sum_{k=0}^{m-1} \frac{1}{\|k \alpha+x-\beta \|} < 4 K \ell q_n ( 2 + \log(2 q_n))\ .\]

From Lemma~\ref{lower bound} we have 
\[ \sum_{k=0}^{\ell q_n-1} \frac{1}{\|x + k \alpha \|} \geq \frac{ \log \ell}{\lambda^{(n)}} \]
for every $x \in E_{n, \ell}$. Thus in order to prove that \eqref{dominance4} holds, it suffices to show that \
\[\frac{\log \ell}{\lambda^{(n)}}  > 4K  \ell q_n (2 + \log(2 q_n))  .\]
But we have chosen $\ell$ such that (using Lemma~\ref{properties})
\[\ell \lambda^{(n)} q_n < \frac{a_{n+1} \lambda^{(n)} q_n}{4} < \frac{\lambda^{(n-1)} q_n}{4} < \frac{1}{4}.\]
It therefore suffices to verify that $\log \ell > K(2+\log(2 q_n))$. But 
\[\ell>\frac{a_{n+1}}{8} > \frac{q_n^{K+1}}{8} > \frac{q_n^K 8e^{2K}2^K}{8},\]
which implies the required property.

\section{Divergence of sums: Diophantine case}

In this section, we use the estimates we got from Section \ref{SecTech} and Diophantine properties of almost any number to get historic behaviour for almost any angle $\alpha$.

\begin{theorem}\label{PropDivSum}
Let $\alpha \in \R$ be such that $a_n \geq 2$ for infinitely many $n$ and
\[\sum_{\substack{n\ge 2 \\ a_n, a_{n+1}\ge 2}} \frac{1}{\log q_n} = \infty.\]
Then, given any $\p$ and $\q$, the reparameterized linear flow $\phi^t$ satisfying (SH) with stopping points at $\p$ and $\q$  has  historic behaviour.

When moreover $\q$ is on the positive orbit of $\p$, then the ergodic limit set of almost any point $\x$ is explicit:
\[ p\omega(\x) = \left[\mu_\infty\, ,\ \delta_\p\right].\]
\end{theorem}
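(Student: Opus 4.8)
The plan is to reduce the whole statement to one Diophantine estimate about the sums $S_i(x)=\sum_{k=0}^{i-1}\|x+k\alpha\|^{-1}$ of \eqref{DefSn}: for $\la$-a.e.\ $x\in\T$,
\[\limsup_{i\to\infty}\frac{1}{\|x+i\alpha\|\,S_i(x)}=\infty;\]
call this property $(\star)$. To prove $(\star)$, fix a rational $\epsilon\in(0,1)$ and let $\mathcal N=\{n\ge 11:a_n\ge2,\ a_{n+1}\ge2\}$; discarding the finitely many small indices leaves $\sum_{n\in\mathcal N}1/\log q_n=\infty$ by hypothesis. For $n\in\mathcal N$ put $\eta_n=\epsilon/(q_n\log(3q_n))$ and $A_n=\bigcup_{0<i<q_n}R_\alpha^{-i}\big((-\eta_n,\eta_n)\big)$. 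Since $2\eta_n<\lambda^{(n-1)}=\min_{0<j<q_n}\|j\alpha\|$ for $n$ large (by \eqref{EqContFrac0} and \eqref{EqLambdaQ}), the preimages are pairwise disjoint, $\la(A_n)\asymp 1/\log q_n$, so $\sum_{n\in\mathcal N}\la(A_n)=\infty$. The crucial point is quasi-independence: for $m<n$ in $\mathcal N$, since $A_n$ is a union of $q_n$ intervals of length $2\eta_n$ whose centres have consecutive gaps $\ge\lambda^{(n-1)}\ge 1/(2q_n)$, every interval $I$ of length $L$ satisfies $\la(A_n\cap I)\le 2\la(A_n)L+4\eta_n$ (using $2\eta_n/\lambda^{(n-1)}\le 2\la(A_n)$); summing over the $q_m$ intervals composing $A_m$ gives $\la(A_m\cap A_n)\le 2\la(A_m)\la(A_n)+(2q_m/q_n)\la(A_n)$. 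As every $n\in\mathcal N$ has $a_{n+1}\ge2$, one has $q_{n+1}\ge2q_n$, hence $q_n/q_m\ge 2^{d}$ whenever $(m,n]$ meets $\mathcal N$ in $d$ points; therefore $\sum_{m<n\le N}(2q_m/q_n)\la(A_n)=O(\sum_{n\le N}\la(A_n))$, and $\sum_{m,n\le N}\la(A_m\cap A_n)\le 2(\sum_{n\le N}\la(A_n))^2+O(\sum_{n\le N}\la(A_n))$. The Kochen--Stone (quantitative second Borel--Cantelli) lemma then gives $\la(\limsup_{n\in\mathcal N}A_n)\ge\tfrac12$. Off the countable set $\{-i\alpha:i\ge0\}$, belonging to $\limsup_{n\in\mathcal N}A_n$ forces $\|x+i\alpha\|<\eta_n$ for infinitely many $n$ with $0<i=i(n)\to\infty$, so Lemma~\ref{LemSizeDiverg} yields $\|x+i\alpha\|^{-1}>\tfrac1{6\epsilon}S_i(x)$ infinitely often. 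Thus the set $C_\epsilon$ of such $x$ has $\la(C_\epsilon)\ge\tfrac12$; but $C_\epsilon$ is $R_\alpha$-invariant mod $0$ (if $i$ works for $x$, then $i-1$ works for $x+\alpha$, because $S_i(x)=\|x\|^{-1}+S_{i-1}(x+\alpha)$), so $\la(C_\epsilon)=1$ by ergodicity of $R_\alpha$. Intersecting over $\epsilon\in\Q\cap(0,1)$ gives $(\star)$.

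Granting $(\star)$, I first prove historic behaviour for arbitrary $\p,\q$. Choose $\Sigma$ with $p_0\ne q_0$ and put $\beta=q_0-p_0\ne0$. By Propositions~\ref{criterium1} and \ref{PropPossibOmega} it suffices to show that $\Theta_n^\beta$ does not converge to $1$ for a.e.\ $x$. Suppose it did; fix $x$ in the full-measure set where this and $(\star)$ both hold, and fix $K>4$. For any index $i$ with $\|x+i\alpha\|^{-1}>KS_i(x)$ (there are infinitely many, by $(\star)$), $S_{i+1}(x)\in(\|x+i\alpha\|^{-1},(1+\tfrac1K)\|x+i\alpha\|^{-1})$; the assumed convergence gives $S_{i+1}(x-\beta)=(1+o(1))S_{i+1}(x)\ge\tfrac12\|x+i\alpha\|^{-1}$; and since $\|x+i\alpha\|\to0$ while $\|\beta\|>0$, $\|x+i\alpha-\beta\|^{-1}\le 2/\|\beta\|$, so $S_i(x-\beta)\ge\tfrac14\|x+i\alpha\|^{-1}$ for $i$ large. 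Hence $\Theta_i^\beta(x)=S_i(x)/S_i(x-\beta)<4S_i(x)\|x+i\alpha\|<4/K<1$ for infinitely many $i$, contradicting $\Theta_n^\beta(x)\to1$. So no physical measure exists, i.e.\ $\phi^t$ has historic behaviour.

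For the refinement, assume $\q=\phi_0^r(\p)$ with $r>0$. Then $q_0=p_0+j\alpha\bmod1$ for an integer $j\ge1$, so $\beta=j\alpha$, and for $n>j$, $S_n(x-\beta)=C_0(x)+S_{n-j}(x)$ with $C_0(x)=\sum_{k=-j}^{-1}\|x+k\alpha\|^{-1}$ finite, while $S_n(x)=S_{n-j}(x)+L_n(x)$ with $L_n(x)=\sum_{k=n-j}^{n-1}\|x+k\alpha\|^{-1}\ge0$; thus $\Theta_n^\beta(x)=\dfrac{S_{n-j}(x)+L_n(x)}{S_{n-j}(x)+C_0(x)}$. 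Since $S_{n-j}(x)\to\infty$ this is $\ge S_{n-j}(x)/(S_{n-j}(x)+C_0(x))\to1$, so $\liminf_n\Theta_n^\beta(x)\ge1$; and for $\rho<1/(2j)$ the set of $n$ with $\min_{0\le l<j}\|x+(n-l)\alpha\|\ge\rho$ has positive density (unique ergodicity of $R_\alpha$), and along it $L_n(x)\le j/\rho$, whence $\Theta_n^\beta(x)\to1$ there. Hence $\liminf_n\Theta_n^\beta(x)=1$ a.e., and Proposition~\ref{criterium1} forces $\tau_0=\sqrt{d_\q}/(\sqrt{d_\p}+\sqrt{d_\q})$, i.e.\ $\tau_0\delta_\p+(1-\tau_0)\delta_\q=\mu_\infty$. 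For the other end, take $n=i+1$ at an index $i$ with $\|x+i\alpha\|^{-1}>KS_i(x)$: then $L_n(x)\ge\|x+i\alpha\|^{-1}>KS_i(x)\ge KS_{n-j}(x)$, so $\Theta_n^\beta(x)\ge L_n(x)/(S_{n-j}(x)+C_0(x))>K/2$ once $S_{n-j}(x)\ge C_0(x)$; hence $\limsup_n\Theta_n^\beta(x)=\infty$ a.e., i.e.\ $\tau_1=1$. By \eqref{crit1} and Proposition~\ref{criterium1}, $p\omega(\x)=[\mu_\infty,\delta_\p]$ for a.e.\ $\x$.

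The main obstacle is the quasi-independence bound in $(\star)$: one needs $\la(A_m\cap A_n)\lesssim\la(A_m)\la(A_n)$ with constants that do not depend on the (fixed) $\epsilon$, and the error term $(q_m/q_n)\la(A_n)$ is summable against $\sum\la(A_n)$ only because the structural hypothesis $a_n,a_{n+1}\ge2$ forces $q_n$ to at least double along $\mathcal N$ --- this is exactly where that hypothesis enters. Secondary points to watch are that the hypotheses of Lemma~\ref{LemSizeDiverg} ($n\ge11$, $a_{n+1}\ge2$, $0<\epsilon<1$) hold along $\mathcal N$, that the realizing index $i(n)$ tends to infinity off a null set, and that $C_\epsilon$ is genuinely $R_\alpha$-invariant up to a null set so that ergodicity upgrades $\la(C_\epsilon)\ge\tfrac12$ to $\la(C_\epsilon)=1$.
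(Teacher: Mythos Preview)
Your argument is correct and reaches the same conclusion as the paper, but the route to the key Diophantine property $(\star)$ is genuinely different. The paper does not prove quasi-independence of the sets $A_n$ at all: it fixes a decreasing sequence $\epsilon_n\to0$ with $\sum_{a_n,a_{n+1}\ge2}\epsilon_n/\log(3q_n)=\infty$, defines $E_n=\bigcup_{i=0}^{q_n-1}R_\alpha^{-i}(I_n)$ with $|I_n|=2\epsilon_n/(q_n\log(3q_n))$, and shows that $\la(\limsup E_n)=1$ by invoking a variant of a theorem of Fuchs--Kim on inhomogeneous Diophantine approximation (the paper's Theorem~\ref{CiteTh12}); the condition $a_n\ge2$ is used there only through the elementary inequality $(q_n-q_{n-1})/q_n\ge1/2$. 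Your approach replaces this black box by an explicit second Borel--Cantelli (Kochen--Stone) argument, and it is here that your use of the hypothesis $a_n,a_{n+1}\ge2$ is sharper and more transparent: it provides the geometric doubling $q_n\ge2^dq_m$ along $\mathcal N$, which is exactly what makes the cross term $\sum_{m<n}(q_m/q_n)\la(A_n)$ summable against $\sum_n\la(A_n)$. The price is that you first obtain only $\la(\limsup A_n)\ge1/2$ and must upgrade via ergodicity, then intersect over countably many $\epsilon$; the paper gets full measure directly because $\epsilon_n\to0$ is built into the construction. For the contradiction to a physical measure, the paper shows $\Theta_{i_k}^\beta(x)=o(\Theta_{i_k+1}^\beta(x))$ directly (bounding $S_{i_k+1}(x-\beta)\le S_{i_k}(x-\beta)+A$ with $A$ a constant), whereas you bootstrap through the assumed convergence $\Theta_{i+1}^\beta\to1$ to bound $S_{i+1}(x-\beta)$ from below --- both are valid, and the second-part computation of $\liminf\Theta_n^\beta=1$ and $\limsup\Theta_n^\beta=\infty$ is essentially the same as the paper's, with your positive-density argument for $\liminf\le1$ replacing the paper's use of the observation that $\psi(y)>C$ forces $\psi(R_\alpha^l(y))\le C$ for $1\le l\le j$.
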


The fact that the set of angles $\alpha$ satisfying the hypotheses of this theorem is of full measure is a consequence of a theorem due to Khinchin and Levy, which asserts that for Lebesgue-almost every $\alpha \in \R$, the denominators of the
convergents satisfy
\[\lim_{n\to +\infty}\frac{\log q_n}{n} = \frac{\pi^2}{12 \log 2},\]
hence
\[\sum_{n\ge 0} \frac{1}{\log q_n} =  +\infty.\]

Moreover, for Lebesgue-almost every $\alpha \in \R$, and any $b\in\N$, one has 
\[\lim_{n\to +\infty} \frac{1}{n}\card\big\{j\le n : a_j = b\big\} = \log_2\left(\frac{(b+1)^2}{b(b+2)}\right),\]
and the Gauss map is mixing, implying that for Lebesgue-almost every $\alpha \in \R$, and any $b,b'\in\N$, one has 
\[\lim_{n\to +\infty} \frac{1}{n}\card\big\{j\le n : a_j = b, a_{j+1} = b'\big\} = \log_2\left(\frac{(b+1)^2}{b(b+2)}\right)\log_2\left(\frac{(b'+1)^2}{b'(b'+2)}\right),\]
For a proof see e.g. Propositions 3.1 and 3.4 of Durand \cite{Durand}.

From this one can easily deduce the following\footnote{E.g. using the partition of $\N$ by intervals $[2^k, 2^{k+1})$.}:
\begin{equation}\label{EqSumInvQ}
\sum_{\substack{n\ge 0\\ a_n, a_{n+1}\ge 2}} \frac{1}{\log q_n} = +\infty
\end{equation}

It could be conjectured that there is an extreme historic behaviour property for almost any $\alpha$ and ``most of'' $\beta$; unfortunately we were only able to establish that the sequences $\Theta_k^\beta (x)$ fail to converge for almost every $x$: the theorem's proof tells us that the sequences $\Theta_k^\beta (x)$ have at least $0$ or $+\infty$ as a limit point, hence that $p\omega(\x)$ contains at least $\delta_\p$ or $\delta_\q$. The key property that allows us to conclude in the case where $\q$ is on the positive orbit of $\p$ is that $\liminf \Theta_k^\beta (x) = 1$.
\medskip

Let us move to the Theorem's proof. Fix $\alpha$ as in the hypothesis of Theorem~\ref{PropDivSum} and let $0< \epsilon_n<1$ be a decreasing sequence of positive numbers such that $\epsilon_n \to 0$ as $n \to \infty$ and satisfying
\[\sum_{\substack{n \geq 2 \\ a_n,a_{n+1} \geq 2}} \frac{\epsilon_n}{\log (3q_n)} = \infty.\]

For $n \geq 1$ let
\[I_n = \left(-\frac{\epsilon_n}{q_n \log(3 q_n)}, \frac{\epsilon_n}{q_n \log(3q_n)} \right ) \]
and set 
\[E_n = \bigcup_{i=0}^{q_n-1} R_\alpha^{-i} (I_n)\]
and
\[E = \bigcap_{N \geq 1 }\bigcup_{\substack{n \ge N\\ a_{n+1} \ge 2}} E_n.\]

\begin{lemma} \label{LemEnLeb1}
Under the hypotheses of Theorem~\ref{PropDivSum}, the set $E$ is of full $\la$-measure.
\end{lemma}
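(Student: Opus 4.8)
The plan is to treat $E$ as a $\limsup$ set, establish $\la(E)>0$ via a divergent‑type (second moment) Borel--Cantelli argument, and then upgrade to full measure using the ergodicity of $R_\alpha$.

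\emph{Measure of $E_n$ and divergence.} First I would compute $\la(E_n)$. Since $q_n\to\infty$, for all large $n$ one has $\la(I_n)=2\epsilon_n/(q_n\log(3q_n))<\lambda^{(n-1)}$; as $\lambda^{(n-1)}$ is the least distance between distinct points of an orbit of length $q_n$ (Lemma~\ref{properties}), the sets $R_\alpha^{-i}(I_n)$, $0\le i<q_n$, are then pairwise disjoint, so $\la(E_n)=q_n\,\la(I_n)=2\epsilon_n/\log(3q_n)$. Discarding finitely many indices, the choice of $(\epsilon_n)$ gives
\[\sum_{n:\,a_{n+1}\ge 2}\la(E_n)\ \ge\ \sum_{n:\,a_n,\,a_{n+1}\ge 2}\frac{2\epsilon_n}{\log(3q_n)}\ =\ +\infty.\]

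\emph{Quasi-independence.} The core of the argument is the estimate, valid for all large $m<n$,
\[\la(E_m\cap E_n)\ \le\ 2\,\la(E_m)\,\la(E_n)\ +\ \frac{C\,q_m}{q_n}\,\la(E_n),\]
with $C$ an absolute constant. To prove it, note that $E_m$ is a disjoint union of $q_m$ intervals of length $\la(I_m)$; inside any such interval $J$, the set $E_n$ --- a union of $q_n$ intervals of length $\la(I_n)$ whose centres, the points of $\{-i\alpha:0\le i<q_n\}$, are pairwise at distance $\ge\lambda^{(n-1)}$ --- meets at most $\la(J)/\lambda^{(n-1)}+3$ of them, so $\la(E_n\cap J)\le\bigl(\la(J)/\lambda^{(n-1)}+3\bigr)\la(I_n)$. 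Summing over the $q_m$ intervals of $E_m$ and using $\lambda^{(n-1)}>1/(2q_n)$ together with $q_n\la(I_n)=\la(E_n)$ gives the claim. Since $q_{k+2}=a_{k+2}q_{k+1}+q_k\ge 2q_k$ (Lemma~\ref{properties}), one has $q_m/q_n\le 2^{-\lfloor(n-m)/2\rfloor}$, hence $\sum_{m<n}q_m/q_n=O(1)$. Therefore, writing $S_N=\sum_{n\le N,\,a_{n+1}\ge 2}\la(E_n)$, summing over pairs and adding the diagonal yields
\[\sum_{\substack{m,n\le N\\ a_{m+1},\,a_{n+1}\ge 2}}\la(E_m\cap E_n)\ \le\ C'\bigl(S_N^2+S_N\bigr).\]

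\emph{Conclusion.} Since $S_N\to\infty$, the Kochen--Stone inequality (quantitative second Borel--Cantelli) gives
\[\la(E)\ =\ \la\Bigl(\,\limsup_{\substack{n\to\infty\\ a_{n+1}\ge 2}}E_n\Bigr)\ \ge\ \limsup_{N\to\infty}\frac{S_N^2}{C'(S_N^2+S_N)}\ =\ \frac1{C'}\ >\ 0.\]
Finally I would show $E$ is $R_\alpha$-invariant modulo a null set: since $R_\alpha^{-1}(E_n)\,\triangle\,E_n\subseteq I_n\cup R_\alpha^{-q_n}(I_n)$ we have $\la(E_n\,\triangle\,R_\alpha^{-1}E_n)\le 4\la(I_n)\le 4/q_n$, and $\sum_n 1/q_n<\infty$ because $q_n$ grows at least geometrically; hence for $\la$-a.e.\ $x$ one has $\mathbf 1_{E_n}(x)=\mathbf 1_{E_n}(R_\alpha x)$ for all large $n$, so $x\in E\iff R_\alpha x\in E$. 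Ergodicity of $R_\alpha$ then forces $\la(E)\in\{0,1\}$, and combined with $\la(E)>0$ this gives $\la(E)=1$. The only genuinely delicate point is the correlation bound: the events $x\in E_m$ and $x\in E_n$ are far from independent when $m,n$ are close, and what rescues the argument is that the error term comes with a factor $q_m/q_n$ which, by the exponential growth of the continued-fraction denominators, makes the total off-diagonal error $O(S_N)$ rather than of order $S_N^2$.
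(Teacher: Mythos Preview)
Your argument is correct and takes a genuinely different route from the paper's.

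The paper obtains Lemma~\ref{LemEnLeb1} by quoting an adaptation of a theorem of Fuchs and Kim (Theorem~\ref{CiteTh12}) on inhomogeneous Diophantine approximation, choosing $\varphi(n)=\epsilon_n/(q_n\log(3q_n))$ on the indices with $a_n,a_{n+1}\ge 2$ and $\varphi(n)=0$ otherwise, and verifying the divergence condition there. That theorem is itself proved (in the cited reference) via a Borel--Cantelli argument controlled by the Denjoy--Koksma inequality.

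Your approach is a self-contained second-moment (Kochen--Stone) Borel--Cantelli argument: you bound $\la(E_m\cap E_n)$ directly from the three-gap structure of orbits (the minimum gap $\lambda^{(n-1)}$), obtaining a clean quasi-independence estimate with an off-diagonal error of size $q_m/q_n$, summable thanks to the exponential growth of $q_n$. You then upgrade $\la(E)>0$ to $\la(E)=1$ via invariance of $E$ modulo null sets and ergodicity of $R_\alpha$. This is more elementary in that it avoids any external black box and any use of Denjoy--Koksma; the trade-off is that the paper's route is one line once Theorem~\ref{CiteTh12} is accepted, and that theorem is stated in a form that would also handle more general target sequences. Both approaches ultimately hinge on the same combinatorial fact about orbit spacings; yours makes that dependence explicit.
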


We first show how to deduce Theorem~\ref{PropDivSum} from Lemma~\ref{LemEnLeb1}. Then we proceed to the proof of the lemma. 

\begin{proof}[Proof of Theorem~\ref{PropDivSum}]
First, by Lemma~\ref{LemEnLeb1}, denoting $R_\alpha^{-\N}(0)$ the pre-orbit of $0$, the set
\[E^* = E \setminus R_\alpha^{-\N}(0)\]
has total measure. Recall that $\psi(x) = \|x\|^{-1}$.

Let us prove that if $x \in E^*$, then for any $M\in\N$, there exists $i, n\in\N$ satisfying $M \le i \le q_n$ and such that
\begin{equation}\label{EqThDioph}
\psi\big(R_\alpha^i(x)\big) > \frac{1}{6 \epsilon_n} \,\sum_{j=0}^{i-1} \psi\big(R_\alpha^j(x)\big).
\end{equation}
Fix $M\in\N$ arbitrarily. As $x$ is not in the pre-orbit of $0$, one has $d = \min_{0\le i < M} \|R_\alpha^i(x)\|>0$. For any $N\ge M$ large enough, one has $d>1/(q_N \log(3q_N))$. As $x\in E_n$ for some $n\ge N$ with $a_{n+1}\ge 2$, we know that there exists $i < q_n$ such that $R_\alpha^{i}(x) \in I_n$. Hence, $\|R_\alpha^{
i}(x)\| < d $ and so $i\ge M$. Applying Lemma~\ref{LemSizeDiverg} with $\epsilon_n$ in place of $\epsilon$ yields \eqref{EqThDioph}.
We will denote by $i_k, n_k$ some increasing sequences of numbers (depending \textit{a priori} on $x$) satisfying \eqref{EqThDioph}.

Let $\Sigma = \{x_0\} \times \T$ be chosen as in Section \ref{SecDefFlow}, so that $p_0 \neq q_0$ and let $\beta = q_0 - p_0$. As $\beta \neq 0$, there exists $A>0$ such that if $\psi(y)>A$, then $\psi(y-\beta)<A$. Hence, for $k$ large enough, one has ($S_n$ is defined in \eqref{DefSn})
\[S_{i_k+1}(x) \ge \left(1+\frac{1}{6 \epsilon_{n_k}}\right) S_{i_k}(x) \quad \text{and} \quad S_{i_k+1}(x-\beta) \le  S_{i_k}(x-\beta) + A.\]
(the second inequality comes from the fact that $\lim_{k\to \infty}\psi\big(R_\alpha^{i_k}(x)\big) = \infty$ and thus $\psi\big(R_\alpha^{i_k}(x-\beta)\big)\le A$ for $k$ large enough.)
Hence ($\Theta$ is defined in \eqref{DefTheta}), using the fact that $\lim_{k\to \infty} S_{i_k}(x-\beta) = \infty$, for any $k$ large enough,
\[
\Theta_{i_k+1}^\beta(x) = \frac{S_{i_k+1}(x)}{S_{i_k+1}(x-\beta)}\ge \frac{(1+1/(6 \epsilon_{n_k}) ) S_{i_k}(x)}{S_{i_k}(x-\beta) + A} \ge \frac{1}{12 \epsilon_{n_k}} \Theta_{i_k}^\beta(x).
\]
Hence, there exists $(i_k) \to +\infty$ such that 
\begin{equation}\label{EqFinalCOntra2}
\Theta_{i_k}^\beta(x) = o\left(\Theta_{i_k+1}^\beta(x)\right).
\end{equation}

But Proposition~\ref{PropPossibOmega} tells us that if $\phi^t$ has a physical measure, then it is equal to $\mu_\infty$, and Proposition~\ref{criterium1} tells us that in this case the sequences $\Theta_{n}^\beta(x)$ converge to a positive real number for almost every $x$. This is in contradiction with \eqref{EqFinalCOntra2}, so $\phi^t$ has an historic behaviour for almost any point.
\medskip

For the second part of the theorem, 
the hypothesis that $\q$ is on the positive orbit of $\p$  implies the existence of $j>0$ such that $\beta = j\alpha \mod 1$. Hence, for any $x\notin R_\alpha^{-\N}(0)$, and every $n \geq j$ we have
\begin{equation}\label{independent of n}
S_n(x-\beta) - S_{n-j}(x)  = S_{j}(x-\beta),
\end{equation}
which is independent of $n$. Thus for a given $x$ the right hand side of (\ref{independent of n}) is a constant $B>0$, say. Thus for every $n \geq j$ we have
\begin{align} \label{LastEq}
\Theta_n^\beta(x) & = \frac{S_n(x)}{S_n(x-\beta)} = \frac{S_n(x)-S_{n-j}(x)}{S_n(x-\beta)} + \frac{S_{n-j}(x)- S_n(x-\beta)}{S_n(x-\beta)} + 1\\
& \geq - \frac{B}{S_n(x-\beta)} + 1,\nonumber
\end{align}
(since $S_{n}(x)-S_{n-j}(x) = S_j(x+(n-j) \alpha )\geq 0$ for every $n \ge j$).  It follows that $\liminf \Theta_n^\beta(x) \ge 1$.

Similarly to what we have seen in the first part of the proof, as $\alpha\notin\Q$, there exists $C>0$ such that if $\psi(y)>C$, then $\psi(R_\alpha^i(y))<C$ for any $1\le i \le j$. Hence (using \eqref{LastEq} applied to $n=i_k+j+1$),
\[ \Theta_{i_k+j+1}^\beta(x) = \frac{S_{i_k+j+1}(x)-S_{i_k+1}(x)}{S_{i_k+j+1}(x-\beta)} + \frac{S_{i_k+1}(x)- S_{i_k+j+1}(x-\beta)}{S_{i_k+j+1}(x-\beta)} + 1,\]
which implies that
\[\Theta_{i_k+j}^\beta(x) -1 \le  \frac{jC - B}{S_{i_k+j}(x-\beta)} \underset{n\to +\infty}{\longrightarrow} 0,\]
and that $\liminf \Theta_n^\beta(x) \le 1$.

Summing up, one has $\liminf \Theta_n^\beta(x) = 1$. Moreover, from \eqref{EqFinalCOntra2} one also has $\limsup \Theta_n^\beta(x) = +\infty$. The theorem then directly follows from Proposition~\ref{criterium1}.
\end{proof}

To prove Lemma~\ref{LemEnLeb1}, we use a variation of Fuchs and Kim \cite[Theorem 1.2]{MR3494133}. The initial statement deals with inhomogeneous Diophantine approximation: it gives a criterion under which the orbit of almost any point of the circle under a rigid rotation approaches the origin at a given speed. Its proof consists in a suitable application of a Borel-Cantelli lemma, allowed by Denjoy-Koksma inequality. 

\begin{theorem}\label{CiteTh12}
Let $\varphi(n)$ be a nonnegative sequence and $\alpha$ be an irrational number
with principal convergents $p_n/q_n$. For $j\in\N$, denote $n(j)$ the number satisfying $q_{n(j)-1} \le j < q_{n(j)}$. Then, for almost all $x\in\R$,
\[\|x+j\alpha \| < \varphi\big(n(j)\big)\]
for infinitely many $j\in\N$ if and only if
\[\sum_{n=1}^\infty\Big((q_{n}-q_{n-1}) \min\big(\varphi(n),\|q_{n-1}\alpha\|\big)\Big) = \infty.\]
\end{theorem}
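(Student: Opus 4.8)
The plan is to deduce this from the results of Fuchs and Kim~\cite{MR3494133}, adapting their argument to the slightly different formulation needed here. First I would recall the content of~\cite[Theorem 1.2]{MR3494133}: it characterizes, via a Borel--Cantelli type criterion, when the inhomogeneous orbit $\{x+j\alpha\}$ approaches $0$ faster than a prescribed rate, the criterion being the (non-)convergence of a series whose terms encode the number of times an interval of length $\varphi(n(j))$ is hit during the $n$-th renormalization block together with the measure of that interval. The key geometric input is the renormalization structure of Section~\ref{subsecrenor}: during the times $q_{n-1} \le j < q_n$ there are roughly $q_n - q_{n-1}$ new iterates of $x$ that fall into the renormalization interval $\Delta(n-1)$, and they are essentially equidistributed there at scale $\lambda^{(n-1)} = \|q_{n-1}\alpha\|$ thanks to the three-distance theorem (Lemma~\ref{properties}). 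Thus the event ``$\|x+j\alpha\| < \varphi(n(j))$ for some $j$ in the $n$-th block'' has probability comparable to $(q_n - q_{n-1})\min(\varphi(n), \|q_{n-1}\alpha\|)$, up to a multiplicative constant: the factor $q_n - q_{n-1}$ counts the available slots and the $\min$ reflects that an interval cannot meaningfully be smaller than the gap scale $\lambda^{(n-1)}$.

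The two directions are then handled as in~\cite{MR3494133}. For the ``only if'' direction (convergence implies finitely many solutions a.e.), I would bound the measure of the $n$-th event from above by $C(q_n - q_{n-1})\min(\varphi(n), \|q_{n-1}\alpha\|)$ and apply the first Borel--Cantelli lemma; no independence is needed here. For the ``if'' direction (divergence implies infinitely many solutions a.e.), the difficulty is the usual one: the events across different $n$ are not independent, so one needs a quasi-independence / second-moment estimate. This is exactly where the Denjoy--Koksma inequality enters in Fuchs--Kim: it controls Birkhoff sums of the indicator of a small interval over $q_n$ steps with an error bounded by the variation, which is $O(1)$, and this lets one show that the events are sufficiently decorrelated for the divergent Borel--Cantelli lemma (in the Erd\H{o}s--R\'enyi / Kochen--Stone form) to apply. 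The conclusion is that a.e.\ $x$ lies in infinitely many of the events, i.e.\ $\|x+j\alpha\| < \varphi(n(j))$ infinitely often.

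The main obstacle is the careful bookkeeping in the ``if'' direction: one must verify that the version stated here — indexed by the renormalization level $n(j)$ rather than by $j$ directly, and with the specific truncation $\min(\varphi(n),\|q_{n-1}\alpha\|)$ — really follows from (or runs parallel to) the Fuchs--Kim statement without any hidden loss. Concretely I would (i) translate their hypothesis into the present notation, checking that their counting function for the number of $j$ in block $n$ with $\|x+j\alpha\|$ small matches $(q_n-q_{n-1})\varphi(n)/\lambda^{(n-1)}$ up to constants; (ii) note that replacing $\varphi(n)$ by $\min(\varphi(n),\lambda^{(n-1)})$ changes nothing when $\varphi(n) \le \lambda^{(n-1)}$ and, when $\varphi(n) > \lambda^{(n-1)}$, the event has probability bounded below by a constant so the corresponding term of the series is $\asymp q_n - q_{n-1} \asymp \lambda^{(n-1)}(q_n-q_{n-1})$ and the block is hit for trivial reasons; and (iii) invoke their Borel--Cantelli machinery verbatim. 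Since we only use this theorem through Lemma~\ref{LemEnLeb1}, where $\varphi(n) \asymp \epsilon_n/(q_n\log(3q_n))$ is tiny, the truncation is never active in our application, which further reduces the risk of a mismatch.
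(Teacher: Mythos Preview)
Your proposal is correct and takes essentially the same approach as the paper: both reduce the statement to \cite[Theorem~1.2]{MR3494133} of Fuchs--Kim. The paper is more terse and pinpoints the single place where an adaptation is needed: Fuchs--Kim assume their approximating function $\psi(j)$ is \emph{decreasing} in $j$, and the observation is that this monotonicity hypothesis plays no role once $\psi$ is taken piecewise constant on each block $[q_{n-1},q_n)$ (which is exactly the situation $\psi(j)=\varphi(n(j))$). Your detailed bookkeeping in points (i)--(iii) is fine, but the heart of the matter is just that one hypothesis to drop; once you note this, their Borel--Cantelli / Denjoy--Koksma argument goes through verbatim.
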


This theorem can be easily adapted from the proof of Fuchs and Kim \cite[Theorem 1.2]{MR3494133}, by cheking that the hypothesis of $\psi$ being decreasing is useless in the case where it is constant equal to $\varphi$ on every interval $[q_n,q_{n+1})$.

\begin{proof}[Proof of Lemma~\ref{LemEnLeb1}]
Lemma~\ref{LemEnLeb1} lies in an application of Theorem~\ref{CiteTh12}. More precisely, by \eqref{EqLambdaQ}, one has $\|q_{n-1} \alpha\| = \lambda^{(n-1)}\ge \frac{1}{2q_{n}}$.
Choose
\[\varphi(n) = \begin{cases}
\frac{\epsilon_{n}}{q_{n}\log(3q_{n})}\quad & \text{if } a_{n}, a_{n+1}\ge 2\\
0 & \text{if } a_{n} =1 \text{ or }a_{n+1} =1.
\end{cases}\]
In particular, if $a_n, a_{n+1} \ge 2$, then $\varphi(n) \le \|q_{n-1} \alpha\|$.

We compute
\begin{align*}
\sum_{\substack{n \ge 1}}\Big((q_{n}-q_{n-1}) \min\big(\varphi(n),\|q_{n-1}\alpha\|\big)\Big)
& \ge \sum_{\substack{n \in \N\\ a_n, a_{n+1} \ge 2}}\!\!\! \Big((q_{n}-q_{n-1}) \min\big(\varphi(n),\|q_{n-1}\alpha\|\big)\Big)\\
& \ge \sum_{\substack{n \in \N\\ a_n, a_{n+1} \ge 2}} \frac{q_{n}-q_{n-1}}{q_{n}}\frac{\epsilon_{n}}{\log(3q_{n})}.
\end{align*}
But as $a_n>1$, one has
\[\frac{q_{n}-q_{n-1}}{q_{n}} \ge 1-\frac{1}{a_n} \ge 2,\]
so
\[\sum_{\substack{n \ge 1}}\Big((q_{n}-q_{n-1}) \min\big(\varphi(n),\|q_{n-1}\alpha\|\big)\Big)
\ge \sum_{\substack{n \in \N\\ a_n, a_{n+1} \ge 2}} \frac{\epsilon_{n}}{\log(3q_{n})}.\]
Hence, Theorem~\ref{CiteTh12} applies and implies that for almost all $x\in\R$, for any $M\in\N$, there exists $j\ge M$ such that $a_{n(j)}, a_{n(j)+1}\ge 2$ and 
\[\|x+j\alpha \| < \frac{\epsilon_{n(j)}}{q_{n(j)}\log(3q_{n(j)})},\]
in other words that $x\in E$.
\end{proof}

\section{Physical measures for stopping points on the same orbit}\label{SecPhysSame}

\subsection{Statement and ideas of proof}

The aim of this section is to provide conditions on $\alpha$ under which flows with stopping points on the same orbit have a physical measure. The simplest situation in which this happens is when the sequence $a_n$ tends to infinity sufficiently fast. 

\begin{theorem}\label{simple physical measure}
Let $\alpha=[a_0; a_1, a_2, \ldots]$ be such that 
\[\sum_n \frac{1}{\log a_n} < \infty.\]
Then any reparameterized linear flow satisfying $(SH)$, with $\p$ and $\q$ in the same orbit, has a unique physical measure, which attracts Lebesgue almost any point, and equal to $\mu_\infty$ (defined in \eqref{EqFormPhys}).
\end{theorem}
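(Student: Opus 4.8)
The plan is to reduce the statement, via Proposition~\ref{criterium1}, to the arithmetic assertion that $\|R_\alpha^i x\|^{-1}=o\big(S_i(x)\big)$ for $\la$-a.e.\ $x$ (with $S_i$ as in \eqref{DefSn}), and then to prove that assertion by a Borel--Cantelli estimate that exploits the very fast growth of the denominators $q_n$ forced by $\sum_n 1/\log a_n<\infty$. First I would carry out the reduction. Choose the cross-section $\Sigma$ of Section~\ref{SecDefFlow} so that $p_0\neq q_0$; since $\p$ and $\q$ lie on the same orbit, $q_0=p_0+j\alpha$ for some integer $j\neq 0$, and exchanging $\p$ and $\q$ if necessary (which leaves $\mu_\infty$ unchanged) we may take $j>0$, so $\beta:=q_0-p_0=j\alpha$. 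By Proposition~\ref{criterium1}, if $\Theta_n^\beta(x)\to1$ for a.e.\ $x$ then $\tau_0=\tau_1=\sqrt{d_\q}/(\sqrt{d_\p}+\sqrt{d_\q})$ in \eqref{crit1}, i.e.\ $p\omega(\x)=\{\mu_\infty\}$ a.e., so it suffices to prove $\Theta_n^\beta(x)\to1$ a.e. Exactly as in the proof of Theorem~\ref{PropDivSum} (equation \eqref{independent of n}), for $x\notin R_\alpha^{-\N}(0)$ and $n\ge j$ one has $S_n(x-\beta)=S_{n-j}(x)+B$ with $B:=S_j(x-\beta)>0$ constant, hence $\Theta_n^\beta(x)=S_n(x)/(S_{n-j}(x)+B)\ge S_{n-j}(x)/(S_{n-j}(x)+B)\to1$; thus $\liminf_n\Theta_n^\beta(x)\ge1$ always, and it remains to establish $\|R_\alpha^i x\|^{-1}/S_i(x)\to0$ as $i\to\infty$ for a.e.\ $x$, which then gives $S_n(x)=(1+o(1))S_{n-j}(x)$ and $\limsup_n\Theta_n^\beta(x)\le1$.

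For the main estimate, fix $\eta>0$ and, for each $m$, set $G_m=\{x:\|R_\alpha^i x\|^{-1}>\eta S_i(x)\text{ for some }q_m\le i<q_{m+1}\}$. For such an $i$ write $\ell=\lfloor i/q_m\rfloor$ (so $1\le\ell\le a_{m+1}$) and $d=\|R_\alpha^i x\|$. Two lower bounds for $S_i(x)$ are available: the \emph{global} one $S_i(x)\ge S_{\ell q_m}(x)>\tfrac12\ell q_m\log q_m$, from Corollary~\ref{kq orbit} together with $\lambda^{(m-1)}<q_m^{-1}$ (see \eqref{EqLambdaQ}); and the \emph{close-return} one, coming from the fact that the $\ell$ distinct indices $i-q_m,\dots,i-\ell q_m$ all lie in $[0,i)$ and satisfy $\|R_\alpha^{i-sq_m}x\|\le d+s\lambda^{(m)}$, whence $S_i(x)\ge\sum_{s=1}^{\ell}(d+s\lambda^{(m)})^{-1}$. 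If $d\ge\lambda^{(m)}$ the latter is $\ge d^{-1}\sum_{s=1}^\ell(1+s)^{-1}\ge d^{-1}\log(\ell/2)$, so on $G_m$ one gets $\log(\ell/2)<1/\eta$; hence for the blocks with $\ell\ge L_\eta:=2e^{1/\eta}$ necessarily $d<\lambda^{(m)}$, and then $S_i(x)\ge\sum_{s=1}^\ell((1+s)\lambda^{(m)})^{-1}\ge\log(\ell/2)/\lambda^{(m)}$, so (using $\lambda^{(m)}<q_{m+1}^{-1}$) $\{\|R_\alpha^i x\|^{-1}>\eta S_i(x)\}\subseteq\{\|R_\alpha^i x\|<(\eta q_{m+1}\log(\ell/2))^{-1}\}$. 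For the finitely many blocks with $\ell<L_\eta$ the global bound gives instead $\{\|R_\alpha^i x\|^{-1}>\eta S_i(x)\}\subseteq\{\|R_\alpha^i x\|<2(\eta\ell q_m\log q_m)^{-1}\}$.

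Since each block contains at most $q_m$ indices $i$ and $\la\{x:\|R_\alpha^i x\|<\delta\}=2\delta$, summing over $i$ and then over $\ell$ yields
\[\la(G_m)\ \lesssim\ \frac{1+\log L_\eta}{\eta\log q_m}\ +\ \frac{1}{\eta a_{m+1}}\sum_{\ell\le a_{m+1}}\frac{1}{\log(\ell/2)}\ \lesssim\ \frac{1+\log L_\eta}{\eta\log q_m}\ +\ \frac{1}{\eta\log a_{m+1}},\]
where the last inequality uses the estimate $\sum_{k=2}^a\frac1{\log k}\le\frac{Ca}{\log a}$ of Lemma~\ref{LemSerHarmo} (together with $\log(\ell/2)\ge\tfrac12\log\ell$ for $\ell\ge4$, the finitely many small $\ell$ contributing $O(1/(\eta a_{m+1}))$). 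Since $q_m\ge a_m$, the hypothesis $\sum_n1/\log a_n<\infty$ gives both $\sum_m\frac{1}{\log q_m}\le\sum_m\frac{1}{\log a_m}<\infty$ and $\sum_m\frac{1}{\log a_{m+1}}<\infty$, so $\sum_m\la(G_m)<\infty$. By Borel--Cantelli, for $\la$-a.e.\ $x$ only finitely many $G_m$ occur, i.e.\ $\limsup_i\|R_\alpha^i x\|^{-1}/S_i(x)\le\eta$ a.e.; taking $\eta=1/k$ and letting $k\to\infty$ completes the proof.

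The crux of the argument — and where essentially all the difficulty lies — is this two-regime splitting. With the global bound alone, summing over the $\approx a_{m+1}$ values of $\ell$ produces a divergent factor $a_{m+1}$ (and, since $a_{m+1}$ is not bounded in terms of $q_m$, no amount of crude estimation repairs this); feeding the close-return boost back in on the blocks with $\ell$ comparable to $a_{m+1}$ is precisely what turns that factor into the harmless $1/\log a_{m+1}$ via Lemma~\ref{LemSerHarmo}. Making this work requires carefully lining up the thresholds: the elementary displacement bound $\|R_\alpha^{i-sq_m}x\|\le d+s\lambda^{(m)}$, the dichotomy $d\gtrless\lambda^{(m)}$, the constant $L_\eta$, and the bookkeeping of how many indices $i$ sit in each renormalization block. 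I expect this delicate matching — rather than any single hard inequality — to be the main obstacle.
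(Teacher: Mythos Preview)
Your argument is correct and follows essentially the same route as the paper's proof (Lemmas~\ref{CritPhys}, \ref{LemConvDn2} and \ref{alternative summability}): reduce via Proposition~\ref{criterium1} to $\psi(R_\alpha^i x)=o(S_i(x))$ a.e., then run a Borel--Cantelli argument combining the global bound of Corollary~\ref{kq orbit} with the close-return bound (your inline version of Lemma~\ref{lower bound}), and finish with the $\sum_{k\le a}1/\log k\lesssim a/\log a$ estimate of Lemma~\ref{LemSerHarmo}. The only cosmetic difference is that the paper encodes the two regimes through $c_{n,k}=\max(a_{n,k},b_{n,k})$ and a sequence $u_n\to\infty$ with $\sum u_n/\log a_n<\infty$, whereas you fix $\eta$, split at $\ell\gtrless L_\eta$, and intersect over $\eta=1/k$ at the end; the resulting measure bounds and the use of the hypothesis are identical.
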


A similar result can be obtained by assuming sufficiently rapid growth of $q_n$.

\begin{theorem}\label{PropConv}
If $\p$ and $\q$ lie on the same orbit of the flow, and if there exist $C>0$ and $\gamma>0$ such that $q_n \ge C \exp(n^{2+\gamma})$, then the system has a unique physical measure, which attracts Lebesgue almost any point, and equal to $\mu_\infty$ (defined in \eqref{EqFormPhys}).
\end{theorem}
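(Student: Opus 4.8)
The plan is to verify that, for Lebesgue almost every $x$, one has $\|R_\alpha^n(x)\|^{-1} = o(S_n(x))$, which by Proposition~\ref{criterium1} (and Proposition~\ref{PropPossibOmega}) forces $p\omega(\x) = \{\mu_\infty\}$ a.e. Since $\p$ and $\q$ lie on the same orbit, $\beta = q_0 - p_0 = j\alpha$ for some $j > 0$, so $S_n(x - \beta)$ and $S_{n-j}(x)$ differ by a constant (as in \eqref{independent of n}); hence it suffices to show $\lim_n \Theta_n^\beta(x) = 1$ a.e., and by the symmetry Lemma~\ref{symmetry} and Proposition~\ref{PropPossibOmega} this reduces to showing that the Birkhoff sums $S_n(x)$ grow fast enough that a single large term $\psi(R_\alpha^n(x))$ cannot dominate them. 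Concretely: it is enough to prove that for a.e.\ $x$, whenever $\|x + n\alpha\|$ is small (say $< \lambda^{(m)}/2$ for the relevant $m$), the tail sum $S_n(x)$ already exceeds $K \|x+n\alpha\|^{-1}$ for every fixed $K$ and all large $n$ — i.e.\ the ``killing'' of previous contributions described in the introduction never happens.

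First I would fix the scale: given $n$, let $m = m(n)$ be such that $q_m \le n < q_{m+1}$. Corollary~\ref{kq orbit} gives the crude lower bound $S_n(x) \ge \lfloor n/q_m\rfloor \cdot \tfrac{\log q_m}{2\lambda^{(m-1)}} \gtrsim \tfrac{\log q_m}{\lambda^{(m-1)}}$, uniformly in $x$. The competing term is at most $\|x+n\alpha\|^{-1}$, which is $\le (\lambda^{(M)})^{-1}$ whenever $x+n\alpha$ avoids the $\lambda^{(M)}/2$-neighbourhood of $0$. So the mechanism that could break convergence is exactly: the orbit of $x$ comes abnormally close to $0$ at some time $n$, closer than $\lambda^{(M)}$ for some $M$ much larger than the ``current'' index $m(n)$. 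Using Lemma~\ref{EqSellFinal} (or Lemma~\ref{LemSizeDiverg}) one quantifies this: if $\|x+n\alpha\| \ge \epsilon/(q_{m}\log(3q_m))$ then $\psi(R_\alpha^n(x)) \le \tfrac{1}{\epsilon} q_m\log(3q_m) \ll \tfrac{\log q_m}{\lambda^{(m-1)}} \le S_n(x)$ for small $\epsilon$, since $\lambda^{(m-1)} \gtrsim 1/q_m$. Thus the only dangerous times are those $n \le q_{m+1}$ at which $\|x + n\alpha\| < \epsilon/(q_m \log(3q_m))$ — a very close return.

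The heart of the proof is therefore a Borel–Cantelli argument showing that, under $q_n \ge C\exp(n^{2+\gamma})$, such dangerously close returns happen only finitely often for a.e.\ $x$, for any fixed $\epsilon$. Here I would invoke Theorem~\ref{CiteTh12} with $\varphi(n) = \epsilon/(q_n \log(3q_n))$ (or a slightly cruder summable choice): the relevant series is $\sum_n (q_n - q_{n-1})\min(\varphi(n), \|q_{n-1}\alpha\|)$. Since $\|q_{n-1}\alpha\| = \lambda^{(n-1)} < 1/q_n$ and $q_n - q_{n-1} < q_n$, each term is $\lesssim \min\!\big(\tfrac{\epsilon}{\log(3q_n)}, 1\big)$, and with $\log q_n \ge n^{2+\gamma}$ this is $\lesssim \epsilon\, n^{-(2+\gamma)}$, which is summable. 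Hence for a.e.\ $x$ the inequality $\|x + j\alpha\| < \varphi(n(j))$ holds for only finitely many $j$; combined with the previous paragraph this gives $\|R_\alpha^n(x)\|^{-1} = o(S_n(x))$ a.e., completing the proof. The main obstacle I anticipate is bookkeeping the two-parameter interplay between the time $n$ and the renormalization index $m(n)$ — one must make sure the lower bound on $S_n(x)$ from Corollary~\ref{kq orbit} is expressed at the correct scale $m(n)$ and that the ``$o$'' is uniform enough over the full-measure set produced by Theorem~\ref{CiteTh12}; a secondary point is checking that Theorem~\ref{CiteTh12} genuinely applies with a non-monotone $\varphi$ supported on all of $\N$, which the remark following its statement already grants. (Theorem~\ref{simple physical measure} would be the special case where $\sum 1/\log a_n < \infty$ directly controls the close-return series without needing growth of $q_n$, and the argument above specializes to it verbatim after replacing the hypothesis in the Borel–Cantelli step.)
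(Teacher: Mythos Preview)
There is a genuine gap, and it lies exactly at the ``bookkeeping'' step you flagged as a possible obstacle. Your lower bound on $S_j(x)$ lives at scale $m$ (with $q_m\le j<q_{m+1}$): Corollary~\ref{kq orbit} gives $S_j(x)\ge \tfrac{\log q_m}{2\lambda^{(m-1)}}\asymp q_m\log q_m$. But your Borel--Cantelli control lives at scale $m+1$: in the indexing of Theorem~\ref{CiteTh12} one has $n(j)=m+1$, so your choice $\varphi(n)=\epsilon/(q_n\log(3q_n))$ only yields $\psi(R_\alpha^j(x))\le q_{m+1}\log(3q_{m+1})/\epsilon$. The ratio $\psi/S_j$ is then of order $a_{m+1}\log q_{m+1}/(\epsilon\log q_m)$, which is \emph{not} small when $a_{m+1}$ is large --- and the hypothesis $q_n\ge C\exp(n^{2+\gamma})$ does not preclude this (take e.g.\ $\log a_k\asymp k^{1+\gamma}$). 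If instead you shift the index so that $\varphi$ matches your ``dangerous'' criterion at scale $m$, i.e.\ $\varphi(n)=\epsilon/(q_{n-1}\log(3q_{n-1}))$, then the series in Theorem~\ref{CiteTh12} becomes $\sum (q_n-q_{n-1})\varphi(n)\asymp\sum \epsilon\,a_n/\log q_{n-1}$, which diverges in the same example. Either way the argument does not close. (The same issue arises for Theorem~\ref{simple physical measure}.)

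This scale mismatch is precisely what the paper's machinery addresses. One must track not only the renormalization index $n$ but also the position $k=\lfloor i/q_n\rfloor$ within $[q_n,q_{n+1})$, and use \emph{two} lower bounds on $S_i(x)$: the one you used, $b_{n,k}=k\log q_n/\lambda^{(n-1)}$, and a second one, $a_{n,k}=\log k/\lambda^{(n)}$, coming from Lemma~\ref{lower bound} (which exploits the many close returns to the ground floor when $x$ is already near the orbit). The bad sets $I_{n,k}$ are then sized by $u_n/\max(a_{n,k},b_{n,k})$, and the summability of $\sum_n\la(D_n)$ (Lemma~\ref{summability under rapid growth}) requires a genuine dichotomy: when $a_{n+1}$ is large one uses the $a_{n,k}$ bound, when it is small the $b_{n,k}$ bound suffices. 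Your single-scale Borel--Cantelli argument captures only the second regime.
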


Theorem~\ref{PropConv} is harder to prove than Theorem~\ref{simple physical measure} because it allows large oscillation in the sequence $a_n$ forcing us to use different estimates depending on whether $a_n$ is small or large.

Comparing these statements with Theorem~\ref{PropDivSum}, we can observe that in the case of stopping points on the same orbit,  the flow has historic behaviour for Diophantine $\alpha$ and a unique physical measure for sufficiently Liouvillian $\alpha$. It could be counterintuitive at first sight, but one has to keep in mind that in the Liouvillian case, as the orbit of $0$ eventually comes back really close to $0$, it lets space for most of the other points to come back far away from 0.

\begin{lemma}\label{lemHaus}
The set of numbers $\alpha\in\R$ such that there exist $C>0$ and $\gamma>0$ such that $q_n \ge C \exp(n^{2+\gamma})$ is of zero Lebesgue measure but full Hausdorff dimension.
\end{lemma}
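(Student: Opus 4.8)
The plan is to handle the two assertions — zero Lebesgue measure and full Hausdorff dimension — by separate arguments, both exploiting the continued fraction expansion. Write $S$ for the set of $\alpha$ such that $q_n(\alpha) \ge C\exp(n^{2+\gamma})$ for some $C,\gamma>0$ and all $n$. Recall that $q_n \le \prod_{k=1}^n (a_k+1) \le 2^n \prod_{k=1}^n a_k$, so that $\log q_n \le n\log 2 + \sum_{k=1}^n \log a_k$. Hence if $\alpha \in S$ with parameters $C,\gamma$, then $\sum_{k=1}^n \log a_k \ge \log C + n^{2+\gamma} - n\log 2$, which forces $\frac1n\sum_{k=1}^n \log a_k \to \infty$, i.e. the Cesàro averages of $\log a_k$ diverge.

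\emph{Zero Lebesgue measure.} By the Khinchin--Lévy theorem already quoted in the excerpt (in the discussion after Theorem~\ref{PropDivSum}), for Lebesgue-almost every $\alpha$ one has $\frac{\log q_n}{n} \to \frac{\pi^2}{12\log 2}$, a finite constant. Since every $\alpha \in S$ has $\frac{\log q_n}{n} \ge \frac{\log C}{n} + n^{1+\gamma} \to \infty$, the set $S$ is disjoint from a set of full measure, hence $\leb(S) = 0$. (Equivalently one can invoke the Gauss--Kuzmin statistics: for a.e.\ $\alpha$ the frequency of $a_k = b$ is $\log_2\frac{(b+1)^2}{b(b+2)}$, which already implies $\frac1n\sum \log a_k$ converges.)

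\emph{Full Hausdorff dimension.} Here I would use a now-standard technique: construct, for each $\delta>0$, a Cantor-type subset of $S$ of Hausdorff dimension $\ge 1-\delta$. Fix a rapidly growing sequence of ``free'' indices and prescribed blocks: partition $\N$ into consecutive blocks, and on most indices $k$ allow $a_k$ to range over $\{1,2,\dots,m\}$ freely for some large fixed $m$, while on a sparse subsequence of indices $n_j$ (say $n_j = j!$ or $n_j = 2^j$) one forces $a_{n_j}$ to be astronomically large so as to guarantee the growth $q_{n}\ge C\exp(n^{2+\gamma})$ at those scales, and such that between consecutive forced indices the growth does not drop below the target (one checks $q_n \ge q_{n_j} \ge C\exp(n_j^{2+\gamma})$ and, because $n \mapsto q_n$ is increasing while the required bound grows only sub-doubly-exponentially between $n_j$ and $n_{j+1}$ if the $n_j$ are chosen sparse enough, the intermediate estimate survives with a slightly smaller $\gamma$ and $C$). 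The set of $\alpha$ with these constraints is a Cantor set; to bound its dimension from below I would use the classical fact (Jarník, Good; see also Falconer's mass distribution principle applied to cylinder sets of continued fractions) that restricting partial quotients to a bounded alphabet $\{1,\dots,m\}$ on a set of indices of density one yields Hausdorff dimension tending to $1$ as $m\to\infty$, and that inserting large partial quotients on a density-zero subsequence does not decrease the dimension — the contribution of those coordinates to the covering exponent is negligible because the number of such indices up to $N$ is $o(N)$. Letting $m\to\infty$ and choosing the forced values large enough relative to $m$, one gets $\dim_H S \ge 1-\delta$ for every $\delta>0$, hence $\dim_H S = 1$.

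\emph{Main obstacle.} The measure-zero part is immediate from Khinchin--Lévy. The real work is the dimension lower bound: one must exhibit a subset of $S$ (not merely of the set where the growth is fast infinitely often) with dimension close to $1$, which means controlling the $q_n$ at \emph{all} scales simultaneously, so the geometry of the forced large partial quotients and the intervening free blocks must be balanced carefully. The cleanest route is to cite an existing Hausdorff-dimension result for sets of continued fractions with growth conditions on $q_n$ — e.g.\ results in the style of Good \cite{} / Łuczak / Feng–Wu–Liang–Yu on $\{\alpha : q_{n+1} \ge q_n^{\,b(n)}\}$ — from which $\dim_H S = 1$ follows since our growth $q_n \ge C\exp(n^{2+\gamma})$ is far milder than, say, $q_{n+1}\ge q_n^n$; if a direct citation is unavailable, the self-similar Cantor construction above, combined with the mass distribution principle, gives the bound from scratch. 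I expect assembling the explicit Cantor set and verifying the at-all-scales bound $q_n \ge C'\exp(n^{2+\gamma'})$ to be the fiddly step, but it is routine bookkeeping rather than a genuine difficulty.
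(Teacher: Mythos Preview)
Your measure-zero argument via Khinchin--L\'evy is correct and matches what the paper relies on (the Khinchin--L\'evy limit is quoted just after Theorem~\ref{PropDivSum}).

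Your dimension argument, however, has a genuine gap. The claim that ``inserting large partial quotients on a density-zero subsequence does not decrease the dimension --- the contribution of those coordinates to the covering exponent is negligible because the number of such indices up to $N$ is $o(N)$'' is wrong in this setting, and it is exactly where your construction collapses. What matters for the covering exponent is not the \emph{number} of forced indices but their contribution to $\log q_n$. In your Cantor set, every level-$n$ cylinder has diameter $\asymp q_n^{-2}$, and by construction $q_n \ge C\exp(n^{2+\gamma})$ throughout the set, so every level-$n$ cylinder has diameter at most $\exp\big(-2n^{2+\gamma}+O(1)\big)$. On the other hand there are at most $m^{n}$ such cylinders (a factor $m$ at each free index and a single choice at each forced index). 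Hence for every $s>0$ the level-$n$ covering sum is at most $m^{n}\exp(-2sn^{2+\gamma})\to 0$, and your Cantor set has Hausdorff dimension \emph{zero}, not close to $1$. The forced $a_{n_j}$ are far from negligible: in order that $\sum_{k\le n}\log a_k \gtrsim n^{2+\gamma}$, their logarithms must carry essentially all of $\log q_n$, and that is precisely what crushes the cylinder diameters. Your heuristic would be valid only if the inserted quotients stayed bounded (so that their total contribution to $\log q_n$ were $O(\#\{j:n_j\le n\})=o(n)$), but that is incompatible with membership in $S$.

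The paper's route is entirely different: rather than a bare-hands Cantor construction, it invokes a Jarn\'{\i}k--Besicovitch type result (cited from Durand), producing for each $\tau>2$ a subset of $S$ of dimension $\ge 2/\tau$, and then lets $\tau\downarrow 2$. Your alternative suggestion --- to cite an existing dimension result --- is therefore the right instinct, but note that the specific theorems you name (Good, \L uczak) concern sets with \emph{all} partial quotients large and yield dimension at most $1/2$; they cannot give full dimension. One really needs the Jarn\'{\i}k--Besicovitch input, which is what the paper uses.
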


\begin{proof}
This is a direct consequence of a theorem of Jarn{\'i}k Besicovitch: following Durand \cite{Durand}, combining Proposition 1.8 with Theorem 3.1, for any $\tau>2$, the set of $\alpha\in\R$ such that there exists $C>0$ such that $\log q_n < (\tau-1)^n C$ has Hausdorff dimension $\ge 2/\tau$.
\end{proof}

Let us first explain the idea of the proof of Theorem~\ref{PropConv}. To make it simpler we will suppose that the projections $p_0$ and $q_0$ of respectively $\p$ and $\q$ on the transverse section $\Sigma$ satisfy $q_0 = R_\alpha(p_0)$ (see Section \ref{SecDefFlow}).

Using Proposition~\ref{criterium1}, we want to show that for almost any point $\x\in\T^2$ and for any $j$ large enough, the Birkhoff sum for the observable $\|\cdot - p_0\|^{-1}$ over the $j$ first return times of $\x$ on $\Sigma$ for the flow with only one stopping point at $\p$ is more or less the same as the sum of the $j$ first return times for the flow with only one stopping point at $\q$. But as $q_0 = R_\alpha(p_0)$, the difference between these two sums is more or less the value of the last return time of the orbit of $p_0$. Hence, supposing without loss of generality that $p_0=0$, what we want to show is that for almost any $x\in\T$, (see Lemma~\ref{CritPhys})
\[\psi\big(R_\alpha^j(x)\big) = o\left(\sum_{i=0}^{j-1} \psi(R_\alpha^i(x))\right).\]

More precisely, we will prove that the measure $\lambda_n$ of the set of points $x\in \T$ for which there exists a time $q_n\le j < q_{n+1}$ such that 
\begin{equation}\label{EqDiv}
\psi(R_\alpha^j(x)) \ge n^{-\gamma/6}\sum_{i=0}^{j-1} \psi(R_\alpha^i(x))
\end{equation}
satisfies $\sum_n \lambda_n<+\infty$ (Lemmas~\ref{alternative summability}, \ref{summability under rapid growth} and \ref{LemSerHarmo}). Hence, almost any point of the circle will be eventually ``good'' between times $q_n$ and $q_{n+1}$.

To do this, we have to prove that for most of points, the sum on the right of \eqref{EqDiv} is sufficiently large. We separate two different cases for each $n$:
\begin{itemize}
\item Either $a_n$ is big, that is, $q_{n+1}\gg q_n$ --- note that it has to happen an infinite number of times (otherwise we could not have $q_n \ge C \exp(n^{2+\gamma})$). It will turn out that in this case, the most important contribution for the Birkhoff sum comes from the returns in the ground floor $\Delta^{(n-1)}$, that is, the term $\psi_1(y_0)$ of the first part of Lemma~\ref{EqSellFinal}. In practice, we will cut the ground floor $\Delta^{(n-1)}$ into the $a_n$ ground floors of the sectors (defined page \pageref{Sectors}), and throw away the points that are sufficiently close to the the preimage of 0. The remaining points will not satisfy \eqref{EqDiv}, simply because they will return in the ground floor a lot of times before coming close to 0, which will increase sufficiently the right part of \eqref{EqDiv}.
\item Or $a_n$ is small, that is, $q_{n+1}\not\gg q_n$. In this case, the most important contribution for the Birkhoff sum comes from the sums in the whole sectors but the ground floor, that is, the term $\log q_n / \big(2 \lambda^{(n-1)}\big)$ of the first part of Lemma~\ref{EqSellFinal}. The fact that $q_n$ is large enough will imply that the right part of \eqref{EqDiv} is large enough, which will ensure that the proportion of points satisfying \eqref{EqDiv} is small enough.
\end{itemize}
Of course, these considerations will be made precise in the proof of Theorem~\ref{PropConv}.

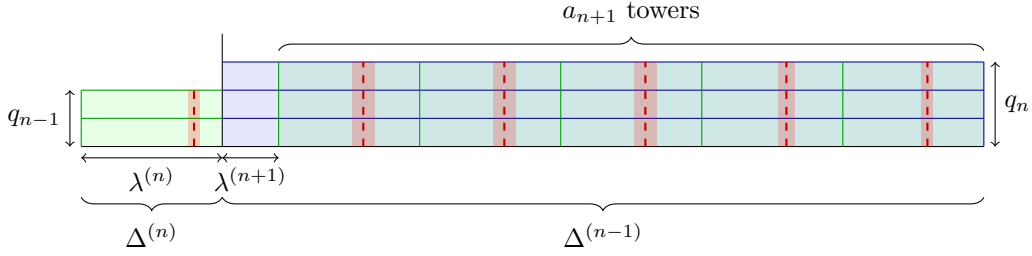
\begin{figure}
\begin{tikzpicture}[scale=.75]
\fill[fill=green, opacity=.1] (0,0) rectangle (-2.5,1);
\fill[fill=blue, opacity=.1] (0,0) rectangle (13.5,1.5);
\fill[fill=green, opacity=.1] (1,0) rectangle (13.5,1.5);
\draw (0,0) -- (0,2);
\draw (0,0) -- (13.5,0);
\draw[color=blue!60!black] (0,.5) -- (13.5,.5);
\draw[color=blue!60!black] (0,1) -- (13.5,1);
\draw[color=blue!60!black] (0,1.5) -- (13.5,1.5);
\draw[color=blue!60!black] (13.5,0) -- (13.5,1.5);

\draw (-2.5,0) -- (0,0);
\draw[color=green!60!black] (-2.5,.5) -- (0,.5);
\draw[color=green!60!black] (-2.5,1) -- (0,1);
\draw[color=green!60!black] (-2.5,0) -- (-2.5,1);
\draw[color=green!60!black] (1,0) -- (1,1.5);
\draw[color=green!60!black] (3.5,0) -- (3.5,1.5);
\draw[color=green!60!black] (6,0) -- (6,1.5);
\draw[color=green!60!black] (8.5,0) -- (8.5,1.5);
\draw[color=green!60!black] (11,0) -- (11,1.5);

\foreach \x in {1,...,5}
{\draw[dashed, thick, color=red!70!black] (2.5*\x,0) -- (2.5*\x,1.5);}
\draw[dashed, thick, color=red!70!black] (-.5,0) -- (-.5,1);
\fill[color=red, opacity=.2] (2.3,0) rectangle (2.7,1.5);
\fill[color=red, opacity=.2] (4.8,0) rectangle (5.2,1.5);
\fill[color=red, opacity=.2] (7.3,0) rectangle (7.7,1.5);
\fill[color=red, opacity=.2] (9.85,0) rectangle (10.15,1.5);
\fill[color=red, opacity=.2] (12.4,0) rectangle (12.6,1.5);
\fill[color=red, opacity=.2] (-.6,0) rectangle (-.4,1);

\draw [decorate,decoration={brace,amplitude=5pt},xshift=0,yshift=-.9cm]
(13.5,0) -- (0,0) node [black,midway,yshift=-0.5cm] {$\Delta^{(n-1)}$};
\draw [decorate,decoration={brace,amplitude=5pt},xshift=0,yshift=-.9cm]
(0,0) -- (-2.5,0) node [black,midway,yshift=-0.5cm] {$\Delta^{(n)}$};

\draw [decorate,decoration={brace,amplitude=5pt},xshift=0,yshift=.2cm]
(1,1.5) -- (13.5,1.5) node [black,midway,yshift=0.5cm] {$a_{n+1}$ towers};

\draw[<->] (13.7,0) --node[midway, right]{$q_n$} (13.7,1.5);
\draw[<->] (-2.7,0) --node[midway, left]{$q_{n-1}$} (-2.7,1);
\draw[<->] (-2.5,-.2) --node[midway, below]{$\lambda^{(n)}$} (0,-.2);
\draw[<->] (1,-.2) --node[midway, below]{$\lambda^{(n+1)}$} (0,-.2);


\end{tikzpicture}
\caption{The pre-orbit of 0 (hatched lines) and the set $D_n$ (light red rectangles).
}\label{FigRenor8}
\end{figure}

\subsection{A criterium for convergence}

The following easy lemma gives a sufficient condition for having $\mu_\infty$ (defined in \eqref{EqFormPhys}) as a physical measure.

\begin{lemma}\label{CritPhys}
If $\p$ and $\q$ are on the same $\phi^t$-orbit, $x$ is not on the $R_\alpha$-orbit of $p_0$ and
\[\psi(R_\alpha^j(x)) = o\left(\sum_{i=0}^{j-1} \psi(R_\alpha^i(x))\right),\]
then $p\omega(x)$ is equal to $\{\mu_\infty\}$.
\end{lemma}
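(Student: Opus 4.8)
The plan is to reduce the claim to the single assertion that $\Theta_n^\beta(x)\to 1$, and then to feed this into Proposition~\ref{criterium1}. First I would use the hypothesis that $\p$ and $\q$ lie on a common $\phi^t$-orbit: this forces $p_0$ and $q_0$ to lie on a common $R_\alpha$-orbit, so $\beta=q_0-p_0=j\alpha\bmod 1$ for some nonzero integer $j$. For concreteness I would treat the case $j\ge 1$ in detail; the case $j\le -1$ is entirely analogous and, as noted below, easier. The assumption that $x$ is not on the $R_\alpha$-orbit of $p_0$ guarantees that $\psi(R_\alpha^i(x))<\infty$ for every $i\in\Z$, so in particular $S_n(x)$ and $S_n(x-\beta)$ are finite for all $n$; and since $\psi\ge 2$ on $\T$ we have $S_n(x)\to\infty$ and $S_n(x-\beta)\to\infty$.

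The core computation rests on the identity $R_\alpha^{i}(x-\beta)=R_\alpha^{i-j}(x)$, which gives, for $n>j$,
\[
S_n(x-\beta)=\sum_{i=-j}^{-1}\psi\big(R_\alpha^i(x)\big)+S_{n-j}(x)=c+S_{n-j}(x),
\qquad\text{where}\qquad c=\sum_{i=1}^{j}\psi\big(R_\alpha^{-i}(x)\big)<\infty,
\]
whence
\[
\Theta_n^\beta(x)-1=\frac{S_n(x)-S_n(x-\beta)}{S_n(x-\beta)}
=\frac{-c+\sum_{i=n-j}^{n-1}\psi\big(R_\alpha^i(x)\big)}{c+S_{n-j}(x)}.
\]
The $-c$ contributes $o(1)$ since the denominator tends to infinity, so everything comes down to showing that the $j$ ``late'' terms $\psi(R_\alpha^i(x))$, $n-j\le i\le n-1$, sum to $o(S_{n-j}(x))$. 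This is where a short bootstrap is needed (only when $j\ge 2$): let $a$ be the largest of these $j$ terms, attained at some index $i^\star$. By the hypothesis $a=o(S_{i^\star}(x))$, while $S_{i^\star}(x)\le S_{n-j}(x)+(j-1)a$, because $S_{i^\star}(x)$ exceeds $S_{n-j}(x)$ by at most $j-1$ terms, each bounded by $a$. Hence, for $\delta>0$ small and $n$ large, $a\le\delta S_{i^\star}(x)\le\delta\big(S_{n-j}(x)+(j-1)a\big)$, giving $a\le 2\delta S_{n-j}(x)$ and $\sum_{i=n-j}^{n-1}\psi(R_\alpha^i(x))\le ja\le 2j\delta S_{n-j}(x)$. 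As $\delta$ is arbitrary, this sum is $o(S_{n-j}(x))$, and therefore $\Theta_n^\beta(x)\to 1$. (When $j\le -1$ the boundary terms lie inside the partial sum in the denominator, so one reads off the limit directly, with no bootstrap.)

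Finally I would conclude with Proposition~\ref{criterium1}: writing $p\omega(\x)=[\tau_0\delta_\p+(1-\tau_0)\delta_\q,\ \tau_1\delta_\p+(1-\tau_1)\delta_\q]$ (recall $p\omega$ is a.e. constant), that proposition identifies $\liminf_n\Theta_n^\beta(x)$ and $\limsup_n\Theta_n^\beta(x)$ with $\sqrt{d_\p/d_\q}\,\tau_0/(1-\tau_0)$ and $\sqrt{d_\p/d_\q}\,\tau_1/(1-\tau_1)$; since both equal $1$ we get $\tau_0=\tau_1=\sqrt{d_\q}/(\sqrt{d_\p}+\sqrt{d_\q})$, i.e. $p\omega(\x)=\{\mu_\infty\}$, exactly as in the proof of Proposition~\ref{PropPossibOmega}. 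I expect the bootstrap on the boundary terms to be the only genuinely delicate point: for $j>0$ those terms sit \emph{beyond} the partial sum appearing in the denominator, so the $o$-hypothesis cannot be applied to them term by term, and the self-referential estimate (or an equivalent device) is what makes the correction negligible; the remaining steps are bookkeeping, modulo the standard caveat that Proposition~\ref{criterium1} is invoked off the null set on which its underlying ergodic averages might fail — harmless here, as Lemma~\ref{CritPhys} is ultimately applied only to $\la$-almost every $x$.
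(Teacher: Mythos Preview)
Your proof is correct and follows essentially the same approach as the paper: both reduce via Proposition~\ref{criterium1} to showing $\Theta_n^\beta(x)\to 1$, exploit $\beta=j\alpha$ to telescope $S_n(x)-S_n(x-\beta)$ into a fixed constant plus a tail of $j$ ``late'' terms, and then argue that this tail is $o$ of the relevant partial sum. Your bootstrap on the maximal late term is just a repackaging of the paper's inductive claim that $\psi(R_\alpha^{j+k}(x))=o_j(S_j(x))$ for each fixed $k\ge 0$; the paper proves this by writing $S_{j+k}(x)=S_j(x)(1+o(1))$ and iterating, which is the same self-referential estimate you use. The only cosmetic difference is that the paper composes with $R_\alpha^{n_0}$ and works with $S_n(x+\beta)/S_n(x)$, whereas you stay with $\Theta_n^\beta(x)$ directly and treat both signs of $j$ --- your observation that $j<0$ is easier (because the late terms then lie inside the denominator's index range) is correct and is a nice clarification.
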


\begin{proof}
By Proposition~\ref{criterium1}, it suffices to prove that $\Theta_n^\beta(x) \to_n 1$. Recall that
\[\Theta_n^\beta(x) = \frac{S_n(x)}{S_n(x-\beta)}.\]
As $\p$ and $\q$ are on the same orbit, there exists a section $\Sigma$ and a number $n_0\in\N^*$ such that, writing $p_0$ and $q_0$ as in Paragraph \ref{SecDefFlow}, one has $q_0 = R_\alpha^{n_0}(p_0)$, in other words $\beta \equiv n_0\alpha  \mod 1$. It is straightforward to verify that 
\[S_{n_0}(x)+ S_n(x+\beta) = S_{n+n_0}(x) = S_n(x)+S_{n_0}(R_\alpha^n(x)).\]
Hence,
\[\Theta_n^\beta \circ R_\alpha^{n_0}(x) = \frac{S_n(x+\beta)}{S_n(x)} =  1 - \frac{S_{n_0}(x)}{S_n(x)} + \frac{S_{n_0}(R_\alpha^n(x))}{S_n(x)}.\]
Of course, to show that $\Theta_n^\beta(x) \to 1$ $\la$-almost everywhere is equivalent to show that $\Theta_n^\beta \circ R_\alpha^{n_0}(x) \to 1$ $\la$-almost everywhere, since $\la$ is $R_\alpha$-invariant.

As $S_n(x)\to_n +\infty$, the second term tends to 0. So it suffices to prove that the last term also tends to 0.

Remark that under the hypothesis of the lemma, one can easily check by recurrence that for any $k\ge 0$,
\[\psi\big(R_\alpha^{j+k}(x)\big) = o_j\left(\sum_{i=0}^{j-1} \psi(R_\alpha^i(x))\right),\]
so that for any $k_0 \ge 0$, 
\[\sum_{k=0}^{k_0}\psi\big(R_\alpha^{j+k}(x)\big) = o_j\left(\sum_{i=0}^{j-1} \psi(R_\alpha^i(x))\right),\]
in other words
\[\frac{S_{n_0}(R_\alpha^n(x))}{S_n(x)} = o_n(1).\]
\end{proof}

%

\subsection{Bad sets of initial points}

For $n \geq 0$ and $k \geq 1$ we set
\[a_{n,k} = \frac{\log k}{\lambda^{(n)}},
\qquad
b_{n,k} =  \frac{k \log q_n}{\lambda^{(n-1)}},\]
and
\[c_{n,k} = \max\{a_{n,k}, b_{n,k} \}.\]
Note that $c_{n,k}$ is positive as long as $n \geq 2$. 

Let $u_n$ be an increasing sequence of positive numbers tending to infinity. For $n \geq 2$ and $1 \leq k \leq a_{n+1}$ let 
\begin{equation}\label{def of I}
I_{n,k} = \left[-\frac{u_n}{2 c_{n,k}},\, \frac{u_n}{2 c_{n,k}}\right].
\end{equation}
Define the sequences 
\begin{equation}\label{EqDefNi}
n(i) = \max \{n \geq 0: q_n \leq i \}
\quad \text{and} \quad
k(i) = \max \{k \geq 0: k q_{n(i)} \leq i \},
\end{equation}
so that $q_{n(i)} \le i < q_{n(i)+1}$ and $k(i) q_{n(i)} \le i < (k(i)+1)q_{n(i)}$.

For $i \geq 0$ let
\[B_i  = R_\alpha^{-i} (I_{n(i), k(i)}),\]
and
\begin{equation}\label{EqDefDn}
D_n = \bigcup_{i=q_n}^{q_{n+1}-1} B_i .
\end{equation}

\begin{lemma}\label{LemConvDn2}
Let $n \geq 2$. If $x\notin D_n$, then for any $q_n \leq i < q_{n+1}$, and any $x'$ such that $\|x-x'\|\le \lambda^{(n)}$, we have
\begin{equation} \label{at time j_0}
\psi\big(R_\alpha^i(x)\big) \le \frac{8}{u_n} \sum_{j=0}^{i-1} \psi\big(R_\alpha^j(x')\big).
\end{equation}
\end{lemma}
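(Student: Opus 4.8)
The plan is to fix $n \geq 2$, a point $x \notin D_n$, an index $q_n \leq i < q_{n+1}$, and an auxiliary point $x'$ with $\|x - x'\| \leq \lambda^{(n)}$, and to show that the contribution of the single term $\psi(R_\alpha^i(x))$ to the Birkhoff sum of $\psi$ over the first $i$ iterates of $x'$ is controlled by $\frac{8}{u_n}$ times that sum. The hypothesis $x \notin D_n$ says precisely that $x \notin B_i$, i.e. $R_\alpha^i(x) \notin I_{n(i),k(i)}$, where (since $q_n \leq i < q_{n+1}$) we have $n(i) = n$ and $k = k(i)$ satisfies $k q_n \leq i < (k+1) q_n$; thus $\|R_\alpha^i(x)\| > \frac{u_n}{2 c_{n,k}}$, hence $\psi(R_\alpha^i(x)) = \|R_\alpha^i(x)\|^{-1} < \frac{2 c_{n,k}}{u_n}$. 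So the whole game is to bound $c_{n,k} = \max\{a_{n,k}, b_{n,k}\}$ from above by $4 \sum_{j=0}^{i-1}\psi(R_\alpha^j(x'))$, using $a_{n,k} = \frac{\log k}{\lambda^{(n)}}$ and $b_{n,k} = \frac{k \log q_n}{\lambda^{(n-1)}}$.

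For the lower bound on the Birkhoff sum of $x'$, I would split according to which of the two terms realizes the maximum in $c_{n,k}$. Since $i \geq k q_n$ and $kq_n < q_{n+1}$, the sum over $x'$ dominates $\sum_{j=0}^{k q_n - 1}\psi(R_\alpha^j(x'))$. To handle $b_{n,k}$, apply Corollary~\ref{kq orbit}: $\sum_{j=0}^{kq_n-1}\psi(R_\alpha^j(x')) > \frac{k \log q_n}{2\lambda^{(n-1)}} = \frac{1}{2} b_{n,k}$, so $\sum_{j=0}^{i-1}\psi(R_\alpha^j(x')) > \frac{1}{2}b_{n,k} \geq \frac14 c_{n,k}$ in this case (in fact better). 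To handle $a_{n,k}$, I want to show the sum is at least $\frac14 \cdot \frac{\log k}{\lambda^{(n)}}$. Here is where $x'$ and the neighbourhood condition enter: since $x \notin B_i$ in particular means nothing directly about closeness to $0$, but rather the opposite, I instead need to exhibit that $x'$ (or $x$) has some iterate within distance $\approx 2\lambda^{(n)}$ of $0$ among its first $kq_n$ iterates — equivalently, that $x$ lies in the $2\lambda^{(n)}$-enlarged pre-orbit-of-zero set $\tilde E_{n,k}$ of Remark~\ref{lower bound2}, so that $\sum_{j=0}^{kq_n-1}\psi(R_\alpha^j(x)) \geq \frac{\log k}{4\lambda^{(n)}} = \frac14 a_{n,k}$; and then transfer this estimate from $x$ to $x'$ using $\|x-x'\|\le \lambda^{(n)}$. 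The transfer is where I must be a little careful: a direct Lipschitz comparison of $\psi$ fails near $0$, so I would argue instead that $\|x-x'\|\le\lambda^{(n)}$ forces the minimal-norm iterate of $x'$ in the relevant window to lie within $2\lambda^{(n)}$ of $0$, putting $x'$ itself into $\tilde E_{n,k}$, and then apply Remark~\ref{lower bound2} directly to $x'$.

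The one genuine subtlety — and the step I expect to be the main obstacle — is establishing that $x \notin D_n$ actually forces $x \in \tilde E_{n,k(i)}$ whenever one needs the $a_{n,k}$ bound. This is not literally automatic from $\psi(R_\alpha^i(x)) < \frac{2c_{n,k}}{u_n}$; rather, I believe the intended argument is that $I_{n,k} \supseteq I_n = (-\frac{\lambda^{(n)}}{2}, \frac{\lambda^{(n)}}{2})$ fails in general (since $c_{n,k}$ can be large), so one cannot conclude $x\notin E_{n,k}$ from $x\notin D_n$. Instead, the correct reading is that when $a_{n,k}$ is the dominant term we only need the cruder bound, and when it is not dominant $b_{n,k}$ suffices and requires no closeness to $0$ at all. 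So I would structure the proof as: \emph{Case 1:} $c_{n,k} = b_{n,k}$ — conclude directly via Corollary~\ref{kq orbit} applied to $x'$, giving $\psi(R_\alpha^i(x)) < \frac{2b_{n,k}}{u_n} \le \frac{4}{u_n}\sum_{j<kq_n}\psi(R_\alpha^j(x')) \le \frac{8}{u_n}\sum_{j<i}\psi(R_\alpha^j(x'))$. \emph{Case 2:} $c_{n,k} = a_{n,k} > b_{n,k}$ — here I would observe that $a_{n,k} > b_{n,k}$ combined with $\lambda^{(n-1)} > a_{n+1}\lambda^{(n)} \geq k\lambda^{(n)}$ (when $k \le a_{n+1}$) gives $\log k > k\log q_n \cdot \frac{\lambda^{(n)}}{\lambda^{(n-1)}}$, a strong constraint, and use it together with the structure of the pre-orbit of $0$ within the large tower (Section~\ref{subsecrenor}) to force $x$, hence $x'$, into $\tilde E_{n,k}$, then conclude as above with Remark~\ref{lower bound2}. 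I would write Case 2 carefully, since pinning down exactly why $x \notin D_n$ yields the needed return-near-$0$ is the crux; all the rest is routine bookkeeping with the inequalities of Lemma~\ref{properties}.
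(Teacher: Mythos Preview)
Your overall framework is right --- bound $\psi(R_\alpha^i(x))$ from above using $x\notin B_i$, and bound the Birkhoff sum of $x'$ from below via Corollary~\ref{kq orbit} and Remark~\ref{lower bound2} --- but your Case~2 has a genuine gap that cannot be closed the way you suggest. The inequality $a_{n,k} > b_{n,k}$ is a purely arithmetic relation between $\alpha$, $n$, and $k$; it does not involve $x$ at all, so no amount of combining it with the tower structure of Section~\ref{subsecrenor} can force $x$ (or $x'$) into $\tilde E_{n,k}$. Indeed $\la(\tilde E_{n,k}) \le 4kq_n\lambda^{(n)}$, which is typically small, while the hypothesis $x\notin D_n$ only pushes the iterate $R_\alpha^i(x)$ \emph{away} from $0$. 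There is simply no mechanism in your setup that produces a close return to $0$ among the first $kq_n$ iterates of $x'$, and without one Remark~\ref{lower bound2} is unavailable.

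The paper resolves this by splitting cases differently: not on which of $a_{n,k}, b_{n,k}$ realises the maximum, but on the \emph{size of the closest return}. Let $j_0 \in [q_n,i]$ minimise $\|R_\alpha^j(x)\|$ and set $y_0 = R_\alpha^{j_0}(x)$, $k_0 = k(j_0)$; since $\psi(R_\alpha^i(x))\le \psi(y_0)$ and the right-hand side is increasing in $i$, it suffices to prove the bound at $j_0$. If $\|y_0\| \le \lambda^{(n)}/2$, then the smallness of $\|y_0\|$ is itself the input to Remark~\ref{lower bound2}: together with $\|x-x'\|\le\lambda^{(n)}$ it places an iterate of $x'$ within $2\lambda^{(n)}$ of $0$, and both the $a_{n,k_0}$ and $b_{n,k_0}$ lower bounds for the sum go through. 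If instead $\|y_0\|>\lambda^{(n)}/2$, one does \emph{not} invoke Remark~\ref{lower bound2} at all; rather one compares $y_0$ with the earlier iterate $x_0 = R_\alpha^{j_1}(x)$ in the same column of the tower (namely $j_1 = q_n + (j_0 - k_0 q_n) \in [q_n,2q_n)$), obtains $\psi(y_0) < 2k_0\,\psi(x_0)$ from $\|x_0-y_0\|\le(k_0-1)\lambda^{(n)}$, bounds $\psi(x_0)$ via $x\notin B_{j_1}$ (here $k(j_1)=1$, so $c_{n,1}=b_{n,1}$ and only the $b$-bound enters), and finishes with Corollary~\ref{kq orbit} alone. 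The moral: the case distinction must be on the orbit of $x$, not on the Diophantine data.
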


In particular, we will apply this lemma for $x'=x$.

\begin{proof}[Proof of Lemma~\ref{LemConvDn2}]

Fix some $q_n \leq i < q_{n+1}$ (so that $n=n(i)$) and let $j_0$ be such that
\[\|R_\alpha^{j_0}(x)\| = \min_{q_n \leq j \leq i} \|R_\alpha^j(x)\|.\] Let $y_0 = R_\alpha^{j_0}(x)$. Note that the right hand side of (\ref{at time j_0}) is increasing in $i$. Hence it suffices to check that 
\begin{equation} \label{enough}
\psi\big(R_\alpha^{j_0}(x)\big) \le \frac{8}{u_n}  \sum_{j=0}^{j_0-1} \psi\big(R_\alpha^{j}(x)\big).
\end{equation}
We do that in two different cases.
For ease of notation, we  write $k_0 = k(j_0)$.

\textbf{Case (1):} $\| y_0 \| > \lambda^{(n)}/2$.\\
By definition of $k_0$ we have $j_0 = q_n k_0 + \ell$ for some $0 \leq \ell < q_n$. Let $j_1 = q_n+ \ell$ and denote by $x_0$ the point $R_\alpha^{j_1}(x)$. Then, using the assumption that $\|y_0\|> \lambda^{(n)}/2$, we have
\begin{align*}
\|x_0 \| & = \|y_0 - (k_0-1) q_n \alpha \| \\
& \leq \|y_0 \| + (k_0-1) \|q_n \alpha\| \\
& = \|y_0 \| + (k_0-1) \lambda^{(n)} \\
& < (2k_0-1) \|y_0\|  <2 k_0 \|y_0\| .
\end{align*}
Hence 
\[\psi(y_0) < 2k_0 \psi(x_0).\]
Since $x \notin D_n$, it follows that  $x_0 = R_\alpha^{j_0}(x) \notin I_{n,1}$ and therefore 
\[\psi(x_0) < 2\frac{\log q_n}{u_n \lambda^{(n-1)}}.\]
Thus by Corollary~\ref{kq orbit} we have
\[\psi(y_0) < 2k_0 \psi(x_0) < 4k_0\frac{\log q_n}{u_n \lambda^{(n-1)}} < \frac{8}{u_n} \sum_{j=0}^{k_0 q_n - 1} \psi\big(R_\alpha^j(x')\big) \leq \frac{8}{u_n} \sum_{j=0}^{j_0-1} \psi\big(R_\alpha^j(x')\big)\]
as required.

\textbf{Case (2):} $\|y_0 \| \leq \frac{\lambda^{(n)}}{2}$.\\
It follows from the hypothesis $x \notin D_n$ that
\begin{equation} \label{c estimate}
\psi(y_0) < 2\frac{c_{n,k_0}}{u_n}.
\end{equation}
To show (\ref{at time j_0}) we need two estimates.

First, from Corollary~\ref{kq orbit} we have
\begin{equation} \label{a estimate}
 \sum_{j=0}^{j_0-1} \psi\left(R_\alpha^{j}(x')\right) \geq \sum_{j=0}^{k_0 q_n-1} \psi\left(R_\alpha^{j}(x')\right) > \frac{k_0 \log q_n}{2 \lambda^{(n-1)}} = \frac{b_{n, k_0}}{2}.
\end{equation}

Second, from Remark~\ref{lower bound2} (following Lemma~\ref{lower bound}), that can be applied because $\|y_0\|\le \lambda^{(n)}/2$ and $\|x-x'\|\le\lambda^{(n)}$, we obtain the estimate
\begin{equation} \label{b estimate}
  \sum_{i=0}^{j_0-1} \psi(R_\alpha^i(x'))
  \geq \sum_{i=0}^{k_0 q_n-1} \psi(R_\alpha^i(x'))
  \geq \frac{\log k_0}{4\lambda^{(n)}}
  = \frac{a_{n, k_0}}{4}.
 \end{equation}
Putting (\ref{a estimate}) and (\ref{b estimate}) together and comparing with (\ref{c estimate}) 
\[\frac{8}{u_n} \sum_{j=0}^{j_0-1} \psi\big(R_\alpha^{j}(x')\big) > \frac{2 c_{n,k_0}}{u_n} 
 \ge \psi(y_0),\]
again proving that (\ref{enough}) must hold.
\end{proof}

\subsection{Proof of Theorems \ref{simple physical measure} and \ref{PropConv}}

We now turn to the proof of Theorems~\ref{simple physical measure} and \ref{PropConv}. In view of Lemma~\ref{CritPhys} and Lemma~\ref{LemConvDn2}, it suffices to prove that $\la$-almost every point $x \in \T$ belongs to $D_n$ for at most finitely many $n$. By virtue of the Borel-Cantelli lemma, this is the case whenever $\la(D_n)$ is summable. In other words, Theorems~\ref{simple physical measure} and \ref{PropConv} follow, respectively, from the following two lemmas.

\begin{lemma} \label{alternative summability}
Suppose that
\begin{equation} \label{hyp on a_n}
\sum_{n} \frac{1}{\log a_{n}} < \infty.
\end{equation}
 Then it is possible to choose  $u_n$ in \eqref{def of I}  so that 
\[\sum_{n \geq 2} \la(D_n) < \infty\]
\end{lemma}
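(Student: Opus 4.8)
The plan is to prove the inequality $\sum_{n\ge 2}\la(D_n)<\infty$ for a suitable choice of $u_n$; by the discussion preceding the statement (via Lemmas~\ref{CritPhys} and \ref{LemConvDn2} and Borel--Cantelli) this is exactly what is needed. I would first establish a purely geometric bound on $\la(D_n)$ that is independent of the choice of $u_n$, and only at the very end select $u_n$.

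\emph{Geometric bound.} Since $D_n=\bigcup_{i=q_n}^{q_{n+1}-1}B_i$ with $B_i=R_\alpha^{-i}(I_{n,k(i)})$ and $|I_{n,k}|=u_n/c_{n,k}$, we have $\la(D_n)\le u_n\sum_{i=q_n}^{q_{n+1}-1}c_{n,k(i)}^{-1}$. As $k(i)=\lfloor i/q_n\rfloor$, each value $k\in\{1,\dots,a_{n+1}\}$ is attained for at most $q_n$ indices $i$ (using $q_{n+1}=a_{n+1}q_n+q_{n-1}$ from \eqref{EqTotTime}), so $\la(D_n)\le u_nq_n\sum_{k=1}^{a_{n+1}}c_{n,k}^{-1}$. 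For $k=1$ one has $c_{n,1}=b_{n,1}=\log q_n/\lambda^{(n-1)}$, and $q_n\lambda^{(n-1)}<1$ by \eqref{EqLambdaQ}, so this term contributes at most $(\log q_n)^{-1}$. For $k\ge 2$ both $a_{n,k},b_{n,k}>0$ and $c_{n,k}=\max(a_{n,k},b_{n,k})\ge\sqrt{a_{n,k}b_{n,k}}=\sqrt{k\log k\,\log q_n/(\lambda^{(n)}\lambda^{(n-1)})}$, whence
\[q_n\sum_{k=2}^{a_{n+1}}c_{n,k}^{-1}\le\frac{q_n\sqrt{\lambda^{(n)}\lambda^{(n-1)}}}{\sqrt{\log q_n}}\sum_{k=2}^{a_{n+1}}\frac{1}{\sqrt{k\log k}}.\]
By \eqref{EqLambdaQ}--\eqref{EqTotTime}, $\lambda^{(n)}<1/q_{n+1}<1/(a_{n+1}q_n)$ and $\lambda^{(n-1)}<1/q_n$, so $q_n\sqrt{\lambda^{(n)}\lambda^{(n-1)}}<a_{n+1}^{-1/2}$; and comparing with $\int^N(x\log x)^{-1/2}dx$ (whose antiderivative is comparable to $2\sqrt N/\sqrt{\log N}$) gives $\sum_{k=2}^{N}(k\log k)^{-1/2}\le C\sqrt N/\sqrt{\log N}$ for $N\ge 3$. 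Hence, once $n$ is large enough that $a_{n+1}\ge 3$,
\[\la(D_n)\le u_n\Bigl(\frac{1}{\log q_n}+\frac{C}{\sqrt{\log q_n}\,\sqrt{\log a_{n+1}}}\Bigr).\]

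\emph{Using the hypothesis.} Write $d_j=\log a_j$; the hypothesis $\sum_j d_j^{-1}<\infty$ presumes (and forces) $a_j\ge 2$, so $d_j\ge\log 2>0$ for all $j\ge 1$. Put $S_n=\sum_{j=1}^nd_j$ and $C=\sum_{j\ge 1}d_j^{-1}<\infty$. From $q_n=a_nq_{n-1}+q_{n-2}\ge a_nq_{n-1}$ and $q_1=a_1$ one gets $q_n\ge\prod_{j=1}^na_j$, so $\log q_n\ge S_n$; and by Cauchy--Schwarz $n^2=\bigl(\sum_{j=1}^nd_j^{1/2}d_j^{-1/2}\bigr)^2\le S_n\sum_{j=1}^nd_j^{-1}\le CS_n$, i.e.\ $S_n\ge n^2/C$. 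Therefore $\sum_n(\log q_n)^{-1}\le\sum_nS_n^{-1}\le C\sum_nn^{-2}<\infty$, and once more by Cauchy--Schwarz $\sum_n(\log q_n\log a_{n+1})^{-1/2}\le\bigl(\sum_nS_n^{-1}\bigr)^{1/2}\bigl(\sum_nd_{n+1}^{-1}\bigr)^{1/2}<\infty$. Consequently the sequence $W_n:=(\log q_n)^{-1}+C(\log q_n\log a_{n+1})^{-1/2}$ (defined and positive for all $n\ge 2$) is summable.

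\emph{Choosing $u_n$.} Set $r_n=\sum_{j\ge n}W_j>0$, which decreases to $0$, and let $u_n=r_n^{-1/2}$ for $n\ge 2$ (and $u_n=u_2$ for $n\le 1$); this is an increasing sequence tending to $\infty$. Then $u_nW_n=(r_n-r_{n+1})/\sqrt{r_n}\le 2(\sqrt{r_n}-\sqrt{r_{n+1}})$, so $\sum_{n\ge 2}u_nW_n\le 2\sqrt{r_2}<\infty$. Since $\la(D_n)\le u_nW_n$ for all large $n$ and $\la(D_n)<\infty$ for the remaining finitely many $n$, we conclude $\sum_{n\ge 2}\la(D_n)<\infty$. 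The only genuine use of the hypothesis is the quadratic lower bound $\log q_n\gtrsim n^2$; together with the fact that one must keep \emph{both} arguments of the $\max$ in $c_{n,k}$ (a bound using only $b_{n,k}$ is too weak, as it would require controlling $\sum_n u_n\log a_{n+1}/\log q_n$, which can fail), this is what I expect to be the crux, and the part needing the most care is the $\sqrt{a_{n,k}b_{n,k}}$ estimate combined with the $(k\log k)^{-1/2}$ summation.
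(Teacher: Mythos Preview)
Your proof is correct, but it takes a more elaborate route than the paper's. The paper does not use the geometric-mean bound $c_{n,k}\ge\sqrt{a_{n,k}b_{n,k}}$: for $k\ge 2$ it simply uses $c_{n,k}\ge a_{n,k}=\log k/\lambda^{(n)}$, so that
\[
q_n\sum_{k=2}^{a_{n+1}}c_{n,k}^{-1}\le q_n\lambda^{(n)}\sum_{k=2}^{a_{n+1}}\frac{1}{\log k}\le \frac{C\,a_{n+1}\lambda^{(n)}q_n}{\log a_{n+1}}\le\frac{C}{\log a_{n+1}},
\]
using $\sum_{k=2}^{a}1/\log k\le Ca/\log a$ (Lemma~\ref{LemSerHarmo}) and $a_{n+1}\lambda^{(n)}<\lambda^{(n-1)}$, $q_n\lambda^{(n-1)}<1$. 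Together with the $k=1$ term bounded by $1/\log q_n\le 1/\log a_n$, this gives $\la(D_n)\le u_n\bigl(1/\log a_n+C/\log a_{n+1}\bigr)$, and it then suffices to pick $u_n\to\infty$ slowly enough that $\sum_n u_n/\log a_n<\infty$. So your remark that ``one must keep both arguments of the $\max$'' is not quite right: using $b_{n,k}$ alone is indeed too weak (as you observe), but $a_{n,k}$ alone suffices and avoids the $(k\log k)^{-1/2}$ summation, the Cauchy--Schwarz steps, and the quadratic lower bound $\log q_n\gtrsim n^2$. That said, your argument does yield the pleasant auxiliary fact $\log q_n\ge n^2/C$ under the hypothesis, and your geometric-mean estimate would be the natural tool in settings where neither $a_{n,k}$ nor $b_{n,k}$ dominates on its own.
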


\begin{lemma}\label{summability under rapid growth}
Suppose that $\alpha$ is such that $q_n> C \exp(n^{2+\gamma})$ for some $C, \gamma>0$ and every $n$. Then, taking $u_n = n^{\gamma/4}$ in \eqref{def of I}, we have
\[\sum_{n\geq 2} \la(D_n) < \infty.\]
\end{lemma}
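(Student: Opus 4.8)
The plan is to bound $\la(D_n)$ by a crude union bound over the translated intervals that compose it, and then to replace the maximum $c_{n,k}=\max(a_{n,k},b_{n,k})$ appearing in the length of $I_{n,k}$ by the geometric mean $\sqrt{a_{n,k}b_{n,k}}$. This single device makes the dichotomy ``$a_{n+1}$ large / $a_{n+1}$ small'' from the heuristic discussion above unnecessary: the geometric mean interpolates between the two regimes, and after summing over $k$ all the Diophantine quantities collapse thanks to Lemma~\ref{properties}.

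First I would record the combinatorics of $D_n$. For $q_n\le i<q_{n+1}$ one has $n(i)=n$ and $k(i)=\lfloor i/q_n\rfloor\in\{1,\dots,a_{n+1}\}$, and at most $q_n$ indices $i$ in this range give each value of $k$ (exactly $q_n$ for $k<a_{n+1}$, and $q_{n-1}\le q_n$ for $k=a_{n+1}$). Since $\la(B_i)=\la(I_{n,k(i)})=u_n/c_{n,k(i)}$, a union bound gives
\[ \la(D_n)\ \le\ \sum_{i=q_n}^{q_{n+1}-1}\frac{u_n}{c_{n,k(i)}}\ \le\ q_n u_n\sum_{k=1}^{a_{n+1}}\frac1{c_{n,k}}. \]
Next, since $a_{n,k},b_{n,k}\ge0$ we have $c_{n,k}\ge\sqrt{a_{n,k}b_{n,k}}$, hence for $k\ge2$
\[ \frac1{c_{n,k}}\ \le\ \sqrt{\frac{\lambda^{(n)}\lambda^{(n-1)}}{k\log k\,\log q_n}}, \]
while $1/c_{n,1}=\lambda^{(n-1)}/\log q_n$. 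Using the elementary bound $\sum_{k=2}^{a}(k\log k)^{-1/2}\le\tfrac{2}{\sqrt{\log2}}\sqrt a$ (essentially Lemma~\ref{LemSerHarmo}), this sums to
\[ \sum_{k=1}^{a_{n+1}}\frac1{c_{n,k}}\ \le\ \frac{\lambda^{(n-1)}}{\log q_n}\ +\ \frac{2}{\sqrt{\log2}}\sqrt{\frac{\lambda^{(n)}\lambda^{(n-1)}a_{n+1}}{\log q_n}}. \]

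Then I would clear the Diophantine factors using Lemma~\ref{properties}: from $\lambda^{(n-1)}<1/q_n$ we get $q_n\lambda^{(n-1)}<1$, and from $q_na_{n+1}\le q_{n+1}$ (Equation~\eqref{EqTotTime}) together with $\lambda^{(n)}<1/q_{n+1}$ we get
\[ q_n^{2}\,\lambda^{(n)}\lambda^{(n-1)}a_{n+1}\ \le\ \big(q_{n+1}\lambda^{(n)}\big)\big(q_n\lambda^{(n-1)}\big)\ <\ 1. \]
Multiplying the previous bound by $q_n u_n$ therefore gives $\la(D_n)\le C' u_n/\sqrt{\log q_n}$ for an absolute constant $C'$ and all large $n$ (those with $\log q_n\ge 1$). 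Finally, $q_n>C\exp(n^{2+\gamma})$ yields $\log q_n\gtrsim n^{2+\gamma}$, so with $u_n=n^{\gamma/4}$ one gets $\la(D_n)\lesssim n^{\gamma/4-1-\gamma/2}=n^{-1-\gamma/4}$, which is summable since $\gamma>0$; that is the claim.

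The only step requiring a little thought is the passage to the geometric mean: one must anticipate that after the sum over $k$ the surviving factor is $\sqrt{q_n^{2}\lambda^{(n)}\lambda^{(n-1)}a_{n+1}}$, and that Lemma~\ref{properties} forces this to be bounded — which is exactly what makes replacing $\max$ by $\sqrt{\,\cdot\,}$ harmless. An alternative, matching the heuristic in the text, would be to split according to whether $a_{n+1}\ge q_n$ (use $c_{n,k}\ge a_{n,k}$ and $\sum_k 1/\log k$) or $a_{n+1}<q_n$ (use $c_{n,k}\ge b_{n,k}$ and $\sum_k 1/k$), but the geometric-mean route is shorter and handles both regimes uniformly. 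The same scheme, with the sharper estimate $\sum_{k=2}^a(k\log k)^{-1/2}\lesssim\sqrt{a/\log a}$ and a suitable choice of $u_n$, is what I would use for Lemma~\ref{alternative summability}.
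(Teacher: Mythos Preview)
Your proof is correct and takes a genuinely different route from the paper's. The paper bounds $P_n=\sum_{k=2}^{a_{n+1}}q_n\la(I_{n,k})$ in two separate ways---once via $c_{n,k}\ge a_{n,k}$ (giving $P_n\le Cu_n/\log a_{n+1}$) and once via $c_{n,k}\ge b_{n,k}$ (giving $P_n\le u_n\log a_{n+1}/\log q_n$)---and then splits $\{n\ge2\}$ into the sets $\cA=\{a_{n+1}>\exp(n^{1+\gamma/2})\}$ and $\cB=\{a_{n+1}\le\exp(n^{1+\gamma/2})\}$, applying whichever bound is favourable on each set. Your geometric-mean trick $c_{n,k}\ge\sqrt{a_{n,k}b_{n,k}}$ collapses this dichotomy into a single inequality $\la(D_n)\le C'u_n/\sqrt{\log q_n}$, which is shorter and eliminates the auxiliary threshold $\exp(n^{1+\gamma/2})$ entirely. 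The paper's approach has the minor advantage of making the two regimes from the heuristic discussion explicit, but yours is more economical and gives the same final rate $n^{-1-\gamma/4}$. One small quibble: the bound $\sum_{k=2}^a(k\log k)^{-1/2}\le\tfrac{2}{\sqrt{\log2}}\sqrt a$ is not literally in Lemma~\ref{LemSerHarmo} (which only handles $\sum1/k$ and $\sum1/\log k$), but it is a one-line consequence of $\log k\ge\log2$ and $\sum_{k\le a}k^{-1/2}\le2\sqrt a$, so this is harmless.
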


\begin{proof}[Proof of Lemma~\ref{alternative summability}]
Let $u_n$ be an increasing sequence of positive numbers tending to infinity slowly enough so that
\begin{equation} \label{summability after multiplication}
\sum_{n} \frac{u_n}{\log a_{n}} < \infty
\end{equation}
and let $D_n$ be defined accordingly as in \eqref{def of I}. We decompose the sum as 
\begin{equation} \label{decomposition2}
\sum_{n \geq 2} \la(D_n) \leq \sum_{n \geq 2} q_n \la(I_{n,1}) + \sum_{n \geq 2} \sum_{k=2}^{a_{n+1}} \la(I_{n,k}) q_n.
\end{equation}

Recall from Lemma~\ref{properties} that $\lambda^{(n-1)} q_n < 1$. Hence 
\[\sum_{n \geq 2} q_n\la (I_{n,1}) \leq \sum_{k \geq 2} \frac{\lambda^{(n-1)} q_n u_n}{\log q_n} \leq \sum_{k \geq 2} \frac{u_n}{ \log q_n} < \sum_{k \geq 2} \frac{u_n}{ \log a_n}< \infty.\]
We turn to the second term in (\ref{decomposition2}). Let 
\[P_n = \sum_{k=2}^{a_{n+1}}  \la(I_{n,k}) q_n .\]
Using Lemmas~\ref{properties} and \ref{LemSerHarmo} we have
\[P_n  \leq \sum_{k = 2}^{a_{n+1}} \frac{q_n u_n}{ c_{n,k}}  \leq \sum_{k=2}^{a_{n+1}} \frac{q_n u_n}{ a_{n,k}} = \sum_{k=2}^{a_{n+1}} \frac{\lambda^{(n)} u_n}{ \log k}
\leq \frac{C a_{n+1} \lambda^{(n)} q_n u_n}{ \log(a_{n+1})}
\leq \frac{C u_n}{ \log{a_{n+1}}} \leq \frac{C u_{n+1}}{a_{n+1}}. \]

Hence by \eqref{summability after multiplication} we conclude that
\[ \sum_{n \geq 2} P_n < \infty.\]

\end{proof}

\begin{proof}[Proof of Lemma~\ref{summability under rapid growth}]

Just as in the proof of Lemma~\ref{alternative summability} we decompose the sum as 
\begin{equation} \label{decomposition}
\sum_{n \geq 2} \la(D_n) \leq \sum_{n \geq 2} \la(I_{n,1}) + \sum_{n \geq 2} \sum_{k=2}^{a_{n+1}} \la(I_{n,k}) q_n.
\end{equation}

Recall from Lemma~\ref{properties} that $\lambda^{(n-1)} q_n < 1$. Hence 
\[\sum_{n \geq 2} \la (I_{n,1}) \leq \sum_{k \geq 2} \frac{\lambda^{(n-1)} q_n n^{\gamma/4}}{\log q_n} \leq \sum_{k \geq 2} \frac{n^{\gamma/4}}{ \log q_n} < \sum_{n \geq 2} \frac{n^{\gamma/4}}{n^{2+\gamma}} < \infty.\]
We turn to the second term in (\ref{decomposition}). Let 
\[P_n = \sum_{k=2}^{a_{n+1}}  \la(I_{n,k}) q_n .\]
We can bound $P_n$ from above in two ways. On the one hand, using Lemmas~\ref{properties} and \ref{LemSerHarmo} we have
\[P_n  \leq \sum_{k = 2}^{a_{n+1}} \frac{q_n n^{\gamma/4}}{ c_{n,k}}  \leq \sum_{k=2}^{a_{n+1}} \frac{q_n n^{\gamma/4}}{ a_{n,k}} = \sum_{k=2}^{a_{n+1}} \frac{\lambda^{(n)} n^{\gamma/4}}{ \log k}
\leq \frac{C a_{n+1} \lambda^{(n)} q_n n^{\gamma/4} }{ \log(a_{n+1})}
\leq \frac{C n^{\gamma/4}}{ \log{a_{n+1}}}. \]

On the other hand 
\[P_n  \leq \sum_{k = 2}^{a_{n+1}} \frac{q_n n^{\gamma/4}}{ c_{n,k}}  \leq \sum_{k=2}^{a_{n+1}} \frac{ q_n n^{\gamma/4}}{ b_{n,k}} \leq \sum_{k=2}^{a_{n+1}} \frac{ q_n \lambda^{(n-1)} n^{\gamma/4} }{ k \log(q_n)} \leq \frac{\log a_{n+1}  n^{\gamma/4}}{ \log(q_n)}.\]

Let 
\[\cA = \{n \geq 2: a_{n+1} > \exp(n^{1+\gamma/2}) \}\]
and
\[\cB = \{n \geq 2: a_{n+1} \leq \exp(n^{1+\gamma/2}) \}.\]
If $n \in \cA$ then
\[P_n \leq \frac{C\, n^{\gamma/4}}{ \log a_{n+1}} \leq \frac{C\,n^{\gamma/4}}{n^{1+\gamma/2}} = \frac{C}{ n^{1+\gamma/4}}.\]
Hence
\[\sum_{n \in \cA} P_n < \infty.\]
If $n \in \cB$ then
\[P_n \leq \frac{ \log a_{n+1} n^{\gamma/4} }{ \log q_n} \leq \frac{n^{1+\gamma/2} n^{\gamma/4}}{n^{2+\gamma}} = \frac{1}{n^{1+\gamma/4}}.\]
Hence
\[\sum_{n \in \cB} P_n < \infty.\]

We conclude the proof by noting that 
\[\sum_{n \geq 2} \sum_{k=2}^{a_{n+1}} \la (I_{n,k})  = \sum_{n \geq 2} P_n  = \sum_{n \in \cA} P_n + \sum_{n \in \cB} P_n < \infty.\]
\end{proof}

\section{Physical measures for stopping points on different orbits}\label{SecDiff}

In this last section, we use the arguments of the proof of Theorem~\ref{PropConv} to get the existence of flows satisfying (SH) with stopping points $\p$ and $\q$ on different orbits and with a physical measure.

\begin{theorem}\label{physmeas different orbits}
Let $\alpha=[a_0; a_1, a_2, \ldots]$ be such that one of the following conditions holds:
\begin{itemize}
\item $\sum_n \frac{1}{\log a_n} < \infty$;
\item there exist $C>0$ and $\gamma>0$ such that $q_n \ge C \exp(n^{2+\gamma})$.
\end{itemize}
Let
\begin{equation}\label{EqDefBeta0}
\beta_0 = \sum_{n\geq 0 } \rho_n.
\end{equation}

If $\phi^t$ is a reparametrized linear flow with angle $\alpha$ and stopping points at $(0,0)$ and $(0, -\beta_0)$ satisfying (SH), then $\phi^t$ has a physical measure.
\end{theorem}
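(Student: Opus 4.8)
The plan is to follow closely the proof of Theorem~\ref{PropConv}. First, by Proposition~\ref{criterium1} together with Proposition~\ref{PropPossibOmega}, and since the standing hypotheses, the conclusion, and $\la$ are all invariant under translations of $\T^2$ (a translation moves the two stopping points simultaneously and preserves the Hessian determinants), it suffices — after translating $\{(0,0),(0,-\beta_0)\}$ to $\{(0,0),(0,\beta_0)\}$ and using that $\Theta_n^{-\beta_0}(x)=1/\Theta_n^{\beta_0}(x+\beta_0)$ — to prove that $\Theta_n^{\beta_0}(x)=S_n(x)/S_n(x-\beta_0)\longrightarrow 1$ for $\la$-a.e.\ $x\in\T$, where $\beta_0=\sum_{n\ge 0}\rho_n$. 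The point of the choice \eqref{EqDefBeta0} is the following arithmetic fact: writing $Q_N=\sum_{i=0}^N q_i$ and $\beta_0^{(N)}=\sum_{i=0}^N\rho_i=Q_N\alpha-\sum_{i=0}^N p_i$, one has $\beta_0^{(N)}\equiv R_\alpha^{Q_N}(0)\bmod 1$, and since the $\rho_i$ alternate in sign with $|\rho_i|=\lambda^{(i)}$ decreasing,
\[
\bigl\|\beta_0-\beta_0^{(N)}\bigr\|=\Bigl|\sum_{i>N}\rho_i\Bigr|<\lambda^{(N+1)},\qquad q_N\le Q_N<3q_N
\]
(the last from the elementary bound $\sum_{i\le k}q_i<q_{k+2}$). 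Thus $\beta_0$ is \emph{simultaneously} $\lambda^{(N+1)}$-close to the orbit point $R_\alpha^{Q_N}(0)$ for every $N$; morally it is a ``limit of forward shifts'' of the stopping point $(0,0)$, which is exactly why one expects a physical measure rather than historic behaviour.

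Next I would reuse the bad sets $D_n$ of \eqref{EqDefDn}, with $u_n$ chosen as in Lemma~\ref{alternative summability} when $\sum_n 1/\log a_n<\infty$ and as in Lemma~\ref{summability under rapid growth} when $q_n\ge C\exp(n^{2+\gamma})$, so that $\sum_n\la(D_n)<\infty$. To accommodate the second stopping point I would enlarge these by the translate $D_n+\beta_0$ (so that $x-\beta_0$ is also good) and, over the $j$'s that are relevant at scale $n$ (a controlled, at most polynomial in $n$, family), by suitable shifts $R_\alpha^{Q_j}(D_n)$ recording close approaches of the orbit of $x$ to $\beta_0^{(j)}$; the enlarged family still has summable $\la$-measure. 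By Borel--Cantelli, $\la$-a.e.\ $x$ eventually avoids all of them; fix such a \emph{good} $x$, moreover outside the $R_\alpha$-orbits of $0$ and of $\pm\beta_0$, and with the evident backward analogue of goodness (obtained from the forward one by $x\mapsto -x$, $\alpha\mapsto-\alpha$, which preserves $\la$). As extracted from Lemmas~\ref{LemConvDn2} and \ref{EqSellFinal} in the proof of Theorem~\ref{PropConv}, a good $x$ then satisfies $\psi(R_\alpha^j x)=o\bigl(S_j(x)\bigr)$, the two-sided estimate $c_1 n\le S_n(x)\le c_2 n\log(3n)$ for $n$ large, the same bounds along the backward orbit, the flexibility ``$\|x-x'\|\le\lambda^{(n)}$'' of Lemma~\ref{LemConvDn2}, and likewise for $x-\beta_0$.

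For the main estimate, fix $q_M\le n<q_{M+1}$ and a truncation level $N=N(M)\to\infty$. Writing $\psi(R_\alpha^kx)-\psi(R_\alpha^kx-\beta_0)=\bigl[\psi(R_\alpha^kx)-\psi(R_\alpha^{k-Q_N}x)\bigr]+\bigl[\psi(R_\alpha^{k-Q_N}x)-\psi(R_\alpha^k(x-\beta_0))\bigr]$ and summing over $0\le k<n$, the first bracket telescopes (shift by $Q_N$, legitimate once $Q_N\le n$), yielding
\[
S_n(x)-S_n(x-\beta_0)=\sum_{k=n-Q_N}^{n-1}\psi(R_\alpha^kx)\;-\;\sum_{k=-Q_N}^{-1}\psi(R_\alpha^kx)\;+\;\sum_{k=0}^{n-1}\Bigl(\psi(R_\alpha^{k-Q_N}x)-\psi(R_\alpha^k(x-\beta_0))\Bigr).
\]
The first two sums are forward Birkhoff sums of $\psi$ of length $Q_N\asymp q_N$ over the orbit of $x$, resp.\ its backward orbit; by Lemma~\ref{EqSellFinal} and the good-point estimates they are $O(q_N\log q_N)+o(S_n(x))=o(S_n(x))$ as soon as $q_N=o(q_M)$. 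In the last sum the two arguments differ by $\epsilon_N:=\beta_0^{(N)}-\beta_0$ with $\|\epsilon_N\|<\lambda^{(N+1)}$; splitting according to whether $R_\alpha^{k-Q_N}x$ lies at distance $\ge\delta$ from $0$ (a fixed small constant) or not, the ``far'' part is bounded via $|\psi(a)-\psi(b)|\le|a-b|/\delta^2$ (and the refinement $|\psi(a)-\psi(b)|\le 4|a-b|\,\psi(a)^2$ for $\|a\|\ge 2\|\epsilon_N\|$) by $n\lambda^{(N+1)}/\delta^2\le q_{M+1}/(q_{N+2}\delta^2)$ up to lower-order terms, while the ``near'' part involves only a number of indices controlled through the three-gap structure of $R_\alpha$, on which each $\psi$-value is dominated by $\tfrac{8}{u}S(\cdot)$ because $x$, $x-\beta_0$ and the orbit point $\beta_0^{(N)}$ are good.

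The crux — and the step where the quantitative hypotheses on $\alpha$ re-enter, via the same dichotomy ``$a_{M+1}$ large'' versus ``$a_{M+1}$ moderate'' as in the proof of Theorem~\ref{PropConv} — is to make a \emph{single} choice $N=N(M)$ for which all three pieces are \emph{simultaneously} $o(S_n(x))$ \emph{uniformly over} $q_M\le n<q_{M+1}$: $q_N$ must be small enough (for the boundary sums), yet $q_{N+2}$, suitably boosted by $u_{N+2}$, must be large enough to absorb both the $\lambda^{(N+1)}$-Lipschitz error and the near-singularity terms of the third sum against $S_n(x)\gtrsim k(n)\,q_M\log q_M$. This balancing is possible under either hypothesis — essentially because $\log q_{M+1}/\log q_M\to1$ and $q_n$ grows fast enough that a few scales below $M$ are already polynomially comparable to $q_M$ — but carrying it out, together with the reorganisation of the sets $D_n$ around $\beta_0$ and of the index bookkeeping, is where I expect the real work (and the main obstacle) to lie; once it is done, Lemma~\ref{CritPhys}'s argument (here in the form $\Theta_n^{\beta_0}(x)=S_n(x)/(S_n(x)-o(S_n(x)))\to1$) concludes.
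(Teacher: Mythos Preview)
Your outline is essentially the paper's approach: write $\beta_0=\ell_{n-1}\alpha+\beta_n$ with $\ell_{n-1}=\sum_{i<n}q_i$ and $\|\beta_n\|<\lambda^{(n)}$, split $S_i(x)-S_i(x+\beta_0)$ into a telescoping ``same-orbit'' piece (your boundary sums) and a small-perturbation piece (your third sum), and control both using an enlarged family of bad sets. So the strategy is right, and you have correctly identified where the difficulty lies. Two concrete points, however, are not yet in place and are exactly what the paper has to add beyond Theorem~\ref{PropConv}.

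First, the telescoping piece. Your two boundary sums have length $Q_N\asymp q_N$, and their ``close-return'' terms must be absorbed by $o(S_i)$ via the good-point estimate $\psi(R_\alpha^j x)\le \tfrac{8}{u_n}S_j(x)$. But when $i$ is barely larger than $q_N$ this gives at best $O(1/u_N)\cdot S_i$, not $o(S_i)$. The paper fixes this by inserting an auxiliary increasing sequence $v_n\to\infty$ with $v_{n+1}/u_n\to 0$ and $\sum_n v_n\la(D_n)<\infty$: one redefines the scale so that $i$ ranges over $[v_nq_n,\,v_{n+1}q_{n+1})$ (rather than $[q_n,q_{n+1})$), takes $\ell=\ell_{n-1}$, and then the boundary sums are $\le \tfrac{64}{v_n}S_i+\tfrac{48}{u_n}S_i$ after using Lemma~\ref{EqSellFinal} and the good-set estimate. (Incidentally, if you telescope with $+\beta_0$ rather than $-\beta_0$ the boundary sums become \emph{forward} Birkhoff sums $S_\ell(x)$ and $S_\ell(R_\alpha^i x)$, so no backward-orbit goodness is needed.)

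Second, and more seriously, the perturbation piece cannot be handled by a global (or even pointwise) Lipschitz split ``far/near'' around $0$ with a fixed $\delta$: the resulting bound $n\lambda^{(N+1)}/\delta^2\lesssim q_{M+1}/q_{N+2}$ is generally \emph{not} $o(S_i)$ uniformly over $q_M\le i<q_{M+1}$ under either hypothesis (think of $i$ close to $q_M$ with $a_{M+1}$ enormous), and the refinement $|\psi(a)-\psi(b)|\le 4|a-b|\psi(a)^2$ forces you to control $\sum\psi^2$, which is dominated by the very closest returns and brings back the same obstruction. What the paper does instead is sectorwise: apply Lemma~\ref{LemFinalMartin} on each block of length $q_n$, which isolates the (at most two) closest returns per block and gives a sharp remainder $\lambda^{(n)}q_n/\lambda^{(n-1)}$ on the rest; then the isolated close returns (the ``ground-floor'' points of the renormalization tower) are summed explicitly, with two cases according to whether $k=\lfloor i/q_n\rfloor$ is $\ge a_{n+1}$ or $<a_{n+1}$, and with a separate argument for $\psi_2$. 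This is the content of the paper's Lemma~\ref{LemSecondTerm9}, and it is precisely the ``real work'' you anticipated; your plan would become a proof once you replace the Lipschitz split by this sectorwise analysis and introduce the $v_n$ buffer.
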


%

Remark that for a fixed $\p$, the set of $\q$ satisfying the conclusion of this theorem is dense (simply by the fact that the positive orbit under the linear flow of such a point $\q$ is dense).

We start by proving that the number $\beta_0$ is not in the $R_\alpha$-orbit of 0 whenever $a_n$ is not eventually constant equal to 1 (Lemma~\ref{beta0notorbit}). Hence, Theorem~\ref{physmeas different orbits} gives the existence of reparametrized linear flows with two stopping points which are not on the same orbit and with a unique physical measure; in particular, using Lemma~\ref{lemHaus}, its conclusion holds on a set of full Hausdorff dimension (as the set of points failing to satisfy Lemma~\ref{beta0notorbit} is at most countable).

\subsection{The number $\beta_0$\label{SubSecBeta0}}

Let $\beta_0$ as in \eqref{EqDefBeta0}. Note that $\beta_0$ can be written as 
\[\beta_0 =  \sum_{k=0}^{n-1} (q_k \alpha - p_k) + \sum_{k=n}^\infty \rho_k  = \ell_{n-1} \alpha + \beta_n \mod 1,\]
where
\[\ell_{n-1} = q_0  + \ldots + q_{n-1} \qquad \text{and} \qquad
\beta_n = \sum_{k=n}^\infty \rho_k.\]

\begin{lemma} \label{ell versus q}
The sequence $\ell_n$ satisfies 
\begin{enumerate}
\item $\ell_{n} < q_{n}+q_{n+1}$ for every $n\geq 0$, and

\item  $\ell_{n} < q_{n+1}$ for every $n \geq 1$ such that $a_{n+1} \geq 2$.
\end{enumerate}
\end{lemma}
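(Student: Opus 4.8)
The plan is to prove both items by a short induction on $n$, using nothing beyond the recurrence $q_{n+1} = a_{n+1}q_n + q_{n-1}$ from \eqref{EqTotTime}, the conventions $q_{-1}=0$, $q_0=1$, $q_1=a_1$, and the trivial bound $a_{n+1}\ge 1$ for all $n$ (so that $q_{n+1}\ge q_n+q_{n-1}$). It is convenient to phrase part (1) in terms of the telescoping quantity $r_n := q_n+q_{n+1}-\ell_n$: then $r_0 = q_1 \ge 1$, and
\[
r_n-r_{n-1} = q_{n+1}-q_{n-1}-q_n = (a_{n+1}-1)q_n \ \ge\ 0 ,
\]
so $r_n \ge r_0 \ge 1 > 0$ for all $n\ge 0$, which is exactly the assertion $\ell_n < q_n+q_{n+1}$. (Equivalently one runs the plain induction: the base case $n=0$ is $\ell_0 = 1 < 1+a_1 = q_0+q_1$, and the step is $\ell_n = \ell_{n-1}+q_n < q_{n-1}+2q_n \le q_n+q_{n+1}$ using the inductive hypothesis and $q_{n+1}\ge q_n+q_{n-1}$.)

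For part (2) I would deduce it from part (1) applied one level down. Fix $n\ge 1$ with $a_{n+1}\ge 2$; the hypothesis $n-1\ge 0$ is precisely what makes part (1) available at index $n-1$, which is why the statement is restricted to $n\ge 1$. From $\ell_{n-1} < q_{n-1}+q_n$ (integers, hence $\ell_{n-1}\le q_{n-1}+q_n-1$) we get
\[
\ell_n = \ell_{n-1}+q_n \ \le\ q_{n-1}+2q_n-1 \ <\ a_{n+1}q_n+q_{n-1} = q_{n+1},
\]
the last inequality being $2q_n-1 < a_{n+1}q_n$, which holds because $a_{n+1}\ge 2$.

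There is no genuine obstacle here: the argument is a one-line induction in each case. The only thing to be careful about is keeping the continued-fraction indexing straight — namely that $q_{-1}=0$, $q_0=1$, $q_1=a_1$ — so that the base cases and the index shift in part (2) are correctly aligned; once that is fixed, everything reduces to the two displayed inequalities above.
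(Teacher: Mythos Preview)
Your proof is correct and essentially the same as the paper's: both parts are handled by induction using only $q_{n+1}=a_{n+1}q_n+q_{n-1}$ and $a_{n+1}\ge 1$, with part (2) obtained from part (1) at level $n-1$ together with $a_{n+1}\ge 2$. Your telescoping reformulation $r_n=q_n+q_{n+1}-\ell_n$ is a nice repackaging of the same induction, and your integer step in part (2) is just a cosmetic variant of the paper's direct chain $\ell_n<q_{n-1}+2q_n\le q_{n+1}$.
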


\begin{proof}
We prove $(1)$ by induction. Clearly
\[\ell_0 = q_0 < q_0 + q_1,\]
so $(1)$ holds for $n=0$. Now suppose that $(1)$ hods for $n$. Then, using $q_{n-1} + q_n \leq q_{n+1}$, we obtain
\[\ell_{n+1} = \ell_{n}+q_{n+1} < (q_{n}+q_{n+1}) + q_{n+1} \leq q_{n+2}+q_{n+1}.\]
In other words, $(1)$ holds for $n+1$ and the proof follows by induction.

We now turn to the proof of $(2)$. Fix some $n \geq 1$ with $a_{n+1} \geq 2$. Then (see Lemma~\ref{properties}) $q_{n+1} \geq 2 q_n+ q_{n-1}$. We know from $(1)$  that $\ell_{n-1} < q_{n-1}+q_{n}$. Hence
\[\ell_{n} = \ell_{n-1}+ q_{n} < q_{n-1}+2 q_{n} \leq q_{n+1}.\]
\end{proof}

\begin{lemma} \label{alternating sum}
We have $\| \beta_n \| <  \lambda^{(n)}$.
\end{lemma}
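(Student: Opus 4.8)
The plan is to exploit the alternating-series structure of $\beta_n=\sum_{k\ge n}\rho_k$. First I would invoke \eqref{EqContFrac0}, which gives $\rho_k=(-1)^k\lambda^{(k)}$, so that
$\beta_n=(-1)^n\sum_{j\ge 0}(-1)^j\lambda^{(n+j)}$.
Next I would record two elementary monotonicity facts about the sequence $(\lambda^{(k)})_k$: it is \emph{strictly} decreasing, because \eqref{Eqaeta} gives $\lambda^{(k-1)}=a_{k+1}\lambda^{(k)}+\lambda^{(k+1)}\ge \lambda^{(k)}+\lambda^{(k+1)}>\lambda^{(k)}$ (using $a_{k+1}\ge 1$ and $\lambda^{(k+1)}>0$), and it tends to $0$, since $\lambda^{(k)}<1/q_{k+1}$ by \eqref{EqLambdaQ} while $q_k\to\infty$. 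Consequently $\sum_{j\ge 0}(-1)^j\lambda^{(n+j)}$ is a convergent alternating series whose terms strictly decrease to $0$, so the standard alternating-series estimate yields $0<\lambda^{(n)}-\lambda^{(n+1)}<\bigl|\sum_{j\ge 0}(-1)^j\lambda^{(n+j)}\bigr|<\lambda^{(n)}$; in particular $0<|\beta_n|<\lambda^{(n)}$ as a real number.

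To finish, I would pass from $\R$ to $\T$: since $|\beta_n|<\lambda^{(n)}\le\lambda^{(0)}<1$, the integer nearest to the real number $\beta_n$ is $0$, whence $\|\beta_n\|=|\beta_n|<\lambda^{(n)}$, which is exactly the assertion of the lemma.

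I do not expect a serious obstacle here. The only point that requires a little care is the \emph{strict} inequality in the alternating-series bound, which needs the terms $\lambda^{(n)},\lambda^{(n+1)},\dots$ to decrease strictly rather than weakly; this is guaranteed precisely by $\lambda^{(k+1)}>0$ for all $k$, i.e. by the irrationality of $\alpha$. (Alternatively, the same estimate can be read off from the renormalization picture of Figure~\ref{FigRenor0}, where $\beta_n$ corresponds to a nested telescoping of the intervals $\Delta^{(k)}$, but the direct computation above is shortest.)
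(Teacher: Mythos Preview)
Your argument is correct and is exactly what the paper has in mind: its proof consists of the three words ``Decreasing alternating series.'' One tiny remark on the last step: the implication ``$|\beta_n|<1\Rightarrow$ nearest integer is $0$'' is not quite right as stated, but this is moot since $\|\beta_n\|\le|\beta_n|$ holds trivially for any real $\beta_n$ (distance to $\Z$ is at most distance to $0$), so $\|\beta_n\|\le|\beta_n|<\lambda^{(n)}$ follows immediately.
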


\begin{proof}
Decreasing alternating series.
\end{proof}

\begin{lemma}\label{beta0notorbit}
Let $\alpha = [a_0; a_1, a_2, \ldots ]$ and suppose that there are infinitely many $n$ such that $a_{n} \geq 2$ (which is true if $\alpha\notin \Q[\sqrt 5]$). Then the point $\beta_0$ is not on the $R_\alpha$-orbit of zero. 
\end{lemma}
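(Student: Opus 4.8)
The plan is to argue by contradiction. Suppose $\beta_0$ lies on the $R_\alpha$-orbit of $0$, say $\beta_0 \equiv m\alpha \bmod 1$ for some $m \in \Z$. For every $n \geq 1$ we have, as recalled just before the lemma, $\beta_0 \equiv \ell_{n-1}\alpha + \beta_n \bmod 1$ with $\ell_{n-1} = q_0 + \cdots + q_{n-1}$ and $\beta_n = \sum_{k \geq n}\rho_k$. Subtracting gives $\beta_n \equiv (m-\ell_{n-1})\alpha \bmod 1$, so that in $\T$ one has $\|\beta_n\| = \|(m-\ell_{n-1})\alpha\|$. The strategy is then to combine a two-sided estimate of $\|\beta_n\|$ with the best-approximation property of the $q_n$'s, and finally to feed in the size estimates on $\ell_{n-1}$ coming from Lemma~\ref{ell versus q}.

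First I would pin down $\|\beta_n\|$. Lemma~\ref{alternating sum} gives $\|\beta_n\| \leq |\beta_n| < \lambda^{(n)}$. For the lower bound, note $\beta_n = (-1)^n\big(\lambda^{(n)} - \lambda^{(n+1)} + \lambda^{(n+2)} - \cdots\big)$ is (up to sign) an alternating series with strictly decreasing positive terms $\lambda^{(n)} > \lambda^{(n+1)} > \cdots \to 0$ (strict since $\alpha\notin\Q$), hence $\lambda^{(n)} - \lambda^{(n+1)} < |\beta_n| < \lambda^{(n)}$; in particular $0 < |\beta_n| < 1$, so $\beta_n \notin \Z$ and $\|\beta_n\| > 0$. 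Thus $0 < \|(m-\ell_{n-1})\alpha\| < \lambda^{(n)}$ for every $n \geq 1$. Since $\|j\alpha\| \geq \lambda^{(n)}$ for every integer $j$ with $0 < |j| < q_{n+1}$ (Lemma~\ref{properties}), this first forces $m - \ell_{n-1} \neq 0$ and then $|m - \ell_{n-1}| \geq q_{n+1}$ for every $n \geq 1$.

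Now I would bring in the size of $\ell_{n-1}$. By Lemma~\ref{ell versus q}(1), $\ell_{n-1} < q_{n-1}+q_n \leq q_{n+1}$, so $0 \le \ell_{n-1} < q_{n+1}$ for all $n \geq 1$; and for the infinitely many $n$ with $a_{n+1} \geq 2$ one moreover gets $q_{n+1} - \ell_{n-1} > q_{n+1} - q_{n-1} - q_n = (a_{n+1}-1)q_n \geq q_n$. Combining with $|m-\ell_{n-1}| \geq q_{n+1}$: if $m \geq \ell_{n-1}$ then $m \geq q_{n+1} + \ell_{n-1} \geq q_{n+1}$, while if $m < \ell_{n-1}$ then $m \leq \ell_{n-1} - q_{n+1} < -q_n$ (for such $n$). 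Hence, for infinitely many $n$, one has $m \geq q_{n+1}$ or $m \leq -q_n$. As $m$ is a fixed integer and $q_n \to \infty$, both alternatives fail for large $n$ (if $m \geq 0$, the second fails always and the first fails once $q_{n+1} > m$; if $m < 0$, the first fails always and the second fails once $q_n > -m$). This contradiction shows $\beta_0$ is not on the $R_\alpha$-orbit of $0$.

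Finally, for the parenthetical remark: if $a_n = 1$ for all $n \geq N$, then $\alpha_N = [1;1,1,\ldots] = \frac{1+\sqrt 5}{2}$, so $\alpha = [a_0;a_1,\ldots,a_{N-1},\alpha_N]$ is a rational Möbius image of $\frac{1+\sqrt5}{2}$ and hence $\alpha \in \Q[\sqrt 5]$; thus $\alpha \notin \Q[\sqrt 5]$ implies $a_n \geq 2$ for infinitely many $n$, which is exactly the hypothesis used. I do not expect a genuine obstacle here: the only points requiring care are the sign bookkeeping in the two-case split and using the strictness $\lambda^{(n)} > \lambda^{(n+1)}$ to rule out $\beta_n \equiv 0 \bmod 1$ (equivalently $m = \ell_{n-1}$).
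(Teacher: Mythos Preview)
Your proof is correct and follows essentially the same route as the paper: both compare $m\alpha$ with $\ell_{\,\cdot}\,\alpha$ via the estimate $\|\beta_{\,\cdot}\|<\lambda^{(\cdot)}$ from Lemma~\ref{alternating sum} and the best-approximation property of the $q_n$'s, together with the size bounds on $\ell_n$ from Lemma~\ref{ell versus q}. The only cosmetic differences are that the paper argues directly (for each integer $k$ it exhibits an $n$ with $\|k\alpha-\beta_0\|>0$) whereas you argue by contradiction, and you use the index shift $(\ell_{n-1},\beta_n)$ in place of the paper's $(\ell_n,\beta_{n+1})$; you also spell out the justification of the parenthetical remark about $\Q[\sqrt 5]$, which the paper leaves implicit.
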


\begin{proof}
We begin by showing that $\beta_0$ is not on the positive $R_\alpha$-orbit of $0$. To this end, fix some $k \geq 0$ and choose $n$ such that $q_{n+1}>k$ and $k\neq \ell_n$. From (1) of Lemma~\ref{ell versus q}, we have $\ell_n < q_{n+1}+q_n$. Hence $|\ell_n - k| < q_{n+2}$ so that by \eqref{EqContFrac0}, $\|(\ell_n-k) \alpha \| \geq  \lambda^{(n+1)}$. Moreover, from Lemma~\ref{alternating sum} we have   $\| \ell_n \alpha - \beta_0 \| = \| \beta_{n+1}\| < \lambda^{(n+1)}$. It follows that
\[\|k \alpha - \beta_0 \| \geq \| k \alpha - \ell_n \alpha \| - \| \ell_n \alpha - \beta_0 \| > \lambda^{(n+1)}-\lambda^{(n+1)}> 0.\]
This shows that $\beta_0 \neq k \alpha \mod 0$. Note that $k \geq 0$ was chosen arbitrarily. Hence $\beta_0$ is not on the positive orbit of zero under $R_\alpha$. 
\medskip

Fix some  integer $m<0$ and choose $n$ such that $q_{n} > -m$ and $a_{n+1}\ge 2$.
By (2) of Lemma~\ref{ell versus q}, we have $\ell_n < q_{n+1}$ and so $0<\ell_n-m < q_{n+1}+q_{n} \le q_{n+2}$. This implies that $\|(\ell_n-m)\alpha\|\ge \lambda^{(n+1)}$. Hence,
\begin{align*}
\| m \alpha - \beta_0 \| & \ge \| m \alpha  - \ell_n\alpha \| - \|\ell_n\alpha - \beta_0\| \\
& = \|(m- \ell_n ) \alpha \| - \|\beta_{n+1} \| \\
& > \lambda^{(n+1)} - \lambda^{(n+1)} = 0.
\end{align*}
Since $m<0$ has been taken arbitrarily this shows that $\beta_0$ is not on the negative $R_\alpha$ orbit of 0.
\end{proof}

\begin{remark}
The converse of Lemma \ref{beta0notorbit} is also true: if $\alpha$ is such that $a_n = 1$ for all but finitely many $n$, then $\beta_0$ is on the $R_\alpha$-orbit of $0$. To see why, fix $N$ such that $ a_n = 1$ for every $n \geq N+1$. Then (see Lemma~\ref{properties}) 
\[\rho_n= \rho_{n-2}-\rho_{n-1}\]
for every $n \geq N+1$. So for $n \geq N$ we can write
\begin{align*}
\sum_{k=0}^n \rho_k &= \sum_{k=0}^N \rho_k + \sum_{k=N+1}^n \rho_k \\
& = \sum_{k=0}^N \rho_k + \sum_{k=N+1}^n (\rho_{k-2}-\rho_{k-1}) \\
& = \sum_{k=0}^N \rho_k + \rho_{N-1} - \rho_{n-1}.
\end{align*}
Taking the limit $n \to \infty$ we get
\[\beta_0 = \sum_{k=0}^\infty \rho_n = \sum_{k=0}^N \rho_k + \rho_{N-1} = (\ell_N + q_{N-1}) \alpha \mod 1.\]
Hence $\beta_0$ is on the $R_\alpha$-orbit of $0$. 

\end{remark}

\subsection{Proof of Theorem~\ref{physmeas different orbits}}

Without loss of generality one can suppose that $p_0=0$ (where $(x_0,p_0)$ is the point of the section $\Sigma$ corresponding to $\p$, see Paragraph \ref{SecDefFlow}); hence $\beta=q_0$ corresponds to the projection of the point $\q$ on $\Sigma$.

Recall that in the previous section we have defined (in \eqref{EqDefDn}) the set $D_n$ of ``bad points''. We now define an alternative version of this set, using an alternative version of \eqref{EqDefNi}: let $u_n$ be as in Lemmas \ref{alternative summability} or \ref{summability under rapid growth} (depending on whether we are in the first or the second hypothesis of Theorem~\ref{physmeas different orbits}), and consider a sequence $(v_n)$ of integers such that
\begin{equation}\label{eqpropvn}
v_n\underset{n\to\infty}{\longrightarrow}\infty,\qquad \sum_{n\ge 2} v_n \lambda(D_n) < \infty, \qquad \frac{v_{n+1}}{u_n}\underset{n\to\infty}{\longrightarrow}0.
\end{equation}
Set
\begin{equation*}
\tilde n(i) = \max \{n \geq 0: v_n q_n \leq i \}
\quad \text{and} \quad
\tilde k(i) = \max \{k \geq 0: k q_{\tilde n(i)} \leq i \},
\end{equation*}
so that $v_{\tilde n(i)} q_{\tilde n(i)} \le i < v_{\tilde n(i) + 1} q_{\tilde n(i)+1}$ and $\tilde k(i) q_{\tilde n(i)} \le i < (\tilde k(i)+1)q_{\tilde n(i)}$. Note that
\begin{equation}\label{EqTildeK}
v_{\tilde n(i)} \ \le \ \tilde k(i)\ < \ v_{\tilde n(i)+1} \frac{q_{\tilde n(i)+1}}{q_{\tilde n(i)}}.
\end{equation}

For $i \geq 0$ let (recall that by \eqref{def of I}, one has $
I_{n,k} = \big[-u_n/(2 c_{n,k}),\, u_n/(2 c_{n,k})\big]$)
\[\tilde B_i  = R_\alpha^{-i} \left(I_{\tilde n(i), \tilde k(i)}\right);\]
this allows to define an alternative version of \eqref{EqDefDn}
\begin{equation*}\label{EqDefDn2}
\tilde D_n = \bigcup_{i=v_n q_n}^{v_{n+1} q_{n+1}\,-1} \tilde B_i,
\end{equation*}
and
\begin{equation}\label{EqD}
\tilde D = \bigcap_{N\in\N} \bigcup_{n\ge N} \big(\tilde D_n \cup (\tilde D_n-\beta_0) \cup (\tilde D_n-\ell_{n-1}\alpha) \big).
\end{equation}
By a trivial adaptation of Lemma~\ref{summability under rapid growth}, this set has null measure (by using \eqref{eqpropvn} and the fact that the measure of the new set $\tilde D_n$ is smaller than $v_n+1$ times the measure of the old $D_n$ defined in \eqref{EqDefDn}).

\begin{proposition}\label{LemFinalPhysmeas}
Under the hypotheses of Theorem~\ref{physmeas different orbits}, for $\beta_0$ defined by \eqref{EqDefBeta0} and for any $x\notin \tilde D$ (defined in \eqref{EqD}) which is not in the preorbits of $p_0$ or $q_0$, the point $(x_0,x)$ is in the basin of attraction of the measure $\mu_\infty$, where $\q$ corresponds to $(x_0,-\beta_0)\in\Sigma$.
\end{proposition}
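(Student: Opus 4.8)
The plan is to reduce the statement, via Lemma~\ref{CritPhys}, to a comparison of Birkhoff sums $S_j(x)$ and $S_j(x-\beta_0)$ along the orbit of a typical point $x$, exploiting the decomposition $\beta_0 = \ell_{n-1}\alpha + \beta_n \mod 1$ with $\|\beta_n\| < \lambda^{(n)}$ (Lemmas~\ref{alternating sum} and \ref{ell versus q}). Since $\p$ and $\q$ are \emph{not} on the same orbit, we cannot apply Lemma~\ref{CritPhys} directly; instead we will show that $\Theta_j^{\beta_0}(x) \to 1$ by hand, using that for $j$ in the range $v_n q_n \le j < v_{n+1}q_{n+1}$ the shift $\beta_0$ agrees with $\ell_{n-1}\alpha$ up to an error $\|\beta_n\|<\lambda^{(n)}$, which is exactly the slack permitted in Lemma~\ref{LemConvDn2} (the hypothesis ``$\|x-x'\|\le\lambda^{(n)}$''). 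Concretely, writing $x' = x - \beta_n$, one has $x - \beta_0 = x' - \ell_{n-1}\alpha$, and $S_j(x-\beta_0)$ differs from $S_{j}(x'-\ell_{n-1}\alpha)$ by nothing (it is the same point) while $S_{j+\ell_{n-1}}(x') - S_j(x-\beta_0)$ is a sum of $\ell_{n-1}$ terms $\psi(R_\alpha^i(x'))$, each controlled because $x'\notin \tilde D_n$ (here the translate $\tilde D_n - \ell_{n-1}\alpha$ in \eqref{EqD} is used) and because $\ell_{n-1} < q_n \ll j$.

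The key steps, in order, are: (1) Fix $x\notin\tilde D$ not in the preorbit of $p_0$ nor $q_0$; choose $N$ so that for all $n\ge N$ the point $x$ avoids $\tilde D_n$, $\tilde D_n - \beta_0$ and $\tilde D_n - \ell_{n-1}\alpha$. (2) For $j$ with $v_{\tilde n} q_{\tilde n}\le j< v_{\tilde n+1}q_{\tilde n+1}$, apply Lemma~\ref{LemConvDn2} with the parameters $\tilde n(i),\tilde k(i)$ (noting \eqref{EqTildeK} guarantees $\tilde k(i) q_{\tilde n(i)}\le i$) to get $\psi(R_\alpha^i(x)) \le \frac{8}{u_{\tilde n}} S_i(x)$ for all $v_{\tilde n}q_{\tilde n}\le i< j$; the analogous bound holds for $x'=x-\beta_n$ and for $x-\beta_0$, since these lie at distance $\le\lambda^{(\tilde n)}$ from $x$ and $x-\ell_{\tilde n-1}\alpha$ respectively, which are themselves outside $\tilde D_{\tilde n}$. (3) Use $u_{\tilde n}\to\infty$ together with $S_i(x)\to\infty$ to conclude that along this range $\psi(R_\alpha^i(x)) = o(S_i(x))$ — and similarly with $x$ replaced by $x-\beta_0$ — hence $S_j(x)$ and $S_j(x-\beta_0)$ grow with negligible last-term contributions. (4) Compare $S_j(x)$ with $S_{j-\ell_{\tilde n-1}}(x')$: they differ by the sum of $\ell_{\tilde n-1} < q_{\tilde n}$ consecutive values of $\psi$ at points of the orbit of $x'$, each of which is $o(S_j(x'))$ by step (3) (applied in the form of the ``by recurrence'' observation in the proof of Lemma~\ref{CritPhys}); likewise $S_j(x-\beta_0) = S_{j-\ell_{\tilde n-1}}(x') $, so $\Theta_j^{\beta_0}(x) = S_j(x)/S_j(x-\beta_0) = (S_{j-\ell_{\tilde n-1}}(x') + o(S_j(x')))/S_{j-\ell_{\tilde n-1}}(x') \to 1$. (5) Conclude by Proposition~\ref{criterium1} (with $\tau_0=\tau_1 = \sqrt{d_\q}/(\sqrt{d_\p}+\sqrt{d_\q})$, recalling $d_\p = d_\q$ is not assumed — the limit of $\Theta$ being $1$ forces $p\omega(x)=\{\mu_\infty\}$ exactly as in the proof of Lemma~\ref{CritPhys}).

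The main obstacle I anticipate is step (4): keeping track of the fact that the ``error'' block $S_j(x) - S_{j-\ell_{\tilde n-1}}(x')$ genuinely has $o(S_j)$ size \emph{uniformly} as $j$ ranges over the whole interval $[v_{\tilde n}q_{\tilde n}, v_{\tilde n+1}q_{\tilde n+1})$, and that the transition from parameter $\tilde n$ to $\tilde n+1$ does not destroy the estimate — this is where the conditions $v_n\to\infty$, $v_{n+1}/u_n\to 0$ and $\sum v_n\lambda(D_n)<\infty$ in \eqref{eqpropvn} are essential, and I would need to check carefully that the choice of the $\tilde n$-th scale ``activates'' early enough (at time $v_{\tilde n}q_{\tilde n}$ rather than $q_{\tilde n}$) so that $\ell_{\tilde n-1}\le q_{\tilde n}$ is indeed negligible compared to $j \ge v_{\tilde n}q_{\tilde n}$, using $v_{\tilde n}\to\infty$. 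A secondary subtlety is verifying that $x$ avoiding $\tilde D_n - \ell_{n-1}\alpha$ is precisely what licenses applying Lemma~\ref{LemConvDn2} to the point $x - \ell_{n-1}\alpha$ with the $\lambda^{(n)}$-slack absorbing $\beta_n$; this is a bookkeeping check that the three translates in \eqref{EqD} are exactly the ones needed.
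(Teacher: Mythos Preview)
Your overall architecture (reduce to $\Theta_j^{\beta_0}(x)\to 1$ via Proposition~\ref{criterium1}, split $\beta_0=\ell_{n-1}\alpha+\beta_n$, and use the translates in \eqref{EqD} to control closest returns) matches the paper's, but step~(4) as written does not go through, and it hides a second difficulty you have not addressed.

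First, the claim ``$S_j(x-\beta_0)=S_{j-\ell_{n-1}}(x')$'' is false: with $x'=x-\beta_n$ one gets $S_j(x-\beta_0)=\sum_{m=-\ell_{n-1}}^{\,j-\ell_{n-1}-1}\psi(R_\alpha^m(x'))$, which differs from $S_{j-\ell_{n-1}}(x')$ by an additional block of $\ell_{n-1}$ terms. More importantly, even using the correct telescoping identity $S_j(y)-S_j(y+\ell\alpha)=S_\ell(y)-S_\ell(R_\alpha^j(y))$, you cannot bound $S_\ell(\,\cdot\,)$ by ``$\ell$ terms, each $o(S_j)$'': the bound from Lemma~\ref{LemConvDn2} is $\psi(R_\alpha^i(\cdot))\le \tfrac{8}{u_n}S_j$, and summing $\ell_{n-1}$ such inequalities gives $\tfrac{8\ell_{n-1}}{u_n}S_j$ with $\ell_{n-1}\sim q_n$ and $q_n/u_n\to\infty$. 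The paper instead uses Lemma~\ref{EqSellFinal} to write $S_\ell\le \psi(y_0)+\psi(y_1)+\tfrac{8\log q_n}{\lambda^{(n-1)}}$; only the two closest returns $y_0,y_1$ are handled via Lemma~\ref{LemConvDn3} (this is where $x\notin\tilde D_n-\ell_{n-1}\alpha$ is used for the tail block $S_\ell(R_\alpha^j(\cdot))$), and the remaining $\tfrac{8\log q_n}{\lambda^{(n-1)}}$ is beaten by the lower bound $S_j\ge \tfrac{v_n\log q_n}{2\lambda^{(n-1)}}$ from Corollary~\ref{kq orbit}, yielding the crucial factor $1/v_n$.

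Second, you never actually compare $S_j$ at the two $\lambda^{(n)}$-close points: your step~(4) silently jumps from $x$ to $x'=x-\beta_n$ inside the Birkhoff sum. Lemma~\ref{LemConvDn2} only controls the \emph{single} term $\psi(R_\alpha^i(x))$, not the difference $S_j(x)-S_j(x')$. Showing $|S_i(\overline{x})-S_i(\overline{x}+\beta_n)|=o(S_i)$ is the content of the paper's Lemma~\ref{LemSecondTerm9}: one needs the cancellation estimate of Lemma~\ref{LemFinalMartin} (which isolates, per block of length $q_n$, the two closest returns and bounds the rest by a Lipschitz argument), followed by a case split $k\gtrless a_{n+1}$ and a careful count of ground-floor contributions. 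This is the technical heart of the proof and is missing from your outline.
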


This proposition implies Theorem~\ref{physmeas different orbits}, as the set $\tilde D$ has null measure.

We shall use the following adaptation of Lemma~\ref{LemConvDn2}, whose proof is identical.

\begin{lemma}\label{LemConvDn3}
Let $n \geq 2$. If $x\notin \tilde D_n$, then for any $v_n q_n \leq i < v_{n+1}q_{n+1}$, and any $x'$ such that $\|x-x'\|\le \lambda^{(n)}$, we have
\begin{equation*}
\psi\big(R_\alpha^i(x)\big) \le \frac{8}{u_n} \sum_{j=0}^{i-1} \psi\big(R_\alpha^j(x')\big).
\end{equation*}
\end{lemma}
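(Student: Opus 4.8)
The plan is to transcribe the proof of Lemma~\ref{LemConvDn2}, replacing $n(i)$, $k(i)$, $D_n$ by $\tilde n(i)$, $\tilde k(i)$, $\tilde D_n$ and the admissible range $q_n\le i<q_{n+1}$ by $v_nq_n\le i<v_{n+1}q_{n+1}$, and to verify that every estimate survives. Fix $n\ge 2$, $x\notin\tilde D_n$, an admissible $i$ (so $\tilde n(i)=n$) and $x'$ with $\|x-x'\|\le\lambda^{(n)}$; let $j_0\in[v_nq_n,i]$ minimise $\|R_\alpha^{j_0}(x)\|$, put $y_0=R_\alpha^{j_0}(x)$ and $k_0=\tilde k(j_0)$, and note that $\tilde n(j_0)=n$ and, by \eqref{EqTildeK}, $k_0\ge v_n$. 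Since the right-hand side of the desired inequality is non-decreasing in $i$ it is enough to treat $i=j_0$, i.e. to prove $\psi(y_0)\le\frac{8}{u_n}\sum_{j=0}^{j_0-1}\psi(R_\alpha^j(x'))$; write $j_0=k_0q_n+\ell$ with $0\le\ell<q_n$.

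If $\|y_0\|\le\lambda^{(n)}/2$ the argument is verbatim Case~(2) of Lemma~\ref{LemConvDn2}: applying $x\notin\tilde D_n$ at the index $j_0$ gives $\psi(y_0)<2c_{n,k_0}/u_n$; Corollary~\ref{kq orbit} gives $\sum_{j=0}^{j_0-1}\psi(R_\alpha^j(x'))\ge\sum_{j=0}^{k_0q_n-1}\psi(R_\alpha^j(x'))>b_{n,k_0}/2$; and Remark~\ref{lower bound2}, applicable since $\|x-x'\|\le\lambda^{(n)}$ puts an iterate of $x'$ within $2\lambda^{(n)}$ of $0$, gives the same sum $\ge a_{n,k_0}/4$; hence it is $\ge c_{n,k_0}/4$ and $\frac{8}{u_n}\sum>2c_{n,k_0}/u_n>\psi(y_0)$.

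The case $\|y_0\|>\lambda^{(n)}/2$ is where the reindexing really bites, and I expect this to be the main obstacle. In Lemma~\ref{LemConvDn2} one slides $j_0$ down to $j_1=q_n+\ell$, the bottom sector of the admissible range, where $\tilde k=1$ and $x\notin D_n$ gives the sharp bound $\psi(R_\alpha^{j_1}(x))<2c_{n,1}/u_n=2b_{n,1}/u_n$; here the bottom admissible sector is the $v_n$-th one, so the substitute is $j_1=v_nq_n+\ell$, for which $\|R_\alpha^{j_1}(x)-y_0\|\le(k_0-v_n)\lambda^{(n)}$ and (using $\lambda^{(n)}<2\|y_0\|$ and $v_n\ge1$) $\psi(y_0)<2k_0\,\psi(R_\alpha^{j_1}(x))$, while $x\notin\tilde D_n$ now only provides the weaker bound $\psi(R_\alpha^{j_1}(x))<2c_{n,v_n}/u_n$, with $c_{n,v_n}$ roughly a factor $v_n$ above $c_{n,1}$ in the regime where $b_{n,k_0}$ dominates. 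I would close this by dichotomising on $a_{n,k_0}$ versus $b_{n,k_0}$: when $b_{n,k_0}\ge a_{n,k_0}$ no slide is needed — apply $x\notin\tilde D_n$ directly at $j_0$ and use Corollary~\ref{kq orbit} to get $\sum_{j=0}^{j_0-1}\psi(R_\alpha^j(x'))>b_{n,k_0}/2\ge c_{n,k_0}/2$; when $a_{n,k_0}$ dominates, $\lambda^{(n-1)}/\lambda^{(n)}>k_0\log q_n/\log k_0\ge\log q_n$ forces $a_{n+1}$ to be large and bounds $k_0$ accordingly, and one recovers size of order $a_{n,k_0}=\log k_0/\lambda^{(n)}$ from the $k_0$ sectors themselves: they are coherent translates of one another by $\rho_n$ (Lemma~\ref{properties}), so since the whole orbit segment $R_\alpha^{v_nq_n}(x),\dots,R_\alpha^{j_0}(x)$ stays at distance $>\lambda^{(n)}/2$ from $0$, the closest-to-$0$ points of successive sectors sweep an interval of length up to of order $k_0\lambda^{(n)}$ and so contribute a quantity of order $\log k_0/\lambda^{(n)}$ to $\sum_{j=0}^{j_0-1}\psi(R_\alpha^j(x'))$. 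Finally, any residual multiplicative constant of size $v_n$ or $u_n/\log q_n$ is harmless, since in both hypotheses of Theorem~\ref{physmeas different orbits} one has $v_{n+1}/u_n\to0$ (by \eqref{eqpropvn}) and $u_n/\log q_n\to0$ (respectively $u_n\le\log a_n\le\log q_n$ eventually): at worst the constant $8$ in the statement is replaced by a null sequence, which is all that is used in the only application of the lemma, Proposition~\ref{LemFinalPhysmeas}.
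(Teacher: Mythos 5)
Your reduction to $i=j_0$, your treatment of the case $\|y_0\|\le\lambda^{(n)}/2$, and the branch $b_{n,k_0}\ge a_{n,k_0}$ (exclusion applied directly at $j_0$ plus Corollary~\ref{kq orbit}) are correct; and you are right that the paper's own justification — it simply declares the proof identical to that of Lemma~\ref{LemConvDn2} — glosses over the fact that the bottom admissible sector is now $v_n$, so the sliding step of Case~(1) only yields $c_{n,v_n}$ instead of $c_{n,1}=b_{n,1}$ and the original chain of inequalities does not close as written. The genuine gap is in your remaining branch $a_{n,k_0}>b_{n,k_0}$. The assertion that the closest-to-$0$ points of the successive sectors ``sweep an interval of length of order $k_0\lambda^{(n)}$ and so contribute a quantity of order $\log k_0/\lambda^{(n)}$'' implicitly assumes that this sweep starts at distance $O(\lambda^{(n)})$ from $0$. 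But the Case-(1) hypothesis $\|y_0\|>\lambda^{(n)}/2$ is a \emph{lower} bound on the distance, not an upper bound: nothing you invoke prevents the whole orbit of $x'$ up to time $j_0$ from staying at distance $D\gg k_0\lambda^{(n)}$ from $0$, in which case the $k_0$ translates by $\rho_n$ contribute only about $k_0/D\ll\log k_0/\lambda^{(n)}$. Such a configuration is consistent with everything you use in this branch: the exclusion at $j_0$ only forces $\|y_0\|>u_n/(2c_{n,k_0})$, and if for instance $k_0=v_n$, $a_{n+1}$ is enormous (so $c_{n,v_n}=\log v_n/\lambda^{(n)}$), $\|y_0\|$ sits just above this threshold, and the only points of the orbit near $0$ are the $v_n$ cluster-mates $R_\alpha^{j_0-mq_n}(x)$ at distances comparable to $\|y_0\|$, then $\sum_{j<j_0}\psi(R_\alpha^j(x'))\approx v_n/\|y_0\|+O\bigl(v_n\log q_n/\lambda^{(n-1)}\bigr)$, so that $\psi(y_0)$ is of order $\frac{1}{v_n}\sum_{j<j_0}\psi(R_\alpha^j(x'))$, which is far larger than $\frac{8}{u_n}\sum_{j<j_0}\psi(R_\alpha^j(x'))$ since $v_n=o(u_n)$ by \eqref{eqpropvn}. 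Hence the inequality you aim at in that branch does not follow from the estimates you cite, and the branch is not closed.

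Your fallback — that a residual multiplicative factor of size $v_n$ merely degrades the constant $8$ to a null sequence, which is ``all that is used'' in the application — does not repair this. First, what the scenario above actually permits is at best a coefficient of order $1/v_n$ (coming from the $v_n$ cluster-mates), not $8v_n/u_n$. Second, harmlessness in the application is not automatic: in Lemma~\ref{LemSecondTerm9} the coefficient supplied by the present lemma is multiplied by roughly $3v_{n+1}$ (the number $\lceil k/a_{n+1}\rceil$ of blocks feeding into \eqref{multfirst}), and it is precisely $v_{n+1}/u_n\to0$ that makes the resulting $96v_{n+1}/u_n$ vanish; a coefficient $C/v_n$ would instead produce terms of order $v_{n+1}/v_n$, which \eqref{eqpropvn} does not control. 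So either the lemma must be established with the stated $8/u_n$ — which requires an argument genuinely beyond both your branch~(ii) and a verbatim copy of Lemma~\ref{LemConvDn2} — or the definitions of $\tilde D_n$ and $v_n$ must be adjusted; as written, your Case~(1) in the regime $a_{n,k_0}>b_{n,k_0}$ is a genuine gap.
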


\begin{proof}[Proof of Proposition~\ref{LemFinalPhysmeas}]
As $x\notin \tilde D$, there exists $N\in\N$ such that
\[x\notin \bigcup_{n\ge N} \big(\tilde D_n \cup (\tilde D_n-\beta_0)\cup (\tilde D_n-\ell_n\alpha)\big).\]

Using Proposition~\ref{criterium1}, we shall prove that 
\[S_i(x) \overset{\text{def.}}{=} \sum_{j=0}^{i-1} \psi\big(R_\alpha^j(x)\big) \underset{i\to +\infty}{\sim} \sum_{j=0}^{i-1} \psi\big(R_\alpha^j(x+\beta_0)\big) = S_i(x+\beta_0),\]
which is true if
\[\frac{\big|S_i(x) - S_i(x+\beta_0)\big|}{S_i(x) + S_i(x+\beta_0)} \underset{n\to+\infty}{\longrightarrow}0.\]

Consider $n\ge N$ such that $v_n> 2$, $2q_n\ge v_N q_N$ and that the two returns closest to 0 of the orbit of $x$ of length $2 q_n$ under $R_\alpha$ are of indices bigger than $v_Nq_N$.

Take $i$ such that $v_n q_n \le i < v_{n+1} q_{n+1}$ (note that in this case, $n=\tilde n(i)$).

Recall that $\beta_0$ can be written as the sum $\beta_0 = \ell\alpha +\beta_n$ with $0\le \ell \le q_n+q_{n-1}\le 2 q_n$  and $|\beta_n| < \lambda^{(n)}$ (note that in this case, $\ell=\ell_{n-1}$). Hence, 
\begin{equation}\label{EqDivBeta}
S_i(x) - S_i(x+\beta_0) = \big(S_i(x) - S_i(x+\ell\alpha)\big) + \big(S_i(x+\ell\alpha) - S_i(x+\ell\alpha+\beta_n)\big).
\end{equation}
We will treat each element of this sum separately.

\begin{lemma}\label{LemSecondTerm9}
Under the hypotheses of Proposition~\ref{LemFinalPhysmeas}, 
\[\max_{v_n q_n \le i < v_{n+1} q_{n+1}}
\frac{\big|S_i(x+\ell_{n-1}\alpha) - S_i(x+\ell_{n-1}\alpha+\beta_n)\big|}{S_i(x+\beta_0)} \underset{n\to +\infty}{\longrightarrow}0.\]
\end{lemma}

\begin{lemma}\label{LemFirstTerm9}
Under the hypotheses of Proposition~\ref{LemFinalPhysmeas}, 
\[\max_{v_n q_n \le i < v_{n+1} q_{n+1}} 
\frac{\big|S_i(x) - S_i(x+\ell_{n-1}\alpha)\big|}{S_i(x) + S_i(x+\beta_0)} \underset{n\to+\infty}{\longrightarrow}0\]
\end{lemma}

These two lemma prove Proposition~\ref{LemFinalPhysmeas}.
\end{proof}

\begin{proof}[Proof of Lemma~\ref{LemSecondTerm9}]
In this proof, we suppose that $n$ is odd, the even case being identical. Denote 
\[\ell=\ell_{n-1},\qquad \overline{x} = x+\ell\alpha \qquad \text{and} \qquad k = \lfloor i/q_n\rfloor,\]
so that $k=\tilde k(i)$. Note that $\overline{x} + \beta_n = x+\beta_0$. To begin with, we bound the sum using $\psi_1$ and $\psi_2$ (defined in \eqref{EqDefPsi}):
\begin{equation}\label{Cut12}
\big|S_i(\overline{x}) - S_i(\overline{x}+\beta_n)\big| \le 
\bigg|\sum_{j=0}^{i-1} \psi_1(\overline{x}) - \psi_1(\overline{x}+\beta_n)\bigg|
+\bigg|\sum_{j=0}^{i-1} \psi_2(\overline{x}) - \psi_2(\overline{x}+\beta_n)\bigg|.
\end{equation}

We treat the first term of \eqref{Cut12}; we will indicate the changes for the last term when needed. For $0 \le r \le k$, we denote by $j_r$ (resp. $j'_r$) the time corresponding to the closest return to 0 of the orbit of $R_\alpha^{r q_n}(\overline x)$ (resp. $R_\alpha^{r q_n}(\overline x+\beta_n)$) of length $q_n$ in the fundamental domain $[0,1)$. As the orbit of length $i$ is made of at most $k+1$ such pieces of orbits of length $q_n$, by Lemma~\ref{LemFinalMartin},
\begin{align}\label{EqFirstDecomposition}
\bigg|\sum_{j=0}^{i-1} \psi_1(\overline{x}) - \psi_1(\overline{x}+&\beta_n)\bigg|\le 
 \sum_{r=0}^k\left|\psi_1\big(R_\alpha^{j_r}(\overline{x})\big) - \psi_1\big(R_\alpha^{j_r}(\overline{x}+\beta_n)\big)\right| \\
& + \sum_{r=0}^k\left|\psi_1\big(R_\alpha^{j'_r}(\overline{x})\big) - \psi_1\big(R_\alpha^{j'_r}(\overline{x}+\beta_n)\big)\right| + (k+1) \frac{\lambda^{(n)}q_n}{{\lambda^{(n-1)}}} \nonumber.
\end{align}

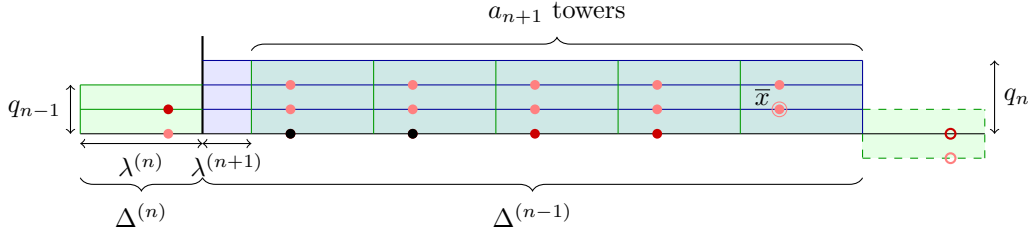
\begin{figure}
\begin{tikzpicture}[scale=.65]
\fill[fill=green, opacity=.1] (0,0) rectangle (-2.5,1);
\fill[fill=green, opacity=.1] (13.5,-.5) rectangle (16,.5);
\fill[fill=blue, opacity=.1] (0,0) rectangle (13.5,1.5);
\fill[fill=green, opacity=.1] (1,0) rectangle (13.5,1.5);
\draw[thick] (0,0) -- (0,2);
\draw (-2.5,0) -- (16,0);
\draw[color=blue!60!black] (0,.5) -- (13.5,.5);
\draw[color=blue!60!black] (0,1) -- (13.5,1);
\draw[color=blue!60!black] (0,1.5) -- (13.5,1.5);
\draw[color=blue!60!black] (13.5,0) -- (13.5,1.5);

\draw[color=green!60!black] (-2.5,.5) -- (0,.5);
\draw[color=green!60!black] (-2.5,1) -- (0,1);
\draw[color=green!60!black] (-2.5,0) -- (-2.5,1);
\draw[color=green!60!black] (1,0) -- (1,1.5);
\draw[color=green!60!black] (3.5,0) -- (3.5,1.5);
\draw[color=green!60!black] (6,0) -- (6,1.5);
\draw[color=green!60!black] (8.5,0) -- (8.5,1.5);
\draw[color=green!60!black] (11,0) -- (11,1.5);
\draw[color=green!60!black, dashed] (13.5,.5) -- (16,.5);
\draw[color=green!60!black, dashed] (13.5,-.5) -- (16,-.5);
\draw[color=green!60!black, dashed] (16,-.5) -- (16,.5);
\draw[color=green!60!black, dashed] (13.5,0) -- (13.5,-.5);

\foreach \x in {1,...,4}
{\fill[color=red!80!black] (2.5*\x-.7,0) circle (.1);
\fill[color=red!50!white] (2.5*\x-.7,.5) circle (.1);
\fill[color=red!50!white] (2.5*\x-.7,1) circle (.1);}

\fill[color=black] (2.5-.7,0) circle (.1);
\fill[color=black] (5-.7,0) circle (.1);

\fill[color=red!50!white] (2.5*5-.7,1) circle (.1);
\fill[color=red!50!white] (2.5*5-.7,.5) circle (.1);
\draw[color=red!50!white] (2.5*5-.7,.5) circle(.15);
\fill[color=black] (2.5*5-.7,.4) node[above left]{$\overline x$};

\fill[color=red!80!black] (-.7,.5) circle (.1);
\fill[color=red!50!white] (-.7,0) circle (.1);

\draw[color=red!80!black, thick] (15.3,0) circle (.1);
\draw[color=red!50!white, thick] (15.3,-.5) circle (.1);


\draw [decorate,decoration={brace,amplitude=5pt},xshift=0,yshift=-.9cm]
(13.5,0) -- (0,0) node [black,midway,yshift=-0.5cm] {$\Delta^{(n-1)}$};
\draw [decorate,decoration={brace,amplitude=5pt},xshift=0,yshift=-.9cm]
(0,0) -- (-2.5,0) node [black,midway,yshift=-0.5cm] {$\Delta^{(n)}$};

\draw [decorate,decoration={brace,amplitude=5pt},xshift=0,yshift=.2cm]
(1,1.5) -- (13.5,1.5) node [black,midway,yshift=0.5cm] {$a_{n+1}$ towers};

\draw[<->] (16.2,0) --node[midway, right]{$q_n$} (16.2,1.5);
\draw[<->] (-2.7,0) --node[midway, left]{$q_{n-1}$} (-2.7,1);
\draw[<->] (-2.5,-.2) --node[midway, below]{$\lambda^{(n)}$} (0,-.2);
\draw[<->] (1,-.2) --node[midway, below]{$\lambda^{(n+1)}$} (0,-.2);

\end{tikzpicture}
\caption{Piece of positive orbit of the point $\overline x$ (of length \emph{a priori} smaller than $i$). Each set of $q_n-1$ consecutive light red points in a column contributes to the last term of \eqref{EqFirstDecomposition}. The other points (red and black) contribute to the first terms of \eqref{EqFirstDecomposition}, i.e. the points $R_\alpha^{j_r}(\overline x)$. Among the $a_{n+1}$ first of them, at most two (in black) belong to $[0,2\lambda^{(n)}]$ (first term of the second line of \eqref{multfirst}) The other ones (in red) give birth to the second term of the second line of \eqref{multfirst}. In this example, the interval on the right of $\Delta^{(n-1)}$ is some $\Delta^{(n)}_j$ but it also could be some $\Delta^{(n-1)}_j$. The green dashed shifted small tower on the right is the same as the small leftmost tower.
}\label{FigRenor666}
\end{figure}

Let us first bound the last term of \eqref{EqFirstDecomposition}, which corresponds to the pink points of Figure~\ref{FigRenor666}. Recall that by Corollary~\ref{kq orbit}, one has
\begin{equation}\label{EqTheLast}
S_i(x+\beta_0) \ge k\frac{\log q_n}{2\lambda^{(n-1)}}.
\end{equation}
By Lemma~\ref{properties}, this gives
\begin{equation}\label{EqFirstIziTerm}
(k+1)\frac{\lambda^{(n)}q_n}{{\lambda^{(n-1)}} S_i(x+\beta_0)}
\le (k+1)\frac{2q_n/q_{n+1}}{k\log q_n}
\le \frac{4}{a_{n+1}\log q_n}.
\end{equation}

For the first term of \eqref{EqFirstDecomposition} (the ground floor in Figure~\ref{FigRenor666}, i.e. the black and red points), we will use the fact that as $x\notin (\tilde D_n-\ell_{n-1}\alpha) \cup (\tilde D_n-\beta_0)$, $\overline x\notin \tilde D_n$ and  $\overline{x}+\beta_n \notin \tilde D_n$. So it is possible to apply Lemma~\ref{LemConvDn3}: as $\|\overline x-(\overline x+\beta_n)\| = \|\beta_n\| < \lambda^{(n)}$, for any $r \le k$, one has
\begin{equation}\label{EqBoundCloose}
\psi\big(R_\alpha^{j_r}(\overline{x})\big) + \psi\big(R_\alpha^{j_r}(\overline{x}+\beta_n)\big) \le \frac{16}{u_n} \sum_{j=0}^{i-1} \psi\big(R_\alpha^j(\overline x+\beta_n)\big).
\end{equation}

We treat two cases separately.

\noindent {\bfseries Case 1:} $k\ge a_{n+1}$.\\
In this case, one cuts the sum into pieces of length $a_{n+1}$:
\begin{multline*}
\sum_{r=0}^k\left|\psi_1\big(R_\alpha^{j_r}(\overline{x})\big) - \psi_1\big(R_\alpha^{j_r}(\overline{x}+\beta_n)\big)\right| \\
 \le \sum_{m=0}^{\lceil k/a_{n+1}\rceil}\sum_{p=0}^{a_{n+1}-1}
 \left|\psi_1\big(R_\alpha^{j_{p+a_{n+1}m}}(\overline{x})\big) - \psi_1\big(R_\alpha^{j_{p+a_{n+1}m}}(\overline{x}+\beta_n)\big)\right|.
\end{multline*}

As can be seen in Figure~\ref{FigRenor666}, and because of the form of the rotation map $R_\alpha$ in the renormalization tower, among $a_{n+1}$ consecutive terms $R_\alpha^{j_r}(\overline x)$, at most one of them does not belong to $\Delta^{(n-1)}$. There are between $a_{n+1}-1$ and $a_{n+1}$ remaining terms, which belong to $\Delta^{(n-1)}$. These remaining terms are made of at most two pieces of orbit of the rotation of angle $\rho_n=q_n\alpha-p_n$, and if there are two of them all the terms of one piece are on the left to all the terms of the other (for the order on the segment $\Delta^{(n-1)}$).

One can isolate the points $R_\alpha^{j_{p+a_{n+1}m}}(\overline{x})$ that belong to $[0,2\lambda^{(n)}]$ (there are at most two of them, in black in Figure~\ref{FigRenor666}) -- which will give the first term of \eqref{multfirst} -- from the others, in red in Figure~\ref{FigRenor666} -- which will give the second term of \eqref{multfirst}. Reasoning as in the proof of Lemma~\ref{LemFinalMartin}, one gets
\begin{multline}\label{multfirst}
\sum_{p=0}^{a_{n+1}-1}\left|\psi_1\big(R_\alpha^{j_{p+a_{n+1}m}}(\overline{x})\big) - \psi_1\big(R_\alpha^{j_{p+a_{n+1}m}}(\overline{x}+\beta_n)\big)\right|\\
\le \frac{32}{u_n} \sum_{j=0}^{i-1} \psi_1\big(R_\alpha^j(\overline x+\beta_n)\big) + \sum_{p=1}^{a_{n+1}} \left(\frac{1}{p\lambda^{(n)}} - \frac{1}{(p+1)\lambda^{(n)}}\right)\\
\le \frac{32}{u_n} S_i(x+\beta_0) + \frac{1}{\lambda^{(n)}}.
\end{multline}
But by \eqref{EqTildeK} and \eqref{EqTotTime} one has (for $n$ large enough)
\[\lceil k/a_{n+1} \rceil \le \frac{a_{n+1}+1}{a_{n+1}} v_{n+1} + 1 \le 3v_{n+1},\]
and also, because $k\ge a_{n+1}$, one has $\lceil k/a_{n+1} \rceil \le \frac{2k}{a_{n+1}}$. These two bounds lead to
\[\sum_{r=0}^k\left|\psi_1\big(R_\alpha^{j_r}(\overline{x})\big) - \psi_1\big(R_\alpha^{j_r}(\overline{x}+\beta_n)\big)\right|
 \le \frac{96}{u_n} v_{n+1}\, S_i(x+\beta_0) + \frac{2 k}{a_{n+1}\lambda^{(n)}}.\]
Using Equation \eqref{EqTheLast} together with \eqref{Eqaeta}, this gives
\begin{equation*}
\frac{\sum_{r=0}^k\left|\psi_1\big(R_\alpha^{j_r}(\overline{x})\big) - \psi_1\big(R_\alpha^{j_r}(\overline{x}+\beta_n)\big)\right|}{S_i(x+\beta_0)}
 \le \frac{96}{u_n} v_{n+1} + \frac{8}{\log q_n}.
\end{equation*}
Combined with \eqref{eqpropvn}, this implies that
\begin{equation}\label{multfirst'}
\frac{\sum_{r=0}^k\left|\psi_1\big(R_\alpha^{j_r}(\overline{x})\big) - \psi_1\big(R_\alpha^{j_r}(\overline{x}+\beta_n)\big)\right|}{S_i(x+\beta_0)}
\underset{n\to +\infty}{\longrightarrow}0 .
\end{equation}

The same proof works for $\psi_2$ instead of $\psi_1$, and also for $j'_r$ instead of $j_r$ (second term of \eqref{EqFirstDecomposition}).
\medskip

\noindent {\bfseries Case 2:} $k < a_{n+1}$.\\
As in the first case, and as in the proof of Lemma~\ref{LemFinalMartin}, one can isolate the points $R_\alpha^{j_{p+a_{n+1}}}(\overline{x})$ that belong to $[0,2\lambda^{(n)}]$ (there are at most two of them, in black in Figure~\ref{FigRenor666}) -- which will give the first term of \eqref{multsecond} -- from the others, in red in Figure~\ref{FigRenor666} -- which will give the second term of \eqref{multsecond}. For this second family of points, let us call $m_0$ the index of the first ground floor interval of length $\lambda^{(n)}$ containing one of these points; in other words the is no point $R_\alpha^{j_{p+a_{n+1}}}(\overline{x})$ of this family in $[0,\lambda^{(n+1)}+2\lambda^{(n)})$ and one in $[\lambda^{(n+1)}+2\lambda^{(n)},\lambda^{(n+1)}+3\lambda^{(n)}]$. In this case, one gets 
\begin{multline}\label{multsecond}
\sum_{r=0}^k\left|\psi_1\big(R_\alpha^{j_r}(\overline{x})\big) - \psi_1\big(R_\alpha^{j_r}(\overline{x}+\beta_n)\big)\right|\\
\le \frac{32}{u_n} \sum_{j=0}^{i-1} \psi_1\big(R_\alpha^j(\overline x+\beta_n)\big) + \sum_{p=m_0}^{k} \left(\frac{1}{p\lambda^{(n)}} - \frac{1}{(p+1)\lambda^{(n)}}\right)\\
\le \frac{32}{u_n} S_i(x+\beta_0) + \frac{1}{m_0\lambda^{(n)}}.
\end{multline}
On the other hand, by the same kind of reasoning, using $k\ge v_n$ and Lemma~\ref{LemSerHarmo},
\begin{align}
\nonumber S_i(x+\beta_0) & \ge \sum_{r=0}^{k-1}\psi_1\big(R_\alpha^{j_r}(\overline x + \beta_n)\big)\\ 
& \ge \sum_{p=m_0}^{m_0+v_n-3} \frac{1}{p\lambda^{(n)}} \ge \frac{1}{\lambda^{(n)}}\log\left(1+\frac{v_n-3}{m_0}\right).\label{EqMinorSiSpec}
\end{align}
We will also use the following fact, easily coming from the convexity of $\log$: For any $m_0\ge 1$,
\begin{equation*}
m_0\log\left(1+\frac{v}{m_0}\right) \ge \log\big(1+v\big).
\end{equation*}
Applying this to \eqref{EqMinorSiSpec} and \eqref{multsecond}, one gets
\begin{equation}\label{multsecond'}
\frac{\sum_{r=0}^k\left|\psi_1\big(R_\alpha^{j_r}(\overline{x})\big) - \psi_1\big(R_\alpha^{j_r}(\overline{x}+\beta_n)\big)\right|}{S_i(x+\beta_0)}
\le \frac{32}{u_n}  + \frac{1}{\log\big(v_n-2\big)}.
\end{equation}
The same holds for $j'_r$ instead of $j_r$.
\medskip

For this second case $k<a_{n+1}$, we also need to treat the case of $\psi_2$. The reader should refer to Figure~\ref{FigRenor6666}. Now, for $0 \le r \le k$, we denote by $j_r$ (resp. $j'_r$) the time corresponding to the closest return to 0 of the orbit of $R_\alpha^{r q_n}(\overline x)$ (resp. $R_\alpha^{r q_n}(\overline x+\beta_n)$) of length $q_n$ in the fundamental domain $(-1,0]$ (which is adapted to the map $\psi_2$).

This time, we consider the interval $J$ made of the union of $\Delta^{(n)}$ with the interval of $\xi^{(n)}$ on its left, denoted by $\Delta_\iota^{(n-1)}$ (by the properties of the renormalization procedure, we know that this interval is some $\Delta_j^{(n-1)}$ and not some $\Delta_j^{(n)}$). As for the renormalization interval $\Delta(n)$, the first return map in restriction to $J$ is the rotation of angle $\rho_n = q_n\alpha-p_n$, and the return time is always smaller than $q_n$. This implies that the points $R_\alpha^{j_r}(\overline x)$ form an orbit segment for $R_{\rho_n}$ of length $k<a_{n+1}$ -- in particular, quotienting $J$ by its endpoints to get a circle, the order of the points on this circle corresponds to the order on their indices $r$.

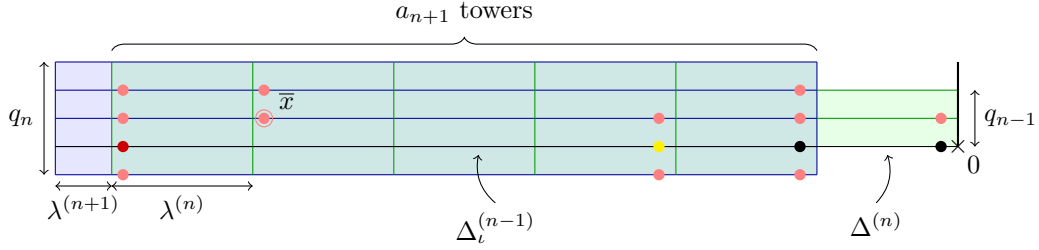
\begin{figure}
\begin{tikzpicture}[scale=.75]
\fill[fill=green, opacity=.1] (13.5,.5) rectangle (16,1.5);
\fill[fill=blue, opacity=.1] (0,0) rectangle (13.5,2);
\fill[fill=green, opacity=.1] (1,0) rectangle (13.5,2);

\draw[color=blue!60!black] (0,.5) -- (13.5,.5);
\draw[color=blue!60!black] (0,0) -- (13.5,0);
\draw[color=blue!60!black] (0,1) -- (13.5,1);
\draw[color=blue!60!black] (0,1.5) -- (13.5,1.5);
\draw[color=blue!60!black] (0,2) -- (13.5,2);
\draw[color=blue!60!black] (13.5,0) -- (13.5,2);
\draw[color=blue!60!black] (0,0) -- (0,2);

\draw[color=green!60!black] (13.5,1.5) -- (16,1.5);
\draw[color=green!60!black] (13.5,1) -- (16,1);
\draw[color=green!60!black] (1,0) -- (1,2);
\draw[color=green!60!black] (3.5,0) -- (3.5,2);
\draw[color=green!60!black] (6,0) -- (6,2);
\draw[color=green!60!black] (8.5,0) -- (8.5,2);
\draw[color=green!60!black] (11,0) -- (11,2);

\draw[thick] (16,.5) -- (16,2);
\draw (0,.5) -- (16,.5);
\draw (16,.5) node{$\times$} node[below right]{$0$};

\fill[color=red!50!white] (3.7,1) circle (.1);
\draw[color=red!50!white] (3.7,1) circle (.15);
\fill[color=black] (3.8,1) node[above right]{$\overline x$};
\fill[color=red!50!white] (3.7,1.5) circle (.1);
\fill[color=red!50!white] (1.2,0) circle (.1);
\fill[color=red!80!black] (1.2,.5) circle (.1);
\fill[color=red!50!white] (1.2,1) circle (.1);
\fill[color=red!50!white] (1.2,1.5) circle (.1);
\fill[color=black] (15.7,.5) circle (.1);
\fill[color=red!50!white] (15.7,1) circle (.1);
\fill[color=red!50!white] (13.2,0) circle (.1);
\fill[color=black] (13.2,.5) circle (.1);
\fill[color=red!50!white] (13.2,1) circle (.1);
\fill[color=red!50!white] (13.2,1.5) circle (.1);
\fill[color=red!50!white] (10.7,0) circle (.1);
\fill[color=yellow] (10.7,.5) circle (.1);
\fill[color=red!50!white] (10.7,1) circle (.1);

\draw[<-] (14.75,.35) to[bend left] (14.55,-.5) node[below] {$\Delta^{(n)}$};
\draw[<-] (7.5,.35) to[bend right] (7.8,-.5) node[below] {$\Delta_\iota^{(n-1)}$};

\draw [decorate,decoration={brace,amplitude=5pt},xshift=0,yshift=.2cm]
(1,2) -- (13.5,2) node [black,midway,yshift=0.5cm] {$a_{n+1}$ towers};

\draw[<->] (-.2,0) --node[midway, left]{$q_n$} (-.2,2);
\draw[<->] (16.3,.5) --node[midway, right]{$q_{n-1}$} (16.3,1.5);
\draw[<->] (1,-.2) --node[midway, below]{$\lambda^{(n)}$} (3.5,-.2);
\draw[<->] (1,-.2) --node[midway, below]{$\lambda^{(n+1)}$} (0,-.2);

\end{tikzpicture}
\caption{Adaptation of Figure~\ref{FigRenor666} for $\psi_2$: we consider the large interval $\Delta_\iota^{(n-1)}$ on the left of $\Delta^{(n)}$ (the interval in black inside the large tower), which lies inside the tower above $\Delta^{(n-1)}$ (the bottom interval of the left tower). Among the positive orbit of $\overline x$, there are four types of points: the pink ones that give contribution to the last term of \eqref{EqFirstDecomposition}, the red, the yellow and the black ones. There are at most $v_{n+1}$ black ones; the red and the yellow ones are treated as in the proof for $\psi_1$.
}\label{FigRenor6666}
\end{figure}

Among these points, we isolate the $v_n$ ones that are the closest to 0 (in black in Figure~\ref{FigRenor6666}); we denote $B$ the set of indices of these points. It separates the remaining points $R_\alpha^{j_r}(\overline x)$ into (at most) two orbit segments of $R_{\rho_n}$ in $J$, in red and yellow in Figure~\ref{FigRenor6666}. Let us denote $R$ and $Y$ the sets of indices of these orbit segments.

For the black points, using \eqref{EqBoundCloose}, one has
\[\sum_{r\in B}\left|\psi_2\big(R_\alpha^{j_r}(\overline{x})\big) - \psi_2\big(R_\alpha^{j_r}(\overline{x}+\beta_n)\big)\right| \le \frac{16 v_n}{u_n}S_i(\overline x+\beta_n).\]
For the red points, if the set $R$ has cardinality smaller than $v_n$, the same estimate holds. If not, then the proof strategy of \eqref{multsecond} and \eqref{EqMinorSiSpec} works identically. The same is true for the yellow points. Finally, this proves that
\[\frac{\sum_{r=0}^k\left|\psi_2\big(R_\alpha^{j_r}(\overline{x})\big) - \psi_2\big(R_\alpha^{j_r}(\overline{x}+\beta_n)\big)\right| }{S_i(\overline x+\beta_n)}  \underset{n\to +\infty}{\longrightarrow}0.\]
\medskip

Combining it with \eqref{EqFirstDecomposition} with \eqref{EqFirstIziTerm}, \eqref{multfirst'} and \eqref{multsecond'}, we deduce that
\[\frac{\big|S_i(\overline{x}) - S_i(\overline{x}+\beta_n)\big|}{S_i(x+\beta_0)} \underset{n\to +\infty}{\longrightarrow}0.\]
\end{proof}

\begin{proof}[Proof of Lemma~\ref{LemFirstTerm9}]
We now treat the first term of \eqref{EqDivBeta}. Remark that the number $\ell=\ell_n$ depends on $n$, so results of Section \ref{SecPhysSame} do not apply directly.

So one wants to compare $|S_i(x) - S_i(x+\ell\alpha)|$ with $S_i(x)+S_i(x+\beta_0)$. Let us compute (using $\ell < 2q_n$ and $i\ge v_n q_n$ with $v_n\ge 4$):
\begin{align*}
S_i(x) - S_i(x+\ell\alpha) & = \sum_{j=0}^{i-1} \left(\psi(R_\alpha^j(x)) - \psi(R_\alpha^j(x+\ell\alpha))\right)\\
 & = \sum_{j=0}^{i-1} \left(\psi(R_\alpha^j(x)) - \psi(R_\alpha^{j+\ell}(x))\right)\\
  & = \sum_{j=0}^{\ell-1} \psi(R_\alpha^j(x)) - \sum_{j=i}^{i+\ell-1} \psi(R_\alpha^j(x)).
\end{align*}
Hence, 
\[\big|S_i(x) - S_i(x+\ell\alpha)\big| \le S_{\ell}(x) + S_{\ell}\big(R_\alpha^{i}(x)\big).\]

By Lemma~\ref{EqSellFinal}, one has (using $\ell\le 2q_n$)
\[S_{\ell}(x) \le \frac{8\log q_n}{\lambda^{(n-1)}} + \psi(y_0) + \psi(y_1),\]
where $y_0$ and $y_1$ are the two closest returns to $0$ of the orbit of $x$ of length $2q_n$, and similarly
\[S_{\ell}\big(R_\alpha^{i}(x)\big) \le \frac{8\log q_n}{\lambda^{(n-1)}} + \psi(y'_0) + \psi(y'_1),\]
with $y_0'$ and $y_1'$ the two closest returns to $0$ of the orbit of $R_\alpha^{i}(x)$ of length\footnote{Note that here the length of the orbit in which we choose the closest points is smaller, but the proof of the Lemma~\ref{EqSellFinal} works identically in this case.	} $\ell$.

Recall that by the choice of $n\ge N$ large enough, denoting $y_0 = x+m_0\alpha$ and $y_1 = x+m_1\alpha$, one has $v_Nq_N \le m_0,m_1 \le \ell <i$. So $\tilde n(m_0), \tilde n(m_1) \ge N$. As $x\notin \tilde D_{\tilde n(m_0)} \cup \tilde D_{\tilde n(m_1)}$, by Lemma~\ref{LemConvDn3} (and the fact that $(u_n)$ is increasing),
\[\psi(y_0) \le \frac{8}{u_{\tilde n(m_0)}} \sum_{j=0}^{m_0-1} \psi\big(R_\alpha^j(x)\big) \le \frac{8}{u_n} \sum_{j=0}^{i-1} \psi\big(R_\alpha^j(x)\big),\]
and the same for $y_1$.

We now treat the points $y_0'$ and $y_1'$. Note that they are the two returns closest to 0 of the orbit of $R_\alpha^{i-\ell}(\overline x)$ of length $\ell$ (recall that $\overline{x}=x+\ell\alpha$). Let us denote $y_0'=\overline x+m'_0\alpha$ and $y_1'=\overline x+m'_1\alpha$, so that $v_Nq_N \le 2 q_n \le m'_0,m'_1 < i$. As before, as $\overline x\notin \tilde D_{\tilde n(m_0)} \cup \tilde D_{\tilde n(m_1)}$, by Lemma~\ref{LemConvDn3},
\[\psi(y'_0) \le \frac{8}{u_{\tilde n(m'_0)}} \sum_{j=0}^{m'_0-1} \psi\big(R_\alpha^j(\overline x)\big) \le \frac{8}{u_n} \sum_{j=0}^{i-1} \psi\big(R_\alpha^j(\overline x)\big),\]
and the same for $y'_1$.

Using Lemma~\ref{LemSecondTerm9} (which tells that $S_i(\overline x)$ is more or less equal to $S_i(x+\beta_0)$), we deduce that for all $n$ large enough,
\[\psi(y_0'), \psi(y_1') \le \frac{16}{u_n} S_i(x+\beta_0).\]

Putting this bound together with \eqref{EqTheLast}, one gets (using $k\ge v_n$, by \eqref{EqTildeK})
\begin{align*}
\frac{\left|S_i(x) - S_i(x+\ell\alpha)\right|}{S_i(x) + S_i(x+\beta_0)}
& \le  4\frac{8\log q_n}{\lambda^{(n-1)}}\frac{2\lambda^{(n-1)}}{k\log q_n} + \frac{48}{u_n}\\
& \le  \frac{64}{k} + \frac{48}{u_n}\\
& \le  \frac{64}{v_n} + \frac{48}{u_n}.
\end{align*}
Note that this is in this part we use the $v_n$ factor introduced specifically for this proof.
\medskip

Putting all these estimates together, and using \eqref{eqpropvn}, one gets 
\[\frac{\big|S_i(x) - S_i(x+\beta_0)\big|}{S_i(x) + S_i(x+\beta_0)} \underset{n\to+\infty}{\longrightarrow}0.\]
\end{proof}

\bibliographystyle{plain}

\bibliography{flow}

\end{document}